\numberwithin{equation}{section}
\newtheorem{lemma}{Lemma}[section]
\newtheorem{theorem}[lemma]{Theorem}
\newtheorem{proposition}[lemma]{Proposition}
\newtheorem{corollary}[lemma]{Corollary}
\newtheorem{remark}[lemma]{Remark}
\newtheorem{hypothesis}[lemma]{Hypothesis}
\newtheorem{definition}[lemma]{Definition}
\newcommand{\tg}{{\tt g}}
\newcommand{\R}{\mathbb R}
\newcommand{\cP}{\mathcal P}
\newcommand{\C}{\mathbb C}
\newcommand{\Z}{\mathbb Z}
\newcommand{\N}{\mathbb N}
\newcommand{\T}{\mathbb T}
\newcommand{\ii }{{\rm i} }
\def\norma#1{\|#1  \|}
\def\im{{\rm i}}
\def\sleq{\lesssim}
\def\be{{\bf e}}
\def\bji#1{{j_{#1}}}
\def\pil{\tilde P}
\def\norpil{ \|\pil\|_{R}}
\newcommand{\pa}{\partial}
\def\cZ{{\mathcal{Z}}}
\def\cJ{{\mathcal{J}}}
\definecolor{awesome}{rgb}{1.0, 0.13, 0.32}
\definecolor{darkgr}{rgb}{0.0, 0.62, 0.42}
\definecolor{cyan}{rgb}{0.0, 0.72, 0.92}
\def\bJ{{\bf J}}
\def\cM{\mathcal{M}}
\def\cG{{\mathcal{G}}}
\def\cV{{\mathcal{V}}}
\def\cK{{\mathcal{K}}}
\def\cT{{\mathcal{T}}}
\def\cI{{\mathcal{I}}}
\def\QS{{\mathcal{Q}\kern-0.3pt\mathcal{S}}}
\def\cE{{\mathcal{E}}}
\def\cM{{\mathcal{M}}}
\def\cU{{\mathcal{U}}}
\def\cR{{\mathcal{R}}}
\newcommand{\bral}{[ \! [} 
\newcommand{\brar}{] \! ]}
\title{Almost global existence for some Hamiltonian PDEs with small
  Cauchy data on general tori}
\author{
D. Bambusi\footnote{Dipartimento di Matematica, Universit\`a degli Studi di Milano, Via Saldini 50, I-20133
Milano. \newline
 \textit{Email: } \texttt{dario.bambusi@unimi.it}}, 
R.Feola\footnote{{Dipartimento di Matematica e Fisica, Universit\`a degli Studi RomaTre, Largo San Leonardo Murialdo 1, 00144 Roma}\newline
\textit{Email:} \texttt{roberto.feola@uniroma3.it}}, 
R.Montalto\footnote{Dipartimento di Matematica, Universit\`a degli Studi di Milano, Via Saldini 50, I-20133
Milano. \newline
 \textit{Email: } \texttt{riccardo.montalto@unimi.it}}
 }
\date{}
\begin{document}
  
\maketitle

\begin{abstract}
In this paper we prove a result of almost global existence for some
abstract nonlinear PDEs on flat tori and apply it to some concrete
equations, namely a nonlinear Schr\"odinger equation with a
convolution potential, a beam equation and a quantum hydrodinamical
equation. We also apply it to the stability of plane waves in NLS. The
main point is that the abstract result is based on  a nonresonance
condition much weaker than the usual ones, which rely on the celebrated Bourgain's Lemma which provides a partition of the ``resonant sites'' of the Laplace operator on irrational tori. 
\end{abstract}

\tableofcontents

\section{Introduction}
The problem of giving an upper bound on the growth of Sobolev norms in
Hamiltonian nonlinear PDEs, has been widely investigated. The results
typically obtained are known as ``almost global existence'': they
ensure that solutions corresponding to smooth and small initial data
remain smooth and small for times of order $\epsilon^{-r}$ with
arbitrary $r$; here $\epsilon$ is the norm of the initial datum.

We recall that there exist quite satisfactory results for semilinear
equations in one space dimension \cite{Bou96, Bam03,BG06}, which have also been
extended to some semilinear PDEs with unbounded perturbations
\cite{Yuan} and to some quasilinear wave equations \cite{Delort-2009},
gravity capillary water waves \cite{BD} (see also \cite{BeFeFran}),
quasi-linear Schr\"odinger \cite{FI} and pure gravity water waves
\cite{BeFePus} still in dimension one.  On the contrary for the case of higher dimensional
manifolds only particular examples are known
\cite{Delort-Imrekaz, BDGS,Delort-2015} and for PDEs in higher space dimension
with unbounded perturbations only partial results have been obtained
\cite{Ionescu-Pusateri, Feola-Greb-Ian, FeMo}. A slightly different point of view is the one developed in \cite{StaffWilson20}, \cite{HPSW21}, in which the authors analyze the phenomenon of energy transfer to high modes, for initial data Fourier supported in a box for the cubic NLS on the irrational square torus in dimension two. 

\noindent
To discuss the main
difficulty met in order to obtain almost global existence in more than
one space dimension, we recall that all the known results deal with
perturbations of linear systems whose eigenvalues are of the form $\pm
i\omega_j$ with $\omega_j$ real numbers playing the role of
frequencies. The main point is that in all known results, the
frequencies are assumed to verify a non-resonance condition of the
form
\begin{equation}
  \label{seconda}
\left|\omega_{j_1}\pm\omega_{j_2}\pm....\pm\omega_{j_r}\right|\geq
 (\max_{3}\left\{|j_1|,...,|j_r|\right\})^{-\tau} \gamma
\end{equation}
with $ \max_{3}\left\{|j_1|,...,|j_r|\right\} $ the third largest number
among $|j_1|,...,|j_r|$ and $\gamma,\tau$ positive numbers; condition
\eqref{seconda} is a kind of second Melnikov condition since it
requires to control linear combinations involving two frequencies with
index arbitrarily large.  \emph{This is a Diophantine type condition
  which is typically violated in more than one space dimensions}.

In the present paper we prove an abstract result of almost global existence
(see Theorem \ref{main.abs}) for some
Hamiltonian PDEs in which the linear frequencies are assumed to
fulfill the much weaker condition 
\begin{equation}
  \label{zero}
\left|\omega_{j_1}\pm\omega_{j_2}\pm....\pm\omega_{j_r}\right|\geq
(\max\left\{|j_1|,...,|j_r|\right\})^{-\tau}\gamma  
  \end{equation}
for all possible choices of indexes $j_1,...,j_r $. \emph{This is a
  condition typically fulfilled in any space dimension}. The key point
is that we also require the frequencies $\omega_j$ and the indexes $j$
to fulfill a structural property ensured by a Lemma by Bourgain on
the ``localization of resonant sites'' in $\T^d$. This
allows to prove a theorem ensuring that the Hamiltonian of the PDE can
be put in a suitable block-normal form which can be used to control
the growth of Sobolev norms. 

More precisely, following \cite{BG06}, we decompose the variables in
variables of large index (high modes) and variables of small index
(low modes); the normal form we construct is the standard Birkhoff
normal form for low modes, while it is a normal form in which the
equations of the high modes are linear time dependent
equations. Furthermore the equations for the high modes have a block
diagonal structure with dyadic blocks and this allows to control the
growth in time of the Sobolev norms.

We emphasize that one of the points of interest of our paper is that
it shows the impact of results of the kind of \cite{Bourgainlinear, DelortIMRN} dealing
with linear time dependent systems on nonlinear systems, thus, in view
of the generalizations \cite{risonante,unbounded,BL22}, it opens the
way to the possibility of proving almost global existence in more
general systems, e.g. on some manifolds with integrable geodesic flow.

\medskip

In the present paper, after proving the abstract result, we apply it to
a few concrete equations for which almost global existence was out of
reach with previous methods. Precisely we prove almost global
existence of small amplitude solutions (1) for nonlinear Schr\"odinger
equations with convolution potential, (2) for nonlinear beam equations and
(3) for a quantum hydrodinamical model (QHD) (see for instance \cite{FeIaMu}). 
We also prove Sobolev
stability 
of plane waves for the Schr\"odinger equation (following \cite{FGL13}).

To present in a more precise way the result, we recall that an
arbitrary torus can be easily identified with the standard torus
endowed by a flat metric. This is the point of view we will take. For
the Schr\"odinger equation we show that, without any restrictions on
the metric of the torus, one has that if the potential belongs to a
set of full measure then one has almost global existence. For the case
of the beam equation, we use the metric in order to tune the frequencies and to fulfill the
nonresonance condition, thus we prove that if the metric of the torus
is chosen in a set of full measure then almost global
existence holds. Examples of tori fulfilling our property are
rectangular tori with diophantine sides, but also more general tori
are allowed.

The result for the QHD model is very similar to that of the beam
equation: if the metric is chosen in
a set of full measure, then almost global existence holds. Also the
result of Sobolev stability of plane waves in the Schr\"odinger
equation is of the same kind: if the metric belongs to a set of full measure, one has stability of the plane
waves over times
longer than any inverse power of $\epsilon$. 

We recall that results
of this kind were only known for square tori in which the frequencies
have a structure identical to that of typical 1
dimensional systems. For irrational tori the only result ensuring
at least a quadratic lifespan of nonlinear Schr\"odinger equations with
unbounded, quadratic nonlinearities has been proved in \cite{FeMo}
(see also \cite{FeIaMu} for the Euler-Kortweg system and
\cite{BeFeGrIa} for the Beam equation).

\medskip Finally we recall the result \cite{BFG} in which the authors
consider a nonlinear wave equation on $\T^d$ and prove that if the
initial datum is small enough in some Sobolev norm then the solution
remains small in a weaker Sobolev norm for times of order
$\epsilon^{-r}$ with arbitrary $r$. The main difference is that this
result involves a loss of smoothness of the solution which is not
present in our result; however, we emphasize that at present our
method does no apply to the wave equation since no generalizations of
Bourgain's Lemma to systems of first order are known.

\noindent
{\sc Acknowledgments.} Dario Bambusi and Roberto Feola are supported by 
the research project PRIN 2020XBFL ``Hamiltonian and dispersive PDEs''.
Riccardo Montalto is supported by the ERC STARTING GRANT "Hamiltonian Dynamics, Normal Forms and Water Waves" (HamDyWWa), project number: 101039762.

\section{The abstract theorem}

\subsection{Phase space}\label{sezione spazio delle fasi}

Denote $\cZ^d:=\Z^d\times\left\{-1,1\right\}$. Let $g$ be a positive 
definite, {symmetric}, quadratic form on $\Z^d$ and, for
$J\equiv(j,\sigma)\in\cZ^d$, denote
\begin{equation}\label{modulo}
\begin{aligned}
& \left|J\right|^2\equiv|j|^2:= \sum_{i = 1}^d |j_i|^2 \, , \quad  |J|^2_g \equiv |j|_g^2 := g(j, j)\,. 
\end{aligned}
\end{equation}
We define
\begin{equation}\label{ell2s}
\begin{aligned}
\ell^2_s(\cZ^d;\C):=
&\Big\{
u\equiv(u_J)_{J\in\cZ^d}\ ,\quad
u_J\in\C\ ,\ :\ 
\\
&
\qquad\qquad\left\|u\right\|_s^2:=\sum_{J\in\cZ^d}\left(1+\left|J\right|\right)^{2s}|u_J|^2<\infty
\Big\}\ .
\end{aligned}
\end{equation}
In the following we will simply write $\ell^2_s$ for
$\ell^2_s(\cZ^d;\C)$ and $\ell^2$ for $\ell^2_0$. We denote by
$B_s(R)$ the open ball of radius $R$ and center $0$ in
$\ell^2_s$. Furthermore in the following $\cU_s\subset\ell^2_s$ will
always denote an open set containing the origin. 

We endow $\ell^2$ by the symplectic form $\im \sum_{j\in\Z^d} 
u_{(j,+)}\wedge u_{(j,-)}$, which, when restricted to $\ell_s^2$
($s>0$), is a weakly symplectic form.

Correspondingly, given a function $H\in C^1(\cU_s)$, for some $s$, its Hamilton equations are given by
\begin{equation}
  \label{Ham.eq}
\dot u_{(j,+)}=\im\frac{\partial H}{\partial u_{(j,-)}}\ ,\qquad \dot
u_{(j,-)}=-\im\frac{\partial H}{\partial u_{(j,+)}}\ ,
\end{equation}
or, compactly
\begin{equation}
\label{Ham.eq.1}
\dot u_{(j,\sigma)}=\sigma \im\frac{\partial H}{\partial u_{(j,-\sigma)}}\ .
\end{equation}
We will also denote by
\begin{equation}
  \label{campo}
X_H(u):=(X_J)_{J\in\cZ^d}\ ,\qquad X_{(j,\sigma)}:= \sigma
\im\frac{\partial H}{\partial u_{(j,-\sigma)}} 
\end{equation}
the corresponding (formal) Hamiltonian vector field.

In the following we will work on the space $\ell^2_s$ with $s$
large. More precisely, all the properties we will ask 
will be required to hold for all $s$ large enough.

\subsection{The class of functions (and perturbations)}\label{functions} 

Given an index $J\equiv(j,\sigma)\in\cZ^d$ we define the involution
\begin{equation}
  \label{Jbar}
\bar J:=(j,-\sigma)\, . 
\end{equation}
Given a multindex $\bJ\equiv (J_1,...,J_r)$, with $ J_l\in\cZ^d$,
$l=1,...,r$, we define $\bar\bJ:=(\bar J_1,...,\bar J_r)$.

\noindent
On the contrary, {\it for a complex number the bar will simply denote the
complex conjugate}.

\begin{definition}
  \label{invol}
  On $\ell^2_s$ we define the involution $I$ by
  \begin{equation}
    \label{invol.1}
(Iu)_{J}:=\overline{u_{\bar J}}\ .
  \end{equation}
  The sequences such that $Iu=u$ will be called \emph{real sequences}.
\end{definition}

\noindent
Given a multi-index $\bJ\equiv (J_1,...,J_{r})$, we also define its
momentum by
\begin{equation}
  \label{momentum}
\cM(\bJ):=\sum_{l=1}^{r}\sigma_lj_l\ .
\end{equation}
In particular in the following we will deal almost only with multi
indexes with zero momentum, so we define
\begin{equation}\label{indici}
\cI_r:=\left\{ \bJ\in(\cZ^d)^{r}\ :\ \cM(\bJ)=0  \right\}\, .
\end{equation}

Given a homogeneous polynomial $P$ of degree $r$, namely
$P:\ell^2_s\to\C$ for some $s$, it is well known that it can be
written in a unique way in the form 
\begin{equation}\label{forma}
P(u)=\sum_{J_1,...,J_{r}\in \cZ^d}P_{J_1,...,J_r}u_{J_1}...u_{J_r}\ ,
\end{equation}
with $P_{J_1,...,J_{r}}\in\C$ symmetric with respect to any permutation
of the indexes. 

We are now ready to specify the class of functions we will consider.

\begin{definition}{\bf (Polynomials).}\label{polyclass}
Let $r\geq1$. We denote by $\cP_r$ the space of 
formal polynomials $P(u)$ of the form \eqref{forma}
satisfying the following conditions:

\begin{itemize}
\item[P.1] \emph{(Momentum conservation):} $P(u)$ contains 
only monomyals with zero momentum, 
namely (recall \eqref{indici})  
\begin{equation}\label{momHam}
P(u)=\sum_{\bJ\in\cI_r}P_{\bJ}u_{J_1}...u_{J_r}\ ;
\end{equation}

\item [P.2] \emph{(Reality):} for any $\bJ\in (\mathcal{Z}^{d})^{r}$, 
one has $\overline{P_{\bar \bJ}}=P_{\bJ} $\,.
\item [P.3] \emph{(Boundedness):} The coefficients $P_{\bJ}$ are bounded, 
namely
  \[
  \sup_{\bJ\in\cI_r}|P_{\bJ}|<\infty\,.
  \] 
\end{itemize}

\noindent
For $R>0$
we endow the space 
$\cP_r$ 
with the family of norms 
\begin{equation}\label{normaPR}
\left\|P\right\|_R:=\sup_{\bJ\in\cI_r}|P_{\bJ}|R^{r}\,.
\end{equation}

\noindent
Given $r_2\geq r_1\geq1$
we denote by $\cP_{r_1,r_2}:=\bigcup_{l=r_1}^{r_2}\cP_l$ 
the space of polynomials $P(u)$ that may be written as
\[
P=\sum_{l=r_1}^{r_2}P_l\,,\qquad P_{l}\in \cP_{l}\,,
\]
endowed with the natural norm
\[
\left\|P\right\|_R:=\sum_{l=r_1}^{r_2}\left\| P_l\right\|_R\,.
\]
\end{definition}

\begin{remark}
  \label{r.classi}
  By the reality condition $(P.2)$ in Definition \ref{polyclass}, 
  one can note that if $P\in \cP_r$ then 
  
  \begin{itemize}
  \item $P(u)\in \R$ for all  real sequence $u$ (see Def. \ref{invol}). 
  
  \item Fix $J_1,J_2\in \Z^d$  and define
\[
A_{J_1,J_2}(u):=\sum_{\substack{J_3,...,J_r \in \Z^d \\
(J_1,J_2,J_3,\ldots,J_r)\in \mathcal{I}_r}}P_{J_1,J_2,J_3,...,J_r}u_{J_3}...u_{J_r}\,.
\]
 Then, for all real sequence $u$, one has
    \begin{equation}
      \label{forsa}
A_{(j_1,+),(j_2,-)}=\bar A_{(j_2,+),(j_1,-)}\ ;
    \end{equation}
    this ``formal selfadjointness'' will play a fundamental
    role in the following.
  \end{itemize}
\end{remark}

\begin{definition}{\bf (Functions).}\label{funzP}
  We say that a function $P\in C^{\infty}(\cU_s;\C)$ belongs to class
$\cP$, and we write $P\in \mathcal{P}$, 
if 

\noindent
$\bullet$
all the terms of its Taylor expansion at $u=0$ are of class
$\cP_r$ for some $r$; 

\noindent
$\bullet$ the vector field $X_{P}$ (recall \eqref{campo}) 
belongs to $C^{\infty}(\cU_s;\ell^2_s)$
for all $s>d/2$.
%
\end{definition}

The Hamiltonian systems that we will study are of the form
\begin{equation}
  \label{h.abs}
H=H_0+P\ ,
\end{equation}
with $P\in\cP$ and $H_0$  of the form
\begin{equation}
  \label{H0}
H_0(u):=\sum_{j\in\Z^d} \omega_ju_{(j,+)}u_{(j,-)}\ ,
\end{equation}
and $\omega_j\in\R$ a sequence on which we are going to make some
assumptions in the next subsection.

\subsection{Statement of the main result}\label{nonres}

We need the following assumption.

\begin{hypothesis}\label{hypo1}
The frequency vector $\omega=(\omega_{j})_{j\in \Z^d}$ satisfies the following.

\end{hypothesis}
\begin{itemize}
\item[F.1] There exist constants $C_1>0$ and $\beta>1$ such that,
  $\forall j$ large enough one has
 \[
\frac{1}{C_1}\left|j\right|^\beta\leq\omega_j\leq C_1
\left|j\right|^\beta\ .
\]
\item[F.2] For any $r\geq 3$ there exist $\gamma_r>0$ and $\tau_r$
  such that the following condition holds for all $N$ large enough
  \begin{align}
    \nonumber
\forall J_1,...,J_r\ \;\;\;\text{with}\ \;\;\; 
\left|J_l\right|\leq N\ ,\ \;\;\forall
l=1,...,r
\\
\label{prima}
\sum_{l=1}^r\sigma_{j_l}\omega_{j_l}\not=0\quad \Longrightarrow\quad 
\left|\sum_{l=1}^r\sigma_{j_l}\omega_{j_l}\right|\geq \frac{\gamma_r}{N^{\tau_r}}\,.
  \end{align}
\item[F.3] There exists a partition
  \begin{equation}
    \label{parti}
\Z^d=\bigcup_{\alpha}\Omega_\alpha\ ,
  \end{equation}
  with the following properties:
  \begin{itemize}
    
  \item[F.3.1]
    \begin{itemize}
    \item  either $\Omega_\alpha$ is finite dimensional and centered at the
      origin, namely there exists $C_1$ such that
      \[
      j\in\Omega_\alpha\ ,\quad \Longrightarrow \quad |j|\leq C_1\ ,
      \]
\item or it is dyadic, namely there exists a constant $C_2$ independent
    of $\alpha$ such that
    \begin{equation}
      \label{dya}
\sup_{j\in\Omega_\alpha}\left|j\right|\leq
C_2\inf_{j\in\Omega_\alpha}\left|j\right|\ .
    \end{equation}
    \end{itemize}
    \item[F.3.2] There exist $\delta>0$ and $C_3=C_3(\delta)$ such that, if
      $j\in\Omega_\alpha$ and $i\in\Omega_\beta$ with
      $\alpha\not=\beta$, then
      \begin{equation}
        \label{separa}
\left|i-j\right|+\left|\omega_i-\omega_j\right|\geq
C_3(\left|i\right|^\delta+\left|j\right|^\delta)\ .
      \end{equation}
  \end{itemize}
\end{itemize}

Finally, we need a separation property of the resonances, namely that
the resonances do not couple very low modes with very high modes. To
state this precisely, we first define an equivalence relation on
$\Z^d$

\begin{definition}
  \label{equi}
For $i,j\in\Z^d$, we say that $i\sim j$ if $\omega_i=\omega_j$. We
denote by $[i]$ the equivalence classes with respect to such an
equivalence relation.
  \end{definition}

\begin{hypothesis}\label{hypo2}
The frequency vector $\omega=(\omega_{j})_{j\in \Z^d}$ satisfies the following.

\begin{itemize}
\item[(NR.1)] The equivalence classes are dyadic, namely there exists
  $C>0$ such that
  \begin{equation}
    \label{dya.2}
C\inf_{j\in[i]}|j|\geq \sup_{j\in[i]}|j|\ ,\quad \forall i\in\Z^d\ ; 
    \end{equation}
\item[(NR.2)] \emph{Non-resonance}: $\forall l\in \N$ and   for any choice of
  $j_1,...,j_l$ such that $[j_i]\not=[j_k]$, if $i\not=k$, one has   
  \begin{equation}
    \label{nr.1}
\omega_{j_1}\pm\omega_{j_2}\pm...\pm \omega_{j_l}\not=0\,.
  \end{equation}
   \end{itemize}
   \end{hypothesis}
   \begin{remark}
   We point out that the Hypothesis \ref{hypo2} is only used in Section \ref{dynamics} in order to prove energy estimates for the system in normal form, see Lemma \ref{piccoli}.  
   \end{remark}
   
Our main abstract theorem pertains the Cauchy problem
\begin{equation}
  \label{Cauchy}
  \left\{\begin{aligned}
&   \dot u=X_{H}(u)
    \\
   & u(0)=u_0
  \end{aligned}
  \right.\ .
\end{equation}

\begin{theorem} \label{main.abs}
Consider the Cauchy problem \eqref{Cauchy} where $H$ has the form
\eqref{h.abs} with 
$H_0$ as in \eqref{H0}
and  $P\in \cP$
vanishing at order at least 3 at $u=0$.
Assume that the frequencies $\omega_j$ fulfill
Hypotheses \ref{hypo1}, \ref{hypo2}. 
For any integer $r$
there exists $s_r\in\N$ such that
for any $s\geq s_r$  there exists $\epsilon_0>0$
and $c>0$ with the following property: if the initial datum
$u_0\in\ell^2_s$ is real and small, namely if
\begin{equation}
  \label{small}
  Iu_0=u_0\ ,\quad
  \epsilon:=\left\|u_0\right\|_{\ell^2_s}<\epsilon_0\ ,
\end{equation}
then the Cauchy problem \eqref{Cauchy} has a unique solution
$$
u\in C^0((-T_{\epsilon},T_{\epsilon}),\ell^2_s)\cap
C^1((-T_{\epsilon},T_{\epsilon}),\ell^2_{s-\beta})\ , 
$$
with $T_\epsilon>c\epsilon^{-r}$. Moreover there exists $ C>0$ such that
\begin{equation}
  \label{stima.solTeo}
\sup_{|t|\leq T_\epsilon}\left\| u(t)\right\|_{\ell^2_s}\leq
C\epsilon\ .
\end{equation}
\end{theorem}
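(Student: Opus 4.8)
The plan is to prove the theorem by constructing a \emph{block-normal form} via successive Birkhoff-type transformations, and then deriving energy estimates for the system in normal form.

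\medskip

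\noindent\textbf{Step 1: Block decomposition of the phase space.} Fix the target order $r$. Following \cite{BG06}, I would introduce a cutoff $N = N(\epsilon)$ (of the form $\epsilon^{-\theta}$ for a small exponent $\theta$ depending on $r$) and split the Fourier index set into \emph{low modes} $|j| \le N$ and \emph{high modes} $|j| > N$. For the low modes the goal is a standard Birkhoff normal form (up to order $r$); for the high modes the goal is only to render the equations \emph{linear and block-diagonal} with dyadic blocks, using the partition $\{\Omega_\alpha\}$ from hypothesis F.3. The separation property \eqref{separa} together with F.2 (the weak non-resonance condition \eqref{prima}) is what allows one to solve the homological equations: when the ``resonant'' combination $\sum \sigma_{j_l}\omega_{j_l}$ does not vanish, its modulus is bounded below by $\gamma_r N^{-\tau_r}$, and one divides by it; when it vanishes, the corresponding monomial is kept in the normal form, but hypothesis F.2 combined with the dyadic/finite structure of the $\Omega_\alpha$ forces such surviving monomials to be either low-mode (contributing to the Birkhoff part) or of the block-diagonal, momentum-preserving, formally self-adjoint type identified in Remark \ref{r.classi} (the operator $A_{J_1,J_2}$ and the identity \eqref{forsa}).

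\medskip

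\noindent\textbf{Step 2: Iterative normal form.} I would set up the Lie transform machinery within the class $\cP$: given $P \in \cP_m$, solve $\{H_0, \chi\} + P = Z$ with $Z$ in ``resonant normal form'' and $\chi \in \cP_m$, controlling the norm $\|\chi\|_R$ by $N^{\tau_r}\gamma_r^{-1}\|P\|_R$. One must check that the class $\cP$ is stable under Poisson brackets and under the time-one flow of $X_\chi$ (this uses P.1 momentum conservation — so brackets stay at zero momentum — P.2 reality, and the smoothness of the vector field on $\ell^2_s$). Iterating $r-2$ times produces
\[
H = H_0 + Z + \cR\,,
\]
where $Z$ is the block-normal form (Birkhoff integrable part on low modes plus a quadratic-in-high-modes, block-diagonal, time-dependent-after-restriction-to-low-modes Hamiltonian) and $\cR \in \cP$ vanishes to order $r+1$ at the origin, with $\|\cR\|_R$ controlled by powers of $N$ times $\|P\|_R$. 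The coordinate change is $\epsilon^2$-close to the identity on $\ell^2_s$ on a ball of radius $\sim\epsilon$, provided $N^{\tau}\epsilon$ is small, which fixes the relation between $\theta$, $\tau_r$ and forces $s \ge s_r$ large.

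\medskip

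\noindent\textbf{Step 3: Energy estimates (the dynamics).} This is where Hypothesis \ref{hypo2} enters, exactly as flagged in the remark before the theorem. On the normal-form side one computes $\frac{d}{dt}\|u(t)\|_s^2$. The contribution of the remainder $\cR$ is $O(\epsilon^{r+1})$ times a power of $N$, hence integrable over times $\epsilon^{-r}$ after optimizing $N$. The contribution of $Z$ splits: the Birkhoff (low-mode) part Poisson-commutes enough to be harmless; the dangerous term is the high-mode quadratic part, which acts as a linear, time-dependent, block-diagonal operator. Here one invokes the formal self-adjointness \eqref{forsa} — so the principal part of $\frac{d}{dt}\|u\|_s^2$ cancels — and the dyadic block structure \eqref{dya} together with the separation \eqref{separa} to bound the commutator $[\langle D\rangle^{2s}, Z_{\mathrm{high}}]$ by a term with a \emph{negative} power of $N$ coming from $\delta$. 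Hypothesis \ref{hypo2} (dyadic equivalence classes NR.1 and the non-resonance NR.1--NR.2) is what guarantees that the resonant monomials surviving among the high modes genuinely are block-diagonal with blocks that do not mix widely separated frequencies — this is Lemma \ref{piccoli} referenced in the excerpt. A Gronwall / bootstrap argument then gives $\|u(t)\|_s \le C\epsilon$ for $|t| \le c\epsilon^{-r}$, and transporting back through the (near-identity) normal-form change yields \eqref{stima.solTeo}; local existence plus this a priori bound gives the stated lifespan and regularity class $C^0(\ell^2_s)\cap C^1(\ell^2_{s-\beta})$ (the loss of $\beta$ derivatives coming from $X_{H_0}$, by F.1).

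\medskip

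\noindent\textbf{Main obstacle.} The crux — and the genuinely new point over \cite{BG06}-type arguments — is Step 3 for the high modes: proving that the block-diagonal time-dependent linear system controls Sobolev norms. One does not get an invariant, only an almost-invariant, and the gain must come from the geometric separation \eqref{separa}/\eqref{dya} of Bourgain's resonant-site partition rather than from a second Melnikov condition \eqref{seconda}, which is unavailable here. Equivalently, the hard estimate is bounding $\|[\langle D\rangle^{2s}, \op{Z_{\mathrm{high}}}]\|$ — commuting the Sobolev weight past the normal form generator — using only that blocks are dyadic and $\delta$-separated; everything else (the algebra of the class $\cP$, solving homological equations under F.2, the perturbative bookkeeping in $N$ and $\epsilon$) is, while lengthy, essentially routine.
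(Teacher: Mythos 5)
Your plan is essentially the paper's own: a block normal form \`a la \cite{BG06} with low modes in Birkhoff normal form and high modes governed by a linear, time-dependent, block-diagonal operator, followed by separate energy estimates for the two parts (this is Theorem \ref{NF.theorem}, Corollary \ref{NF.corollary}, and Lemmas \ref{piccoli}--\ref{grandi} in Section \ref{dynamics}). Two points of imprecision in Step 3 are worth flagging, because they mislocate where the hypotheses enter. First, the block-diagonal structure of the high-mode quadratic part $Z_2$ is \emph{not} a consequence of Hypothesis \ref{hypo2}: it is built into the normal form by Definition \ref{Nor.form} and Lemma \ref{sol.hom}, where the homological equation is solved precisely on the non-resonant multi-indices of Definition \ref{non.res.index}, and the solvability there rests on F.2 and F.3 through Lemma \ref{nr.vera}. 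Hypothesis \ref{hypo2} enters only in the \emph{low-mode} estimate (Lemma \ref{piccoli}): NR.2 forces the Birkhoff part $Z_0$ to Poisson-commute with the superactions $J_{[i]}=\sum_{j\in[i]}|z_{(j,-)}|^2$, and NR.1 makes the superaction-weighted norm equivalent to $\ell^2_s$ on the low modes. Second, for the high modes the paper does not prove a commutator bound $\bigl[\langle D\rangle^{2s},Z_{\mathrm{high}}\bigr]$ with a gain from $\delta$: it simply observes that $\|z_\alpha\|_0$ is \emph{exactly conserved} on each block by self-adjointness (\eqref{forsa}), and that the block-weighted norm $\bigl(\sum_\alpha n(\alpha)^{2s}\|z_\alpha\|_0^2\bigr)^{1/2}$ is equivalent to $\|\cdot\|_s$ by the dyadic property \eqref{dya}, so the flow of the block-diagonal part is uniformly bounded on $\ell^2_s$ with no loss; the separation condition \eqref{separa} was already consumed in Lemma \ref{nr.vera}. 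With these two corrections your outline agrees with the paper's proof.
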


The main step for the proof of Theorem \ref{main.abs} consists in
proving a suitable normal form lemma which is given in the next
section. 

\section{Normal form}\label{NF}

In the following we will use the notation $a\sleq b$ to mean there
exists a constant $C$, independent of all the relevant parameters,
such that $a\leq Cb$. If we want to emphasize the fact that the
constant $C$ depends on some parameters, say $r,s$, we will write
$a\sleq_{s,r}b$. 
We will also write $a\simeq b$ if $a\sleq b$ and $b\sleq a$. 

We need the following definition.
\begin{definition}{\bf (N-block normal form).}\label{Nor.form}
Let $\bar{r}\geq3$ and $N\gg1$.
We say that a polynomial $Z\in\cP_{3,\bar r}$
of the form
\begin{equation*}
Z=\sum_{l=3}^{\bar r}\sum_{\bJ\in\cI_l}Z_{\bJ}u_{J_1}...u_{J_l}\ ,
\end{equation*}
(recall Def. \ref{polyclass})  
  is in \emph{$N$-block normal form} if
  $Z_{\bJ}\not=0$ \emph{only} if $\bJ\equiv(J_1,...,J_l)$ fulfills
  {\bf one} of the following two conditions:
  
  \begin{itemize}
  \item[1.] $|J_n|\leq N$ for any $n=1,\ldots,l$ and
    $\sum_{n=1}^l\sigma_{j_n}\omega_{j_n}=0$;

\item[2.] there exist \emph{exactly} $2$ indexes larger than $N$, say
  $J_1$ and $J_2$ and the following two conditions hold:
  \begin{itemize}
  \item[2.1] $J_1=(j_1,\sigma_1)$, $J_2=(j_2,\sigma_2)$ with
    $\sigma_1\sigma_2=-1$.

  \item[2.2] there exist $ \alpha $ such that $j_1,j_2\in\Omega_\alpha$, namely
    both the large indexes belong to the same cluster\footnote{recall conditions $F.3.1$, $F.3.2$ in 
  Hypothesis \ref{hypo1}} 
  $\Omega_\alpha$. 
  \end{itemize}
  
  \end{itemize}
\end{definition}

We now state the main result of this section.

\begin{theorem}\label{NF.theorem}
Fix any $N\gg1$, $s_0>d/2$ and 
consider the Hamiltonian \eqref{h.abs} with $\omega_j$ fulfilling
Hypothesis \ref{hypo1}
and $P\in\cP$. 
For any $\bar r\geq3$ 
there is $\tau>0$ such that for any
$s>s_0$ there exist $R_{s,\bar r}$, $C_{s,\bar r}>0$ 
such that for any $R<R_{s, \bar{r}}$ the following holds.
If
\begin{equation}
  \label{soglia}
RN^\tau<R_{s, \bar r} \ ,
\end{equation} then there exists  an invertible canonical transformation
\begin{equation}\label{stimamappaTT}
\cT^{(\bar r)}\,,\; [\cT^{(\bar r)}]^{-1}\,  :B_s(R)\to B_s(C_{s,\bar r}R)\ ,
\end{equation}
such that 
\begin{equation}
  \label{Htra1}
H^{(\bar r)}:=H\circ\cT^{(\bar r)}=H_0+Z^{(\bar r)}+\cR_{T}+\cR_{\perp}
\end{equation}
where
\begin{itemize}
\item $Z^{(\bar r)}\in\cP_{3,\bar r}$ is in $N$-block normal form and fulfills
  \begin{equation}
    \label{zk1}
\left\|Z^{(\bar r)}\right\|_R\sleq_{\bar r}R^3\ ;
  \end{equation}
\item $\cR_{T}$ is such that $X_{\cR_{T}}\in
  C^{\infty}(B_s(R_{s,\bar r});\ell^2_s)$ and
  \begin{equation}
    \label{rtk.1}
\sup_{\left\|u\right\|_s\leq
  R}\left\|X_{\cR_{T,k}}(u)\right\|_s\sleq_{\bar r,s}R^2
(RN^\tau)^{\bar r-3}\ ,\quad \forall R\leq
R_{s,\bar r}\ ; 
    \end{equation}
\item $\cR_{\perp}$ is such that $X_{\cR_{\perp}}\in
  C^{\infty}(B_s(R_{s,\bar r});\ell^2_s)$ and
  \begin{equation}
    \label{rtk.2}
\sup_{\left\|u\right\|_s\leq
  R}\left\|X_{\cR_{\perp,k}}(u)\right\|_s\sleq_{
  \bar r,s}\frac{R^2}{N^{s-s_0}}\ ,\quad \forall R\leq
R_{s,\bar r}\ .
    \end{equation}
\end{itemize}
\end{theorem}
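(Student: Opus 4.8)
The plan is to construct $\cT^{(\bar r)}$ as a finite composition of time-one flows of Hamiltonian vector fields generated by homogeneous polynomial symbols $\chi_l\in\cP_l$, $l=3,\dots,\bar r$, eliminating at each step the monomials of degree $l$ that are neither in $N$-block normal form nor already ``small''. First I would set up the cohomological equation: writing $H^{(l-1)}=H_0+Z^{(l-1)}+\big(\text{terms of degree}\geq l\big)$, with $P_l$ the degree-$l$ part, I want $\chi_l$ solving $\{H_0,\chi_l\}+P_l=Z_l+(\text{remainder})$, where $Z_l$ collects the monomials $P_{l,\bJ}u_{J_1}\cdots u_{J_l}$ with $\bJ$ satisfying condition 1 or condition 2 of Definition \ref{Nor.form}. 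Since $\{H_0,u_{J_1}\cdots u_{J_l}\}=-\im\big(\sum_n\sigma_{j_n}\omega_{j_n}\big)u_{J_1}\cdots u_{J_l}$, the symbol $\chi_l$ is obtained by dividing the remaining coefficients by $\im\sum_n\sigma_{j_n}\omega_{j_n}$. The key point — and here is where Hypothesis \ref{hypo1} enters decisively — is that the coefficients we divide by are bounded below. There are two cases: (a) all $|J_n|\leq N$, in which case $\sum_n\sigma_{j_n}\omega_{j_n}\neq0$ (otherwise the monomial is in normal form by condition 1), so by F.2 the denominator is $\geq\gamma_r N^{-\tau_r}$; (b) at least one index, say $|J_1|>N$, exceeds $N$: then I would argue that either there are $\geq3$ large indices — in which case momentum conservation $\cM(\bJ)=0$ forces the third-largest index to be comparable to $N$, and since the symbol has bounded coefficients these monomials are absorbed into $\cR_\perp$ by paying a factor $N^{-(s-s_0)}$ via the gain in the $\ell^2_s$ norm — or there are exactly two large indices $J_1,J_2$, and then if $\sigma_1\sigma_2=+1$ one has $|\omega_{j_1}+\omega_{j_2}|\gtrsim|j_1|^\beta$ (huge, by F.1) so it is easily solvable, while if $\sigma_1\sigma_2=-1$ one uses F.3.2: if $j_1,j_2$ lie in different clusters then $|\omega_{j_1}-\omega_{j_2}|+|j_1-j_2|\gtrsim|j_1|^\delta$, and $|j_1-j_2|$ is controlled by the low indices via momentum conservation, so the denominator is again polynomially bounded below; if $j_1,j_2$ lie in the same cluster $\Omega_\alpha$ the monomial is in normal form (condition 2) and kept in $Z_l$.

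Next I would carry out the bookkeeping of norms. Each $\chi_l$ satisfies $\|\chi_l\|_R\lesssim N^{\tau}\|P_l\|_R\lesssim N^\tau R^l$ by the lower bounds above, with $\tau=\tau_{\bar r}$ (or the max of the relevant exponents plus a contribution from F.3.2); correspondingly $X_{\chi_l}$ maps $B_s(R)$ into a ball of radius $\lesssim N^\tau R^{l-1}$ (one loses the $N^\tau$, but the vector field bound requires controlling $\|X_{\chi_l}\|_s$, which for a bounded-coefficient polynomial with zero momentum follows from a standard convolution/Young estimate on $\ell^2_s$ — this is where $P\in\cP$, i.e. $X_P\in C^\infty(\cU_s;\ell^2_s)$, is used, together with interpolation). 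The time-one flow $\Phi_{\chi_l}$ is then well-defined on $B_s(R)$ provided $RN^\tau\ll1$, which is exactly the smallness assumption \eqref{soglia}; and $\Phi_{\chi_l},\Phi_{\chi_l}^{-1}:B_s(R)\to B_s(2R)$, giving \eqref{stimamappaTT} after composing the $\bar r-2$ maps. The conjugated Hamiltonian $H\circ\Phi_{\chi_l}=H_0+Z^{(l-1)}+Z_l+(\text{new degree}\geq l+1\text{ terms})+\{(\text{remainders}),\chi_l\}+\cdots$ is expanded by the Lie series; the terms of degree $>\bar r$ produced along the way are collected into $\cR_T$, and tracking the powers of $R$ and $N^\tau$ accumulated through the $\bar r-2$ steps gives the bound $R^2(RN^\tau)^{\bar r-3}$ in \eqref{rtk.1}. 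The $\cR_\perp$ part collects all the monomials discarded in case (b) with $\geq3$ large indices (and their images under the subsequent flows); the gain $N^{-(s-s_0)}$ comes from: a monomial with the third-largest index $\gtrsim N$ but with $\geq3$ indices of size $\gtrsim N$ loses, relative to the $\ell^2_s$ norm, a factor $(1+N)^{-(s-s_0)}$ because only the two largest indices can be ``charged'' at regularity $s$ while the rest only need $s_0>d/2$ for summability — this is the standard smoothing-by-high-frequency-cutoff mechanism.

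The main obstacle I expect is the case analysis in the cohomological equation when there are exactly two large indices with $\sigma_1\sigma_2=-1$ and they lie in the same dyadic cluster $\Omega_\alpha$: these cannot be removed, so one must verify that keeping them in $Z^{(\bar r)}$ is consistent with the $N$-block normal form structure at every step — in particular that the Poisson bracket of two such block-normal-form terms, or of a block-normal-form term with $\chi_l$, does not generate monomials outside the allowed structure (or, if it does, that those fall into $\cR_T$ or $\cR_\perp$). This requires checking that the property ``at most two large indices, and if exactly two they are conjugate and co-clustered'' is stable under Poisson brackets modulo high-degree and smoothing remainders — which in turn leans on F.3.2 (clusters are well-separated, so indices in distinct clusters differ polynomially) and on momentum conservation to bound the gap between the two large indices by the sum of the small ones. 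A secondary technical point is proving the vector-field estimates $X_{\cR_T},X_{\cR_\perp}\in C^\infty(B_s(R_{s,\bar r});\ell^2_s)$ with the stated quantitative bounds, which amounts to a careful but routine Young-type estimate for multilinear maps with bounded, momentum-conserving coefficients, combined with the tame estimates inherited from $P\in\cP$.
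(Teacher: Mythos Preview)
Your strategy is essentially the paper's: a finite sequence of Lie transforms generated by polynomials solving a homological equation, with the normal form retaining the resonant low-mode monomials and the two-high-index co-clustered monomials, and the remainder split into a high-order piece $\cR_T$ and a smoothing piece $\cR_\perp$. Two corrections are in order.

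First, your ``main obstacle'' is a phantom. You do \emph{not} need the $N$-block normal form structure to be preserved under Poisson brackets. The iteration proceeds degree by degree (or, in the paper, by the index $k$ of the leading degree of the residual $P_k$): at step $k$ you classify and normalize only $P_k$, and the terms produced by $\{Z^{(k)},G_{k+1}\}$, $\{P_k,G_{k+1}\}$, etc.\ are of strictly higher degree, so they are simply lumped into the new residual $P_{k+1}\in\cP_{k+1,\bar r}$ and re-split at the next step into an effective part (at most quadratic in $u^\perp$) and a smoothing part (cubic or higher in $u^\perp$). No structural stability under brackets is required. The paper makes this transparent by introducing the notion ``degree in $u^\perp$'' (Definition~\ref{zerok}) and always decomposing $P_k=P_{k,\mathrm{eff}}+R_{k,\perp}$ \emph{before} solving the homological equation, applying Lemma~\ref{sol.hom} only to $P_{k,\mathrm{eff}}$.

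Second, your case analysis for the small divisors omits the case of \emph{exactly one} index larger than $N$; and in the case of two large indices in different clusters your wording conflates the two alternatives in F.3.2. The paper's Lemma~\ref{nr.vera} handles all of these uniformly by a bootstrap: in each subcase one shows that either the divisor is trivially large, or all indices are in fact bounded by some $N'\simeq N$ (or $N^{\beta/\delta}$, or $N^{1/\delta}$), and then one applies F.2 with $N$ replaced by $N'$. Also, your justification for the $N^{-(s-s_0)}$ gain via ``momentum conservation forces the third-largest index to be comparable to $N$'' is not the right mechanism; the gain comes purely from the tame multilinear estimate (Lemma~\ref{campo.N}): a polynomial with a zero of order $\geq 3$ in $u^\perp$ has at least two factors of $u^\perp$ estimated in $\ell^2_{s_0}$, and $\|u^\perp\|_{s_0}\leq N^{-(s-s_0)}\|u\|_s$.
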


The rest of the section is devoted to the proof of this theorem and is
split in a few subsections. 

\subsection{Properties of the class of functions $\cP$}\label{properties}

First we give the following lemma.

\begin{lemma}{\bf (Estimates on the vector field).}
  \label{campo.1}
  Fix $r\geq 3$, $R>0$. Then for 
any $s > s_0 >d/2$  there exists a constant $C_{r,s}>0$
such that, $\forall P\in\cP_r$, the following inequality holds:
\begin{equation*}
\left\|X_{P}(u)\right\|_s\leq C_{r,s}\frac{\left\|P\right\|_R}{R} \ ,\qquad \forall\, u\in B_s(R)\,.
\end{equation*}
\end{lemma}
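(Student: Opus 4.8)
\textbf{Proof sketch for Lemma \ref{campo.1}.}

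The plan is to estimate each component $X_{P,(j,\sigma)}(u) = \sigma\im\,\partial H/\partial u_{(j,-\sigma)}$ directly from the monomial expansion \eqref{forma}, using momentum conservation (P.1) to absorb the loss of the weight $(1+|j|)^s$ associated with the differentiated variable, and boundedness (P.3) to pull out the uniform bound $\|P\|_R/R^r$ on the coefficients. First I would fix $u\in B_s(R)$ and write, for $P\in\cP_r$,
\[
\left|X_{P,(j,\sigma)}(u)\right| \sleq_r \sum_{\substack{(J_2,\dots,J_r)\,:\\ (\bar J,J_2,\dots,J_r)\in\cI_r}} |P_{\bar J,J_2,\dots,J_r}|\,|u_{J_2}|\cdots|u_{J_r}|\,,
\]
where $\bar J=(j,-\sigma)$; the combinatorial constant comes from the symmetry of the coefficients. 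Bounding $|P_{\bar J,J_2,\dots,J_r}|\leq \|P\|_R/R^r$ and setting $v_J := (1+|J|)^{s}|u_J|$, so that $(v_J)\in\ell^2$ with $\|v\|_{\ell^2}=\|u\|_s$, the key point is the inequality
\[
(1+|j|)^{s} \sleq_{r,s} \sum_{\ell=2}^r (1+|J_\ell|)^{s}
\]
valid on the momentum-zero set $\cM(\bar J,J_2,\dots,J_r)=0$, i.e. when $\sigma j = \sum_{\ell\ge 2}\sigma_\ell j_\ell$, hence $|j|\le \sum_{\ell\ge 2}|j_\ell|\le (r-1)\max_\ell|J_\ell|$. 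Distributing this over the product, each term of $\|X_P(u)\|_s^2 = \sum_j (1+|j|)^{2s}|X_{P,(j,\sigma)}|^2$ becomes a convolution-type sum in which exactly one factor carries the weight $(1+|J_\ell|)^{s}$ and the remaining $r-2$ factors are unweighted $|u_{J_k}|$.

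To close the estimate I would control the unweighted factors in $\ell^\infty$, using $\|u\|_{\ell^\infty}\le \|u\|_{\ell^2}=\|u\|_0\le\|u\|_s$, and keep two factors (the weighted one and one more) in $\ell^2$ via Young's inequality for the discrete convolution; equivalently, since $s>s_0>d/2$, one can use that $\ell^2_s$ is an algebra-type space under convolution with the $\ell^1$-summable weight $(1+|J|)^{-s}$. Concretely, after applying Minkowski / Young one arrives at
\[
\|X_P(u)\|_s \sleq_{r,s} \frac{\|P\|_R}{R^r}\,\|u\|_s^{\,r-1} \le C_{r,s}\,\frac{\|P\|_R}{R}\,,
\]
the last inequality because $\|u\|_s < R$ gives $\|u\|_s^{r-1}\le R^{r-1}$.

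The main obstacle is the bookkeeping of the weights: one must verify that momentum conservation really does let the single derivative index $j$ be dominated by the others so that no net loss of $s$ derivatives occurs, and that the leftover sums are genuinely summable for $s>d/2$ (this is where the Sobolev threshold $s_0>d/2$ enters, through $\|(1+|J|)^{-s}\|_{\ell^1(\Z^d)}<\infty$ or through the $\ell^2$–$\ell^2$–$\ell^\infty$ Hölder splitting). Everything else is a routine application of Hölder and Young's inequalities, so I would not carry out the constants explicitly; the statement is uniform in $u\in B_s(R)$ precisely because the final bound depends on $u$ only through $\|u\|_s\le R$.
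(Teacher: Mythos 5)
Your approach is the same in spirit as the paper's. The paper delegates the key estimate to the appendix Lemma \ref{prod}, which recognizes the momentum-constrained sum as the $j$-th Fourier coefficient of a pointwise product $u^{(1)}(x)\cdots u^{(r-1)}(x)$ and invokes the tame Sobolev product estimate (after splitting $u=u_++u_-$ so as to reduce to sequences indexed by $\Z^d$); what you do is reprove that tame estimate directly as a weighted convolution bound, using momentum conservation to pass the weight $(1+|j|)^s$ to the largest remaining index. That is the same underlying mathematics.

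However, the exponent bookkeeping in your sketch is off in a way that would make the written-out proof fail at the closing step. In Young's inequality for the convolution of $r-1$ sequences with $\ell^2$ output, one needs $\sum_{i}1/p_i = 1/2 + (r-2)$; if one factor (the weighted one) sits in $\ell^2$, the remaining $r-2$ factors must sit in $\ell^1$, not $\ell^\infty$, and ``keep two factors in $\ell^2$ and the rest in $\ell^\infty$'' does not give an $\ell^2$ bound for any $r\geq 3$. The correct source of the $\ell^1$ control on the unweighted factors is Cauchy--Schwarz: $\|u\|_{\ell^1}=\sum_J (1+|J|)^{-s_0}(1+|J|)^{s_0}|u_J|\leq \|(1+|J|)^{-s_0}\|_{\ell^2}\,\|u\|_{s_0}$, which uses that $(1+|J|)^{-s_0}\in\ell^2(\Z^d)$ for $s_0>d/2$; your parenthetical appeals instead to $\ell^1$-summability of $(1+|J|)^{-s}$, which would require $s>d$. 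Once those two points are corrected, your argument closes and reproduces exactly the tame estimate of Lemma \ref{prod}, hence the stated bound $\|X_P(u)\|_s\lesssim \|P\|_R\,\|u\|_s^{r-1}/R^r\leq C_{r,s}\|P\|_R/R$ on $B_s(R)$.
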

\begin{proof}
  Let $P \in {\cal P}_r$. Then (recalling \eqref{campo}) one has 
  $X_P=((X_P)_J)_{J\in \cZ^d}$ with 
  \begin{equation}
    \label{expa.00}
(X_P)_{(j,+)}=\im \partial_{u_{(j,-)}} P= \im r  \sum_{\begin{subarray}{c}
J_1, \ldots, J_{r - 1} \in {\cal Z}^d, \, J=(j,+) \\
{\cal M}(J_1, \ldots, J_{r - 1}) + j = 0
\end{subarray}} P_{J,J_1,....,J_{r-1}} u_{J_1} \ldots u_{J_{r - 1}}
  \end{equation}
  and similarly for $(X_P)_{(j,-)}$. Remark that the r.h.s. of \eqref{expa.00} defines a unique
  symmetric $(r-1)$-linear form 
  \begin{equation}
    \label{expa.0}
(\widetilde{X_P})_{(j,+)}(u^{(1)},...,u^{(r-1)}):=\im r \!\! \sum_{\begin{subarray}{c}
J_1, \ldots, J_{r - 1} \in {\cal Z}^d \\
{\cal M}(J_1, \ldots, J_{r - 1}) +j = 0
\end{subarray}} P_{J,J_1,....,J_{r-1}} u_{J_1}^{(1)} \ldots u_{J_{r - 1}}^{(r-1)}\,.
    \end{equation}
In order to apply Lemma
  \ref{prod}  we decompose 
  \begin{equation}\label{expa.4}
u^{(l)}=u_+^{(l)}+u_-^{(l)}\ ,\ \text{with}\ 
  u_\sigma^{(l)}:=(u_{(j,\sigma)}^{(l)})_{j\in\Z^d}\ .
  \end{equation}
  Substituting in
  the previous expression we have 
  \begin{equation}\label{expa.1}
  \begin{aligned}
    (\widetilde{X_P})_{+}(u^{(1)}&,...,u^{(r-1)})=
    \\&=\sum_{l=0}^{r-1}
    \left(
    \begin{matrix}
r-1\\ l
    \end{matrix}
    \right) 
    (\widetilde{X_P})_{+}(u^{(1)}_+,...,u_+^{(l)},u_-^{(l+1)},...,u_-^{(r-1)} )\ .
    \end{aligned}
  \end{equation}
  Now each of the addenda of \eqref{expa.1} fulfills the assumptions
  of Lemma \ref{prod}. Therefore, since $\left\|u\right\|_{s_0}\leq\left\|u\right\|_{s}$ 
  one has 
\[
\begin{aligned}
&
\left\|
(\widetilde{X_P})_{+}(u^{(1)}_+,...,u_+^{(l)},u_-^{(l+1)},...,u_-^{(r-1)}
)\right\|_s
\\&\qquad\qquad\qquad\qquad\qquad
\sleq \sup_{\substack{(J,J_1,\ldots,J_r)\in (\mathcal{Z}^{d})^r}}\left|P_{J,J_1,...,J_{r-1}}\right|
\left\|u^{(1)}\right\|_s...\left\|u^{(r-1)}\right\|_s\ .
\end{aligned}
\]
Taking all the $u^{(l)}$ equal to $u\in B_{s}(R)$ (i.e. $\|u\|_s<R$) 
and recalling the norm in \eqref{normaPR}
one gets the thesis for
$({X_{P}})_+$. Similarly one gets the thesis for 
$({X_{P}})_-$ and this concludes the proof of the lemma. 
\end{proof}

As usual given two functions $f_1,f_2\in C^{\infty}(\ell^2_s;\C)$
we define their Poisson Brackets by
\begin{equation}
  \label{poisson}
\left\{f_1;f_2\right\}:=
\im\sum_{j\in\Z^d}\left(\frac{\partial
  f_1}{\partial u_{(j,-)}}\frac{\partial f_2}{\partial
  u_{(j,+)}}-\frac{\partial f_1}{\partial u_{(j,+)}}\frac{\partial
  f_2}{\partial u_{(j,-)}}\right)
  \equiv df_1 X_{f_2} \ ,
\end{equation}
which could be ill defined (but will turn out to be well defined in
the cases we will consider).

We recall that if both $f_1$ and $f_2$ have smooth vector field then
\begin{equation}
  \label{commu}
X_{\{f_1;f_2\}}=[X_{f_1};X_{f_2}]\ ,
\end{equation}
with $[\cdot\,;\,\cdot]$ denoting the commutator of vector fields.

\begin{lemma}{\bf (Poisson brackets).}
  \label{poi.1}
Given two polynomials $P_1\in\cP_{r_1}$ and $P_2\in\cP_{r_2}$, one has
$\left\{P_1;P_2\right\}\in\cP_{r_1+r_2-2}$ with
$$
\left\| \left\{P_1;P_2\right\} \right\| _{R}\leq
\frac{2r_1r_2}{R^2}\left\|P_{r_1}\right\| _{R}\left\|P_{r_2}\right\| _{R}\ .
$$
\end{lemma}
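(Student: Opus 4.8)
The plan is to compute $\{P_1;P_2\}$ explicitly from the coefficient representation \eqref{forma} and then to verify the three defining properties of $\cP_{r_1+r_2-2}$ together with the quantitative bound. Writing $P_1(u)=\sum_{\bJ\in\cI_{r_1}}(P_1)_\bJ\,u_{J_1}\cdots u_{J_{r_1}}$ and analogously for $P_2$, the first step is to differentiate: by symmetry of the coefficients, for every $j\in\Z^d$ and $\sigma\in\{+,-\}$,
\[
\partial_{u_{(j,\sigma)}}P_1(u)=r_1\!\!\sum_{\bJ'\in(\cZ^d)^{r_1-1}}\!\!(P_1)_{(j,\sigma),\bJ'}\,u_{J'_1}\cdots u_{J'_{r_1-1}}\,,
\]
and likewise for $P_2$; inserting this into \eqref{poisson} yields an expression homogeneous of degree $(r_1-1)+(r_2-1)=r_1+r_2-2$. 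The key observation is that momentum conservation $(P.1)$ of $P_1$ makes a monomial $u_{J'_1}\cdots u_{J'_{r_1-1}}$ appear in $\partial_{u_{(j,-)}}P_1$ only when $-j+\cM(\bJ')=0$, so the internal summation index $j$ is \emph{forced} to equal $\cM(\bJ')$; likewise the $P_2$--factor forces $j$ to equal minus the momentum of the complementary indices. Hence, monomial by monomial, the sum over $j\in\Z^d$ in \eqref{poisson} collapses to a single term (the two constraints on $j$ being compatible precisely because the external multi-index has zero momentum), so $\{P_1;P_2\}$ is a well-defined formal polynomial of degree $r_1+r_2-2$.

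Next I would symmetrize the coefficients and collect terms; setting $m:=r_1+r_2-2$, one obtains, for any ${\bf L}=(L_1,\dots,L_m)\in\cI_m$,
\[
\{P_1;P_2\}_{\bf L}=\im\,r_1r_2\,\frac{(r_1-1)!\,(r_2-1)!}{m!}\sum_{\substack{A\subset\{1,\dots,m\}\\|A|=r_1-1}}\!\Big[(P_1)_{(j_A,-),{\bf L}_A}(P_2)_{(j_A,+),{\bf L}_{A^c}}-(P_1)_{(-j_A,+),{\bf L}_A}(P_2)_{(-j_A,-),{\bf L}_{A^c}}\Big]\,,
\]
where ${\bf L}_A$ denotes the subtuple of ${\bf L}$ indexed by $A$ and $j_A:=\cM({\bf L}_A)$. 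From this formula $(P.1)$ is manifest, since every surviving term pairs a tuple of $\cI_{r_1}$ with a tuple of $\cI_{r_2}$ of opposite momenta. For $(P.2)$ I would take complex conjugates in the displayed identity and use $(P.2)$ for $P_1$ and $P_2$, together with $\overline{(j,\sigma)}=(j,-\sigma)$ and the relabelling ${\bf L}\mapsto\bar{\bf L}$ (under which $j_A\mapsto-j_A$); alternatively $(P.2)$ is automatic, since by Remark \ref{r.classi} both $P_1,P_2$, hence also $\{P_1;P_2\}$, are real on real sequences (and a polynomial that is real on the totally real set of real sequences necessarily has coefficients obeying $(P.2)$).

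The last step is the norm bound. Because the number of subsets $A\subset\{1,\dots,m\}$ with $|A|=r_1-1$ equals $\binom{m}{r_1-1}=\tfrac{m!}{(r_1-1)!\,(r_2-1)!}$, the combinatorial prefactor in the displayed formula cancels exactly, and therefore
\[
\big|\{P_1;P_2\}_{\bf L}\big|\le 2\,r_1 r_2\Big(\sup_{\bJ\in\cI_{r_1}}|(P_1)_\bJ|\Big)\Big(\sup_{\bJ\in\cI_{r_2}}|(P_2)_\bJ|\Big)<\infty\,,
\]
which gives $(P.3)$. Multiplying through by $R^m=R^{r_1+r_2-2}$ and recalling from \eqref{normaPR} that $\sup_{\bJ\in\cI_{r_i}}|(P_i)_\bJ|=\|P_i\|_R/R^{r_i}$ then produces $\|\{P_1;P_2\}\|_R\le\frac{2r_1r_2}{R^2}\|P_1\|_R\|P_2\|_R$, as claimed. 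The only step that is not routine is establishing the displayed coefficient formula (the symmetrization bookkeeping), but this is purely combinatorial; the real substance — the well-definedness of the bracket and the structure of the resulting polynomial — is forced by the momentum-conservation property $(P.1)$.
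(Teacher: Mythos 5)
Your proof is correct and follows the same route the paper sketches in its one‑line proof (``It follows by formula \eqref{poisson} recalling \eqref{normaPR} and exploiting the momentum conservation''): you differentiate, use momentum conservation from $(P.1)$ to collapse the internal sum over $j$, symmetrize, and read off the coefficient bound; the combinatorial cancellation of $\binom{m}{r_1-1}$ against the symmetrization prefactor is exactly why the stated constant $2r_1r_2/R^2$ comes out clean. The only thing worth flagging is that your alternative argument for $(P.2)$ (reality of the bracket on real sequences implies the coefficient symmetry) implicitly uses uniqueness of the symmetric coefficient representation — which does hold here, since a polynomial vanishing identically on the totally real set $\{Iu=u\}$ vanishes as a formal polynomial — so it is fine, but the direct conjugation computation you give first is the cleaner route.
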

\begin{proof}
It follows by formula \eqref{poisson} 
recalling \eqref{normaPR} and exploiting the momentum conservation.
\end{proof}

We now fix some large $N>0$, but will track the dependence of all the
constants on $N$.
Corresponding to $N$ we define a decomposition of $u$ in \emph{low} and \emph{high}
modes. Precisely, we define the projectors
\begin{equation}\label{splitto1}
\Pi^{\leq}u:=(u_J)_{|J|\leq N}\ ,\quad \Pi^{\perp}u:=(u_J)_{|J|> N}\   
\end{equation}
and denote
\begin{equation}\label{splitto2}
u^{\leq}:=\Pi^{\leq}u\ ,\quad u^{\perp}:=\Pi^{\perp}u\ ,
\end{equation}
so that $u=u^{\leq}+u^\perp$.

As in \cite{Bam03,BG06}, a particular role is played by the
polynomials $P\in\cP_r$ which are quadratic or cubic in $u^\perp$. We
are now going to give a precise meaning to this formal
statement. First, given $f\in C^{\infty}(\cU_s;\C)$, we denote by
\[
d^lf(u)(h_1,...,h_l)\ 
\]
the $l$-th differential of $f$ evaluated at $u$ and applied to the
increments $h_1,...,h_l $.

\begin{definition}
  \label{zerok}
We say that $P\in\cP_r$ has a zero of order at least $k$ in
$u^{\perp}$ if
\[
d^lP(\Pi^{\leq}u)(\Pi^\perp h_1,...,\Pi^\perp h_l)=0\ ,\quad \forall\;
u,h_1,...,h_l\in \ell^2_s\ ,\;\;\; \forall\; l=0,...,k\ .
\]
We say that it is homogeneous of degree $k$ in $u^{\perp}$ if it has a zero
of order at least $k$, but not of order at least $k+1$.
\end{definition}

\begin{remark}
  \label{z0z2}
  By the very definition of normal form, one can decompose
  $Z^{(r)}=Z_0+Z_2$, with $Z_0$ homogeneous of degree zero in $u^\perp$
  and $Z_2$ homogeneous of degree 2 in $u^{\perp}$. Furthermore $Z_0$
  is in Birkhoff normal form in the classical sense, namely it
  contains only resonant monomyals, i.e. monomyals of the form
  \begin{equation*} 
u_{J_1}...u_{J_r} \ ,  
\quad \text{with}\quad
\sum_{l}\sigma_l\omega_{j_l}=0\ .
 \end{equation*}
\end{remark}

\begin{lemma}
  \label{campo.N}
For all $s > s_0 >d/2$ and all $r\geq 3$, there exists a constant $C_{r,s}>0$
such that the following holds:
\begin{itemize}
\item[(i)] if $P\in\cP_r$ has a zero of order at least 2 in $u^\perp$,
  then
  \begin{equation*} 
\sup_{\left\| u\right\|_s\leq R}\left\|\Pi^{\leq}X_{P}(u)\right\|_s
\leq
\frac{C_{r,s}}{N^{s- s_0}}\frac{\left\|P\right\|_R}{R}\ ;
 \end{equation*}
 
\item[(ii)] if $P\in\cP_r$ has a zero of order at least 3 in $u^\perp$,
  then
  \begin{equation*}
\sup_{\left\| u\right\|_s\leq R}\left\|X_{P}(u)\right\|_s\leq
\frac{C_{r,s}}{N^{s- s_0}}\frac{\left\|P\right\|_R}{R}\ .
  \end{equation*}
  \end{itemize}
\end{lemma}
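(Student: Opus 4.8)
\textbf{Proof plan for Lemma \ref{campo.N}.}

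The plan is to exploit the structure of the vector field of a homogeneous polynomial $P\in\cP_r$ as an $(r-1)$-linear form, just as in the proof of Lemma \ref{campo.1}, but now keeping careful track of which slots carry a factor constrained to be a high mode. Recall from \eqref{expa.00}--\eqref{expa.0} that $(X_P)_{(j,\sigma)}$ is obtained by fixing one index of $P$ equal to $(j,-\sigma)$ and contracting the remaining $r-1$ indexes against copies of $u$; by symmetry of the coefficients this defines a symmetric $(r-1)$-linear form, and the product estimate of Lemma \ref{prod} applies to each piece after splitting $u=u^\leq+u^\perp$ (and, for $u$ itself, $u=u_++u_-$ as in \eqref{expa.4}). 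The key observation is that the hypothesis ``$P$ has a zero of order at least $k$ in $u^\perp$'' (Definition \ref{zerok}) means precisely that every monomial $u_{J_1}\cdots u_{J_r}$ appearing in $P$ has at least $k$ indexes with $|J_n|>N$. I would first record this elementary combinatorial reformulation, which follows by a standard polarization/differentiation argument at $u=\Pi^\leq u$.

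For item (ii): differentiating once to form $X_P$ removes at most one of the $k\ge 3$ compulsory high indexes, so every monomial of every component $(X_P)_{(j,\sigma)}$ still contains at least $2$ factors $u_{J}$ with $|J|>N$, and moreover if $|j|\le N$ then in fact all $k$ (or at least $k-1\ge 2$) of them survive among the contracted factors. In either case I would bound the component using Lemma \ref{prod} but placing two of the contracted copies of $u$ in their projected form $\Pi^\perp u$; since $\|\Pi^\perp u\|_{s_0}\le N^{-(s-s_0)}\|u\|_s$ — this is the one inequality to state explicitly, coming directly from the definitions \eqref{ell2s}, \eqref{splitto1} — each such replacement produces a gain of $N^{-(s-s_0)}$. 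Only one such gain is needed, so summing over the finitely many terms in the multinomial expansion \eqref{expa.1} and invoking the norm \eqref{normaPR} exactly as at the end of the proof of Lemma \ref{campo.1} gives $\sup_{\|u\|_s\le R}\|X_P(u)\|_s\lesssim_{r,s} N^{-(s-s_0)}\|P\|_R/R$.

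For item (i): now $P$ has only $k\ge 2$ compulsory high indexes, so after differentiating it may happen that a component $(X_P)_{(j,\sigma)}$ with $|j|>N$ has only one guaranteed high factor left among the contracted $u$'s, and one cannot extract the extra $N^{-(s-s_0)}$ on those components — this is exactly why the statement restricts to $\Pi^\leq X_P$. However, if we look at the low-mode component $(\Pi^\leq X_P)_{(j,\sigma)}$, i.e. $|j|\le N$, then the index pinned in $P$ has $|\,(j,-\sigma)\,|\le N$, so it is \emph{not} one of the $\ge 2$ high indexes; hence both of those high indexes are still among the $r-1$ contracted factors, and we again place two copies of $u$ in $\Pi^\perp$-form and gain $N^{-(s-s_0)}$ (one gain suffices). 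Then I would conclude as before.

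I expect the only genuinely delicate point to be the bookkeeping of ``which index is forced to be large'' under differentiation — in particular making rigorous that fixing a \emph{low} external index $j$ in the low-mode component of $X_P$ leaves all the high indexes on the contracted side; this is where momentum conservation \eqref{momHam} plays no role but the order-of-vanishing hypothesis must be used in the sharp form given by Definition \ref{zerok}. Everything else (the multilinear product estimate, the splitting into $2^{r-1}$ or $r$ pieces, the passage from the multilinear form to the evaluation at $u^{(1)}=\dots=u$, and the identification with $\|P\|_R$) is identical to the already-proved Lemma \ref{campo.1} and needs no new ideas.
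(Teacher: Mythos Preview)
Your proposal is correct and follows essentially the same approach as the paper's proof: both reduce (i) and (ii) to the observation that the relevant vector field (namely $\Pi^{\leq}X_P$ in case (i), $X_P$ in case (ii)) still carries at least two compulsory $u^\perp$ factors, then apply the multilinear product estimate of Lemma~\ref{prod} together with $\|u^\perp\|_{s_0}\le N^{-(s-s_0)}\|u\|_s$ to extract the gain. Your combinatorial phrasing (``the pinned low index cannot be one of the high ones'') is exactly the content of the paper's remark that $(\Pi^\leq X_P)_\pm=\pm\im\nabla_{u_\pm^\leq}P$ has a zero of order~$2$ in $u^\perp$, after which the paper simply notes the multinomial expansion in $u^\perp,u^\leq$ starts at $l=2$.
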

\proof Consider first the case (i) and remark that, using the notation
\eqref{expa.4}, we have
$\left(\Pi^{\leq}X_P(u) \right)_{\pm}=\pm \im
\nabla_{u_{\pm}^{\leq}}P$, so that $\Pi^{\leq}X_P(u)$ has a zero of
order 2 in $u^{\perp}$. It follows that both in the case (i) and in the case
(ii)  we have to estimate a polynomial function $X(u)$ of the form 
\eqref{expa.00} with a zero of second order in $u^{\perp}$. To
exploit this fact consider first the $+$ component and consider again
the multilinear form $(\widetilde{X})_+$ as in \eqref{expa.0}: we have
\begin{equation*}
X_+(u)=X_+(u^\perp+u^\leq)= \sum_{l=0}^{r-1}
    \left(
    \begin{matrix}
r-1\\ l
    \end{matrix}
    \right) 
    (\widetilde{X})_{+}(\underbrace{u^\perp,...,u^\perp}_{l-\text{times}},
    \underbrace{u^{\leq},...,u^{\leq}}_{r-1-l-\text{times}
      })\ ,
\end{equation*}
but, since $X_+(u)$ has a zero of at least second order in
$u^{\perp}$, one has
\begin{equation*}
X_+(u)= \sum_{l=2}^{r-1}
    \left(
    \begin{matrix}
r-1\\ l
    \end{matrix}
    \right) 
    (\widetilde{X})_{+}(\underbrace{u^\perp,...,u^\perp}_{l-\text{times}},\underbrace{u^{\leq},...,u^{\leq}}_{r-1-l-\text{times}
      })\ .
\end{equation*}
Consider the first addendum (which is the one giving rise to worst
estimates): proceeding as in the proof of Lemma \ref{campo.1} one can
apply Lemma \ref{prod} and get the estimate
\begin{equation*}
\begin{aligned}
 &\left\|
 (\widetilde{X})_{+}(u^\perp,u^\perp,u^{\leq},...,u^{\leq})\right\|_s
 \sleq \sup_{j,j_1,\ldots, j_{r-1}\in\Z^d}\left|P_{j,j_1,---,j_{r-1}}\right| \times
 \\&\quad\quad\qquad\qquad\qquad\times
 \left(\left\|
 u^\perp\right\|_s\left\| u^\perp\right\|_{s_0}\left\|
 u^\leq\right\|^{r-3}_{s_0}+ \left\| u^\leq\right\|_s\left\|
 u^\perp\right\|^2_{s_0}\left\| u^\leq\right\|^{r-4}_{s_0}\right)\ ,
\end{aligned}
\end{equation*}
but 
$$
\left\| u^\perp\right\|_{s_0}^2=\sum_{|J|>N}\langle
J\rangle^{2s_0}\left|u_{J}\right|^2 =\sum_{|J|>N}\frac{\langle
  J\rangle^{2s}\left|u_J\right|^2}{\langle J\rangle^{2(s-s_0)}}\leq
\frac{\left\|u^\perp\right\|_s^2}{N^{2(s-s_0)}}=\frac{\left\|u\right\|_s^2}{N^{2(s-s_0)}}\ ,
$$
and thus
\[
\begin{aligned}
 \left\|
 (\widetilde{X})_{+}(u^\perp,u^\perp,u^{\leq},...,u^{\leq})\right\|_s
& \sleq 
\sup_{j,j_1,\ldots, j_{r-1}\in\Z^d}\left|P_{j,j_1,---,j_{r-1}}\right|
 \frac{R^{r-1}}{N^{s-s_0}}
 \\&
 \sleq \frac{\left\|P\right\|_R}{R}\frac{1}{N^{s-s_0}}\,.
\end{aligned}
\]
The other cases can be treated similarly.
\qed

\subsection{Lie Tranfsorm}\label{Lie}

Given $G\in\cP_{r,\bar r}$, consider its Hamilton equations $\dot u=X_G(u)$,
which, by Lemma \ref{campo.1}, are locally well posed in a
neighborhood of the origin.
Denote by $\Phi_G^t$ the corresponding flow, then we have the
following Lemma whose proof is equal to the finite dimensional case. 

\begin{lemma}
  \label{Lie.1}
  Consider $\bar{r}\geq r_1\geq r\geq3$ and $s>s_0>d/2$. 
There exists $C_{r,s}>0$ such that for any
$ G\in\cP_{r,r_1}$ 
and any $R>0$
satisfying 
  \begin{equation}
    \label{rs}
\frac{\left\|G\right\|_R}{R}\leq\frac{1}{C_{r,s}}\ ,
  \end{equation}
  the following holds.
For any $|t|\leq1$ one has $ \Phi_G^t(B_s(R))\subset B_s(2R)$
and the estimate
\begin{equation*} 
    \sup_{u\in B_s(R)}\left\|\Phi^t_G(u)-u\right\|\leq \left|t\right|
    C_{r,s}\frac{\left\| G\right\|_R}{R} \ ,\quad  \forall t\ :\ |t|\leq
    1\ .
\end{equation*}
\end{lemma}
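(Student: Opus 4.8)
The plan is to run the standard Cauchy--Lipschitz plus bootstrap argument in the Banach space $\ell^2_s$, exactly as in finite dimensions. First I would observe that, since $G\in\cP_{r,r_1}$ is a \emph{polynomial}, its Hamiltonian vector field $X_G=\sum_{l=r}^{r_1}X_{G_l}$ is a finite sum of bounded multilinear maps from $\ell^2_s$ into $\ell^2_s$ --- this is precisely the content of the computation in the proof of Lemma \ref{campo.1} --- hence $X_G$ is a smooth, in particular locally Lipschitz, vector field on $\ell^2_s$. Therefore the Cauchy problem $\dot u=X_G(u)$, $u(0)=u_0$, has, for every $u_0\in\ell^2_s$, a unique maximal solution, and this defines the local flow $\Phi_G^t$.

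Next I would establish the inclusion $\Phi_G^t(B_s(R))\subset B_s(2R)$ for $|t|\le1$ by a continuity argument. Fix $u_0\in B_s(R)$, let $u(t):=\Phi_G^t(u_0)$, and let $[0,T^*)\subseteq[0,1]$ be the maximal interval on which $u(t)$ stays in $B_s(2R)$. On that interval I would apply Lemma \ref{campo.1} to each homogeneous component $G_l$ on the ball of radius $2R$ and sum over $l=r,\dots,r_1$: since $\|G_l\|_{2R}=2^l\|G_l\|_R$ the rescaling costs only a bounded geometric factor, so one gets
\begin{equation*}
\|X_G(u(t))\|_s\le C_{r,s}\,\frac{\|G\|_R}{R}\,,
\end{equation*}
with $C_{r,s}$ depending only on $r,\bar r,s$. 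Integrating the equation then yields
\begin{equation*}
\|u(t)-u_0\|_s\le\int_0^{|t|}\|X_G(u(\tau))\|_s\,d\tau\le|t|\,C_{r,s}\,\frac{\|G\|_R}{R}\,,
\end{equation*}
hence $\|u(t)\|_s<R+C_{r,s}\|G\|_R/R$; choosing the constant in the smallness assumption \eqref{rs} large enough (so that $C_{r,s}\|G\|_R/R$ is strictly below $R$) prevents $u(t)$ from reaching $\partial B_s(2R)$, forcing $T^*=1$. The negative-time half is identical, so $\Phi_G^t$ is defined on $B_s(R)$ for all $|t|\le1$ with values in $B_s(2R)$.

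With the flow now confined on the whole interval $|t|\le1$, the displacement estimate of the statement is just the integral bound displayed above, read with the now-legitimate fact that $u(\tau)\in B_s(2R)$ throughout. The argument is essentially routine; the only genuinely delicate point is the bootstrap step --- checking that the \emph{a priori} bound, valid only while $u(t)$ has not yet left $B_s(2R)$, together with the smallness of $\|G\|_R/R$, really does keep the trajectory inside that ball up to time $1$ --- together with the bookkeeping forced by the fact that $G$ is a sum of homogeneous pieces of degrees $r,\dots,r_1\le\bar r$, so that all the constants are to depend on $r,\bar r,s$ only.
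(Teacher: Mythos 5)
Your plan is the standard Picard/bootstrap argument, and it is in fact the route the paper has in mind (the paper offers no proof, only the remark that it ``is equal to the finite dimensional case''). The reduction of the vector-field bound to Lemma~\ref{campo.1}, the rescaling $\|G_l\|_{2R}=2^l\|G_l\|_R$ to pass to the ball of radius $2R$, and the integral displacement estimate are all correct.

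The bootstrap closure, however, has a dimensional gap. To keep the trajectory inside $B_s(2R)$ you need the accumulated displacement, which you correctly bound by $C'\|G\|_R/R$, to be less than $R$, i.e.\ $\|G\|_R/R^2<1/C'$. The hypothesis~\eqref{rs} only controls $\|G\|_R/R$, which is one power of $R$ weaker; your proposed fix of ``choosing $C_{r,s}$ large enough so that $C_{r,s}\|G\|_R/R<R$'' would force $C_{r,s}$ to grow like $1/R$, contradicting its $R$-independence. The quantity that must be small here is precisely $\|G\|_R/R^2$ --- the $\mu$ of Corollary~\ref{cor.3} --- and indeed whenever the lemma is invoked in the paper the generating function satisfies the stronger bound $\|G\|_R\lesssim R^2$ (see \eqref{stik1g}), so the statement is used consistently. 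But as written, the inclusion $\Phi_G^t(B_s(R))\subset B_s(2R)$ does not follow from~\eqref{rs} alone; you should either strengthen the hypothesis to a bound on $\|G\|_R/R^2$ or invoke the additional smallness of $R$ available in the applications.
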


\begin{definition}
  \label{Lie.6}
The map $\Phi_G:=\Phi_G^t\Big|_{t=1}$ is called the \emph{Lie transform}
generated by $G$.
\end{definition}

In order to describe how a function is transformed under Lie transform
we define the operator
\begin{align*}
  Ad_G:C^{\infty}(\cU_s,\C)&\to C^{\infty}(\cU_s,\C)
  \\
  f&\mapsto Ad_G f:=\left\{f;G\right\}\ ,
\end{align*}
and its $k$-th power $Ad_G^kf:=\{Ad_{G}^{k-1}f;G\}$ for $k\geq1$. 
Also the following Lemma has a standard
proof equal to that of the finite dimensional case.
\begin{lemma}
  \label{Lie.8}
  Let $\bar{r}\geq r\geq3$ and $s>s_0>d/2$ and consider $G\in\cP_{r,\bar r}$. 
 There exists $C_{r,s}>0$ such that for any $R>0$ satisfying \eqref{rs}
 the following holds. For any  $f\in C^{\infty}(B_s(2R);\C)$
and any  $ n\in\N$ one has
\begin{equation}
  \label{Lie.9}
f(\Phi^t_G(u))=\sum_{k=0}^n\frac{t^k}{k!}(Ad_G^kf)(u)+\frac{1}{n!}\int_0^t(t-\tau)^n
(Ad_G^{n+1}f)\left(\Phi^\tau_G(u)\right) d\tau\ ,
\end{equation}
$\forall u\in B_{s}(R)$ and any $t$ with $|t|\leq 1$. 
\end{lemma}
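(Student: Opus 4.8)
The plan is to reduce the statement to the classical one–dimensional Taylor formula with integral remainder applied to the scalar function $t\mapsto f(\Phi^t_G(u))$, for $u\in B_s(R)$ fixed; all the infinite–dimensional content is contained in checking that this function is $C^{n+1}$ and in identifying its $k$-th derivative with $(Ad_G^k f)(\Phi^t_G(u))$.

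\textbf{Set-up and regularity.} First I would record that, since $G\in\cP_{r,\bar r}$ is a polynomial and $R$ satisfies \eqref{rs}, Lemma \ref{Lie.1} gives that $\Phi^t_G$ is defined for $|t|\le 1$ with $\Phi^t_G(B_s(R))\subset B_s(2R)$, while $X_G$ is a polynomial vector field, hence of class $C^\infty$ on $\ell^2_s$, with the bounds of Lemma \ref{campo.1}. Consequently, for $f\in C^\infty(B_s(2R);\C)$ the iterated brackets $Ad_G^k f$ are well defined and of class $C^\infty$ on $B_s(2R)$: by the definition of the Poisson bracket one has $Ad_G f(v)=df(v)\,X_G(v)$ (see \eqref{poisson}), a composition of the smooth maps $v\mapsto df(v)$ and $v\mapsto X_G(v)$, hence smooth; and inductively $Ad_G^k f=\{Ad_G^{k-1}f;G\}=d(Ad_G^{k-1}f)\,X_G$ is smooth as well. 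In particular each $Ad_G^k f$ has a continuous differential on $B_s(2R)$, which is all the chain rule below requires.

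\textbf{The derivative identity.} Fix $u\in B_s(R)$ and set $F(t):=f(\Phi^t_G(u))$ for $t\in[-1,1]$; by the previous step $\Phi^t_G(u)\in B_s(2R)$, so $F$ is well defined. Using $\tfrac{d}{dt}\Phi^t_G(u)=X_G(\Phi^t_G(u))$ together with the chain rule in $\ell^2_s$ and the identity $\{g;G\}=dg\,X_G$ from \eqref{poisson}, one gets $F'(t)=df(\Phi^t_G(u))\big[X_G(\Phi^t_G(u))\big]=(Ad_G f)(\Phi^t_G(u))$. Applying the same computation with $Ad_G^{k-1}f$ in the role of $f$ and arguing by induction on $k$ (legitimate because $Ad_G^{k-1}f\in C^\infty(B_s(2R);\C)$ by the previous step), one obtains $F\in C^{n+1}([-1,1])$ with $F^{(k)}(t)=(Ad_G^k f)(\Phi^t_G(u))$ for all $0\le k\le n+1$. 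Finally it suffices to apply the standard Taylor expansion with integral remainder to $F$, namely $F(t)=\sum_{k=0}^n\frac{t^k}{k!}F^{(k)}(0)+\frac{1}{n!}\int_0^t(t-\tau)^n F^{(n+1)}(\tau)\,d\tau$, and to substitute $F^{(k)}(0)=(Ad_G^k f)(u)$ and $F^{(n+1)}(\tau)=(Ad_G^{n+1}f)(\Phi^\tau_G(u))$; this is exactly \eqref{Lie.9}.

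The only delicate point — the one that makes the lemma more than a one-line remark — is the regularity bookkeeping in the inductive step: one must make sure that on the ball $B_s(2R)$ reached by the flow each $Ad_G^k f$ is genuinely $C^1$ with continuous differential, so that $\frac{d}{dt}(Ad_G^{k-1}f)(\Phi^t_G(u))=(Ad_G^k f)(\Phi^t_G(u))$ is justified. This is where the smoothness of the polynomial vector field $X_G$ (through Lemma \ref{campo.1}) and the a priori inclusion $\Phi^t_G(B_s(R))\subset B_s(2R)$ (through Lemma \ref{Lie.1}) enter; with these two facts in hand the proof is verbatim the finite–dimensional one.
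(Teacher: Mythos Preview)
Your proof is correct and is precisely the standard argument the paper has in mind: the paper does not give a proof at all, stating only that it ``has a standard proof equal to that of the finite dimensional case,'' and what you wrote is exactly that proof---reduce to the one-dimensional Taylor formula with integral remainder for $t\mapsto f(\Phi^t_G(u))$, after checking inductively that its $k$-th derivative is $(Ad_G^k f)(\Phi^t_G(u))$ via the chain rule and the identity $\{g;G\}=dg\,X_G$.
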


From Lemma \ref{poi.1} one has the following corollary.

\begin{corollary}
  \label{cor.lie}
Let $G\in\cP_{r_1,r_2}$, $F\in\cP_{r_3,r_4}$, with
$r_1,r_2,r_3,r_4\leq \bar r$ and $3\leq r_1\leq r_2$. 
Let $\bar n\in \N$ be the smallest integer
such that $(\bar n+1)(r_1-2)+r_2>\bar r$. 
Then there exists $ C_{\bar r}>0$
such that for any $ k\leq \bar n$, one has
\begin{equation*}
\left\|(Ad_G)^kF\right\|_R\leq \left(\frac{C_{\bar
    r}\left\|G\right\|_{R}}{R^2}\right)^k \left\| F\right\|_R\ .
\end{equation*}
\end{corollary}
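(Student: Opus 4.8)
\textbf{Proof plan for Corollary \ref{cor.lie}.}

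The plan is to iterate Lemma \ref{poi.1} and carefully track both the degrees of the polynomials produced and the combinatorial constants. First I would set up notation: write $F = \sum_{l=r_3}^{r_4} F_l$ with $F_l \in \cP_l$ and $G = \sum_{m=r_1}^{r_2} G_m$ with $G_m \in \cP_m$. Applying $Ad_G = \{\,\cdot\,; G\}$ once to a homogeneous piece of degree $d$ produces, by Lemma \ref{poi.1}, a sum of homogeneous pieces of degrees $d + m - 2$ for $m$ ranging over the degrees present in $G$; since $m \geq r_1 \geq 3$, each application raises the minimal degree by at least $r_1 - 2 \geq 1$. After $k$ applications the minimal degree appearing in $(Ad_G)^k F$ is at least $r_3 + k(r_1 - 2)$, and more to the point any surviving homogeneous component that we actually need to control has degree at most $\bar r$; the definition of $\bar n$ guarantees that for $k \leq \bar n$ the expression is still a genuine polynomial in $\cP_{3,\bar r}$ (nothing has been truncated away yet), which is why the statement is restricted to $k \leq \bar n$.

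Next I would prove the norm bound by induction on $k$. The base case $k=0$ is trivial. For the inductive step, write $Y := (Ad_G)^{k-1} F$, so by the inductive hypothesis $\|Y\|_R \leq \big(C_{\bar r}\|G\|_R / R^2\big)^{k-1} \|F\|_R$, and $Y$ is a polynomial of degree at most $\bar r$. Then $(Ad_G)^k F = \{Y; G\} = \sum_{p,m} \{Y_p; G_m\}$ where $Y_p \in \cP_p$, $G_m \in \cP_m$, and Lemma \ref{poi.1} gives $\|\{Y_p; G_m\}\|_R \leq \frac{2 p m}{R^2} \|Y_p\|_R \|G_m\|_R$. Since $p \leq \bar r$ and $m \leq \bar r$, the factor $2pm$ is bounded by $2\bar r^2$, and summing over the (at most $\bar r^2$-many) pairs $(p,m)$ and using $\sum_p \|Y_p\|_R = \|Y\|_R$, $\sum_m \|G_m\|_R = \|G\|_R$ yields $\|\{Y;G\}\|_R \leq \frac{2\bar r^4}{R^2} \|Y\|_R \|G\|_R$. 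Choosing $C_{\bar r} := 2\bar r^4$ (any constant $\geq$ this works) closes the induction.

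The only genuinely delicate point — and the step I expect to need the most care — is the bookkeeping that ensures no homogeneous component of degree exceeding $\bar r$ is silently discarded during the iteration, i.e. that $(Ad_G)^k F$, \emph{as an element of} $\cP_{3,\bar r}$ obtained by the convention of dropping higher-degree terms, actually coincides with the honest Poisson-bracket iterate for all $k \leq \bar n$; this is precisely what the hypothesis $(\bar n+1)(r_1-2) + r_2 > \bar r$ encodes, since after $\bar n + 1$ applications the \emph{lowest} surviving degree $r_3 + (\bar n+1)(r_1-2) \geq \ldots$ can already exceed $\bar r$ only in the extremal configuration — one checks that with the stated bound the iterate at level $\bar n$ still has a component within range, so Lemma \ref{poi.1} applies verbatim at each of the $\bar n$ steps with no truncation issues. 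Apart from this, everything is routine combinatorics on the coefficients together with momentum conservation, which is exactly what Lemma \ref{poi.1} already packages for us.
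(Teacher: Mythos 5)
Your proof has the right architecture --- iterate Lemma \ref{poi.1}, track degrees, run an induction on $k$ --- and this is indeed the intended route (the paper itself treats the corollary as an immediate consequence of Lemma \ref{poi.1} and gives no proof). But there is a genuine gap at the step where you bound the degree $p$ of the homogeneous pieces $Y_p$ of $Y=(Ad_G)^{k-1}F$.

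You write ``Since $p\leq\bar r$ and $m\leq\bar r$, the factor $2pm$ is bounded by $2\bar r^2$.'' The bound $m\leq\bar r$ is fine ($G\in\cP_{r_1,r_2}$ with $r_2\leq\bar r$), but $p\leq\bar r$ is false in general. Each application of $Ad_G$ can \emph{raise} the maximal degree by $r_2-2$, so the maximal degree appearing in $(Ad_G)^{k-1}F$ is $r_4+(k-1)(r_2-2)$, which exceeds $\bar r$ already for $k=2$ in many configurations covered by the hypotheses (e.g.\ $r_1=r_2=3$, $r_3=r_4=\bar r$, for which $\bar n=\bar r-3\geq1$). Your closing ``delicate point'' paragraph does not repair this: you focus on the \emph{lowest} surviving degree $r_3+(\bar n+1)(r_1-2)$, but the relevant worry for the factor $2pm$ in Lemma \ref{poi.1} is the \emph{highest} degree, and nothing in the hypothesis $(\bar n+1)(r_1-2)+r_2>\bar r$ keeps the highest degree below $\bar r$. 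Also note that $(Ad_G)^kF$ need not lie in $\cP_{3,\bar r}$; the norm $\|\cdot\|_R$ is perfectly well defined on $\cP_{p,q}$ for any $q$, and no silent truncation is being performed in the statement.

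The correct way to close the argument is to observe that the constraint $k\leq\bar n$, combined with the \emph{definition} of $\bar n$, bounds the intermediate degrees in terms of $\bar r$ alone rather than in terms of $k$. Since $\bar n$ is the smallest natural number with $(\bar n+1)(r_1-2)+r_2>\bar r$, one has $\bar n(r_1-2)\leq\bar r-r_2$; as $r_1\geq3$ and $r_2\geq3$ this gives $\bar n\leq\bar r-3$. Hence for $k\leq\bar n$ the maximal degree in $(Ad_G)^{k-1}F$ is at most $r_4+\bar n(r_2-2)\leq\bar r+(\bar r-3)(\bar r-2)$, a quantity depending only on $\bar r$. Substituting this for your ``$p\leq\bar r$'' and taking $C_{\bar r}:=2\bigl(\bar r+(\bar r-3)(\bar r-2)\bigr)\bar r$ makes your induction go through verbatim; the rest of your argument (the identity $\sum_p\|Y_p\|_R=\|Y\|_R$, the summation over pairs $(p,m)$) is correct.
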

A further standard Lemma we need is the following. 
\begin{lemma}
  \label{defo}
Let $G\in \cP_{r_1,r_2}$, $3\leq r_1,r_2\leq \bar r$ and let $\Phi_G$ be the
Lie transform it generates. Let $R_s$ by the largest value of $R$
such that \eqref{rs} holds. Then there exists $C>0$ such that for any $F\in
C^{\infty}(B_s(2R_s))$ satisfying 
\begin{equation*}
\sup_{\left\|u\right\|_s\leq
  2R}\left\|X_F(u)\right\|_s=:C_R<\infty\ ,\quad \forall R<R_s\ .
\end{equation*}
one has
\begin{equation*}
\sup_{\left\|u\right\|_s\leq
  R/C}\left\|X_{F\circ \Phi_G}(u)\right\|_s=:2C_R<\infty\ ,\quad \forall R<R_s\ .
\end{equation*}
\end{lemma}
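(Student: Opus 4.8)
\emph{Proof sketch.} The plan is to write $\Phi_G$ as the time-$1$ map of the Hamiltonian flow of $G$, use the transformation rule of Hamiltonian vector fields under a symplectomorphism, and control the differential of the flow by a Gronwall argument in the operator norm on $\ell^2_s$.

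First, since $G\in\cP_{r_1,r_2}$ with $3\le r_1,r_2\le\bar r$, the vector field $X_G$ is a polynomial vector field, and arguing exactly as in the proof of Lemma \ref{campo.1} (reading off one of the arguments of the associated multilinear form $\widetilde{X_G}$ as the increment) one gets that also the differential $dX_G(v)$ is, for $\|v\|_s\le 2R$, a bounded operator on $\ell^2_s$ with $\sup_{\|v\|_s\le 2R}\|dX_G(v)\|_{{\mathcal L}(\ell^2_s)}\le C_{s,\bar r}\,\|G\|_R/R^2$ (the polynomial structure makes the passage from the ball of radius $R$ to the one of radius $2R$ cost only a universal constant). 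By the definition of $R_s$ and \eqref{rs}, this bound is $\lesssim_{s,\bar r} 1/R$ for every $R<R_s$; moreover, by Lemma \ref{Lie.1}, $\Phi_G^\tau(B_s(R))\subset B_s(2R)$ for all $|\tau|\le 1$ and $R<R_s$.

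Next, for $u\in B_s(R)$ the differential $W(t):=d\Phi_G^t(u)\in{\mathcal L}(\ell^2_s)$ solves the variational equation $\dot W(t)=dX_G(\Phi_G^t(u))\,W(t)$, $W(0)=\mathrm{Id}$, with $\Phi_G^t(u)\in B_s(2R)$; hence Gronwall's inequality and the bound above produce a constant $K=K(s,\bar r)\ge 1$, \emph{independent} of $R<R_s$ and of $u\in B_s(R)$, with $\|d\Phi_G^t(u)\|_{{\mathcal L}(\ell^2_s)}\le K$ for $|t|\le 1$. Differentiating $\Phi_G^{-t}\circ\Phi_G^t=\mathrm{id}$ gives $(d\Phi_G^t(u))^{-1}=d\Phi_G^{-t}(\Phi_G^t(u))$, which is again the differential of a flow map whose orbit stays in $B_s(2R)$, so the same Gronwall argument yields $\|(d\Phi_G^t(u))^{-1}\|_{{\mathcal L}(\ell^2_s)}\le K$.

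Finally, $\Phi_G$, being the time-$1$ map of a Hamiltonian flow, is a symplectomorphism, so the standard identity for the Hamiltonian vector field of a composition gives
\[
X_{F\circ\Phi_G}(u)=\big(d\Phi_G(u)\big)^{-1}\big[X_F(\Phi_G(u))\big]\,.
\]
Taking $C=2$, for $u\in B_s(R/C)\subset B_s(R)$ one has $\Phi_G(u)\in B_s(2R)\subset B_s(2R_s)$, where $F$ (hence $X_F$) is defined and where by hypothesis $\|X_F(\Phi_G(u))\|_s\le C_R$; therefore $\|X_{F\circ\Phi_G}(u)\|_s\le K\,C_R<\infty$, which is the asserted finiteness on the smaller ball (the precise numerical value $2C_R$ in the statement being understood up to renaming the constant). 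Smoothness of $X_{F\circ\Phi_G}$ as a map into $\ell^2_s$ follows from smoothness of $\Phi_G$ and of $X_F$, together with smoothness of $u\mapsto(d\Phi_G(u))^{-1}$ as a map into ${\mathcal L}(\ell^2_s)$. The only genuinely technical point is the operator-norm bound on $dX_G$ on $\ell^2_s$ and the ensuing Gronwall estimate for $d\Phi_G^t$; everything else is bookkeeping of constants, all of which depend only on $s$, $\bar r$ and $d$, not on $R<R_s$ nor on $F$.
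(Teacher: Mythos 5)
The paper gives no proof of Lemma~\ref{defo} (it is invoked as ``a further standard Lemma''), so the comparison is only with what the standard argument would be. Your strategy --- use the transformation rule $X_{F\circ\Phi_G}(u)=\big(d\Phi_G(u)\big)^{-1}\big[X_F(\Phi_G(u))\big]$ for a symplectomorphism, and control $d\Phi_G^t$ through the variational equation and Gronwall --- is indeed that standard argument, and the transformation identity is stated correctly.

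There is, however, a genuine gap in the Gronwall step. You correctly obtain $\|dX_G(v)\|_{\mathcal L(\ell^2_s)}\le C_{s,\bar r}\,\|G\|_R/R^2$ for $\|v\|_s\le 2R$, but then you discard the structure by weakening this to ``$\lesssim 1/R$'' via \eqref{rs}, and from this you assert that Gronwall yields a constant $K$ \emph{independent of $R$}. That inference fails: from $\|dX_G\|\lesssim 1/R$ alone, Gronwall only gives $\|d\Phi_G^t(u)\|\le e^{c/R}$, which blows up as $R\to 0$. What actually makes the constant uniform in $R<R_s$ is the observation that, since $G\in\cP_{r_1,r_2}$ with $r_1\ge 3$, the map $R\mapsto \|G\|_R/R^2=\sum_{l=r_1}^{r_2}\sup_{\bJ}|G^{(l)}_{\bJ}|\,R^{l-2}$ has all exponents $l-2\ge 1$ and is therefore \emph{increasing} in $R$, hence bounded on $(0,R_s)$ by its value $\|G\|_{R_s}/R_s^2$ at the threshold. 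This gives $\sup_{B_s(2R)}\|dX_G\|\le c_{s,\bar r}\,\|G\|_{R_s}/R_s^2$ uniformly for $R<R_s$, and then Gronwall produces a constant $K$ uniform in $R$. Note that this $K$ depends on $G$ (through $R_s$), so the claim $K=K(s,\bar r)$ is also incorrect; the Lemma only needs $C$ to be independent of $R$ and $F$, so this is harmless, but the reasoning as written does not support even that. A secondary issue is the choice $C=2$: your argument gives $\sup_{B_s(R/2)}\|X_{F\circ\Phi_G}\|\le K\,C_R$, not $2C_R$. To really match the statement one should take $C$ large enough so that the Gronwall constant on $B_s(2R/C)$ is $\le 2$ (again possible uniformly in $R<R_s$ by the same monotonicity); waving this away as ``renaming a constant'' hides the fact that it is $C$, not $2C_R$, that must be renamed, and that the adjustment of $C$ depends on $G$.
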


From Lemma \ref{Lie.8}, Corollary \ref{cor.lie} and Lemma \ref{defo},
one has the following Corollary which is the one relevant for the
perturbative construction leading to the normal form lemma. 
\begin{corollary}
  \label{cor.3}
There exists $\mu_0>0$ such that for any
$G\in\cP_{r,\bar r}$, $3\leq r\leq \bar r$, the following holds. If 
\begin{equation*}
\mu:=\frac{C_{\bar r}\left\| G\right\|_R}{R^2} <\mu_0\,,
\end{equation*}
with $C_{\bar r}$ the
constant of Corollary \ref{cor.lie},   then, for any
$F\in\cP_{r_1,\bar r}$, $r_1\leq \bar r$, one has
\begin{equation*}
F\circ\Phi_G=\tilde F+\cR_{F,G}\ ,
\end{equation*}
with $\tilde F\in\cP_{r+r_1-2,\bar r}$ ($\tilde F\equiv 0$ if
$r+r_1-2>\bar r$) and $\cR_{F,G}\in C^{\infty}(B_s(R/C);\C)$ which
fulfill the following estimates
\begin{align*}
  \left\|\tilde F\right\|_R\sleq_{\bar r} \mu\left\|F\right\|_R\ ,
  \\
\sup_{\left\|u\right\|_s\leq R/C}
\left\|X_{\cR_{F,G}}(u)\right\|_{s}\sleq \frac{\left\|
  F\right\|_R}{R}\mu^{\bar n}\ ,
\end{align*}
with $\bar n$ as in Corollary \ref{cor.lie} and $C$ as in Lemma \ref{defo}.
\end{corollary}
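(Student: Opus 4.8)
The plan is to read this corollary off from three things already established: the finite Lie expansion of Lemma~\ref{Lie.8}, the iterated-bracket estimate of Corollary~\ref{cor.lie}, and the deformation estimate of Lemma~\ref{defo}. There is no new idea; the content is pure bookkeeping --- deciding which homogeneous pieces of the Lie series become the polynomial part $\tilde F$ and which the smooth tail $\cR_{F,G}$, and checking that all the constants produced by those three results collapse into powers of $\mu=C_{\bar r}\|G\|_R/R^2$. By linearity in $F$ it suffices to treat $F$ homogeneous, $F\in\cP_{r_1}$; one may also reduce to $G$ homogeneous of degree $r$ (equivalently, work with the $\bar r$-jet, absorbing everything of degree $>\bar r$ as one goes), which is the clean case I shall describe.

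First I would choose $\mu_0\in(0,1)$ so small that $\mu<\mu_0$ forces $\|G\|_R/R=(R/C_{\bar r})\,\mu$ to satisfy the smallness hypothesis \eqref{rs}; then $\Phi_G^\tau$ is well defined on $B_s(R)$ for all $|\tau|\le1$ and sends it into $B_s(2R)$, by Lemma~\ref{Lie.1}. Applying Lemma~\ref{Lie.8} with $f=F$, $t=1$ and $n=\bar n$ gives, for $u\in B_s(R)$,
\[
F(\Phi_G(u))=\sum_{k=0}^{\bar n}\frac1{k!}(Ad_G^kF)(u)+\frac1{\bar n!}\int_0^1(1-\tau)^{\bar n}(Ad_G^{\bar n+1}F)\big(\Phi_G^\tau(u)\big)\,d\tau\,.
\]
By Lemma~\ref{poi.1} each $Ad_G^kF$ is again a polynomial of class $\cP$ and each bracket with $G$ raises the degree by $r-2$, so $Ad_G^kF\in\cP_{r_1+k(r-2)}$; since $\bar n$ is the smallest integer with $(\bar n+1)(r-2)+r_1>\bar r$, for $1\le k\le\bar n$ these sit in $\cP_{r+r_1-2,\bar r}$ while $Ad_G^{\bar n+1}F$ has degree $>\bar r$. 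I would therefore set $\tilde F:=\sum_{k=1}^{\bar n}\frac1{k!}Ad_G^kF\in\cP_{r+r_1-2,\bar r}$ (the empty sum, hence $\equiv0$, exactly when $r+r_1-2>\bar r$, i.e.\ $\bar n=0$) and $\cR_{F,G}:=\frac1{\bar n!}\int_0^1(1-\tau)^{\bar n}(Ad_G^{\bar n+1}F)\circ\Phi_G^\tau\,d\tau$, which is $C^\infty$ on a ball $B_s(R/C)$ by Lemma~\ref{defo}. The bound on $\tilde F$ is then immediate from Corollary~\ref{cor.lie} together with $\sum_{k\ge1}\mu^k/k!\sleq\mu$ (valid since $\mu<1$): $\|\tilde F\|_R\sleq_{\bar r}\mu\,\|F\|_R$.

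The only step asking for a little care is the bound on $X_{\cR_{F,G}}$. Corollary~\ref{cor.lie} does not directly cover $Ad_G^{\bar n+1}F$, so I would apply Lemma~\ref{poi.1} once more to $Ad_G^{\bar n}F$ (whose norm is $\sleq_{\bar r}\mu^{\bar n}\|F\|_R$), absorbing the bounded combinatorial constant into the implicit constant and one factor $\|G\|_R/R^2$ into a power of $\mu$, to obtain $\|Ad_G^{\bar n+1}F\|_R\sleq_{\bar r}\mu^{\bar n+1}\|F\|_R$; Lemma~\ref{campo.1} then yields $\|X_{Ad_G^{\bar n+1}F}(u)\|_s\sleq_{\bar r}\mu^{\bar n+1}\|F\|_R/R$ on $B_s(R)$. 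Finally, writing $\Phi_G^\tau=\Phi_{\tau G}$ with $\|\tau G\|_R\le\|G\|_R$, Lemma~\ref{defo} applies uniformly for $\tau\in[0,1]$ and, at the cost of shrinking the radius by its fixed factor $C$, transports this bound to $X_{(Ad_G^{\bar n+1}F)\circ\Phi_G^\tau}$ on $B_s(R/C)$; integrating in $\tau$ and using $\mu^{\bar n+1}\le\mu^{\bar n}$ produces $\sup_{\|u\|_s\le R/C}\|X_{\cR_{F,G}}(u)\|_s\sleq_{\bar r}\mu^{\bar n}\|F\|_R/R$. I expect this last ingredient --- transporting a vector-field bound through the nonlinear conjugation $\Phi_G^\tau$ by Lemma~\ref{defo}, which is also what forces the domain loss $R\mapsto R/C$ --- to be the only genuinely analytic point; everything else is degree counting and tracking which constants get absorbed into the powers of $\mu$.
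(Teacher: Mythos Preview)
Your argument is correct and is exactly the route the paper indicates: the paper gives no proof beyond the sentence ``From Lemma~\ref{Lie.8}, Corollary~\ref{cor.lie} and Lemma~\ref{defo}, one has the following Corollary,'' and you have faithfully unpacked that sentence, including the only nontrivial point (estimating the $(\bar n{+}1)$-st bracket by one extra application of Lemma~\ref{poi.1} and then pushing the vector-field bound through the flow via Lemma~\ref{defo}). Your observation $\Phi_G^\tau=\Phi_{\tau G}$ to make Lemma~\ref{defo} apply uniformly in $\tau$ is the right way to handle the integral remainder.

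One cosmetic remark: with your (correct) choice $\tilde F=\sum_{k\ge 1}\tfrac1{k!}Ad_G^kF$ the identity actually reads $F\circ\Phi_G=F+\tilde F+\cR_{F,G}$, which is how the corollary is used later in the paper (cf.\ Lemma~\ref{stil}); the statement as printed drops the leading $F$, but your decomposition is the one consistent with both the degree claim $\tilde F\in\cP_{r+r_1-2,\bar r}$ and the estimate $\|\tilde F\|_R\sleq_{\bar r}\mu\|F\|_R$.
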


\subsection{Homological equation}\label{homo.s}
In order to construct the transformation $\cT^{(\bar r)}$ of Theorem
\ref{NF.theorem}, we will use the Lie
transform generated by auxiliary Hamiltonian functions
$G_3,...,G_{\bar r}$, with
$G_\ell\in\cP_{\ell,\bar r}$, which in turn will be constructed by solving
the homological equation
\begin{equation}
  \label{homo.1}
\left\{H_0;G\right\}+Z=F
\end{equation}
with $F\in\cP_{\ell,\bar r}$ a given polynomial of order 2 in
$u^\perp$ and $Z$ to be determined, but in $N$-block normal form.  In
order to solve the homological equation we need a nonresonance
condition seemingly stronger than \eqref{prima}, but which actually
follows from $F.1$, $F.2$, $F.3$ of Hypothesis \ref{hypo1}.

To state the non-resonance condition we need the following definition.

\begin{definition}{\bf (Non resonant multi-indexes).}
  \label{non.res.index}
  For $\ell\in \N$ and $N\gg1$ we denote by 
$\cJ_l^N$ the subset of $\cI_l $ (see \eqref{indici}) of the  multi-indexes  
$\bJ=(J_1,...,J_l)$ satisfying the following conditions:
\begin{itemize}
\item[(I.1)] there are at most two indexes larger than $N$\,;

\item[(I.2)] if there exist two indexes, say $J_1$ and $J_2$ with
  $|J_1|>N$ and $|J_2|>N$, with
  $j_1$ and $j_2$ belonging to the same cluster\footnote{recall conditions $F.3.1$, $F.3.2$ in 
  Hypothesis \ref{hypo1}}
    $\Omega_\alpha$ then $\sigma_{1}\sigma_{2}=1$,
   \end{itemize}
Multi-indexes  ${\bf J}\in\cJ_l^N $ are called 
 \emph{non resonant multi-indexes}.
  \end{definition}
  
  \begin{remark}\label{rmk:normalform}
By Definitions \ref{Nor.form} and \ref{non.res.index}
  we notice that an Hamiltonian $Z\in \mathcal{P}_{r}$, $r\geq3$,
  of the form \eqref{momHam} but supported only on 
  multi-indexes ${\bf J}\in \mathcal{I}_{r}\setminus \mathcal{J}_{r}^{N}$
  is in $N$-block normal form.
  \end{remark}

\begin{lemma}
  \label{nr.vera}
  Assume Hypothesis \ref{hypo1} and let $r\in \N$.
Then there exist 
$\tau'_r$ and $\gamma'_r>0$, such that 
for any $3\leq p\leq r$ and any multi-index ${\bf J}\in\cJ_p^N$
one has
\begin{equation}
  \label{NR.v}
\sum_{l=1}^{p}\sigma_l\omega_{j_l}\not=0\ \quad \Longrightarrow\ \quad 
\left|\sum_{l=1}^{p}\sigma_l\omega_{j_l}\right|\geq\frac{\gamma'_r}{N^{\tau'_r}}\ .
  \end{equation}
  \end{lemma}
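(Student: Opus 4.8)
The statement to prove is Lemma~\ref{nr.vera}: for non-resonant multi-indexes $\bJ \in \cJ_p^N$ (at most two indexes above $N$, and if two are above $N$ in the same cluster then their signs agree), a non-vanishing frequency combination $\sum_l \sigma_l \omega_{j_l}$ is bounded below by $\gamma'_r N^{-\tau'_r}$. The plan is to reduce, by elementary manipulations, the combination over a non-resonant multi-index to a combination of the form controlled directly by $F.2$ (condition \eqref{prima}), after first eliminating the potentially dangerous pair of high indexes using $F.1$ and $F.3.2$. The key observation is that the small-divisor estimate \eqref{prima} of $F.2$ already handles \emph{all} indexes bounded by $N$; the only work is to handle the (at most two) indexes that exceed $N$.

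\textbf{Step 1: Case of at most one high index.} If all indexes $|J_l| \le N$, apply \eqref{prima} directly with the same $N$ and we are done with $\gamma'_r = \gamma_r$, $\tau'_r = \tau_r$. If exactly one index, say $|J_1| > N$, exceeds $N$, then by momentum conservation $\cM(\bJ) = 0$ we get $|j_1| = |\sum_{l\ge 2}\sigma_l j_l| \le (p-1)N$, so in fact $|J_1| \le (p-1)N =: N'$. Then all indexes are $\le N'$, and \eqref{prima} applied with $N'$ in place of $N$ gives the bound $\gamma_p/(N')^{\tau_p} \gtrsim_r \gamma_p/N^{\tau_p}$; absorb the $(p-1)^{\tau_p}$ into the constant. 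This sets $\tau'_r := \max_{p\le r}\tau_p$ and $\gamma'_r := c_r \min_{p\le r}\gamma_p$ for a suitable dimensional/combinatorial constant $c_r$.

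\textbf{Step 2: Case of exactly two high indexes $J_1, J_2$.} By momentum conservation, $|j_1 + \sigma' j_2|$ (with $\sigma' := \sigma_1\sigma_2$, after relabeling so the combination is $\sigma_1\omega_{j_1} + \sigma_2\omega_{j_2} + \sum_{l\ge 3}\sigma_l\omega_{j_l}$, equivalently $\sigma_1(\omega_{j_1} + \sigma'\omega_{j_2}) + \dots$) satisfies $|\sigma_1 j_1 + \sigma_2 j_2| = |\sum_{l\ge 3}\sigma_l j_l| \le (p-2)N$. Now split according to the sign $\sigma' = \sigma_1\sigma_2$.

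\emph{Subcase $\sigma' = -1$ (so $\sigma_1 = -\sigma_2$):} By Definition~\ref{non.res.index}(I.2), if $j_1,j_2$ lie in the same cluster this subcase is excluded when $\sigma_1\sigma_2=-1$ — wait, actually (I.2) says that if they are in the same cluster then $\sigma_1\sigma_2 = 1$, hence if $\sigma' = -1$ then $j_1$ and $j_2$ lie in \emph{different} clusters $\Omega_\alpha \ne \Omega_\beta$. Apply $F.3.2$: $|j_1 - j_2| + |\omega_{j_1} - \omega_{j_2}| \ge C_3(|j_1|^\delta + |j_2|^\delta)$. But $|j_1 - j_2| = |\sigma_1 j_1 + \sigma_2 j_2|\cdot$(sign bookkeeping)$\le (p-2)N$, so if, say, $|j_1|$ were much larger than $N^{1/\delta}$ we would get a large lower bound on $|\omega_{j_1}-\omega_{j_2}|$, which combined with $F.1$ ($\omega_j \simeq |j|^\beta$) forces a contradiction with the fact that $\bJ$ is non-resonant only if... hmm, this needs care. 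Actually the cleaner route: from $F.3.2$ and $|j_1-j_2|\le (p-2)N$ we get $|\omega_{j_1}-\omega_{j_2}| \ge C_3(|j_1|^\delta+|j_2|^\delta) - (p-2)N$. If $|j_1|$ (hence $|j_2| \ge |j_1| - (p-2)N$) is larger than a threshold $\sim N^{1/\delta}$, then $|\omega_{j_1}-\omega_{j_2}| \gtrsim |j_1|^\delta \gtrsim N$, but also $|\omega_{j_1} - \omega_{j_2}| \lesssim |j_1|^{\beta-1}|j_1-j_2| \lesssim |j_1|^{\beta-1} N$ by $F.1$ and the mean value estimate — this does not immediately contradict. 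Let me instead argue: $\sigma_1\omega_{j_1}+\sigma_2\omega_{j_2} = \pm(\omega_{j_1}-\omega_{j_2})$, and we want $|\pm(\omega_{j_1}-\omega_{j_2}) + \sum_{l\ge3}\sigma_l\omega_{j_l}| \ge \gamma'_r/N^{\tau'_r}$. Since $|\omega_{j_1}-\omega_{j_2}| \ge C_3|j_1|^\delta$ (as $|j_1|^\delta+|j_2|^\delta \ge |j_1|^\delta$ and the $(p-2)N$ term is dominated once $|j_1| \gg N^{1/\delta}$), while each $|\omega_{j_l}| \lesssim N^\beta$ for $l \ge 3$, the whole sum has absolute value $\ge C_3|j_1|^\delta - (p-2)C_1 N^\beta$; so \emph{if} $|j_1| \ge (2(p-2)C_1 C_3^{-1})^{1/\delta} N^{\beta/\delta}$ the sum is $\ge \tfrac12 C_3 |j_1|^\delta > 1$, done trivially; \emph{otherwise} $|j_1|, |j_2| \le C_r N^{\beta/\delta}$ and all $p$ indexes are bounded by $N'' := C_r N^{\beta/\delta}$, so \eqref{prima} applied with $N''$ gives $\ge \gamma_p/(N'')^{\tau_p} \gtrsim_r \gamma_p N^{-\beta\tau_p/\delta}$. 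Update $\tau'_r := \max_{p\le r}(\beta\tau_p/\delta)$ accordingly.

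\emph{Subcase $\sigma' = +1$ (so $\sigma_1 = \sigma_2$):} Then $\sigma_1\omega_{j_1} + \sigma_2\omega_{j_2} = \pm(\omega_{j_1}+\omega_{j_2})$, and by $F.1$ this has absolute value $\ge \tfrac{1}{C_1}(|j_1|^\beta + |j_2|^\beta) \ge \tfrac{2}{C_1} N^\beta$ (since $|j_1|,|j_2| > N$). Meanwhile $|\sum_{l\ge3}\sigma_l\omega_{j_l}| \le (p-2)C_1(N)^\beta$ only if the low indexes are $\le N$ — but wait, they are $\le N$ by assumption (I.1), so $|\sum_{l\ge3}\sigma_l\omega_{j_l}| \le (p-2)C_1 N^\beta$. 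This does \emph{not} give a sign contradiction in general. Instead: momentum conservation gives $|j_1 + j_2| \le (p-2)N$, so $|j_2| \le |j_1| + (p-2)N$ and $|j_1| \le |j_2| + (p-2)N$; thus $|j_1|$ and $|j_2|$ differ by at most $(p-2)N$. Then $\omega_{j_1}+\omega_{j_2} \ge \tfrac{1}{C_1}(|j_1|^\beta+|j_2|^\beta)$. We again dichotomize on the size of $|j_1|$: if $|j_1| \ge \big((p-2)C_1^2\big)^{1/(\beta-1)}\cdot$const$\cdot N$ then (using $\beta > 1$) $\tfrac1{C_1}|j_1|^\beta$ dominates $(p-2)C_1 N^\beta \ge |\sum_{l\ge3}\sigma_l\omega_{j_l}|$... this requires $|j_1|^\beta \gtrsim N^\beta$, i.e. $|j_1| \gtrsim N$ with a big enough constant — but the constant depends on $p$, fine. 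So: if $|j_1| \ge A_r N$ for a suitable $A_r$ depending on $p,C_1$, the total sum is $\ge \tfrac1{2C_1}|j_1|^\beta \ge \tfrac1{2C_1}(A_r N)^\beta > 1$; otherwise all $p$ indexes are $\le A_r N$ and \eqref{prima} with $N''' := A_r N$ gives $\ge \gamma_p/(A_r N)^{\tau_p} \gtrsim_r \gamma_p N^{-\tau_p}$.

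\textbf{Conclusion and choice of constants.} Taking $\tau'_r := \max_{3\le p\le r} \max(\tau_p, \beta\tau_p/\delta)$ and $\gamma'_r$ a correspondingly small constant times $\min_{3\le p\le r}\gamma_p$ (absorbing all the $p$-, $d$-, $C_1$-, $C_3$-, $\delta$-dependent combinatorial factors), inequality \eqref{NR.v} holds in all cases.

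\textbf{Main obstacle.} The only genuinely delicate point is the subcase with two high indexes and $\sigma_1\sigma_2 = -1$: there one must \emph{use} the cluster-separation property $F.3.2$ (together with $F.1$) to confine $|j_1|,|j_2|$ to a polynomial-in-$N$ window, so that \eqref{prima} becomes applicable; the combinatorial bookkeeping of which index is "largest" and the interplay of the exponents $\beta$ and $\delta$ is where care is needed. All other cases reduce directly to $F.2$ after a momentum-conservation bound on the high index. Everything else is routine.
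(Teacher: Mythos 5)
Your proof is correct and follows essentially the same strategy as the paper: in each case you confine all indexes to a window of size polynomial in $N$ (using $F.1$, $F.3.2$ and momentum conservation) and then invoke $F.2$ with the enlarged $N$. The two minor organizational differences worth noting: (1) in the one-high-index case you use momentum conservation to bound $|j_1|\le (p-1)N$ directly, whereas the paper instead dichotomizes via $F.1$ (if $|j_1|\gtrsim N$ the high frequency dominates, else apply $F.2$) — both work, yours is marginally shorter; (2) in the two-high-index, opposite-sign case you exploit $|j_1-j_2|\le (p-2)N$ from momentum conservation right away and combine it with $F.3.2$ to force $|\omega_{j_1}-\omega_{j_2}|\gtrsim |j_1|^\delta$, whereas the paper splits into the two alternatives of $F.3.2$ (either $|\omega_{j_1}-\omega_{j_2}|$ or $|j_1-j_2|$ is large) and handles them separately; your version folds the two subcases into one dichotomy but lands on the same threshold $N^{\beta/\delta}$ and the same $\tau'_r \gtrsim \beta\tau_r/\delta$. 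The hesitation in your $\sigma'=+1$ subcase about whether momentum bounds $|j_1|$ is well-placed — it does not, and your fallback to the $F.1$ dichotomy (which is what the paper does, "similarly to the previous case") is the correct fix.
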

\proof For the case where all the indexes $j_l$ are smaller than $N$ there
is nothing to prove.

Consider now the case where there is only one index, say $J_1$, larger
than $N$ and the length of the multi-index is $n+1\leq r$. The quantity
to be estimated is now
\begin{equation}
  \label{M.1}
\left|\sum_{l=2}^n\sigma_l\omega_{j_l}+\sigma_1\omega_{j_1}\right|\ .
  \end{equation}
By condition $F.1$, one has
$$
\left|\sum_{l=2}^n\sigma_l\omega_{j_l}\right|\leq rN^\beta C_1\ ,\quad 
{\rm and }\quad |\omega_{j_1}|\geq C_1|j_1|^\beta\,.
$$
Therefore, if 
$$
|j_1|\geq 2(rC_1^2)^{1/\beta}N=:N_1\ ,
$$
the estimate \eqref{NR.v} is satisfied. Hence, the estimate on the quantity \eqref{M.1} is nontrivial
only if all the indexes are smaller than $N_1$. 
It follows that we can
use \eqref{prima} with $N$ replaced by $N_1$, getting
$$
\left|\eqref{M.1}\right|\geq\frac{\gamma_r}{N_1^{\tau_r}}=\frac{\gamma_r}{2^{\tau_r}
(rC_1^2)^{\tau_r/\beta}N^{\tau_r}}\ ,
$$
which implies the bound \eqref{NR.v}   by choosing
$$
\gamma_r'\leq \frac{\gamma_r}{2^{\tau_r}
(rC_1^2)^{\tau_r/\beta}}\,.
$$

Consider now the case where there are two indexes larger than $N$, say
$J_1$ and $J_2$. The case $\sigma_1\sigma_2=1$ is dealt with similarly
to the previous case.

We discuss now to the case $\sigma_1\sigma_2=-1$. By condition $(I.2)$ 
in Definition \ref{non.res.index}
there exist $\alpha\not=\beta$
such that $j_1\in\Omega_{\alpha}$ and $j_2\in\Omega_{\beta}$. It follows
that either
\begin{align}
  \label{B.1}
\left|\omega_{j_1}-\omega_{j_2}\right|\geq
C (\left|j_1\right|^{\delta}+\left|j_2 \right|^{\delta})
  \end{align}
or
\begin{align}
  \label{B.2}
\left|{j_1}-{j_2}\right|\geq
C(\left|j_1\right|^{\delta}+\left|j_2 \right|^{\delta})\ .
  \end{align}
for some $C  > 0$. Assume for concreteness  that $|j_1|\geq|j_2|$ and $\sigma_1=1$, $\sigma_2=-1$.

\noindent
Consider first the case where \eqref{B.1} holds. The quantity to be
estimated is
\begin{equation}
  \label{M.2}
\left|\sum_{l=3}^n\sigma_l\omega_{j_l}+\omega_{j_1}-\omega_{j_2}\right|\ .
\end{equation}
Notice that \eqref{B.1} implies $\left|\omega_{j_1}-\omega_{j_2}\right|\geq C |j_1|^\delta$
and that we also have
$$
\left|\sum_{l=3}^n\sigma_l\omega_{j_l}\right|\leq (r-2)N^\beta\ .
$$
Then it follows that \eqref{NR.v} is automatic if
\[
\left|j_1\right|\geq\frac{2(r-2)^{1/\delta}}{C}N^{\beta/\delta}=:N_2\,.
 \]
Hence the bound on \eqref{M.2} is nontrivial only if all the indexes are smaller
than $N_2$. In this case   we can apply \eqref{prima} with $N_2$ in place of $N$,
getting
$$
\left|\eqref{M.2}\right|\geq\frac{\gamma_r}{N_2^{\tau_r}}=
\frac{\gamma_rC^{\tau_r}}{2^{\tau_r}(r-2)^{\tau_r/\delta}
  N^{\frac{\beta}{\delta}\tau_r} 
}\ ,
$$
which is the wanted estimate, in particular with
$\tau'_r\geq\frac{\beta}{\delta}\tau_r $.

\noindent 
It remains to bound  \eqref{M.2} from below  with indexes
fulfilling \eqref{B.2}.

\noindent
By  the zero momentum condition we have
$$
\sum_{l=3}^{n}\sigma_lj_l+j_1-j_2=0\ ,
$$
but $\left|\sum_{l=3}^{n}\sigma_lj_l  \right|\leq rN$, while
$$
\left|j_1-j_2\right|\geq C\left|j_1\right|^\delta\ .
$$
It follows that in our set there are no indexes with
$C |j_1|^\delta>rN$ (otherwise the zero momentum condition cannot be
fulfilled), so all the indexes must be smaller than
$N_3:=(rN/C)^{1/\delta}$, and again we can estimate $\eqref{M.2}$
using \eqref{prima} with $N$ substituted by $N_3$, thus getting the
thesis. 
\qed 

\begin{lemma}{\bf (Homological equation).}
  \label{sol.hom}
  Consider the Homological equation \eqref{homo.1}
  with $H_0$ as in \eqref{H0} and  $\omega_j$ satisfying Hypotheses
  \ref{hypo1} and where 
  $F\in\cP_{r,\bar r}$ is a polynomial having a zero of order 2 in
$u^\perp$. Then 
equation \eqref{homo.1} has solutions
$Z\in\cP_{r,\bar r}$ and $G\in\cP_{r,\bar r}$ 
where $Z$ is in  $N$-block normal form, $N\gg1$
and moreover
\begin{align}
  \label{sol.h.1}
  \left\| Z\right\|_R&\leq 
  \left\|F\right\|_R\,,
  \\
  \label{sol.h.2}
  \left\|G\right\|_R&\leq
  \frac{N^{\tau'_{\bar r}}}{\gamma'_{\bar r}}\left\|F\right\|_R\ .
\end{align}
\end{lemma}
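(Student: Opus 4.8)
The plan is to solve the homological equation \eqref{homo.1} coefficient by coefficient. Write $F=\sum_{l=r}^{\bar r}\sum_{\bJ\in\cI_l}F_{\bJ}u_{J_1}\cdots u_{J_l}$, and recall that for a monomial $u_{J_1}\cdots u_{J_l}$ one has $\{H_0;u_{J_1}\cdots u_{J_l}\}=-\im\big(\sum_{n=1}^l\sigma_{j_n}\omega_{j_n}\big)u_{J_1}\cdots u_{J_l}$ (up to an overall sign depending on the convention in \eqref{poisson}), so that $\mathrm{ad}_{H_0}$ acts diagonally on monomials. Hence, seeking $G=\sum_{l}\sum_{\bJ\in\cI_l}G_{\bJ}u_{J_1}\cdots u_{J_l}$ and $Z=\sum_{l}\sum_{\bJ\in\cI_l}Z_{\bJ}u_{J_1}\cdots u_{J_l}$, equation \eqref{homo.1} becomes, for each $\bJ$,
\[
-\im\Big(\sum_{n=1}^l\sigma_{j_n}\omega_{j_n}\Big)G_{\bJ}+Z_{\bJ}=F_{\bJ}.
\]
The natural choice is: if $\bJ\notin\cJ_l^N$ (i.e. $\bJ$ is a ``resonant'' multi-index in the sense of Definition \ref{non.res.index}), set $G_{\bJ}=0$ and $Z_{\bJ}=F_{\bJ}$; if $\bJ\in\cJ_l^N$ and $\sum_n\sigma_{j_n}\omega_{j_n}=0$, again set $G_{\bJ}=0$, $Z_{\bJ}=F_{\bJ}$; and if $\bJ\in\cJ_l^N$ with $\sum_n\sigma_{j_n}\omega_{j_n}\neq0$, set $Z_{\bJ}=0$ and $G_{\bJ}=\big(\im\sum_n\sigma_{j_n}\omega_{j_n}\big)^{-1}F_{\bJ}$.

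With this definition I need to verify the three claimed properties. First, $Z$ is in $N$-block normal form: by Remark \ref{rmk:normalform}, the part of $Z$ supported on $\cI_l\setminus\cJ_l^N$ is automatically in $N$-block normal form, while the remaining part of $Z$ is supported on multi-indexes $\bJ\in\cJ_l^N$ with $\sum_n\sigma_{j_n}\omega_{j_n}=0$; one then checks that such $\bJ$ satisfies condition 1 or condition 2 of Definition \ref{Nor.form}. Indeed if all indexes are $\le N$ it is condition 1 (with the vanishing frequency sum), and if there are exactly two indexes $>N$ then, since $\bJ\in\cJ_l^N$, condition $(I.2)$ forces that if they lie in the same cluster then $\sigma_1\sigma_2=1$; but one must rule out the case where they lie in the same cluster with $\sigma_1\sigma_2=1$ and still $\sum\sigma\omega=0$ — this is exactly where Hypothesis \ref{hypo1} is used more carefully, and actually the safe route is to keep such terms in $G$ only when the divisor is nonzero; when the divisor vanishes one places the term in $Z$ and must argue it fits case 2, i.e. that the two high indexes in fact satisfy $\sigma_1\sigma_2=-1$ and lie in a common cluster. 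I expect the cleanest argument here is that by Lemma \ref{nr.vera} any $\bJ\in\cJ_l^N$ with nonvanishing frequency sum has $|\sum_n\sigma_{j_n}\omega_{j_n}|\ge\gamma'_{\bar r}N^{-\tau'_{\bar r}}>0$, so the only $\bJ\in\cJ_l^N$ put into $Z$ are those with $\sum_n\sigma_{j_n}\omega_{j_n}=0$, and for those the structure forces case 1 or case 2.

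Second, the bounds. The estimate $\|Z\|_R\le\|F\|_R$ is immediate since $Z_{\bJ}$ is either $0$ or $F_{\bJ}$, and the $\cP_{r,\bar r}$-norm \eqref{normaPR} is a supremum over coefficients (times the same power $R^l$ at each homogeneity level), so restricting the support can only decrease it. For $\|G\|_R$, on the support of $G$ we have $\sum_n\sigma_{j_n}\omega_{j_n}\neq0$ with $\bJ\in\cJ_l^N$, so Lemma \ref{nr.vera} gives $|\sum_n\sigma_{j_n}\omega_{j_n}|\ge\gamma'_{\bar r}N^{-\tau'_{\bar r}}$, hence $|G_{\bJ}|\le(N^{\tau'_{\bar r}}/\gamma'_{\bar r})|F_{\bJ}|$, and taking suprema and summing over $l=r,\dots,\bar r$ yields \eqref{sol.h.2}.

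Finally I should check that $Z,G\in\cP_{r,\bar r}$, i.e. that they satisfy P.1--P.3 of Definition \ref{polyclass}. Momentum conservation (P.1) and boundedness (P.3) are inherited from $F$ together with the bounds just proved. The reality condition (P.2) requires that $\overline{G_{\bar\bJ}}=G_{\bJ}$: this follows because $F$ satisfies P.2, the map $\bJ\mapsto\bar\bJ$ preserves membership in $\cJ_l^N$ (it flips all $\sigma$'s, hence flips the sign of $\sum_n\sigma_{j_n}\omega_{j_n}$ but not its vanishing, and preserves clusters and the $\sigma_1\sigma_2$ products), and under $\bJ\mapsto\bar\bJ$ the divisor $\im\sum_n\sigma_{j_n}\omega_{j_n}$ goes to its complex conjugate; combined with $\overline{F_{\bar\bJ}}=F_{\bJ}$ this gives the claim, and similarly for $Z$. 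The one genuine subtlety — and the step I expect to be the main obstacle — is the bookkeeping in the normal-form claim: making sure that every multi-index that ends up in $Z$ genuinely matches case 1 or case 2 of Definition \ref{Nor.form}, in particular handling multi-indexes with two high modes in the same cluster and $\sigma_1\sigma_2=1$ (which are in $\cJ_l^N$ by $(I.2)$) and confirming their frequency sum cannot vanish, so that they are routed into $G$ rather than $Z$; this is precisely what Lemma \ref{nr.vera} together with the cluster separation $F.3.2$ is designed to guarantee.
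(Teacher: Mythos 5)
Your strategy — diagonalize $\{H_0;\cdot\}$ on monomials via $\{H_0;u_{\bJ}\}=\im\big(\sum_{l}\sigma_{l}\omega_{j_l}\big)u_{\bJ}$, solve coefficient by coefficient, route non-resonant $\bJ$ to $G$ and the rest to $Z$, then bound $G$ via Lemma~\ref{nr.vera} — is exactly the paper's. The paper takes $Z:=\sum_{\bJ\in\cI_r\setminus\cJ_r^N}F_{\bJ}u_{\bJ}$ and $G:=\sum_{\bJ\in\cJ_r^N}F_{\bJ}u_{\bJ}/\big(\im\sum_{l}\sigma_l\omega_{j_l}\big)$, then invokes Remark~\ref{rmk:normalform} for the normal-form claim and Lemma~\ref{nr.vera} for \eqref{sol.h.2}; your check of the reality, momentum and boundedness properties matches. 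You also correctly notice a point the paper leaves implicit: Definition~\ref{non.res.index} does not exclude from $\cJ_l^N$ multi-indexes with $\sum_l\sigma_l\omega_{j_l}=0$ (e.g.\ multi-indexes with only low modes and vanishing sum), so the paper's formula for $G$ divides by a quantity that can vanish. Your repair — routing those $\bJ$ into $Z$ — is the natural fix.

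However, the step you yourself flag as the main obstacle is left genuinely open, and the argument you sketch for it does not hold. You must verify that every $\bJ\in\cJ_l^N$ with $\sum_l\sigma_l\omega_{j_l}=0$ routed into $Z$ falls under case~1 or case~2 of Definition~\ref{Nor.form}. When all indexes are $\le N$ this is case~1; but a $\bJ$ with exactly one high index, or with two high indexes in distinct clusters, or with two high indexes in the same cluster with $\sigma_1\sigma_2=1$, and vanishing frequency sum, fits neither case. Invoking Lemma~\ref{nr.vera} cannot close this: that lemma asserts a lower bound on $\big|\sum_l\sigma_l\omega_{j_l}\big|$ \emph{conditional} on its being nonzero, and says nothing about whether the sum can vanish. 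The paper's own proof is equally silent, so you have surfaced rather than resolved an imprecision shared with the source; a clean fix would either build a nonvanishing-divisor requirement directly into Definition~\ref{non.res.index} (after which the paper's formulas work verbatim) or supply a separate argument, using Hypothesis~\ref{hypo1}, that the offending zero-sum configurations cannot occur among multi-indexes with at most two high modes.
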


\proof Notice that, denoting $u_{\bJ}:=u_{J_1}...u_{J_r}$
and recalling \eqref{poisson},  one has
\begin{equation*}
\begin{aligned}
\left\{ H_0;u_{\bJ}\right\}&=
\im \sum_{j}\frac{\partial H_0}{\partial u_{(j,-)}}
\frac{\partial u_{\bJ}}{\partial u_{(j,+)}} -
\frac{\partial H_0}{\partial u_{(j,+)}}
\frac{\partial u_{\bJ}}{\partial u_{(j,-)}}
\\&
=\im \sum_{j}\omega_ju_{(j,+)}
\frac{\partial u_{\bJ}}{\partial u_{(j,+)}} -
\omega_ju_{(j,-)}
\frac{\partial u_{\bJ}}{\partial u_{(j,-)}}
\\&
=\im\sum_{j}\omega_ju_{\bJ}\left(\sum_{l=1}^{r}\delta_{(j,+),J_l}\omega_j-
\delta_{(j,-),J_l}\omega_j\right)
  \\&
  =\im u_{\bJ}\left(\sum_{l=1}^{r}\delta_{(j,+),(j_l,\sigma_l)}\omega_{j_l}-
    \delta_{(j,-),(j_l,\sigma_l)} \omega_{j_l} \right)
    =\im u_{\bJ}\sum_{l=1}^{r}\sigma_l\omega_{j_l}\ .
 \end{aligned}
 \end{equation*}
It follows that, writing
$$
P=\sum_{\bJ\in\cI_r}P_{\bJ}u_{\bJ}\ ,
$$
one can solve the Homological equation \eqref{homo.1} by defining (recall Def. \ref{non.res.index})
\begin{align*}
  Z(u)&:=\sum_{\bJ\in\cI_r\setminus \cJ_r^N}P_{\bJ}u_{\bJ}\ ,
  \\
  G(u)&:=\sum_{\bJ\in\cJ_r^N}\frac{P_{\bJ}}{\im\sum_{l=1}^{r}\sigma_l\omega_{j_l}}
  u_{\bJ}\ .
  \end{align*}
  By Remark \ref{rmk:normalform} we have that $Z$ is in $N$-block normal form.
 The estimates \eqref{sol.h.1}-\eqref{sol.h.2} immediately follow
using Lemma \ref{nr.vera}.
\qed

\subsection{Proof of the normal form Lemma}

Theorem \ref{NF.theorem} is an immediate consequence of the
forthcoming 
Lemma \ref{NF.Lemma}. To introduce it, we first split
$$
P=\pil+\cR_{T,0}\ ,
$$
with $\pil\in\cP_{3,\bar r}$ and $\cR_{T,0}$ having a zero of order at
least $\bar r+1$ at the origin. A relevant role will be played by the
quantity 
$\norpil$. In order to simplify the notation, we remark that,
for $R$ sufficiently small there exists $K_{s,\bar r}$ such that
$$
\sup_{\left\|u\right\|_s\leq
  R}\left\|X_{\cR_{T,0}}(u)\right\|_s\leq K_{s,\bar r}\norpil R^{\bar
  r-3}\frac{1}{R} \ .
$$

\begin{lemma}{\bf (Iterative lemma).}
  \label{NF.Lemma}
  Assume Hypothesis \ref{hypo1} and 
  fix $\bar r\geq 3$. There exists $\mu_{\bar r}>0$ such that for any $3\leq k\leq\bar r$ and any
  $s>s_0>d/2$ there exist $R_{s,k}>0$, $C_{s,k},\tau>0$ 
  such that for any $R<R_{s,k}$ and any $N\gg1$ the following holds. If one has
\begin{equation}
  \label{mu.nf}
\mu:=\frac{\|\pil\|_R}{R^2}N^{\tau}<\mu_{\bar{r}}\,,
\end{equation}
 then there exists an invertible canonical transformation
\begin{equation}\label{TTk1}
\cT^{(k)}:B_s(R)\to B_s(C_{s,k}R)\ ,
\end{equation}
with
\begin{equation}\label{TTk2}
[\cT^{(k)}]^{-1}:B_s(R)\to B_s(C_{s,k}R)\ ,
\end{equation}
such that 
\begin{equation}
  \label{Htra}
H^{(k)}:=H\circ\cT^{(k)}=H_0+Z^{(k)}+P_k+\cR_{T,k}+\cR_{\perp,k}
  \end{equation}
  where
\begin{itemize}
\item $Z^{(k)}\in\cP_{3,k}$ is in $N$-block normal form and fulfills
  \begin{equation}
    \label{zk}
\left\|Z^{(k)}\right\|_R\sleq_{\bar r,k}\norpil\ ;
  \end{equation}
\item $P_k\in\cP_{k,\bar r}$ fulfills
  \begin{equation}
    \label{pk}
\left\|P_k\right\|_R\sleq_{\bar r,k}\norpil \mu^{k-3}\,;
    \end{equation}
\item $\cR_{T,k}$ is such that $X_{\cR_{T,k}}\in
  C^{\infty}(B_s(R_{s,k});\ell^2_s)$ and
  \begin{equation}
    \label{rtk}
\sup_{\left\|u\right\|_s\leq
  R}\left\|X_{\cR_{T,k}}(u)\right\|_s\sleq_{\bar
  r,k,s}\norpil\mu^{\bar r-3}\frac{1}{R}\ ,\quad \forall R\leq
R_{s,k}\ ; 
    \end{equation}
\item $\cR_{\perp,k}$ is such that $X_{\cR_{\perp,k}}\in
  C^{\infty}(B_s(R_{s,k});\ell^2_s)$ and
  \begin{equation}
    \label{rtkperp}
\sup_{\left\|u\right\|_s\leq
  R}\left\|X_{\cR_{\perp,k}}(u)\right\|_s\sleq_{\bar
  r,k,s}\frac{\norpil}{R}\frac{1}{N^{s-s_0}}\ ,\quad \forall R\leq
R_{s,k}\ .
    \end{equation}
\end{itemize}
  \end{lemma}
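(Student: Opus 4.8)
I would prove Lemma \ref{NF.Lemma} by induction on $k$, following the classical Birkhoff scheme, the only twists being that at each step the ``normal form'' extracted is an $N$-block normal form via Lemma \ref{sol.hom} (rather than a genuine one) and that two different remainders are produced and kept apart: $\cR_{T}$, which collects everything vanishing to order $>\bar r$ at the origin, so with vector field of order $\norpil\,\mu^{\bar r-3}/R$, and $\cR_{\perp}$, which collects everything handled by Lemma \ref{campo.N}(ii), so with vector field of order $\norpil N^{-(s-s_0)}/R$. The case $k=3$ is the identity: $\cT^{(3)}=\mathrm{id}$, $Z^{(3)}=0$, $P_3=\pil$, $\cR_{T,3}=\cR_{T,0}$, $\cR_{\perp,3}=0$; since $\mu^0=1$ all the claimed bounds are contained in the definition of $\norpil$ and in the bound on $X_{\cR_{T,0}}$ recalled before the statement. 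Throughout, it is convenient to propagate the sharper bookkeeping inequalities $\|(P_k)_l\|_R\lesssim_{\bar r}\norpil\,\mu^{l-3}$ and $\|(Z^{(k)})_l\|_R\lesssim_{\bar r}\norpil\,\mu^{l-3}$ for each homogeneity degree $l$; summing over $3\le l\le\bar r$ (using $\mu<1$) gives back \eqref{zk} and \eqref{pk}.

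For the step $k\to k+1$ I would first isolate the degree-$k$ part $(P_k)_k\in\cP_k$ and split it as $(P_k)_k=F_k+Q_k$, where $Q_k$ collects the monomials carrying at least three indices of modulus $>N$ and $F_k$ the rest, supported on multi-indexes with at most two large indexes. By Lemma \ref{campo.N}(ii) one has $\sup_{\|u\|_s\le R}\|X_{Q_k}(u)\|_s\lesssim \norpil/(R\,N^{s-s_0})$, so $Q_k$ is moved into the perpendicular remainder. I then apply Lemma \ref{sol.hom} to $F_k$, getting $Z_k\in\cP_k$ in $N$-block normal form with $\|Z_k\|_R\le\|F_k\|_R$ and a generator $G_k\in\cP_k$ with $\|G_k\|_R\le(N^{\tau'_{\bar r}}/\gamma'_{\bar r})\|F_k\|_R$, chosen so that composition with its Lie transform turns $F_k$ into $Z_k$. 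The key choice is $\tau:=\tau'_{\bar r}$ (or any larger value): this makes the loss $N^{\tau'_{\bar r}}$ of the small-divisor estimate be absorbed by the gain $N^\tau$ built into the definition \eqref{mu.nf} of $\mu$, so that $C_{\bar r}\|G_k\|_R/R^2\lesssim_{\bar r}\mu^{k-2}$. Together with $\norpil\lesssim_{\bar r} R^3$, the hypothesis $\mu<\mu_{\bar r}$ then forces $\|G_k\|_R/R\ll1$ and $C_{\bar r}\|G_k\|_R/R^2<\mu_0$, so that Lemma \ref{Lie.1} gives $\Phi_{G_k}^{\pm1}:B_s(R)\to B_s(2R)$ and Corollary \ref{cor.3} applies.

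Setting $\cT^{(k+1)}:=\cT^{(k)}\circ\Phi_{G_k}$ and expanding $H^{(k+1)}=H^{(k)}\circ\Phi_{G_k}$ by Lemma \ref{Lie.8}, the quadratic term $H_0\circ\Phi_{G_k}$ produces, through the homological equation, $H_0+Z_k+(\text{strictly higher order})$ in place of $H_0+F_k$, while $Z^{(k)}\circ\Phi_{G_k}$, $(P_k-(P_k)_k)\circ\Phi_{G_k}$, $\cR_{T,k}\circ\Phi_{G_k}$, $\cR_{\perp,k}\circ\Phi_{G_k}$ and all the iterated brackets $Ad_{G_k}^j(\cdot)$ with $j\ge1$ contribute only terms of strictly higher homogeneity degree or genuine Taylor remainders. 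Estimating the brackets by Lemma \ref{poi.1} and Corollary \ref{cor.lie}, each bracket with $G_k$ costs a factor $\lesssim_{\bar r}\mu^{k-2}$ and raises the degree by $k-2$, so the inductive bookkeeping inequalities are reproduced at level $k+1$; in particular the new monomials of degree $>\bar r$ have vector field $\lesssim\norpil\,\mu^{\bar r-3}/R$ by Lemma \ref{campo.1}, and the Taylor remainders (Corollary \ref{cor.3}, with the truncation index of Corollary \ref{cor.lie}) are of the same size. One then sets $Z^{(k+1)}:=Z^{(k)}+Z_k$, still in $N$-block normal form since that property is stable under sums; lets $P_{k+1}\in\cP_{k+1,\bar r}$ be the collection of the newly created polynomial terms of degree $\le\bar r$; puts $\cR_{\perp,k}\circ\Phi_{G_k}$, $Q_k\circ\Phi_{G_k}$ and the contributions with at least three high-mode indices among the new terms (controlled by Lemmas \ref{campo.N} and \ref{defo}) into $\cR_{\perp,k+1}$; and puts $\cR_{T,k}\circ\Phi_{G_k}$ together with the degree-$(>\bar r)$ monomials and the Taylor remainders into $\cR_{T,k+1}$. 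This gives \eqref{Htra} and the four estimates at level $k+1$; iterating up to $k=\bar r$ and absorbing the residual polynomial $P_{\bar r}\in\cP_{\bar r,\bar r}$ into $\cR_T$ yields Theorem \ref{NF.theorem}. The routine but lengthy part is checking which bracket lands in which bin; the only genuinely delicate ingredient — the reason the scheme survives under the weak nonresonance of Hypothesis \ref{hypo1} rather than the usual second Melnikov condition — is already behind us, namely Lemma \ref{nr.vera}: keeping in the normal form \emph{exactly} the monomials with two large indexes in one cluster $\Omega_\alpha$ is what forces the remaining small divisors appearing in $G_k$ to be bounded below by $\gamma'_{\bar r}/N^{\tau'_{\bar r}}$.
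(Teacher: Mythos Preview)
Your inductive scheme is correct and matches the paper's in its essential structure: split off the part of $P_k$ that is at least cubic in $u^\perp$ and send it to $\cR_\perp$ via Lemma~\ref{campo.N}(ii), solve the homological equation for the rest via Lemma~\ref{sol.hom} (with the small-divisor loss $N^{\tau'_{\bar r}}$ absorbed into $\mu$), and expand under the Lie transform using Lemmas~\ref{Lie.8}--\ref{defo} and Corollary~\ref{cor.lie}.

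The one implementation difference worth flagging is that the paper does \emph{not} work degree-by-degree: at each step it feeds the \emph{whole} $P_{k,\mathrm{eff}}\in\cP_{k,\bar r}$ (all degrees $k,\dots,\bar r$, after peeling off $R_{k,\perp}$) into the homological equation, obtaining a non-homogeneous generator $G_{k+1}\in\cP_{k,\bar r}$. The payoff is that $P_{k+1}$ then consists \emph{only} of iterated brackets, each automatically carrying the extra factor $\mu$, so the bound $\|P_{k+1}\|_R\lesssim\norpil\,\mu^{k-2}$ follows without any degree-graded bookkeeping. In your scheme the leftover $(P_k-(P_k)_k)$ enters $P_{k+1}$ unchanged, which is exactly why you need the sharper hypothesis $\|(P_k)_l\|_R\lesssim\norpil\,\mu^{l-3}$; this works, but note that verifying it at the base $k=3$ already uses a comparison between $\|(\pil)_l\|_R$ and the leading homogeneous piece of $\norpil$, so the implicit constants depend on $\pil$ (and the argument needs the obvious re-indexing if $\pil$ happens to start at degree $>3$). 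A side effect of the paper's choice is that its $Z_{k+1}\in\cP_{k,\bar r}$, so strictly speaking $Z^{(k)}\in\cP_{3,\bar r}$ rather than $\cP_{3,k}$; your degree-by-degree version actually matches the stated lemma on that point.
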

The proof occupies the rest of the section and is split in a few
Lemmas. 
We reason inductively.
First, we consider the Taylor expansion of $P_k$ in $u^\perp$
and we write
\begin{equation}
  \label{pktay}
P_k=P_{k,eff}+R_{k,\perp}\ ,
\end{equation}
with $P_{k,eff}$ containing only terms of degree 0, 1 and 2 in
$u^\perp$, while $R_{k,\perp}$ has a zero of order at least 3 in
$u^\perp$. Then we determine $G_{k+1}$ and $Z_{k+1}$ by solving the
homological equation
\begin{equation}
  \label{homo.k}
\left\{H_0;G_{k+1}\right\}+P_{k,eff}=Z_{k+1}\,,
  \end{equation}
so that, by Lemma \ref{sol.hom} and the inductive assumption \eqref{pk}, we get
\begin{align}
  \label{stik1g}
  \left\|G_{k+1}\right\|_R&\sleq \left\|P_k\right\|_RN^{\tau}\sleq
  R^2\norpil \mu^{k-2}
  \\
  \left\|Z_{k+1}\right\|_{R}&\sleq \left\|P_k\right\|_R N^{\tau}\sleq
  \norpil \mu^{k-3}\,.
\end{align}
Consider the Lie transform $\Phi_{G_{k+1}}$ (recall Def. \ref{Lie.6}) generated by $G_{k+1}$.
By the estimate \eqref{stik1g} and  the condition \eqref{mu.nf}
we have that there is $R_{s,k+1}>0$ such that \eqref{rs} is fulfilled for $R<R_{s,k+1}$.
Hence Lemma \ref{Lie.1} applies and so we deduce that the map 
$\Phi_{G_{k+1}}$ is well-posed.
%

We study now
$H^{(k)}\circ \Phi_{G_{k+1}}$. To start with we prove the following
Lemma.
\begin{lemma}
  \label{hzero}
  Let $G_{k+1}$ be the solution of \eqref{homo.k}, then one has
  \begin{equation}
    \label{gk1}
H_0\circ\Phi_{G_{k+1}}=H_0+Z_{k+1}-P_{k,eff}+\tilde
H_0+\cR_{H_0,G_{k+1}}\ ,
  \end{equation}
  with $\tilde H_{0}\in \cP_{k+1,\bar r}$, and, provided
  $R<R^0_{k+1}$, for some $R^0_{k+1}$, one has
  \begin{equation}
    \label{gk1.1}
\left\|\tilde H_0\right\|_R\sleq \mu^{k-2}\norpil \ .
  \end{equation}
  Furthermore, there exists $C_0>0$ such that one has
  \begin{equation}
    \label{gk1.2}
\sup_{\left\|u\right\|_s\leq R/C_0}\left\|X_{\cR_{H_0,G_{k+1}}
}(u)\right\|_s\sleq \mu^{\bar r-3}{\norpil}R^{-1}\ .
    \end{equation}
  \end{lemma}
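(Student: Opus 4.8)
The plan is to apply the general composition Corollary \ref{cor.3} to the pair $f = H_0$ and $G = G_{k+1}$, and then to identify the polynomial part of the expansion produced by that corollary with the claimed normal-form terms. First I would invoke Lemma \ref{Lie.8} to write the Taylor expansion of $H_0 \circ \Phi_{G_{k+1}}^t$ at $t=1$:
\begin{equation*}
H_0\circ\Phi_{G_{k+1}}=H_0+\{H_0;G_{k+1}\}+\sum_{m=2}^{n}\frac{1}{m!}Ad_{G_{k+1}}^m H_0+\frac{1}{n!}\int_0^1(1-\tau)^n\big(Ad_{G_{k+1}}^{n+1}H_0\big)\big(\Phi_{G_{k+1}}^\tau(u)\big)\,d\tau\,.
\end{equation*}
By the homological equation \eqref{homo.k} we have $\{H_0;G_{k+1}\}=Z_{k+1}-P_{k,eff}$, which accounts for the two explicit polynomial corrections in \eqref{gk1}. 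The higher Poisson-bracket terms $Ad_{G_{k+1}}^m H_0$ for $m\geq 2$ get collected into $\tilde H_0$; here I would use that $Ad_{G_{k+1}}^2 H_0 = \{\{H_0;G_{k+1}\};G_{k+1}\} = \{Z_{k+1}-P_{k,eff};G_{k+1}\}$, and more generally $Ad_{G_{k+1}}^m H_0 = Ad_{G_{k+1}}^{m-1}(Z_{k+1}-P_{k,eff})$, which by Lemma \ref{poi.1} lies in $\cP_{k+1,\bar r}$ (truncated, i.e. vanishing if the degree exceeds $\bar r$) since $G_{k+1}\in\cP_{k+1,\bar r}$ and $Z_{k+1}-P_{k,eff}\in\cP_{k,\bar r}$, so each bracket raises the degree by $k+1-2=k-1\geq 2$.

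For the norm estimate \eqref{gk1.1}, I would apply Corollary \ref{cor.lie} (with $F = Z_{k+1}-P_{k,eff}$ and $G=G_{k+1}$) to bound $\|Ad_{G_{k+1}}^{m-1}(Z_{k+1}-P_{k,eff})\|_R$ by $(C_{\bar r}\|G_{k+1}\|_R/R^2)^{m-1}\|Z_{k+1}-P_{k,eff}\|_R$. Using the inductive bound \eqref{stik1g}, namely $\|G_{k+1}\|_R\lesssim R^2\norpil\mu^{k-2}$, the ratio $C_{\bar r}\|G_{k+1}\|_R/R^2\lesssim \norpil\mu^{k-2}/R^2\cdot R^2 \lesssim \mu\cdot(\text{something bounded})$ — more carefully, under the smallness \eqref{mu.nf} this ratio is $\lesssim\mu$, so it suffices to keep the $m=2$ term as dominant and bound the sum by a geometric series. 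Since $\|Z_{k+1}-P_{k,eff}\|_R\lesssim\norpil\mu^{k-3}$ by \eqref{pk} and Lemma \ref{sol.hom}, and the ratio contributes at least one more factor bounded by $\mu$ (indeed $\lesssim\mu^{k-2}$ once one tracks the $N^\tau$), one arrives at $\|\tilde H_0\|_R\lesssim\mu^{k-2}\norpil$ for $R$ below some threshold $R_{k+1}^0$ ensuring both \eqref{rs} and the convergence of the geometric series.

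Finally, the remainder $\cR_{H_0,G_{k+1}}$ is exactly the integral term, which up to a truncated polynomial piece absorbed into $\tilde H_0$ is handled by the second estimate of Corollary \ref{cor.3}: choosing $n=\bar n$ as in Corollary \ref{cor.lie} (so that further Poisson brackets vanish at the polynomial level and only the smooth tail survives), one gets $\sup_{\|u\|_s\leq R/C_0}\|X_{\cR_{H_0,G_{k+1}}}(u)\|_s\lesssim (\|Z_{k+1}-P_{k,eff}\|_R/R)\mu^{\bar n}$; since $\bar n(k-1)\geq \bar r-3$ (the number of bracket steps times the degree increment each step exceeds the gap to $\bar r$) and $\|Z_{k+1}-P_{k,eff}\|_R\lesssim\norpil$, this gives the claimed $\lesssim\mu^{\bar r-3}\norpil R^{-1}$. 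The smoothness of $X_{\cR_{H_0,G_{k+1}}}$ on $B_s(R/C_0)$ follows from Lemma \ref{defo}, since $H_0$ has smooth vector field (of the form $\omega_j u_{(j,\mp)}$, which is bounded $\ell^2_s\to\ell^2_{s-\beta}$ — here one uses $F.1$) on balls where $\Phi_{G_{k+1}}$ is defined. The main obstacle I anticipate is bookkeeping the precise power of $\mu$: one must carefully verify that each application of $Ad_{G_{k+1}}$ truly costs a factor $\mu$ (not merely a factor $\norpil/R^2$ without the $N^\tau$), which is where the definition $\mu=\|\pil\|_R N^\tau/R^2$ and the homological-equation loss $\|G_{k+1}\|_R\lesssim N^{\tau'_{\bar r}}\|P_k\|_R$ from Lemma \ref{sol.hom} must be matched up with the choice of $\tau$ — but this is routine once the telescoping structure $Ad^m_{G_{k+1}}H_0 = Ad^{m-1}_{G_{k+1}}(Z_{k+1}-P_{k,eff})$ is in hand.
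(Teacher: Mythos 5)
Your approach matches the paper's: both expand $H_0\circ\Phi_{G_{k+1}}$ via Lemma \ref{Lie.8}, substitute the homological equation to get $\{H_0;G_{k+1}\}=Z_{k+1}-P_{k,eff}$, collect the iterated brackets $Ad_{G_{k+1}}^{l-1}\{H_0;G_{k+1}\}$ into $\tilde H_0$, and estimate with Corollary \ref{cor.lie}/\ref{cor.3} plus the inductive bound \eqref{stik1g}. Two bookkeeping slips worth correcting: $G_{k+1}$ solves the homological equation with $P_{k,eff}\in\cP_{k,\bar r}$, so by Lemma \ref{sol.hom} it lies in $\cP_{k,\bar r}$, \emph{not} $\cP_{k+1,\bar r}$; consequently each application of $Ad_{G_{k+1}}$ raises the polynomial degree by $k-2$, not $k-1$ (so already the $l=2$ term has degree $2k-2\geq k+1$, which is all that's needed for $\tilde H_0\in\cP_{k+1,\bar r}$). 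Likewise, your final power count ``$\bar n(k-1)\geq\bar r-3$'' conflates the degree increment with the $\mu$-gain: each bracket contributes one factor of $\mu$ (not $k-1$ of them), and the cleaner bookkeeping, as in the paper, is to start from $\|Z_{k+1}-P_{k,eff}\|_R\lesssim\mu^{k-3}\norpil$ rather than merely $\lesssim\norpil$, so the remainder gives $\mu^{k-3+\bar n}\norpil/R$ and one concludes from $k+\bar n\geq\bar r$ (which follows from $(\bar n+1)(k-2)+k>\bar r$ when $k\geq3$).
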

\proof Let $\bar n$ be such that $(\bar n+1)(k-2)+k>\bar r$; using the
expansion \eqref{Lie.9} one gets
\begin{align}\nonumber
H_0\circ
\Phi_{G_{k+1}}=H_0+\left\{H_0;G_{k+1}\right\}+\sum_{l=2}^{\bar n}
\frac{Ad_{G_{k+1}}^l}{l!}H_0 +\cR_{H_0,G_{k+1}}
\\
\label{sera}
=H_0+\left\{H_0;G_{k+1}\right\}+\sum_{l=2}^{\bar n}
\frac{Ad_{G_{k+1}}^{l-1}}{l!}\left\{H_0;G_{k+1}\right\} +\cR_{H_0,G_{k+1}}
\end{align}
where we can rewrite explicitly the remainder term as
\begin{equation*}
\cR_{H_0,G_{k+1}}=\frac{1}{\bar n!}\int_0^1(1-\tau)^{\bar
  n}\left(Ad_{G_{k+1}}^{\bar n}\left\{
H_0,G_{k+1}\right\}\right)\circ\Phi_{G_{k+1}}^{\tau}d\tau \ .
\end{equation*}
Since $G_{k+1}$ fulfills the Homological equation one has
$$
\left\{H_0,G_{k+1}\right\}=Z_{k+1}-P_{k,eff}\in\cP_{k,\bar r}\ ,
$$
with
$$
\left\|\left\{H_0,G_{k+1}\right\}\right\|_R\sleq \left\|P_k\right\|_R
\sleq \mu^{k-3}\norpil\ .
$$
Hence, defining $\tilde H_0$ to be the sum in Eq. \eqref{sera},
one has
\begin{equation*}
\begin{aligned}
\left\|\tilde H_0\right\|_R&\equiv \left\|\sum_{l=2}^{\bar n}
\frac{Ad_{G_{k+1}}^{l-1}}{l!}\left\{H_0;G_{k+1}\right\}\right\|_R
\\
&\sleq \sum_{l=2}^{\bar n}\left(\frac{C\left\|
  G_{k+1}\right\|_R}{R^2}\right)^{l-1} \frac{1}{l!}\mu^{k-3}\norpil
\sleq \mu^{k-3}\norpil \mu=\mu^{k-2}\norpil\ ,
\end{aligned}
\end{equation*}
provided $R$ is small enough. Analogously one gets
\begin{equation*}
\sup_{\left\|u\right\|_s\leq
  R/C}\left\|X_{\cR_{H_0,G_{k+1}}}(u)\right\|_s\sleq
\mu^{k-3}\frac{\norpil\mu^{\bar n}}{R}\ ,
\end{equation*}
and, since $k+\bar n\geq \bar r$ the thesis follows.
\qed

In an analogous way one proves the following simpler Lemma whose proof
is omitted.

\begin{lemma}
  \label{stil}
Let $G_{k+1}\in\cP_{k,\bar r}$ fulfills the estimate \eqref{stik1g},
then we have
\begin{equation}\label{stip1}
\begin{aligned}
P_k\circ \Phi_{G_{k+1}}&=P_k+\tilde P_{k}+\cR_{P_k,G_{k+1}}\,,
\\
Z^{(k)}\circ \Phi_{G_{k+1}}&=Z^{(k)}+\tilde Z^{(k)}+\cR_{Z^{(k)},G_{k+1}}\,,
\end{aligned}
\end{equation}
and the following estimates hold
\begin{equation*}
\begin{aligned}
\left\|\tilde P_k\right\|_R&\sleq \norpil \mu^{k-2}\ ,\qquad \sup_{\left\|u\right\|_s\leq
  R/C}\left\|X_{\cR_{P_k,G_{k+1}}}(u)\right\|_s
  \sleq
\frac{\norpil\mu^{\bar r}}{R}\ ,
\\
\left\|\tilde Z^{(k)}\right\|_R&\sleq \norpil \mu^{k-2}\ ,\qquad \sup_{\left\|u\right\|_s\leq
  R/C}\left\|X_{\cR_{Z^{(k)},G_{k+1}}}(u)\right\|_s\sleq
\frac{\norpil\mu^{\bar r}}{R}\ .
\end{aligned}
\end{equation*}
\end{lemma}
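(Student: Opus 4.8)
The plan is to prove Lemma~\ref{stil} by repeating, almost verbatim, the argument behind Lemma~\ref{hzero}; the situation here is in fact simpler because $P_k$ and $Z^{(k)}$ have no quadratic part, so there is no leading term to be cancelled against a homological equation and the whole $\{\cdot\,;G_{k+1}\}$-correction lands in the higher-order polynomials $\tilde P_k$ and $\tilde Z^{(k)}$. To begin with, since $G_{k+1}\in\cP_{k,\bar r}$ fulfils \eqref{stik1g}, assumption \eqref{mu.nf} gives $\|G_{k+1}\|_R/R^2\sleq\mu^{k-2}\le\mu$, so that \eqref{rs} holds on $B_s(R)$ for $R<R_{s,k+1}$ (after possibly shrinking the radius) and the Lie transform $\Phi_{G_{k+1}}$ is well defined there; moreover $\|G_{k+1}\|_R/R^2$ is, up to the constant $C_{\bar r}$, the quantity ``$\mu$'' of Corollary~\ref{cor.3}, so Corollaries~\ref{cor.lie} and~\ref{cor.3} apply with generating function $G_{k+1}$.

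First I would apply Lemma~\ref{Lie.8} to $f=P_k$ at $t=1$, truncated at the order $\bar n$ attached to $G_{k+1}$ in Corollary~\ref{cor.lie}, and set $\tilde P_k:=\sum_{l=1}^{\bar n}\frac{1}{l!}(Ad_{G_{k+1}})^lP_k$ and $\cR_{P_k,G_{k+1}}:=\frac{1}{\bar n!}\int_0^1(1-\tau)^{\bar n}\big((Ad_{G_{k+1}})^{\bar n+1}P_k\big)\circ\Phi^\tau_{G_{k+1}}\,d\tau$, so that $P_k\circ\Phi_{G_{k+1}}=P_k+\tilde P_k+\cR_{P_k,G_{k+1}}$. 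Since $P_k,G_{k+1}\in\cP_{k,\bar r}$, Lemma~\ref{poi.1} gives $(Ad_{G_{k+1}})^lP_k\in\cP_{k+l(k-2),\bar r}$, hence $\tilde P_k\in\cP_{2k-2,\bar r}\subset\cP_{k+1,\bar r}$ (using $k\ge3$; the sum is empty, i.e.\ $\tilde P_k\equiv0$, when $2k-2>\bar r$). Corollary~\ref{cor.lie} together with \eqref{stik1g} and the inductive bound \eqref{pk} on $\|P_k\|_R$ then yields $\|\tilde P_k\|_R\sleq(\|G_{k+1}\|_R/R^2)\|P_k\|_R\sleq\mu^{k-2}\,\norpil\,\mu^{k-3}\sleq\norpil\,\mu^{k-2}$. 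For the remainder I would bound $\|(Ad_{G_{k+1}})^{\bar n+1}P_k\|_R$ by Corollary~\ref{cor.lie}, pass to the vector field via Lemma~\ref{campo.1}, and use Lemma~\ref{defo} (together with Lemma~\ref{Lie.1}, which confines the flow $\Phi^\tau_{G_{k+1}}$ to a fixed ball) to control the composition under the integral sign, obtaining $\sup_{\|u\|_s\le R/C}\|X_{\cR_{P_k,G_{k+1}}}(u)\|_s\sleq\norpil\,\mu^{\bar r}/R$ once $\bar n$ is taken large enough that the accumulated power of $\mu$ reaches $\bar r$. The statement for $Z^{(k)}\circ\Phi_{G_{k+1}}$ follows identically, now with $f=Z^{(k)}\in\cP_{3,k}$: here $(Ad_{G_{k+1}})^lZ^{(k)}\in\cP_{3+l(k-2),\bar r}$, so $\tilde Z^{(k)}\in\cP_{k+1,\bar r}$, and using the inductive bound \eqref{zk} ($\|Z^{(k)}\|_R\sleq\norpil$) in place of \eqref{pk} gives $\|\tilde Z^{(k)}\|_R\sleq\norpil\,\mu^{k-2}$ and the same bound on $X_{\cR_{Z^{(k)},G_{k+1}}}$.

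I expect the only mildly delicate point to be, exactly as in Lemma~\ref{hzero}, the choice of the truncation order $\bar n$: one must check simultaneously that for $l\le\bar n$ the iterates $(Ad_{G_{k+1}})^lP_k$ remain genuine polynomials in $\cP_{k+1,\bar r}$ obeying the geometric bound of Corollary~\ref{cor.lie}, and that the last iterate $(Ad_{G_{k+1}})^{\bar n+1}P_k$ already carries a power of $\mu$ of at least $\bar r$, so that it may be absorbed into the remainder $\cR_{P_k,G_{k+1}}$. All the remaining ingredients are direct applications of the Lemmas already established in this section, which is why this can indeed be proved ``in an analogous way'' to, and more simply than, Lemma~\ref{hzero}.
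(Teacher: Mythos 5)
Your proof is correct and follows essentially the approach the paper intends: expand $P_k\circ\Phi_{G_{k+1}}$ and $Z^{(k)}\circ\Phi_{G_{k+1}}$ via Lemma~\ref{Lie.8}, bound the iterated brackets through Corollary~\ref{cor.lie} (equivalently Corollary~\ref{cor.3}) combined with the inductive estimates \eqref{pk}, \eqref{zk}, and use Lemmas~\ref{Lie.1} and \ref{defo} to control the flow-composed remainder, exactly as in Lemma~\ref{hzero}. You also correctly identify the only structural simplification relative to Lemma~\ref{hzero}: since $P_k$ and $Z^{(k)}$ already belong to $\cP$, the first bracket $\{\,\cdot\,;G_{k+1}\}$ is genuinely of higher order and needs no homological cancellation.
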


\noindent
{\it End of the proof of Lemma \ref{NF.Lemma}.} 
We consider the Lie transform $\Phi_{G_{k+1}}$ generated by
$G_{k+1}$ determined by the equation \eqref{homo.k}
and we define
\[
\cT^{(k+1)}:=\cT^{(k)}\circ\Phi_{G_{k+1}}\,.
\]
By estimate \eqref{stik1g}, condition \eqref{mu.nf}, taking $R$ small enough, 
we have that Lemma \ref{Lie.1} applied to $G_{k+1}$
and the inductive hypothesis on $\mathcal{T}^{(k)}$ imply that 
$\mathcal{T}^{(k+1)}$ satisfies \eqref{TTk1}-\eqref{TTk2}
with $k\rightsquigarrow k+1$ and some constant $C_{s,k+1}$.

\noindent
Recalling \eqref{gk1}, \eqref{stip1} we define
\begin{align*}
Z^{(k+1)}&=Z^{(k)}+Z_{k+1}\ ,\quad P_{k+1}=\tilde P_k+\tilde
Z^{(k)}+\tilde H_0
\\
\cR_{T,k+1}&=\cR_{H_0,G_{k+1}}+\cR_{P_k,G_{k+1}}+\cR_{Z^{(k)},G_{k+1}}+\cR_{T,k}\circ
\Phi_{G_{k+1}}\ , \\
\cR_{\perp,k+1}&=R_{\perp,k}+ \cR_{\perp,k}\circ
\Phi_{G_{k+1}}\ .
\end{align*}
Then the iterative estimates follow from the estimates of Lemmas
\ref{hzero} and \ref{stil}. This concludes the proof.
\qed

\begin{proof}[{\bf Proof of Theorem \ref{NF.theorem}}]
Condition \eqref{soglia} implies \eqref{mu.nf}. 
Then the result follows by
Lemma \ref{NF.Lemma} taking $k=\bar{r}$.
\end{proof}


An important consequence of 
Theorem \ref{NF.theorem} is the following.

\begin{corollary}\label{NF.corollary}
Consider the Hamiltonian \eqref{h.abs} with $\omega_j$ fulfilling
Hypotheses \ref{hypo1} 
and $P\in\cP$ (see Def. \ref{funzP}). 
For any $r\geq 3$ there exists $N_r>0$, $\tau>0$ and 
$s_r>d/2$ and a canonical
transformation $\cT_r$ such that
for any $s\geq
s_r$ there exists $R_s>0$ and $C_s>0$ such that the following holds  for any $R<R_{s}$:

\noindent
$(i)$
one has
\begin{align}
\cT_r&\in C^{\infty}(B_s(R/C_s);B_s(R))\ ,\quad \cT_r^{-1}\in C^{\infty}(B_s(R/C_s);B_s(R))\,,
\label{stimamappaT}
\\
H^{r}&:=H\circ \cT_r=H_0+Z^r+\cR^{(r)}\,,\label{hamrr}
\end{align}
where
\begin{itemize}
\item $Z^r\in\cP_{3,r}$ is in $N_r$-block normal form according to Def. \ref{Nor.form};
 \item $\cR^{(r)}$ is such that $X_{\cR^{(r)}}\in
  C^{\infty}(B_s(R_{s}/C_s);B_s(R_s))$ and
  \begin{equation}
    \label{rtk.1fin}
\sup_{\left\|u\right\|_s\leq
  R}\left\|X_{\cR^{(r)}}(u)\right\|_s\sleq_{r}R^{{r+1}}\ ,\quad \forall R\leq
R_{s}/C_{s}\ .
 \end{equation}
\end{itemize}

\vspace{0.5em}
\noindent
$(ii)$ Given $u\in B_s(R)$ we write $u=(u^{\leq}, u^{\perp})$
according to the splitting \eqref{splitto1}-\eqref{splitto2}
with $N$ replaced by $N_r$
and 
we set
$Z^r=Z_0+Z_2$ (see Remark \ref{z0z2}) 
where $Z_0$ is the part 
independent of
$z^{\perp}$ and $Z_{2}$ is the part homogeneous of order $2$ in $z^{\perp}$. 
Then we have
\begin{equation}\label{stimaZeta2}
\sup_{\left\|u\right\|_s\leq
  R}\left\|\Pi^{\leq}X_{Z_2}(u^{\leq}, u^{\perp})\right\|_s\sleq_{r}R^{r+1}\ ,
  \quad \forall R\leq
R_{s}/C_{s}\ .
\end{equation}
\end{corollary}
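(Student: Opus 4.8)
The plan is to deduce Corollary \ref{NF.corollary} from Theorem \ref{NF.theorem} by a judicious, $R$-dependent choice of the two free parameters $\bar r$ and $N$ there. The key point to exploit is that the transversal remainder $\cR_\perp$ is controlled in Theorem \ref{NF.theorem} only by $R^2N^{-(s-s_0)}$; so to make it a genuine $O(R^{r+1})$ remainder I would let $N=N_r$ \emph{grow} as $R\to0$, like a small negative power of $R$, and then use that $N^{-(s-s_0)}$ beats any fixed power of $R$ once $s$ is large. Concretely I would fix $s_0>d/2$; fix $\bar r=\bar r(r)$ large enough that $\bar r-3\geq 2(r-1)$ (so $\bar r=2r+1$ would do); apply Theorem \ref{NF.theorem} with these, obtaining $\tau=\tau(\bar r)>0$ and the constants $R_{s,\bar r},C_{s,\bar r}$; choose $a=a(r)\in(0,1/(2\tau))$; set $N_r:=\lfloor R^{-a}\rfloor$; and then take $\cT_r:=\cT^{(\bar r)}$ (applied at radius $R/C_{s,\bar r}$, so that its image lies in $B_s(R)$), $Z^r:=Z^{(\bar r)}$, $\cR^{(r)}:=\cR_T+\cR_\perp$, $C_s:=C_{s,\bar r}$.

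For part $(i)$: with this choice $RN_r^{\tau}\simeq R^{1-a\tau}\to0$, so \eqref{soglia} holds for $R$ small and Theorem \ref{NF.theorem} applies, giving the mapping property \eqref{stimamappaT}. By \eqref{zk1}, $Z^r=Z^{(\bar r)}$ is in $N_r$-block normal form with $\|Z^r\|_R\sleq_{\bar r}R^3$. By \eqref{rtk.1},
\[
\sup_{\|u\|_s\leq R}\|X_{\cR_T}(u)\|_s\sleq_{\bar r,s}R^2(RN_r^{\tau})^{\bar r-3}\simeq R^{2+(1-a\tau)(\bar r-3)}\sleq_r R^{r+1},
\]
since $1-a\tau\geq1/2$ and $\bar r-3\geq 2(r-1)$ give $(1-a\tau)(\bar r-3)\geq r-1$. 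By \eqref{rtk.2},
\[
\sup_{\|u\|_s\leq R}\|X_{\cR_\perp}(u)\|_s\sleq_{\bar r,s}R^2N_r^{-(s-s_0)}\simeq R^{2+a(s-s_0)},
\]
so, setting $s_r:=s_0+(r-1)/a$, for every $s\geq s_r$ one has $a(s-s_0)\geq r-1$ and hence this is $\sleq_r R^{r+1}$. Adding the two, $X_{\cR^{(r)}}=X_{\cR_T}+X_{\cR_\perp}$ is smooth on the relevant ball (by \eqref{rtk.1}--\eqref{rtk.2}) and satisfies \eqref{rtk.1fin}. Note that $Z^r$ actually lies in $\cP_{3,\bar r}$ with $\bar r=\bar r(r)$ a fixed multiple of $r$; this is the sense in which the statement's ``$Z^r\in\cP_{3,r}$'' should be read, what is used later being only that $Z^r$ is a polynomial of finite degree in $N_r$-block normal form and that the remainder is $O(R^{r+1})$.

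For part $(ii)$: by Remark \ref{z0z2}, relative to the splitting \eqref{splitto1}--\eqref{splitto2} with $N=N_r$ one writes $Z^r=Z_0+Z_2$ with $Z_2$ homogeneous of degree $2$ in $u^\perp$; in particular every homogeneous component $Z_{2,l}\in\cP_l$ of $Z_2$ has a zero of order at least $2$ in $u^\perp$, with $\|Z_{2,l}\|_R\leq\|Z^r\|_R\sleq_{\bar r}R^3$. Then I would apply Lemma \ref{campo.N}$(i)$ to each $Z_{2,l}$ and sum over $l$, getting
\[
\sup_{\|u\|_s\leq R}\|\Pi^{\leq}X_{Z_2}(u)\|_s\sleq_{\bar r,s}\frac{1}{N_r^{\,s-s_0}}\frac{\|Z_2\|_R}{R}\sleq_{\bar r}\frac{R^2}{N_r^{\,s-s_0}}\simeq R^{2+a(s-s_0)}\sleq_r R^{r+1}
\]
for every $s\geq s_r$, which is exactly \eqref{stimaZeta2}.

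The corollary is thus a bookkeeping consequence of Theorem \ref{NF.theorem}: no new dynamical input is required, and I do not expect a serious obstacle. The one thing to be careful about is the order of the choices --- $\bar r$ first (large relative to $r$), which fixes $\tau=\tau(\bar r)$; then $a$ small (so $RN_r^{\tau}\to0$ and $1-a\tau\geq1/2$); and only then $s_r$ large enough that $a(s_r-s_0)\geq r-1$ --- together with the fact that, since $N_r$ and hence $\cT_r$ depend on $R$, the objects $\cT_r,Z^r,\cR^{(r)}$ are to be understood as constructed for each small $R$ separately, which is exactly how they are used in the sequel.
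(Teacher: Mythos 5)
Your proposal is correct and follows essentially the same route as the paper: fix $\bar r$ as a fixed linear function of $r$, take $N_r$ as a small negative power of $R$ so that $RN_r^\tau \to 0$, apply Theorem~\ref{NF.theorem}, set $\cR^{(r)}=\cR_T+\cR_\perp$, and use Lemma~\ref{campo.N}(i) for the estimate~\eqref{stimaZeta2}. The only notable difference is bookkeeping: the paper takes $\bar r = 2r-1$ and $N\simeq R^{-1/(2\tau)}$, which by the same computation you run actually yields $R^2(RN^\tau)^{\bar r-3}\simeq R^r$ rather than $R^{r+1}$, while your choice $\bar r = 2r+1$ together with $a\leq 1/(2\tau)$ cleanly produces the stated exponent $r+1$; so your version is, if anything, slightly more careful. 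You also correctly flag the two soft points of the statement --- that $Z^r$ lives in $\cP_{3,\bar r}$ rather than literally $\cP_{3,r}$, and that $N_r$ (hence $\cT_r$) is $R$-dependent --- both of which are harmless for the way the corollary is used in Section~\ref{dynamics}.
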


\begin{proof}
Let us fix
\begin{equation}
  \label{scelta.2}
\bar r=2r-1\ ,
  \end{equation}
 consider $\tau=\tau_r$ given by Lemma \ref{nr.vera} 
and fix
\[
s_{r}=2(s_0+\tau(\bar r-3))\,.
\]
We now take $N_r=N$ such that
\begin{equation}
  \label{scelta.1}
RN^{\tau}\simeq R^{1/2}\ \iff \ N\simeq R^{-1/2\tau}\ .
  \end{equation}
  With this choices the assumption \eqref{soglia}
  holds taking $R<R_s$ with $R_s$ small enough. Then
 Theorem \ref{NF.theorem} applies with 
  $s\geq s_r$, $N=N_r$ and $\tau=\tau_r$ chosen above.
  First of all notice that
  \begin{equation}
  \label{scelta.3}
(RN^\tau)^{\bar r-3}\simeq \frac{1}{N^{s_r-s_0}}\ \iff \ R^{\frac{\bar
    r-3}{2}}\simeq R^{\frac{s_r-s_0}{2\tau}}\,.
\end{equation}
Then formul\ae\,  \eqref{stimamappaT}-\eqref{hamrr} follow by \eqref{stimamappaTT}-\eqref{Htra1}
setting $\mathcal{R}^{(r)}=\mathcal{R}_{T}+\mathcal{R}_{\perp}$.
Then estimate \eqref{rtk.1fin} follows by \eqref{rtk.1}-\eqref{rtk.2} and \eqref{scelta.3}.
The estimate \eqref{stimaZeta2} follows by Lemma \ref{campo.N} 
and the choice of $N=N_r$ in \eqref{scelta.1}.
\end{proof}

\section{Dynamics and  proof of the main result} \label{dynamics}
In this section we conclude the proof of Theorem \ref{main.abs}. 

Consider the Cauchy problem \eqref{Cauchy}
 (with Hamiltonian $H$ as in \eqref{h.abs})
 with an initial datum $u_0$ satisfying \eqref{small}
 and fix any $r\geq3$.
 Recalling Hypotheses \ref{hypo1}, \ref{hypo2},
 setting 
 \begin{equation}\label{marconi1}
 \epsilon\simeq R\,,
 \end{equation}
  then
for $s\gg1$ large enough and $\epsilon$ small enough (depending on $r$),
 we  have that 
 the assumption of Corollary \ref{NF.corollary} are fulfilled.
 Therefore we set
 \[
 z_0:=\mathcal{T}_r(u_0)\,,
 \]
 and we consider the Cauchy problem
 \begin{equation}\label{Cauchy2}
 \dot{z}=X_{H^r}(z)\,,\qquad z(0)=z_0\,,
 \end{equation}
 with $H^{r}$ given in \eqref{hamrr}. By \eqref{stimamappaT}
 we have that the bound \eqref{stima.solTeo}
 on the solution $u(t)$ of \eqref{Cauchy} follows provided that
 we show
 \begin{equation}\label{claimR}
 \left\|z(t)\right\|_s\lesssim_s
\left\|z(0)\right\|_s+R^{{r+1}}|t|\ ,
\qquad \left|t\right|<T_R
 \end{equation}
 where $z(t)$ is the solution of the problem \eqref{Cauchy2} and where
 we denoted 
 \begin{equation}\label{escapetime}
T_R:=\sup\left\{|t|\in\R^+\ :\ \left\|z(t)\right\|_s<R\right\}\,,
\end{equation}
the (possibly infinite) escape time of the solution from the ball of
radius $R$.
 
 \vspace{0.5em}
 \noindent
 The rest of the section is devoted to the proof of the claim \eqref{claimR}.
 To do this 
we now analyze the dynamics of the system \eqref{Cauchy2} obtained from the
normal form procedure. To this
end we write the Hamilton equations in the form of a system for the
two variables $(z^{\leq},z^{\perp})$ 
and also split the normal form $Z^r=Z_0+Z_2$ 
as in item $(ii)$ in Corollary \ref{NF.corollary}.
%
We get
\begin{align}
  \label{eq:NF.1}
\dot z^{\leq}&=\Lambda z^{\leq}+X_{Z_0}(z^{\leq})+\Pi^{\leq}
X_{Z_2}(z^{\leq}, z^{\perp})+\Pi^{\leq}
X_{\cR^{(r)}}(z^{\leq}, z^{\perp}) \ ,
\\
\label{eq:NF.2}
\dot z^{\perp}&=\Lambda z^{\perp}+\Pi^{\perp}X_{Z_2}(z^{\leq}, z^{\perp})+\Pi^{\perp}
X_{\cR^{(r)}}(z^{\leq}, z^{\perp})\ .
\end{align}
where $\Lambda$ is the linear operator such that $\Lambda z=X_{H_0}(z)$.
The key points to analyze the dynamics are 
the following:
\begin{itemize} 

\item[(i)] $Z_0$ is in
standard Birkhoff normal form;

\item[(ii)] by item $(i)$ of Lemma \ref{campo.N} one has that
$\Pi^{\leq} X_{Z_2}(z^{\leq}, z^{\perp})$ is a remainder term
(see item $(ii)$ in Corollary \ref{NF.corollary});

\item[(iii)] $\Pi^{\perp}X_{Z_2}(z^{\leq}, z^{\perp})$ is linear in
$z^{\perp}$. Furthermore, for any given trial solution $z^{\leq}(t)$
it is a time dependent family of linear operators, which by the
property \eqref{forsa} are selfadjoint and thus conserve the $L^2$
norm;

\item[(iv)] since $Z_2$ is in normal form it leaves invariant the dyadic
decomposition $\Omega_\alpha$ on which the $\ell^2$ norm is equivalent
to all the $\ell^2_s$ norms.
\end{itemize}
%

Formally we split the analysis in a few lemmas. 
The first is
completely standard and provides a priori estimates on the low
frequency part
$z^{\leq}$ of the solution of \eqref{eq:NF.1}.

\begin{lemma}
  \label{piccoli}
There exists $K_1$ such that for any real initial datum $z_0\equiv
(z^{\leq}_0,z^{\perp}_0)$ for \eqref{eq:NF.1}, \eqref{eq:NF.2}, fulfilling
$\left\|z_0\right\|_s\leq R/2$ 
(with $R$ small as in \eqref{marconi1}) the following holds.
One has that
\begin{equation}
  \label{bassi.1}
\left\|z^{\leq}(t)-z^{\leq}(0)\right\|_s\leq
K_1R^{r+1}\left|t\right|\ ,\quad \forall t\ ,\quad \left|t\right|<T_R\,,
  \end{equation}
  where $T_{R}$ is given in \eqref{escapetime}.
\end{lemma}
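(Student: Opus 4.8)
The statement is a standard a priori estimate on the low-mode component, so the plan is to isolate the equation \eqref{eq:NF.1} for $z^{\leq}$ and bound each term on the right-hand side in $\ell^2_s$ along the trajectory, as long as $\|z(t)\|_s < R$, i.e. for $|t| < T_R$. Since $z^{\leq}$ lives in the finite-dimensional space of modes with $|J|\le N_r$, all $\ell^2_s$ norms on it are equivalent (with constants depending on $N_r$, hence on $r$), which lets me treat the linear and the Birkhoff terms cheaply.

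First I would write $\|z^{\leq}(t)-z^{\leq}(0)\|_s \le \int_0^{|t|} \|\dot z^{\leq}(\tau)\|_s\, d\tau$ and estimate the integrand using \eqref{eq:NF.1}. The term $\Lambda z^{\leq}$ and $X_{Z_0}(z^{\leq})$ are where I expect the subtlety: a priori they are of size $O(R)$ and $O(R^2)$ respectively, not $O(R^{r+1})$, so they cannot be bounded termwise. The point (as emphasized in items (i) and (iii)–(iv) preceding the lemma, and as in \cite{BG06,Bam03}) is that $Z_0$ is in classical Birkhoff normal form: it is a function of the actions $|z_{(j,+)}z_{(j,-)}|$ up to the equivalence classes $[j]$, so $H_0 + Z_0$ Poisson-commutes with each block action, and the combined flow of $\Lambda + X_{Z_0}$ on $z^{\leq}$ is an \emph{isometry} of $\ell^2_s$ restricted to the low modes — here one uses Hypothesis \ref{hypo2} (dyadic equivalence classes, non-resonance \eqref{nr.1}) exactly as flagged in the remark after that hypothesis. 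Concretely I would pass to the variable $w^{\leq}(t) := e^{-t\Lambda}\Phi^{-t}_{Z_0}(z^{\leq}(t))$ (or equivalently use that the flow of $\Lambda + X_{Z_0}$ preserves all $\ell^2_s$ norms of the low part), so that $\dot w^{\leq}$ equals the conjugation of the two genuinely small terms $\Pi^{\leq}X_{Z_2}(z^{\leq},z^{\perp})$ and $\Pi^{\leq}X_{\cR^{(r)}}(z^{\leq},z^{\perp})$ by an $\ell^2_s$-isometry, hence has the same norm.

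Then the two remaining terms are bounded using results already proved: $\|\Pi^{\leq}X_{Z_2}(z^{\leq},z^{\perp})\|_s \sleq_r R^{r+1}$ by \eqref{stimaZeta2} in Corollary \ref{NF.corollary}(ii), and $\|\Pi^{\leq}X_{\cR^{(r)}}(z^{\leq},z^{\perp})\|_s \le \|X_{\cR^{(r)}}(z)\|_s \sleq_r R^{r+1}$ by \eqref{rtk.1fin}, both valid as long as $\|z(t)\|_s < R$, i.e. for $|t| < T_R$. Combining, $\|\dot w^{\leq}(\tau)\|_s \sleq_r R^{r+1}$, and since $\|z^{\leq}(t)-z^{\leq}(0)\|_s$ is controlled by $\|w^{\leq}(t)-w^{\leq}(0)\|_s$ up to the ($r$-dependent) equivalence constant between the norms, integrating in time gives \eqref{bassi.1} with a suitable $K_1$. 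The main obstacle is purely the first step — making rigorous that the $\Lambda + X_{Z_0}$ dynamics acts isometrically on the low modes, which is exactly the role of Hypothesis \ref{hypo2} and of the dyadic block structure of the equivalence classes; everything after that is a one-line Grönwall-free integration.
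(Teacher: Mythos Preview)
There is a genuine gap at your final step. Even if $w^{\leq}(t):=\Psi^{-t}(z^{\leq}(t))$ (with $\Psi^t$ the flow of $H_0+Z_0$ on the low modes) satisfies $\|w^{\leq}(t)-w^{\leq}(0)\|_s\lesssim R^{r+1}|t|$, this does \emph{not} control $\|z^{\leq}(t)-z^{\leq}(0)\|_s$. Writing $z^{\leq}(t)-z^{\leq}(0)=\bigl[\Psi^t(w^{\leq}(t))-\Psi^t(w^{\leq}(0))\bigr]+\bigl[\Psi^t(z^{\leq}(0))-z^{\leq}(0)\bigr]$, the second bracket is the displacement of $z^{\leq}(0)$ under the unperturbed flow, and already the linear part $e^{t\Lambda}$ rotates each mode by $e^{it\omega_j}$, giving a contribution of size $O(R)$, not $O(R^{r+1}|t|)$. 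In fact a literal estimate on the norm of the difference cannot hold for this reason; what is really obtained---and what is actually used in \eqref{claimR}---is a bound on $\|z^{\leq}(t)\|_s$ itself, or equivalently on $\big|\,\|z^{\leq}(t)\|_s-\|z^{\leq}(0)\|_s\,\big|$. A secondary issue: the map entering $\dot w^{\leq}=d\Psi^{-t}(z^{\leq})\cdot(\text{small terms})$ is the \emph{differential} of a nonlinear flow, and the fact that $\Psi^t$ preserves an equivalent norm does not by itself bound $d\Psi^{-t}$ uniformly over times $|t|\lesssim R^{-r}$.

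The paper's argument avoids both points by working directly with almost-conserved quantities rather than conjugating. It introduces the superactions $J_{[i]}:=\sum_{j\in[i]}|z_j|^2$ and the equivalent norm $|z|_s^2:=\sum_{e}\langle e\rangle^{2s}J_e$ on $\Pi^{\leq}\ell^2_s$ (the equivalence uses (NR.1)). From (NR.2) one gets $\{J_{[i]};Z_0\}=0$, and trivially $\{J_{[i]};H_0\}=0$, so
\[
\frac{d}{dt}|z^{\leq}|_s^2 \;=\; d\bigl(|z^{\leq}|_s^2\bigr)\bigl(X_{Z_2}+X_{\cR^{(r)}}\bigr)\;=\;O(R^{r+2})
\]
by \eqref{stimaZeta2} and \eqref{rtk.1fin}. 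This directly controls $|z^{\leq}(t)|_s$ and hence $\|z^{\leq}(t)\|_s$, with no nonlinear flow to invert and no linearized flow to estimate. Your conjugation idea can be salvaged to yield the same \emph{norm} bound (since $\|z^{\leq}(t)\|_s\simeq |z^{\leq}(t)|_s=|w^{\leq}(t)|_s$), but not the difference estimate you were aiming for.
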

\proof For $i\in\Z^d$, define the ``superaction''
$$
J_{[i]}:=\sum_{j\in[i]}z_{(j,-)}z_{(j,+)}\equiv
\sum_{j\in[i]}\left| z_{(j,-)}\right|^2\ ,
$$
where the sum is over the indexes belonging to the equivalence class
of $[i]$ according to Definition \ref{equi} and the second equality
follows from the reality of $u$.
Then, by the property of being in normal form and by properties
(NR.1), (NR.2) in Hypothesis \ref{hypo2}, we have
$\left\{J_{[i]};Z_0\right\}=0
$, so that $\dot
J_{[i]}=\left\{J_{[i]};Z_2\right\}+\left\{J_{[i]};\cR^{(r)}\right\} 
  $ Denote by $\cE$ the set of all the equivalence classes of Definition
  \ref{equi}, and, for $e\in\cE$, denote
  \begin{equation*}
\langle e\rangle:=\inf_{i\in e}\langle i\rangle
    \end{equation*}
  then the norm
  \begin{equation*}
\left|z\right|_s^2:=\sum_{e\in\cE}\langle e\rangle^{2s} J_{e}
  \end{equation*}
  is equivalent to the standard one on $\Pi^{\leq}\ell^2_s$.  Thus we
  have  
  \begin{align*}
\frac{d}{dt}\left|z^{\leq}\right|_s^2=\sum_{e\in\cE}\langle e\rangle^{2s}
\frac{d}{dt} J_{e} =\sum_{e\in\cE}\langle e\rangle^{2s}
\left(\left\{J_{e};Z_2\right\} +\left\{J_{e};\cR^{(r)}\right\} \right)
\\
= d\left( \left|z^{\leq}\right|_s^2\right)
\left(X_{Z_2}
(z^{\leq},z^{\perp})+X_{\cR^{(r)}}
(z^{\leq},z^{\perp})\right)\ . 
\end{align*}
Then by \eqref{rtk.1fin}-\eqref{stimaZeta2}
the last quantity is estimated by a constant times $R^{r+2}$.
From this, denoting by $K_0$ the constant in the
above inequality, one gets
$$
\left|z^{\leq}(t)-z^{\leq}(0)\right|_s\leq
K_0R^{r+1}\left|t\right|\ . 
$$
So we have 
$$
\left\|z^{\leq}(t)-z^{\leq}(0)\right\|_s\leq C
\left|z^{\leq}(t)-z^{\leq}(0)\right|_s \leq
CK_0R^{r+1}\left|t\right|
$$
from which, writing $K_1:=K_0C$ one gets the estimate \eqref{bassi.1}. 
\qed

We now provide a priori estimates on the high  frequencies
$z^{\perp}$ which evolve according to \eqref{eq:NF.2}.
\begin{lemma}
  \label{grandi}
For any $s>s_0$ there exists $K_2=K_2(s)$ such that for any real initial datum $z_0\equiv
(z^{\leq}_0,z^{\perp}_0)$ for \eqref{eq:NF.1}, \eqref{eq:NF.2}, fulfilling
$\left\|z_0\right\|_s\leq R/2$ (with $R$ small as in \eqref{marconi1}) 
the following holds.
One has that
\begin{equation}
  \label{grandi.1}
\left\|z^{\perp}(t)\right\|_s\leq
K_2\left\|z^{\perp}(0)\right\|+K_3R^{r+1}|t|\ ,\quad \forall t\ ,\quad \left|t\right|<T_R\,,
  \end{equation}
  where $T_{R}$ is given by \eqref{escapetime}.
\end{lemma}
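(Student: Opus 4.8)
The strategy is to run a differential inequality for $\|z^\perp(t)\|_s^2$ along the flow of \eqref{eq:NF.2}, exploiting the block structure of $Z_2$. First I would write
\[
\frac{d}{dt}\|z^\perp(t)\|_s^2
= 2\,\mathrm{Re}\,\langle z^\perp, \Lambda z^\perp\rangle_s
+ 2\,\mathrm{Re}\,\langle z^\perp, \Pi^\perp X_{Z_2}(z^\le,z^\perp)\rangle_s
+ 2\,\mathrm{Re}\,\langle z^\perp, \Pi^\perp X_{\cR^{(r)}}(z^\le,z^\perp)\rangle_s\,,
\]
where $\langle\cdot,\cdot\rangle_s$ is the weighted inner product inducing $\|\cdot\|_s$. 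The first term vanishes because $\Lambda$ is (formally) skew-adjoint with purely imaginary spectrum $\{\pm i\omega_j\}$ and commutes with the $\langle J\rangle^{2s}$ weights, so $\mathrm{Re}\,\langle z^\perp,\Lambda z^\perp\rangle_s=0$. The last term is bounded, using \eqref{rtk.1fin} and Cauchy--Schwarz, by $\|z^\perp\|_s\cdot C_r R^{r+1}\lesssim_s R^{r+2}$ as long as $|t|<T_R$ (so that $\|z(t)\|_s<R$).

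\textbf{The crucial term} is the middle one, involving $\Pi^\perp X_{Z_2}(z^\le,z^\perp)$. Here I would use properties (iii) and (iv) listed before the lemma: since $Z_2$ is homogeneous of degree $2$ in $z^\perp$ and in $N$-block normal form, for a frozen trajectory $z^\le(t)$ the map $z^\perp\mapsto \Pi^\perp X_{Z_2}(z^\le(t),z^\perp)$ is a time-dependent linear operator which, by condition~2 of Definition \ref{Nor.form}, couples only indexes $J_1,J_2$ with $\sigma_1\sigma_2=-1$ lying in the \emph{same} cluster $\Omega_\alpha$. Hence this operator is block-diagonal with respect to the partition $\{\Omega_\alpha\}$, and by the ``formal selfadjointness'' \eqref{forsa} each block is selfadjoint with respect to the standard $\ell^2$ inner product, so it preserves the $L^2$ norm restricted to each block. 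Now the point is that on each dyadic block $\Omega_\alpha$ the weights $\langle J\rangle^{2s}$ are all comparable (by \eqref{dya}, with constant $C_2^{2s}$), so the weighted norm $\|\cdot\|_s$ restricted to $\Pi^\perp\ell^2$ is \emph{equivalent, block by block}, to a weighted sum of the block $L^2$-norms with block weights $\simeq \langle\alpha\rangle^{2s}$. Since each $L^2$ block norm is conserved by the flow of $\Pi^\perp X_{Z_2}$, up to the fixed constant $C_2^{2s}$ the quantity $\|z^\perp\|_s$ is quasi-conserved by this part of the dynamics; equivalently $\mathrm{Re}\,\langle z^\perp,\Pi^\perp X_{Z_2}(z^\le,z^\perp)\rangle_s$ is controlled by a term proportional to $\|z^\perp\|_s^2$ with a constant that, after changing to the block-equivalent norm, does not accumulate in time. (Concretely I would run the estimate not on $\|z^\perp\|_s^2$ directly but on $\sum_\alpha \langle\alpha\rangle^{2s}\|\Pi_{\Omega_\alpha}z^\perp\|_{\ell^2}^2$, whose time derivative from the $Z_2$-term is exactly zero.)

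\textbf{Conclusion.} Putting the three contributions together, along the block-equivalent norm $|z^\perp|_s$ one gets $\frac{d}{dt}|z^\perp(t)|_s^2 \le C_s R^{r+2}$ for $|t|<T_R$, hence $|z^\perp(t)|_s^2 \le |z^\perp(0)|_s^2 + C_s R^{r+2}|t|$, and taking square roots and returning to $\|\cdot\|_s$ via the equivalence constants (bounded by $C_2^{s}$) yields \eqref{grandi.1} with suitable $K_2,K_3$; note $\sqrt{a^2+b}\le a+\sqrt b$ gives the additive form, and $\sqrt{R^{r+2}|t|}\lesssim R^{r+1}|t|^{1/2}$ which for $|t|<T_R$ one can further crudely bound, or one keeps the estimate in the cleaner form $\|z^\perp(t)\|_s\le K_2\|z^\perp(0)\| + K_3 R^{(r+2)/2}|t|^{1/2}$ and observes this suffices to close the bootstrap; I would however present it as stated since within $|t|\lesssim R^{-r}$ both forms are equivalent up to constants. \textbf{The main obstacle} is making the block-diagonality argument for $\Pi^\perp X_{Z_2}$ fully rigorous: one must check that the normal-form conditions of Definition \ref{Nor.form} really force every monomial contributing to $\Pi^\perp X_{Z_2}$ to have its two high indexes in a common cluster with opposite signs, so that differentiating in one high variable and projecting onto the high modes produces a genuinely block-diagonal, block-selfadjoint operator --- the low-mode variables $z^\le$ entering only as bounded time-dependent coefficients via $A_{J_1,J_2}(z^\le)$, with \eqref{forsa} supplying the selfadjointness of each block.
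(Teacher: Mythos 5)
Your proposal is correct in its key structural insights and arrives at the same heart of the matter as the paper: the operator $\Pi^\perp X_{Z_2}(z^\le,\cdot)$ is, by condition~2.2 of Definition~\ref{Nor.form}, block-diagonal with respect to the partition $\{\Omega_\alpha\}$, each block is selfadjoint by \eqref{forsa}, and one should therefore measure $z^\perp$ with the block-equivalent norm $\sum_\alpha n(\alpha)^{2s}\|\Pi_\alpha z^\perp\|_0^2$, for which the $Z_2$ and $\Lambda$ contributions vanish identically. That is exactly the paper's observation. Where you diverge is the final estimation technique: the paper isolates the linear, time-dependent ``normal form part'' $\dot z^\perp = \Lambda z^\perp + \Pi^\perp X_{Z_2}(z^\le(t),z^\perp)$, shows its propagator $\mathcal{U}(t,\tau)$ is uniformly bounded on $\ell^2_s$ (precisely because of the block structure), and then adds back $\Pi^\perp X_{\mathcal{R}^{(r)}}$ via Duhamel's formula, which immediately yields the linear-in-$|t|$ bound \eqref{grandi.1}. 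You instead run a differential inequality on $\|z^\perp\|_s^2$. This is a legitimate alternative, but as you write it you bound the Cauchy--Schwarz cross term by $C R^{r+2}$ (using $|z^\perp|_s\le R$ too early), which after integration and a square root gives the weaker $K_3 R^{(r+2)/2}|t|^{1/2}$ form rather than $K_3 R^{r+1}|t|$. You flag this yourself and correctly note that both forms close the bootstrap on the relevant time scale $|t|\lesssim R^{-r}$. Note, however, that you could recover the paper's sharper form within your own framework simply by keeping the inequality $\bigl|\frac{d}{dt}|z^\perp|_s^2\bigr|\le 2|z^\perp|_s\,C_rR^{r+1}$, dividing by $2|z^\perp|_s$, and integrating $\bigl|\frac{d}{dt}|z^\perp|_s\bigr|\le C_rR^{r+1}$ directly; the Duhamel route in the paper achieves exactly this without having to worry about the set where $|z^\perp|_s$ vanishes. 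So: same key lemma, different final bookkeeping, paper's version cleaner, yours adequate once the suboptimality is noticed (as you did).
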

\proof First, we denote by $\cZ(z^{\leq}):\Pi^\perp\ell^2\to
\Pi^\perp\ell^2$ the family of linear operator
s.t. $X_{Z_2}(z^{\leq},z^{\perp})=\cZ(z^{\leq}) z^{\perp}$; We also
write $\cZ(t):=\cZ(z^{\leq}(t))$, with $z^{\leq}(t)$ the projection on
low modes of the considered solution. We now introduce some further
notations. For any $z \in \Pi^{\perp}\ell^2$, we introduce the
projector $\Pi_\alpha$ associated to the block $\Omega_\alpha$ of the
partition. More precisely, for any $\alpha $, we define
\begin{equation}\label{proiettore Pi alpha}
\Pi_\alpha : \Pi^{\perp}\ell^2 \to \Pi^{\perp}\ell^2, \quad \Pi_\alpha
u:=
\left\{
\begin{matrix}
  z_{(j,\sigma)} &\text{if}&j\in\Omega_\alpha
  \\
  0&\text{if}&j\not\in\Omega_\alpha
\end{matrix}\right.\ .
\end{equation}
Then any sequence $z \in \Pi^{\perp}\ell^2 $ can be written as 
\begin{equation}\label{spectral decomposition Bourgain-Delort}
z = \sum_{\alpha} z_\alpha, \quad z_\alpha := \Pi_\alpha u \ . 
\end{equation}
By the property $2.2$ of Definition \ref{Nor.form}, 
the normal form operator ${\cal Z}(t)$ has a block-diagonal structure, namely it can be written as 
\begin{equation}\label{decomposizione cal Z (t)}
{\cal Z}(t) = \sum_{\alpha} {\cal Z}_\alpha(t), \quad {\cal
  Z}_\alpha(t) := \Pi_\alpha {\cal Z}(t) \Pi_\alpha\  . 
\end{equation}
For any block $\Omega_\alpha$, we define 
\begin{equation*}
n(\alpha) := {\rm min}_{j \in \Omega_\alpha} \langle j \rangle 
\end{equation*}
and for any $z \in \ell^2_s$, we define the norm 
\begin{equation*}
\bral z\brar_s := \Big( \sum_{\alpha } n(\alpha)^{2 s} \|  z_\alpha \|_0\Big)^{\frac12}\,.
\end{equation*}
By using the dyadic property \eqref{dya}, one has that the norm 
$\bral\cdot \brar_s$ is equivalent to the $\ell^2_s$-norm $\| \cdot \|_s$.

Consider now the normal form part of equation \eqref{eq:NF.2}, namely
\begin{equation}
  \label{NF.3}
\dot z^{\perp}=\Lambda z^{\perp}+\Pi^{\perp}X_{Z_2}(z^{\leq},
z^{\perp})\ ;
  \end{equation}
by \eqref{proiettore Pi alpha}, \eqref{spectral decomposition Bourgain-Delort}, 
\eqref{decomposizione cal Z (t)}, 
it is block diagonal, namely it is equivalent to the decoupled system 
\begin{equation*}
\partial_t z_\alpha(t) = - \ii \Lambda z_\alpha(t) + \ii {\cal
  Z}_\alpha(t) z_\alpha(t)\ . 
\end{equation*}
Since ${\cal Z}_\alpha$ is self-adjoint, one immediately has that 
\begin{equation}\label{stima u alpha L2}
\| z_\alpha(t) \|_{0} = \| z_\alpha(t_0) \|_{0}, \quad \forall t, t_0
\in [- T_R, T_R]\,, \quad \forall \alpha\,.
\end{equation}
Therefore, for any $t \in [- T_R, T_R]$, for the solution of
\eqref{NF.3} one has 
\[
\begin{aligned}
\| z(t) \|_s^2 & {\lesssim_s} \, \bral z(t)\brar_s^2 \, \lesssim_s \,
\sum_{\alpha } n(\alpha)^{2 s} \| z_\alpha(t) \|_{0}^2  
\\&  
\stackrel{\eqref{stima u alpha L2}}{\lesssim_s} 
\sum_{\alpha }
n(\alpha)^{2 s} \| z_\alpha(t_0) \|_{0}^2 \lesssim_s \, \bral z_0\brar_s^2\,
{\lesssim_s} 
\| z_0\|_s^2 \, ,
\end{aligned}
\]
so that, denoting by $\cU(t,\tau)$ the flow map of \eqref{NF.3}, one
has
\begin{equation*}
\left\|\cU(t,\tau)z_0\right\|_{s}\leq
K_2\left\|z_0\right\|_s\ ,\ \forall t\ .
  \end{equation*}
Consider now \eqref{eq:NF.2}. Using Duhamel formula one gets
\[
z^{\perp}(t)=\cU(t,0)z_0+\int_0^t\cU(t,\tau)\Pi^{\perp}X_{\cR}
(z^{\leq}(\tau),z^{\perp }(\tau))d\tau\ ,
\]
from which (using also \eqref{rtk.1fin}) one deduces
\[
\left\|z^{\perp}(t)\right\|_s\leq
K_2\left\|z^{\perp}_0\right\|_s+\int_0^tK_rR^{r+1}d\tau \ ,
\]
and the estimate \eqref{grandi.1}.
\qed

\begin{proof}[{\bf Conclusion of the proof of Theorem \ref{main.abs}}]
By Lemmas \ref{piccoli}, \ref{grandi}
(see estimates \eqref{bassi.1}, \eqref{grandi.1}) we have that the claim \eqref{claimR}
holds. By a standard bootstrap argument one can show that (recall \eqref{escapetime}, \eqref{marconi1})
$T_{R}\gtrsim \epsilon^{-r}$. This implies the thesis.
\end{proof}

\section{Applications}\label{appli}

Let $\be_1,...,\be_d$ be a basis of $\R^d$ and let
\begin{equation}
  \label{gamma}
\Gamma:=\Big\{x\in\R^d\ :\ x=\sum_{j=1}^d2\pi n_j\be_j\ ,\quad n_j\in\Z\Big\}
  \end{equation}
be a maximal dimensional lattice. We denote
$\T^d_\Gamma:=\R^d/\Gamma$.

To fit our scheme it is convenient to introduce in $\T^d_\Gamma$ the
basis given by $\be_1,...,\be_d$, so that the functions turn out to be
defined on the standard torus $\mathbb{T}^d:=\mathbb{R}^{d}/(2\pi\Z)^d$, but endowed by the metric
$\tg_{ij}:=\be_j\cdot\be_j$. In particular the Laplacian turns out to be
\begin{equation}\label{deltaGG}
\Delta_g:=\sum_{l,n=1}^{d}g_{ln}\partial_{x_{l}}\partial_{x_{n}}\ ,\qquad x=(x_1,\ldots,x_{d})\in \T^{d}\,,
\end{equation}
where $g_{ln}$ is the inverse of the matrix $\tg_{ij}$. The positive
definite symmetric quadratic form of equation \eqref{modulo} is then defined by
$$
g(k,k):=\sum_{l,n=1}^{d}g_{ln}k_lk_n\,,\qquad \forall\, k\in \Z^{d}\,.
$$
The coefficients $g_{ln}$, $l,n=1,\ldots, d$, of the metric $g$ above 
can be seen as parameters that will be chosen 
in the set we now introduce. We also assume the symmetry $g_{i j} = g_{j i}$ for any $i, j = 1, \ldots, d$, hence we identify the metric $g$ with $(g_{i j})_{i \leq j}$, namely we identify the space of symmetric metrics with $\R^{\frac{d(d + 1)}{2}}$. We denote by $\| g \|_{2}^2 := \sum_{i , j} |g_{i j}|^2$
\begin{definition}\label{parametrimetrica}
Consider the open set
\[
\begin{aligned}
\mathcal{G}_{0}&:=\left\{
\left(g_{ij}\right)_{i\leq j}\in \R^{\frac{d (d + 1)}{2}}\; : \; \inf_{x \neq 0} \frac{g(x, x)}{|x|^2} > 0
\right\}\,. 
\end{aligned}
\]
Fix $\tau_{*}:= \frac{d(d + 1)}{2} +1$
We then define the set of admissible metrics as follows.
$$
{\cal G} := \cup_{\gamma > 0} {\cal G}_\gamma
$$
where 
$$
\begin{aligned}
{\cal G}_\gamma & := \Big\{ g \in {\cal G}_0  : \Big| \sum_{i \leq j} g_{i j} \ell_{i j} \Big| \geq \dfrac{\gamma}{\Big(\sum_{i \leq j} |\ell_{i j}| \Big)^{\tau_*}}  \\
& \qquad \forall \ell \equiv (\ell_{i j})_{i \leq j} \in \R^{\frac{d(d + 1)}{2}} \setminus \{ 0 \}\Big\}
\end{aligned}
$$
\end{definition}

\begin{remark}\label{remark admissible set}
The set ${\cal G}_\gamma$ above satisfies a diophantine estimate $|({\cal G}_0 \cap B_R) \setminus {\cal G}_\gamma| \lesssim \gamma$ ($B_R$ is the ball in $\R^{\frac{d(d + 1)}{2}}$), implying that ${\cal G}$ has full measure in ${\cal G}_0 $ (we denote by $| \cdot |$ the Lebesgue measure). We also point out that in Section \ref{Schrodinger potenziale}, we only take the metric $g \in {\cal G}_0$ and we shall use the convolution potential in order to impose the non-resonance conditions. For the other applications, namely in sections \ref{section beam}, \ref{sec:QHD}, \ref{plane} we shall use that the metric $g$ is of the form $g = \beta \bar g$, with $\bar g$ in the set of the admissible metrics ${\cal G}$. We then use the parameter $\beta$, in order to verify the non-resonance conditions required. 
\end{remark}

\subsection{Schr\"odinger equations with convolutions potentials}\label{Schrodinger potenziale}
We consider Schr\"odinger equations of the form 
\begin{equation}\label{convoluzione}
\ii \partial_t \psi = - \Delta_g \psi + V * \psi + f(|\psi|^2)\psi , \quad x \in \T^d
\end{equation}
where $\Delta_{g}$ is in \eqref{deltaGG} with $g\in \mathcal{G}_0$ (see Def. \ref{parametrimetrica}),
$V$ is a potential, $*$ denotes the convolution
 and the nonlinearity $f$ is of class $C^\infty(\R,\R)$ in a
neighborhood of the origin and $f(0) = 0$.  
Equation \eqref{convoluzione}
is Hamiltonian with Hamiltonian function
\begin{equation}
  \label{ham.schro}
H=\int_{\T^d}\left(\nabla \psi \cdot \nabla \varphi+\varphi(V*\psi)+F(\psi \varphi)\right)dx
\end{equation}
where $F$ is a primitive of $f$ and $\varphi$ is a variable conjugated
to $\psi$. To get equation \eqref{convoluzione} one has to restrict to
the invariant manifold $\varphi=\overline\psi$. 

Fix $n\geq 0$ and $R>0$, then the potential $V$ is chosen in the space $\cV$
given by
\begin{equation}
  \label{cv}
\cV:=\left\{V(x)=\frac{1}{|\T^d|_g}\sum_{k\in\Z^d}\hat V_ke^{ik\cdot
  x}\ :\ 
  \hat V_k\langle
k\rangle^n\in\left[-\frac{1}{2},\frac{1}{2} \right]\ ,\ \forall
k\in\Z^d  \right\}\ ,
  \end{equation}
which we endow with the product probability measure. Here and below
$|\T^d|_g$ is the measure of the torus induced by the metric $g$.

\begin{theorem}
  \label{nsl.esti}
There exists a set $\cV^{(res)}\subset\cV$ with zero measure such that for any
$V\in \cV\setminus \cV^{(res)}$ the following holds.
For any $r\in\N$, there exists $ s_r>d/2$ such that for any
$s>s_r$ there is 
 $\epsilon_s>0$ and $C>0$ such that if the initial datum for
\eqref{convoluzione} belongs to $H^s$ and fulfills
$\epsilon:=\left|\psi\right|_s<\epsilon_s$ then
\begin{equation*}
\left\|\psi(t)\right\|_s\leq C\epsilon\,,\quad \text{for\ all}\,\quad 
\left|t\right|\leq C\epsilon^{-r}\,.
  \end{equation*}
  \end{theorem}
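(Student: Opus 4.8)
The plan is to verify that equation \eqref{convoluzione} fits the abstract framework of Theorem \ref{main.abs}, so that the conclusion follows directly once Hypotheses \ref{hypo1} and \ref{hypo2} are checked for a full-measure set of potentials. First I would pass to the Fourier side: writing $\psi = \sum_k \psi_k e^{\ii k \cdot x}/\sqrt{|\T^d|_g}$ and introducing the pair of conjugate variables $(\psi, \varphi)$ with $\varphi = \overline{\psi}$ (as indicated after \eqref{ham.schro}), one sets $u_{(k,+)} := \psi_k$ and $u_{(k,-)} := \overline{\psi_k} = \varphi_k$. The quadratic part of the Hamiltonian \eqref{ham.schro} becomes $H_0 = \sum_k \omega_k u_{(k,+)} u_{(k,-)}$ with frequencies
\[
\omega_k := g(k,k) + \hat V_k \, ,
\]
which is exactly the form \eqref{H0}. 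The nonlinear term $\int_{\T^d} F(\psi\varphi)\,dx$ is a smooth function of $u$ vanishing to order $\geq 4$ (since $f(0)=0$ implies $F$ vanishes to order $2$ in its argument, hence to order $4$ in $u$); I would check it belongs to the class $\cP$ of Definition \ref{funzP}, i.e.\ momentum conservation (from the integration over $\T^d$), reality (from $\varphi = \overline{\psi}$), boundedness of the coefficients, and smoothness of the associated vector field on $\ell^2_s$ for $s > d/2$ (a Sobolev algebra argument, since the nonlinearity is semilinear and bounded).

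Next I would verify the three conditions of Hypothesis \ref{hypo1}. Condition F.1 holds with $\beta = 2$: since $g \in \mathcal{G}_0$ one has $g(k,k) \simeq |k|^2$, and $|\hat V_k| \leq \tfrac12 \langle k\rangle^{-n} \leq \tfrac12$ is a lower-order perturbation, so $\tfrac{1}{C_1}|k|^2 \leq \omega_k \leq C_1 |k|^2$ for $|k|$ large. For F.3, the key input is \emph{Bourgain's Lemma on the localization of resonant sites}: for the Laplacian $g(k,k)$ on a general torus this provides a partition $\Z^d = \bigcup_\alpha \Omega_\alpha$ into a finite-dimensional central cluster and dyadic clusters separated in the sense of \eqref{separa}; since $\omega_k$ differs from $g(k,k)$ by a bounded quantity, one checks the separation estimate F.3.2 survives (with $\delta$ coming from Bourgain's Lemma, possibly after shrinking). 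The dyadic/finite dichotomy F.3.1 is part of the statement of that Lemma. Condition F.2 — the non-resonance $|\sum_l \sigma_l \omega_{j_l}| \geq \gamma_r N^{-\tau_r}$ whenever the sum is nonzero — is where the convolution potential is used: writing $\sum_l \sigma_l \omega_{j_l} = \sum_l \sigma_l g(j_l,j_l) + \sum_l \sigma_l \hat V_{j_l}$, the second sum is a (finite) linear combination of the parameters $\{\hat V_k\}_{|k| \leq N}$, and a standard measure-theoretic/complementary-set argument (à la \cite{Bou96, Bam03, BG06}, using the product probability measure on $\cV$) shows that outside a set of measure $\lesssim \gamma_r$ in the relevant finite-dimensional slice, the lower bound holds; intersecting over all $r$ and taking $\gamma_r \to 0$ gives a single full-measure set $\cV \setminus \cV^{(res)}$. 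Here one must be slightly careful: when the combination of the $g$-parts already forces the sum away from zero there is nothing to prove, and when it vanishes identically the $\hat V$-combination is a nontrivial linear form in the parameters (because the multi-index has nonzero "signed count" on some frequency, or one reduces to that case) so the small-divisor estimate applies.

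For Hypothesis \ref{hypo2}: condition (NR.1) asks that the equivalence classes $[i] = \{j : \omega_i = \omega_j\}$ be dyadic. Generically in $V$ the map $k \mapsto \omega_k$ is injective (two distinct $k, k'$ with $\omega_k = \omega_{k'}$ impose one linear equation on the parameters, of measure zero, and there are countably many such pairs), so each class is a singleton and (NR.1) is trivial; similarly (NR.2), the non-resonance $\omega_{j_1} \pm \cdots \pm \omega_{j_l} \neq 0$ for indices in distinct classes, is an at-most-countable union of measure-zero conditions on the $\hat V_k$. Absorbing all these into $\cV^{(res)}$ keeps it of zero measure. Once Hypotheses \ref{hypo1} and \ref{hypo2} are established, Theorem \ref{main.abs} applies verbatim and yields, for each $r$, the existence of $s_r$, $\epsilon_s$, $C$ and the bound $\|\psi(t)\|_s \leq C\epsilon$ for $|t| \leq C\epsilon^{-r}$, which is the claim. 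The main obstacle I expect is the verification of F.3 — that is, quoting and correctly applying Bourgain's geometric lemma about clustering of lattice points on level sets of an irrational quadratic form and checking that adding the bounded potential perturbation $\hat V_k$ does not destroy the separation property \eqref{separa}; the measure-theoretic estimates for F.2, (NR.1), (NR.2), while requiring care, are essentially routine once the linear-forms-in-parameters structure is identified.
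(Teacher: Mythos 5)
Your proposal is correct and follows essentially the same route as the paper: Fourier expansion giving $\omega_j = |j|_g^2 + \hat V_j$, $P\in\cP$, F.1 trivially with $\beta=2$, F.3 via the Berti--Maspero generalization of Bourgain's clustering lemma (checking that the bounded perturbation $\hat V_j$ preserves the separation estimate), and F.2 together with (NR.1), (NR.2) via a measure-theoretic small-divisor estimate on the product probability measure for $\cV$, exploiting that a nonzero multi-index $k$ makes $\omega\cdot k$ a nondegenerate affine function of some $\hat V_{\bar j}$. The paper condenses the (NR.1)/(NR.2) discussion by simply declaring $[j]:=j$ (i.e.\ singleton equivalence classes, generic in $V$) and deriving both non-resonance conditions from a single estimate $|\omega\cdot k|\geq\gamma N^{-\tau}$ for $k\in\cK^r_N$; your slightly more explicit justification of why the classes are generically singletons and why (NR.2) reduces to a countable union of measure-zero hyperplanes is consistent with, and fills in, what the paper leaves implicit.
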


We are now going to prove this theorem.
To fit our scheme simply introduce
the Fourier coefficients 
\begin{equation*}
\psi(x)=\frac{1}{|\T^d|_g^{1/2}} \sum_{j\in\Z^d}u_{(j,+)}e^{ij\cdot x} \ ,
\quad  
\varphi(x)=\frac{1}{|\T^d|_g^{1/2}} \sum_{j\in\Z^d}u_{(j,-)}e^{-ij\cdot x}\ .
\end{equation*}
 In these variables the equation \eqref{convoluzione} takes the form
\eqref{Ham.eq.1} 
with $H=H_0+P$, $H_0$ of the form \eqref{H0} with
frequencies
\begin{equation}
  \label{fre.schro}
\omega_j:=\left|j\right|_g^2+\hat V_j\ ,
\end{equation}
and $P$
obtained by substituting in the $F$ dependent term of the Hamiltonian
\eqref{ham.schro}. It is easy to see that the perturbation is of class
$\cP$ of Definition \ref{funzP}.

In order to apply our abstract Birkhoff normal form theorem, we only
need to verify the Hypotheses \ref{hypo1}, \ref{hypo2}.
The hypothesis (F.1) in Hyp. \ref{hypo1} holds trivially with $\beta=2$ using \eqref{fre.schro}.

The hypothesis (F.3) follows by the generalization of the
Bourgain's Lemma proved in \cite{BerMas}. Precisely we now prove the
following lemma.

\begin{lemma}
  \label{parti.nls}
The assumption (F.3) of Hypothesis \ref{hypo1} holds.
  \end{lemma}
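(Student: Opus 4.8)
The plan is to show that the frequencies $\omega_j = |j|_g^2 + \hat V_j$ admit a partition of $\Z^d$ satisfying (F.3.1) and (F.3.2). Since the convolution potentials $\hat V_j$ are bounded (indeed $|\hat V_j| \le \tfrac12 \langle j\rangle^{-n}$), they are a uniformly bounded perturbation of $|j|_g^2$, so one expects that the geometry of the ``resonant sites'' is governed by the quadratic part $|j|_g^2$, exactly the situation covered by the generalization of Bourgain's Lemma in \cite{BerMas}.

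First I would recall precisely what \cite{BerMas} provides: for a fixed positive-definite quadratic form $g$ on $\Z^d$ and any threshold, a partition $\Z^d = \bigcup_\alpha \Omega_\alpha$ into clusters, each of which is either contained in a ball of fixed radius around the origin, or dyadic in the sense of \eqref{dya} (i.e. $\sup_{j\in\Omega_\alpha}|j| \le C_2 \inf_{j\in\Omega_\alpha}|j|$), with the separation property that distinct clusters are far apart: for $j\in\Omega_\alpha$, $i\in\Omega_\beta$ with $\alpha\ne\beta$, one has $|i-j| + \big||i|_g^2 - |j|_g^2\big| \gtrsim |i|^{\delta_0} + |j|^{\delta_0}$ for a suitable $\delta_0 = \delta_0(d)>0$. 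This is exactly (F.3.1) together with (F.3.2) but with $\omega_i - \omega_j$ replaced by $|i|_g^2 - |j|_g^2$.

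Then the only thing left is to absorb the potential. I would argue: $\omega_i - \omega_j = (|i|_g^2 - |j|_g^2) + (\hat V_i - \hat V_j)$, and $|\hat V_i - \hat V_j| \le 1$. Hence
\[
|i - j| + |\omega_i - \omega_j| \ge |i-j| + \big||i|_g^2 - |j|_g^2\big| - 1 \gtrsim |i|^{\delta_0} + |j|^{\delta_0} - 1.
\]
This is $\gtrsim |i|^{\delta} + |j|^{\delta}$ for any $0 < \delta < \delta_0$, \emph{provided} $|i|$ or $|j|$ is large enough. For the finitely many pairs $(i,j)$ lying in the bounded clusters — or, more precisely, for pairs where the right side is not yet dominant — I would simply merge all the bounded clusters into one single finite cluster centered at the origin (still allowed by (F.3.1)), so that the separation estimate \eqref{separa} only needs to be checked for pairs in which at least one of $|i|, |j|$ exceeds a fixed constant; for such pairs the subtracted $1$ is harmless and one gets \eqref{separa} with the stated $\delta$ and a suitable $C_3$. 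One also checks that merging finitely many bounded clusters does not destroy (F.3.2) among the remaining (dyadic) clusters, since those were already separated from everything.

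The main obstacle — really the only non-routine point — is the precise invocation of \cite{BerMas}: one must make sure that the version of the Bourgain-type partition lemma quoted there is stated for a general positive-definite quadratic form $g$ on $\Z^d$ (not just the Euclidean form on an irrational torus) and yields the separation in the mixed quantity $|i-j| + \big||i|_g^2 - |j|_g^2\big|$ with a uniform exponent $\delta_0$, as this is exactly the structural input that replaces the Diophantine condition \eqref{seconda}. Once that statement is in hand, the passage to $\omega_j$ is the elementary perturbation argument above.
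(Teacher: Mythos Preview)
Your proposal is correct and follows essentially the same route as the paper: invoke the Berti--Maspero partition for the quadratic form $|j|_g^2$, then absorb the uniformly bounded correction $\hat V_j$ via the triangle inequality, noting that the loss of the constant $1$ is harmless once $|i|^\delta+|j|^\delta$ exceeds a fixed threshold. The paper handles the tail the same way (keeping the same $\delta$ but halving the constant $C_0$ rather than lowering the exponent), and, like you, leaves the finitely many small-index clusters to the ``finite cluster at the origin'' clause of (F.3.1).
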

\proof
Let
$\Omega_\alpha$ be the partition of $\Z^d$ constructed in Theorem $2.1$
 of \cite{BerMas}. It 
satisfies the properties 
\begin{equation*}
\begin{aligned}
 ||j|^2 - |j'|^2|+ {|j-j'|} &\geq C_0 (|j|^\delta + |j'|^\delta)\,, \qquad j \in \Omega_\alpha\,, 
 \quad j' \in \Omega_\beta\,, \quad \alpha \neq \beta\,, 
 \\
 \max_{j \in \Omega_\alpha} |j|& \lesssim \min_{j \in \Omega_\alpha} |j|\,, 
\end{aligned}
\end{equation*}
for some $C_0 > 0$ and $\delta \in (0, 1)$.
Clearly, one has that if $j \in \Omega_\alpha, j' \in \Omega_\beta$ 
with $\alpha \neq \beta$, one has that 
\[
\begin{aligned}
|\omega_j - \omega_{j'}|+ {|j-j'|}  & = ||j|^2 - |j'|^2 + \widehat V_j - \widehat V_{j'}|+ {|j-j'|}  
\\
&\geq ||j|^2 - |j'|^2| +{|j-j'|}- 2 \sup_{j \in \Z^d} |\widehat V_j|  
 \\
 & \geq
||j|^2 - |j'|^2| + {|j-j'|}- 1 
\\ & 
\geq C_0  (|j|^\delta + |j'|^\delta) - 1  
\geq C_0  (|j|^\delta + |j'|^\delta)/2\,,
\end{aligned}
\]
provided $|j|^\delta + |j'|^\delta \geq \frac{2}{C_0}$, which is
verified when $|j| + |j'| \geq C(\delta, C_0)$ for some constant
$C(\delta, C_0) > 0$. 
\qed

It remains to verify conditions (F.2) in Hyp. \ref{hypo1} and 
(NR.1), (NR.2) in Hyp. \ref{hypo2}.

\smallskip
\noindent
First we define $[j]:=j$.
Given $r$
and $N$ we define
\begin{align*}
  \Z^d_N:=\left\{j\in\Z^d\ :\ |j|\leq N\right\} \ ,
  \\
  \cK^r_N:=\left\{k\in\Z^{\Z^d_N}\ :\ 0\not=|k|\leq r\right\}\ ,
\end{align*}
and remark that its cardinality $\#\cK^r_N\leq N^{dr}$. For
$k\in\Z^{\Z^d_N}$, consider
$$
\cV^N_k(\gamma):=\left\{V\in\cV\ :\ \left|\omega\cdot
k\right|<\gamma\right\}\ .
$$
\begin{lemma}
  \label{meas.nls}
  One has 
  \begin{equation}
    \label{meas.nls.1}
\left| \cV^N_k(\gamma) \right|\leq 2\gamma N^n
  \end{equation}
  with $n$ the number in the definition of $\cV$ in \eqref{cv}.  
  \end{lemma}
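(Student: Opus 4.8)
The plan is to estimate the measure of the sublevel set $\cV^N_k(\gamma)$ by fixing all the Fourier coefficients $\hat V_i$ except for one well-chosen index, and then observing that $\omega\cdot k$ depends affinely (with unit coefficient, up to rescaling) on that remaining coefficient, so the condition $|\omega\cdot k|<\gamma$ confines it to an interval of length $2\gamma$ in a suitably normalized variable.

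More precisely, recalling \eqref{fre.schro}, we have $\omega_j=|j|_g^2+\hat V_j$, so
\[
\omega\cdot k=\sum_{j\in\Z^d_N}k_j\omega_j=\sum_{j\in\Z^d_N}k_j|j|_g^2+\sum_{j\in\Z^d_N}k_j\hat V_j\,.
\]
Since $k\neq 0$, there is an index $i\in\Z^d_N$ with $k_i\neq 0$; as $k_i$ is an integer, $|k_i|\geq1$. I would then freeze all coefficients $(\hat V_j)_{j\neq i}$ and regard $\omega\cdot k$ as an affine function of $\hat V_i$ with slope $k_i$. In the product probability measure on $\cV$ (the normalized Lebesgue measure on the cube $\hat V_i\langle i\rangle^n\in[-\tfrac12,\tfrac12]$ in each coordinate), the marginal in the variable $t:=\hat V_i\langle i\rangle^n$ is the uniform measure on $[-\tfrac12,\tfrac12]$. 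The set where $|\omega\cdot k|<\gamma$ corresponds, for each fixed choice of the other coordinates, to the set of $t$ with $|k_i\langle i\rangle^{-n}t + c|<\gamma$ for some constant $c$, which is an interval of length $\leq 2\gamma\langle i\rangle^n/|k_i|\leq 2\gamma\langle i\rangle^n\leq 2\gamma N^n$, since $|i|\leq N$ and $|k_i|\geq1$. Hence the conditional probability is at most $2\gamma N^n$, and by Fubini the total measure $|\cV^N_k(\gamma)|$ is bounded by the same quantity, giving \eqref{meas.nls.1}.

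The only mild subtlety is that the interval in the $t$-variable must be intersected with $[-\tfrac12,\tfrac12]$, which can only decrease its length, so the bound $2\gamma N^n$ still holds (indeed one could even gain the factor $\langle i\rangle^{-n}$ but that is not needed). I do not anticipate any real obstacle here: this is a standard one-variable sublevel-set estimate. The main point to be careful about is simply to record that $\omega\cdot k$ has a genuinely nonzero (integer) coefficient in front of at least one free parameter $\hat V_i$, which is immediate from $k\neq 0$, and that the underlying measure on that coordinate is absolutely continuous with a bounded density, which is true by the choice of the uniform product measure on $\cV$ in \eqref{cv}.
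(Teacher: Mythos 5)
Your argument is correct and matches the paper's proof in all essentials: both single out one coordinate $\hat V_i$ with $|k_i|\geq 1$, observe that $\omega\cdot k$ is affine in it with slope $k_i$, and bound the measure of the sublevel set by the one-dimensional layer estimate combined with the rescaling $\hat V_i\mapsto \hat V_i\langle i\rangle^n$. Your version is slightly more explicit about invoking Fubini and the normalization of the product measure, but the idea is identical.
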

\proof If $\cV^N_k(\gamma) $ is empty there is nothing to
prove. Assume that $\tilde V\in \cV^N_k(\gamma)$. 
Since $k\not=0$, there exists 
$ \bar j$ such that $k_{\bar j}\not=0$ and thus 
$\left|k_{\bar j}\right|\geq 1$; so we have
\[
\left|\frac{\partial \omega\cdot k}{\partial \hat V_{\bar j}}\right|\geq 1\ .
\]
It means that if $\cV^N_k(\gamma) $ is not empty it is contained in
the layer
\[
\left|\widehat{\tilde V_{\bar j}}\null'-\hat V'_{\bar j}\right|\leq \gamma\ ,
\]
whose measure is
$\gamma\langle \bar j\rangle^n\leq 2\gamma N^n$.
This implies \eqref{meas.nls.1}.
\qed
\begin{lemma}
  \label{tutto.res}
For any $r$ there exists $\tau$ and a set $\cV^{(res)}\subset\cV$ of zero
measure, s.t., if $V\in\cV\setminus \cV^{(res)}$ there exists
$\gamma>0$ s.t. for all $N\geq 1$ one has
\begin{equation*}
\left|\omega\cdot k\right|\geq\frac{\gamma}{N^\tau}\ ,\ \forall k\in
\cK^r_N\ .
  \end{equation*}
  \end{lemma}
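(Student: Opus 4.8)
The plan is to run a standard Borel--Cantelli / measure-exhaustion argument over the parameters $\hat V_k$, using the linear dependence of $\omega\cdot k$ on the potential coefficients established in Lemma \ref{meas.nls}. First I would fix $r$ and choose the exponent $\tau$ large enough, say $\tau > dr + n + 1$ (any $\tau$ with $\sum_{N\geq1} N^{dr}N^n/N^\tau < \infty$ works); this will make the union bound summable. For each $\gamma>0$ define the ``resonant'' set at threshold $\gamma$ by
\[
\cV^{(res)}_\gamma:=\bigcup_{N\geq1}\ \bigcup_{k\in\cK^r_N}\cV^N_k\Big(\frac{\gamma}{N^\tau}\Big)\ ,
\]
so that by construction $V\in\cV\setminus\cV^{(res)}_\gamma$ means exactly that $|\omega\cdot k|\geq \gamma/N^\tau$ for every $N\geq1$ and every $k\in\cK^r_N$, which is the desired conclusion with that particular $\gamma$.

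Next I would estimate the measure of $\cV^{(res)}_\gamma$. By Lemma \ref{meas.nls} each set $\cV^N_k(\gamma/N^\tau)$ has measure at most $2(\gamma/N^\tau)N^n = 2\gamma N^{n-\tau}$, and since $\#\cK^r_N\leq N^{dr}$ we get
\[
\big|\cV^{(res)}_\gamma\big|\leq \sum_{N\geq1} N^{dr}\cdot 2\gamma N^{n-\tau}=2\gamma\sum_{N\geq1}N^{dr+n-\tau}=:c(\tau,d,r,n)\,\gamma\ ,
\]
the series converging by the choice of $\tau$. Hence $|\cV^{(res)}_\gamma|\to 0$ as $\gamma\to 0^+$. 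Finally set
\[
\cV^{(res)}:=\bigcap_{m\in\N}\cV^{(res)}_{1/m}\ ;
\]
then $|\cV^{(res)}|\leq \inf_m c/m = 0$, so $\cV^{(res)}$ has zero measure, and if $V\in\cV\setminus\cV^{(res)}$ then $V\notin\cV^{(res)}_{1/m}$ for some $m$, i.e.\ the bound holds with $\gamma=1/m>0$. This is exactly the statement.

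The argument is entirely routine once Lemma \ref{meas.nls} is in hand; the only point requiring a little care is the bookkeeping of the two exponents, namely checking that the union bound over the (finitely many for each $N$, but unboundedly many overall) indices $k\in\cK^r_N$ together with the $N$-dependence of the measure bound produces a convergent series — this is guaranteed by taking $\tau$ strictly larger than $dr+n+1$. One should also note that $\cK^r_N$ and $\Z^d_N$ are nested in $N$, so a lattice vector $k$ with $|k|\le r$ that is supported in $\Z^d_{N_0}$ reappears for all $N\ge N_0$; this causes no problem since for such $k$ the constraint at level $N$ is weaker than at level $N_0$, and in any case the crude union bound above already accounts for every $(N,k)$ pair separately. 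There is no genuine obstacle here: the substance of the theorem is Lemma \ref{meas.nls} (equivalently, the nondegeneracy $\partial(\omega\cdot k)/\partial\hat V_{\bar j}=k_{\bar j}\neq0$), and the present lemma is just the measure-theoretic packaging of it.
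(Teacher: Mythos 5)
Your proof is correct and follows essentially the same route as the paper: define the resonant set at scale $\gamma$ as a union of the layers $\cV^N_k(\gamma/N^\tau)$ over $N$ and $k\in\cK^r_N$, bound its measure via Lemma \ref{meas.nls} and a union bound, and intersect over $\gamma\to0$ to obtain a zero-measure exceptional set. The only difference is that you choose $\tau>dr+n+1$ (so that $\sum_N N^{dr+n-\tau}<\infty$), whereas the paper takes the exponent $dr+2$, which makes the series $\sum_N N^{n-2}$ converge only when $n=0$; your bookkeeping is the correct one for general $n\geq0$.
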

\proof From Lemma \ref{meas.nls} it follows that the measure of the set
\[
\cV^{(res)}(\gamma):=\bigcup_{N\geq 1}
\bigcup_{k\in\cK^r_N}
\cV^N_k\left(\frac{\gamma}{ N^{dr+2}}\right)
\]
is estimated by a constant times $\gamma$. It follows that the set
\[
\cV^{(res)}:=\cap_{\gamma>0}\cV^{(res)}(\gamma)
\]
has zero measure and with this definition the lemma is proved. 
\qed

\subsection{Beam equation}\label{section beam}

In this section we study the beam equation 
\begin{equation}
  \label{beam}
\psi_{tt}+\Delta^2_g\psi+m\psi=-\frac{\partial F}{\partial
  \psi}+\sum_{l=1}^d \partial_{x_l}\frac{\partial
  F}{\partial (\partial_l \psi)}\ ,
\end{equation}
with $F(\psi,\partial_{x_1}\psi,...,\partial_{x_d}\psi)$ a function of class
$C^{\infty}(\R^{d+1};\R)$ in a neighborhood of the origin and having
a zero of order 2 at the origin. 

Introducing the variable $\varphi=\dot{\psi}\equiv\psi_{t}$,
it is well known that \eqref{beam} can be seen as an Hamiltonian system in the 
variables $(\psi,\varphi)$ 
 with Hamiltonian function
\begin{equation}
  \label{ham.beam}
H(\psi,\varphi):=\int_{\T^d}\left(\frac{\varphi^2}{2}
+\frac{\psi(\Delta_{g}^2+m)\psi}{2} 
+F(\psi,\partial_1\psi,...,\partial_d\psi)
\right)dx \ . 
\end{equation} 
In order to fulfill the diophantine non-resonance conditions 
on the
frequencies we need to make some restrictions on the metric $g$ whereas, we only require that the mass $m > 0$ is strictly positive. 
 More precisely, we consider $\bar g$ be a metric in the set of the admissible metrics ${\cal G}$ given in the definition \ref{parametrimetrica}. We consider a metric $g$ of the form 
\begin{equation}\label{bla bla metrica constrained}
g = \beta \bar g, \quad \beta \in {\cal B} := (\beta_1, \beta_2), \quad 0 < \beta_1 < \beta_2 < + \infty\,.
\end{equation}
we shall use the parameter $\beta$ in order to tune the resonances and to impose the non-resonance conditions required in order to apply Theorem \ref{main.abs}. The precise statement of the main theorem of this section is the following one. 
\begin{theorem}
  \label{main.beam}
Let $\overline g \in {\cal G}$, There exists a set of zero measure ${\cal B}^{(res)}\subset {\cal B}$ such that if
$\beta\in {\cal B} \setminus {\cal B}^{(res)}$ then for all $ r\in\N$ there
exist $s_r>d/2$ such that the following holds. 
For any $s>s_r$ there exist $ \epsilon_{rs},c,C$ 
such that
if the initial datum for \eqref{beam} fulfills
\begin{equation}
  \label{beam.dat}
\epsilon:=\left\|(\psi_0,\dot\psi_0)\right\|_s:=
\left\|\psi_0\right\|_{H^{s+2}}+
\left\|\dot \psi_0\right\|_{H^{s}}<\epsilon_{sr}\ , 
\end{equation}
then the corresponding solution satisfies 
\begin{equation}
  \label{stima.sol}
\left\| (\psi(t),\dot\psi(t))\right\|_{s}\leq
C\epsilon\,,\quad  \text{for}\quad \left|t\right|\leq c\epsilon^{-r}\,.
\end{equation}
\end{theorem}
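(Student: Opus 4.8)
The plan is to reduce Theorem \ref{main.beam} to the abstract Theorem \ref{main.abs} by recasting the beam equation \eqref{beam} in the Hamiltonian form \eqref{h.abs}--\eqref{H0} and then verifying Hypotheses \ref{hypo1} and \ref{hypo2} for a full-measure set of parameters $\beta$. First I would introduce, as in the Schr\"odinger case, the complex symplectic variables diagonalizing the linear part: setting $\Lambda_j := (|j|_g^4 + m)^{1/2} = (\beta^2 |j|_{\bar g}^4 + m)^{1/2}$ one writes $\psi = \sum_j (2\Lambda_j)^{-1/2}(u_{(j,+)} + \overline{u_{(-j,+)}})e^{\ii j\cdot x}$ and conjugately for $\varphi$, so that \eqref{ham.beam} becomes $H = H_0 + P$ with $H_0 = \sum_j \omega_j u_{(j,+)} u_{(j,-)}$, $\omega_j := \Lambda_j$, and $P$ the image of the $F$-dependent term. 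One checks that $P\in\cP$ in the sense of Definition \ref{funzP}: momentum conservation holds because $F$ is a function on $\T^d$, reality holds because $H$ is real, and the vector field is smooth on $\ell^2_s$ for $s>d/2$ because the weights $(2\Lambda_j)^{-1/2}$ are of order $|j|^{-1}$, so the gradient of $F$ and the extra $\partial_{x_l}$ in \eqref{beam} are compensated (this is exactly why the Sobolev index in \eqref{beam.dat} is shifted by $2$ on $\psi$). Condition (F.1) is then immediate with $\beta$-exponent equal to $2$: $\omega_j = (\beta^2|j|_{\bar g}^4 + m)^{1/2} \simeq |j|^2$ for $|j|$ large, uniformly for $\beta\in\cB$.

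Next I would verify (F.3). The partition $\{\Omega_\alpha\}$ is again the Bourgain--Berti--Maspero partition of Theorem 2.1 of \cite{BerMas}, which gives clusters that are either finite and centered at the origin or dyadic, and satisfy the separation $||j|^2 - |j'|^2| + |j - j'| \geq C_0(|j|^\delta + |j'|^\delta)$ for $j\in\Omega_\alpha$, $j'\in\Omega_\beta$, $\alpha\neq\beta$. Since $\omega_j \simeq |j|^2$ and, more precisely, $|\omega_j - \omega_{j'}| \simeq \big||j|_g^4 - |j'|_g^4\big| / (\omega_j + \omega_{j'}) \simeq \big||j|^2 - |j'|^2\big|\,(|j|+|j'|)/(|j|^2+|j'|^2)$ modulo lower-order terms, a short computation shows $|\omega_i - \omega_j| + |i - j| \gtrsim |i|^\delta + |j|^\delta$ on distinct clusters (possibly after shrinking $\delta$ and absorbing the denominators, exactly as in Lemma \ref{parti.nls}). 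For (NR.1) one observes that the equivalence classes $[i] = \{j : \omega_j = \omega_i\}$ coincide with $\{j : |j|_{\bar g}^4 = |i|_{\bar g}^4\}$, independent of $\beta$ and $m$, and these are contained in a single sphere, hence dyadic.

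The heart of the proof is the measure estimate verifying (F.2) and (NR.2), and this is the step I expect to be the main obstacle. Here one uses the parameter $\beta\in\cB = (\beta_1,\beta_2)$ as in Remark \ref{remark admissible set}. Fix $r$; for indices $j_1,\dots,j_p$ with $p\leq r$, $|j_l|\leq N$, and signs $\sigma_l$, one must bound from below $\big|\sum_l \sigma_l \omega_{j_l}(\beta)\big| = \big|\sum_l \sigma_l (\beta^2 |j_l|_{\bar g}^4 + m)^{1/2}\big|$ when it is nonzero. Since each $\omega_{j_l}$ is an analytic, non-degenerate function of $\beta$ on $\cB$ (its derivatives in $\beta$ do not all vanish, and in fact $\partial_\beta^k \omega_j$ grows like $|j|^4$ for suitable $k$), the function $\beta\mapsto \sum_l \sigma_l\omega_{j_l}(\beta)$ either vanishes identically or has a lower bound on a sub-interval by a quantitative transversality / Bakhtin--R\"ussmann-type lemma, with constants polynomial in $N$. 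The case where it vanishes identically is precisely the resonant case, and one shows it forces $\sum_l \sigma_l\omega_{j_l}\equiv 0$ on all of $\cB$; combined with the admissibility of $\bar g\in\cG$ — the Diophantine condition on the entries $\bar g_{ij}$ — one deduces that identical vanishing in $\beta$ together with momentum conservation implies the modes pair up into equal-$\omega$ classes, i.e. exactly the structure permitted in (NR.2). Then a Borel--Cantelli argument over $N$, summing measures of the bad sets $\{\beta : |\sum \sigma_l\omega_{j_l}| < \gamma N^{-\tau}\}$ (each of measure $\lesssim \gamma N^{-\tau} \cdot N^{\#}$ times the number of index choices $\lesssim N^{dr}$), produces a zero-measure exceptional set $\cB^{(res)}$ and, for $\beta\notin\cB^{(res)}$, constants $\gamma_r,\tau_r$ making \eqref{prima} and \eqref{nr.1} hold. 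With Hypotheses \ref{hypo1}, \ref{hypo2} in hand, Theorem \ref{main.abs} applies and yields \eqref{stima.sol}, where the norm on the left of \eqref{beam.dat}--\eqref{stima.sol} translates into $\|u\|_{\ell^2_s}$ under the diagonalizing change of variables; the delicate points are (i) making the transversality constants explicitly polynomial in $N$ so the series over $N$ converges, and (ii) handling the square-root nonlinearity of $\beta\mapsto\omega_j(\beta)$ uniformly as $|j|\to\infty$.
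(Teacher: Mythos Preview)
Your overall architecture is the paper's: diagonalize via \eqref{u+}--\eqref{u-}, check $P\in\cP$, get (F.1) with exponent $2$, get (F.3) from the Berti--Maspero partition using $\omega_j = |j|_g^2 + O(1)$, and reduce (NR.1) to the dyadic sphere $\{|j|_{\bar g}=|i|_{\bar g}\}$. All of that matches.

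The gap is in the non-resonance step. Your plan is to use analyticity in $\beta$ plus a generic R\"ussmann/Pyartli sublevel estimate. But such an estimate needs, for each nontrivial combination $\sum_l \sigma_l\omega_{j_l}(\beta)$, a \emph{quantitative} lower bound on some derivative, uniform over all index choices with $|j_l|\le N$ and polynomial in $N$; observing that each individual $\partial_\beta^k\omega_j$ is large does not prevent cancellation in the sum, and your dichotomy ``either vanishes identically or has a lower bound'' is not quantitative enough to sum over $N$. The paper closes this by two concrete moves you do not make. First, it changes variable from $\beta$ to $\zeta:=m/\beta^4$, so that (after factoring out $\beta^2$) the relevant frequencies become $\Omega_j(\zeta)=\sqrt{|j|_{\bar g}^4+\zeta}$, whose $\zeta$-derivatives are $C_n(|j|_{\bar g}^4+\zeta)^{1/2-n}$. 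Second, it computes the $K\times K$ matrix $\big(\partial_\zeta^{n}\Omega_{j_i}\big)$ for distinct moduli $|j_1|_{\bar g}<\cdots<|j_K|_{\bar g}\le N$ and recognises it as (a rescaling of) a Vandermonde matrix in $x_i:=(|j_i|_{\bar g}^4+\zeta)^{-1}$; the determinant then factors as $\prod_{i<\ell}\big||j_i|_{\bar g}^2-|j_\ell|_{\bar g}^2\big|$ times harmless powers. Here the Diophantine hypothesis $\bar g\in\cG$ is used \emph{quantitatively} (Lemma \ref{dista}) to bound each factor $\big||j_i|_{\bar g}^2-|j_\ell|_{\bar g}^2\big|\ge C\gamma N^{-2\tau_*}$ from below, yielding $|D|\ge C N^{-\eta K^2}$. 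This explicit polynomial lower bound is exactly what feeds the R\"ussmann-type measure estimate (Lemma \ref{v.001}) and makes the Borel--Cantelli sum converge.

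Two smaller points. Your invocation of momentum conservation inside the verification of (F.2) is misplaced: condition \eqref{prima} is required for \emph{all} $|J_l|\le N$, not just zero-momentum tuples; momentum only re-enters later, in Lemma \ref{nr.vera}. And your qualitative claim that ``identical vanishing in $\beta$ forces the modes to pair up'' is the right intuition for (NR.2), but the paper does not argue this way --- (NR.2) comes out of the same quantitative determinant bound, since a nonzero Vandermonde determinant already rules out identical vanishing whenever the $|j_l|_{\bar g}$ are not all paired.
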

We actually state also a corollary which state that there exists a full measure set of metrics (not only constrained to a given direction $\overline g$) for which the statements of Theorem \ref{main.beam} hold. 
Let $0 < \beta_1 < \beta_2$ and define 
\begin{equation}\label{cal G beta 1 beta 2}
{\cal G}_0(\beta_1, \beta_2) := \Big\{ g \in {\cal G}_0 : \beta_1 \leq \| g \|_2 \leq \beta_1 \Big\}\,. 
\end{equation}
where ${\cal G}_0$ is given in the definition \ref{parametrimetrica}. 
\begin{corollary}\label{remark misura piena beam}
There exists a zero measure set ${\cal G}^{(res)}_{\beta_1, \beta_2} \subseteq {\cal G}_0(\beta_1, \beta_2)$ such that for any $g \in {\cal G}_0(\beta_1, \beta_2) \setminus {\cal G}^{(res)}_{\beta_1, \beta_2}$ the conclusion of theorem \ref{main.beam} hold. 
\end{corollary}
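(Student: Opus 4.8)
The plan is to deduce Corollary \ref{remark misura piena beam} from Theorem \ref{main.beam} by a fibration argument: we decompose the cone $\mathcal{G}_0(\beta_1,\beta_2)$ of admissible metrics into rays through the origin, apply Theorem \ref{main.beam} along each ray with $\bar g$ the normalized direction, and then integrate the exceptional sets over the sphere of directions. First I would introduce polar-type coordinates on $\mathcal{G}_0 \subset \mathbb{R}^{d(d+1)/2}$: write $g = \beta \bar g$ with $\bar g \in \mathcal{S} := \{\bar g \in \mathcal{G}_0 : \|\bar g\|_2 = 1\}$ and $\beta = \|g\|_2 \in [\beta_1, \beta_2]$. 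The key observation is that the non-resonance conditions \eqref{prima}, \eqref{nr.1} for the beam frequencies $\omega_j = \sqrt{\beta^2 |j|_{\bar g}^4 + m}$ depend on $(\beta, \bar g)$ jointly, but for \emph{fixed} $\bar g \in \mathcal{G}$ (the Diophantine directions) Theorem \ref{main.beam} already gives a zero-measure exceptional set $\mathcal{B}^{(res)}(\bar g) \subset (\beta_1, \beta_2)$.

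The main steps are as follows. (1) Fix $\bar g \in \mathcal{S} \cap \mathcal{G}$. By Theorem \ref{main.beam} there is a set $\mathcal{B}^{(res)}(\bar g)$ of one-dimensional Lebesgue measure zero such that for $\beta \in (\beta_1,\beta_2) \setminus \mathcal{B}^{(res)}(\bar g)$ the conclusion holds for $g = \beta \bar g$. (2) Define
\[
\mathcal{G}^{(res)}_{\beta_1,\beta_2} := \Big\{ g \in \mathcal{G}_0(\beta_1,\beta_2) \;:\; \|g\|_2 \in \mathcal{B}^{(res)}(g / \|g\|_2) \Big\} \;\cup\; \Big\{ g \in \mathcal{G}_0(\beta_1,\beta_2) \;:\; g/\|g\|_2 \notin \mathcal{G} \Big\}.
\]
(3) Show this set has $\frac{d(d+1)}{2}$-dimensional Lebesgue measure zero. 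For the first piece, use the coarea/Fubini theorem in the polar coordinates $(\beta, \bar g)$: the Jacobian of $g \mapsto (\|g\|_2, g/\|g\|_2)$ is smooth and nonvanishing on $\mathcal{G}_0(\beta_1,\beta_2)$ (since $\beta_1 > 0$), so the measure of the first piece equals $\int_{\mathcal{S}} \big( \int_{(\beta_1,\beta_2)} \mathbf{1}_{\mathcal{B}^{(res)}(\bar g)}(\beta)\, w(\beta,\bar g)\, d\beta \big)\, d\sigma(\bar g)$ for a bounded weight $w$; the inner integral vanishes for every $\bar g \in \mathcal{G}$ by step (1), hence the whole integral is zero. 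For the second piece, Remark \ref{remark admissible set} says $\mathcal{G}$ has full measure in $\mathcal{G}_0$, and one checks (again by the polar decomposition, or directly from the scaling invariance of the Diophantine condition defining $\mathcal{G}$) that $\mathcal{G}$ is a cone, so $\mathcal{S} \setminus \mathcal{G}$ has zero surface measure, and the corresponding cone has zero volume. (4) For $g \in \mathcal{G}_0(\beta_1,\beta_2) \setminus \mathcal{G}^{(res)}_{\beta_1,\beta_2}$, we have $\bar g := g/\|g\|_2 \in \mathcal{G}$ and $\|g\|_2 \notin \mathcal{B}^{(res)}(\bar g)$, so Theorem \ref{main.beam} applies with this $\bar g$ and $\beta = \|g\|_2 \in (\beta_1, \beta_2)$, giving exactly the conclusion \eqref{stima.sol}.

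The measurability of $g \mapsto \mathbf{1}_{\mathcal{B}^{(res)}(\bar g)}$ as a function on $\mathcal{G}_0(\beta_1,\beta_2)$ is a minor technical point: one should track that the exceptional set in Theorem \ref{main.beam} is built (as in Lemmas \ref{meas.nls}--\ref{tutto.res} for the Schr\"odinger case, and its analogue for the beam equation) as a countable intersection over $\gamma$ of countable unions over $(N, k)$ of explicitly-defined sublevel sets of real-analytic functions of $(\beta, \bar g)$, hence jointly measurable; Fubini then legitimately applies. I expect this step --- verifying joint measurability and setting up the coarea formula with the correct Jacobian in the polar coordinates --- to be the only genuine obstacle, and it is purely a bookkeeping issue; the dynamical content is entirely inherited from Theorem \ref{main.beam}. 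Everything else is the elementary fact that a cone minus a zero-measure-per-ray subset is zero measure.
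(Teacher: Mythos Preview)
Your proposal is correct and follows essentially the same approach as the paper: both decompose $\mathcal{G}_0(\beta_1,\beta_2)$ into rays $g=\beta\bar g$, apply Theorem \ref{main.beam} along each ray with $\bar g$ Diophantine, and use Fubini in polar coordinates to conclude that the union of the one-dimensional exceptional sets has zero volume. The paper phrases the argument as two claims (first that $\mathcal{G}\cap\partial B_1$ has full surface measure, then that the good set has full volume), while you work directly with the complement and also note the scaling invariance of the Diophantine condition to handle the bad-direction piece; you are additionally more explicit about the joint measurability needed for Fubini, which the paper leaves implicit.
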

\noindent{\it Proof of Corollary \ref{remark misura piena beam}}.
To shorten notations in this proof, we denote by $n := \frac{d(d + 1)}{2}$. For any $\beta_1 \leq \beta \leq \beta_2$, we denote by 
$\sigma_\beta$ the surface $n - 1$ dimensional measure on the sphere $ \partial B_\beta := \{\| g \|_2 = \beta \}$. We now prove the following two claims
\begin{itemize}
\item{\bf Claim 1.} One has that the surface measure of all diophantine metrics ${\cal G}$ in ${\cal G}_0$ with norm equal 1 has full surface measure in ${\cal G}_0 \cap \partial B_1$, namely $\sigma_1({\cal G} \cap \partial B_1) = \sigma_1({\cal G}_0 \cap \partial B_1)$. 
\item{\bf Claim 2.} Let $\overline g \in {\cal G} \cap \partial B_1$ and let ${\cal B}_{\overline g} \subset (\beta_1, \beta_2)$ the full measure set provided in Theorem \ref{main.beam}. We shall prove that 
$$
{\cal G}_{\beta_1, \beta_2}^{(nr)} := \bigcup_{\overline g \in \partial B_1 \cap {\cal G}} {\cal B}_{\overline g}
$$
has full measure in ${\cal G}_0(\beta_1, \beta_2)$. 

\noindent
{\sc Proof of Claim 1.} Let $E \subset \partial B_1$. Then the set $$\beta E := \big\{ \beta x : x \in E \big\} \subset \partial B_\beta$$ and, by standard scaling properties, 
\begin{equation}\label{invariance bla}
\sigma_\beta(\beta E) = C_n \beta^{n - 1} \sigma_1(E)\quad \text{for some constant} \quad C_n > 0\,. 
\end{equation}
By \eqref{cal G beta 1 beta 2} and Remark \eqref{remark admissible set}, the set ${\cal G}_{\beta_1, \beta_2} := {\cal G}_0(\beta_1, \beta_2) \cap {\cal G}$ has full measure in the open set ${\cal G}_0(\beta_1, \beta_2)$. By Fubini one has 
\begin{equation}\label{fuffa scaling 0}
\begin{aligned}
|{\cal G}_0(\beta_1, \beta_2)| & = \int_{\beta_1}^{\beta_2} \sigma_{\beta}({\cal G}_0 \cap \partial B_\beta)\, d \beta \\
&  \stackrel{\eqref{invariance bla}}{=}  C_n \int_{\beta_1}^{\beta_2} \beta^{n - 1} \sigma_1({\cal G}_0 \cap \partial B_1)\, d \beta \\
& = \frac{C_n(\beta_2^n - \beta_1^n)}{n} \sigma_1\Big( {\cal G}_0 \cap \partial B_1 \Big)
\end{aligned}
\end{equation}
and similarly
\begin{equation}\label{fuffa scaling 1}
\begin{aligned}
|{\cal G}_{\beta_1, \beta_2}| & = \int_{\beta_1}^{\beta_2} \sigma_{\beta}({\cal G} \cap \partial B_\beta)\, d \beta \\
&  \stackrel{\eqref{invariance bla}}{=}  C_n \int_{\beta_1}^{\beta_2} \beta^{n - 1} \sigma_1({\cal G} \cap \partial B_1)\, d \beta \\
& = \frac{C_n(\beta_2^n - \beta_1^n)}{n} \sigma_1\Big( {\cal G}\cap \partial B_1 \Big)\,. 
\end{aligned}
\end{equation}
Since $|{\cal G}_0(\beta_1, \beta_2)| = |{\cal G}_{\beta_1, \beta_2}|$, by comparing \eqref{fuffa scaling 0}, \eqref{fuffa scaling 1}, one immediately gets that $\sigma_1({\cal G} \cap \partial B_1) = \sigma_1({\cal G}_0 \cap \partial B_1)$. 

\noindent
{\sc Proof of claim 2.} By Fubini, the Lebesgue measure $|{\cal G}_{\beta_1, \beta_2}^{(nr)}|$ is 
$$
\begin{aligned}
|{\cal G}_{\beta_1, \beta_2}^{(nr)}| &= \int_{{\cal G} \cap \partial B_1} |{\cal B}_{\overline g}|\, d \sigma_1(\overline g) = (\beta_2 - \beta_1) \int_{{\cal G} \cap \partial B_1}\, d \sigma_1(\overline g) = (\beta_1 - \beta_2) \sigma_1({\cal G} \cap \partial B_1) \\
& \stackrel{Claim\, 1}{=} (\beta_1 - \beta_2) \sigma_1(\partial B_1 \cap {\cal G}_0) = |{\cal G}_0(\beta_1, \beta_2)|\,.
\end{aligned}
$$
The claimed statement has then been proved. 
\end{itemize}
\qed

To prove Theorem \ref{main.beam} we first show how to fit our scheme and then
we prove that the Hypotheses of Theorem \ref{main.abs} are verified. 

To fit our scheme we first introduce new variables
\begin{align}
  \label{u+}
u_+(x):=\frac{1}{\sqrt 2}\left(\left(\Delta_{g}^2+m\right)^{1/4}\varphi+i
\left(\Delta_{g}^2+m\right)^{-1/4}\psi\right)\ ,\quad  
\\
u_-(x):=\frac{1}{\sqrt 2}\left(\left(\Delta_{g}^2+m\right)^{1/4}\varphi
-i \left(\Delta_{g}^2+m\right)^{-1/4}\psi\right)\ ,
\label{u-}
\end{align}
and consider their Fourier series, namely, for $\sigma=\pm1$
\begin{equation*}
u_\sigma(x)=\frac{1}{|\T^d|_g^{1/2}}
\sum_{j\in\Z^d}u_{(j,\sigma)}e^{\sigma ij\cdot
x}   \ .
\end{equation*}
In these variables the beam equation \eqref{beam} 
takes exactly the form
\eqref{Ham.eq.1} with $H=H_0+P$, $H_0$ of the form \eqref{H0} with
frequencies
\begin{equation}
  \label{fre.beam}
\omega_j:=\sqrt{\left|j\right|^4_g+m}\ .
\end{equation}
and $P$
obtained by substituting \eqref{u+}-\eqref{u-} in the $F$ dependent term of the Hamiltonian
\eqref{ham.beam}.
Thanks to the regularity assumption on $F$,
it is easy to see that the perturbation $P$ is of class $\cP$. 

The verification of (F.3) in Hyp. \ref{hypo1} 
goes exactly as in the case of the
Schr\"odinger equation, since the asymptotic of 
$\omega_j $ in \eqref{fre.beam} 
is $\omega_j = |j|^2_g + O(1)$. The asymptotic condition (F.1) is also
trivially fulfilled with $\beta=2$.
The main point is to verify the non-resonance conditions (F.2) and
the conditions 
(NR.1), (NR.2) in Hyp. \ref{hypo2}. 
This will occupy the rest of this subsection.

First of all we remark that the equivalence classes of Definition
\ref{equi} are simply defined by  
\begin{equation*}
[j]\equiv\left\{i\in\Z^d\ :\ |i|_g=|j|_g\right\}\ .
\end{equation*}
Now, recall that $g = \beta \overline g$ with $\overline g \in {\cal G}$ and $\beta \in {\cal B} = [\beta_1, \beta_2]$. One can easily verify that 
\begin{equation}\label{j g bar g}
|j|_g = \beta |j|_{\overline g}
\end{equation}
implying that $|j|_g = |k|_g$ if and only if $|j|_{\overline g} = |k|_{\overline g}$. Hence the equivalence class $[j]$ is 
$$
[j]\equiv\left\{i\in\Z^d\ :\ |i|_{\bar g}=|j|_{\bar g}\right\}\ .
$$

We are going to prove the following Lemma
\begin{lemma}
  \label{non.beam}
Let $\overline g \in {\cal G}$. There exists a set ${\cal B}^{(res)}\subset {\cal B}$ of zero measure, s.t., if
$\beta \in {\cal B} \setminus {\cal B}^{(res)}$ then (NR.1), (NR.2) and (F.2)
hold for the metric $g = \beta \, \overline g$. 
\end{lemma}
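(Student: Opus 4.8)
\textbf{Proof proposal for Lemma \ref{non.beam}.}

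The plan is to treat the three conditions (NR.1), (NR.2), (F.2) in turn, isolating the genuinely measure-theoretic content for the parameter $\beta$. Condition (NR.1) is immediate and $\beta$-independent: since the equivalence classes are $[j]=\{i : |i|_{\bar g}=|j|_{\bar g}\}$ and the metric $\bar g$ lies in ${\cal G}_0$, there are constants $0<c_1\le c_2$ with $c_1|j|\le |j|_{\bar g}\le c_2|j|$; hence $|i|=|j|_{\bar g}=|j|$ (after relabeling) forces $c_1|i|\le c_2|j|$, so the classes are dyadic with constant $C=c_2/c_1$. The frequencies $\omega_j=\sqrt{|j|^4_g+m}$ are strictly increasing functions of $|j|_g$, so $\omega_i=\omega_j \iff |i|_{\bar g}=|j|_{\bar g}$, confirming the description of $[j]$ used throughout.

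For (NR.2) and (F.2) I would first reduce everything to a statement about the finitely many ``reduced'' frequency differences. Write $\omega_j=\omega_j(\beta)=\sqrt{\beta^2|j|_{\bar g}^4+m}$ (using \eqref{j g bar g}). For a fixed sign choice $\pm$ and indices $j_1,\dots,j_l$ with pairwise distinct classes, consider the analytic function of $\beta$
\[
\phi(\beta):=\omega_{j_1}(\beta)\pm\omega_{j_2}(\beta)\pm\cdots\pm\omega_{j_l}(\beta)\,.
\]
The key algebraic observation is that $\phi$ is \emph{not identically zero} on the interval ${\cal B}$: indeed $\omega_j(\beta)^2=\beta^2|j|^4_{\bar g}+m$ are real-analytic, and distinct classes mean the values $|j_i|^4_{\bar g}$ are not all equal among the relevant indices, so a nonzero signed combination of the $\omega_{j_i}(\beta)$ cannot vanish on an open $\beta$-interval (a careful way: look at the asymptotics as $\beta\to\infty$, where $\omega_j(\beta)=\beta|j|^2_{\bar g}+O(1/\beta)$, and use that the $|j_i|^2_{\bar g}$ are pairwise distinct when the classes are; one must also rule out cancellation of the leading terms, which is exactly what distinctness of classes gives after possibly reorganizing the $\pm$ signs, or one falls back to a subset with distinct leading coefficients). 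Since $\phi$ is real-analytic and not identically zero, its zero set in ${\cal B}$ is finite, and moreover by a quantitative argument (Lojasiewicz-type, or simply because $\phi$ and its derivatives are polynomially bounded in the $|j_i|_{\bar g}$, hence in $N$) one gets a sublevel estimate: for each $N$,
\[
\big|\{\beta\in{\cal B} : 0<|\phi(\beta)|<\kappa\}\big|\lesssim N^{a}\kappa^{1/b}
\]
for suitable constants $a,b$ depending only on $l\le r$ and $d$. This is the standard mechanism (cf. the way the convolution potential is used in Lemma \ref{tutto.res}) and it is here that the admissibility of $\bar g\in{\cal G}$ enters, through the Diophantine control on $|j|^2_{\bar g}=\sum_{i\le j}\bar g_{ij}\ell_{ij}$ with integer $\ell$'s, which guarantees the leading coefficients of $\phi$ are not merely distinct but \emph{quantitatively} separated, so that $b,a$ can be taken uniform. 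Summing over the $\lesssim N^{dr}$ choices of indices bounded by $N$ and the finitely many sign patterns, and then over $N$ with a weight $N^{-\tau}$ for $\tau$ large, yields, for each $\gamma>0$, a set ${\cal B}^{(res)}(\gamma)$ of measure $\lesssim\gamma$ outside of which \eqref{prima} holds with that $\gamma$ and a fixed $\tau_r$; taking ${\cal B}^{(res)}:=\cap_{\gamma>0}{\cal B}^{(res)}(\gamma)$ gives a zero-measure set, and (F.2) holds for all $\beta\in{\cal B}\setminus{\cal B}^{(res)}$. Since (F.2) in particular forces all the nonzero combinations to be bounded away from $0$ uniformly, (NR.2) follows a fortiori (it is the qualitative shadow of (F.2) for combinations with pairwise distinct classes, where one must also check that the finitely many ``low'' combinations — those not covered directly by the $N$-dependent estimate — are nonzero, which is again the non-vanishing of $\phi$).

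The main obstacle I expect is the non-vanishing claim for $\phi(\beta)$, i.e. showing that a signed sum $\sum_i \pm\sqrt{\beta^2 a_i+m}$ with the $a_i=|j_i|^4_{\bar g}$ not all equal cannot vanish on an interval. Squaring repeatedly to clear the radicals produces a polynomial identity in $\beta^2$ whose vanishing would impose rigid relations among the $a_i$; one has to argue that, given the Diophantine structure of $\bar g$ (so that the $a_i$ are, roughly, algebraically independent ``enough'' — or at least that no such spurious identity can hold for all $\beta$), this forces $a_1=\cdots=a_l$, contradicting distinct classes. Equivalently and more robustly, one passes to the $\beta\to\infty$ asymptotic expansion $\omega_{j_i}(\beta)=\beta|j_i|^2_{\bar g}+\tfrac{m}{2\beta|j_i|^2_{\bar g}}+\cdots$ and reads off that the first nonconstant coefficient distinguishing the $j_i$ is nonzero; the bookkeeping of which order in $1/\beta$ first separates them, uniformly in $N$, is the technical heart and is where the quantitative Diophantine bound on $\bar g$ is used to get $a,b$ independent of the indices. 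Once the non-vanishing and its quantitative (sublevel) version are in hand, the measure estimate and the Borel–Cantelli-style intersection over $\gamma$ are routine, exactly parallel to Lemmas \ref{meas.nls}–\ref{tutto.res}.
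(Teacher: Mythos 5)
Your outline captures the right strategy — this is a Diophantine measure estimate in the parameter $\beta$, resting on the analyticity and non-degeneracy of the frequency map — and your treatment of (NR.1) is correct (and more explicit than the paper's, which tacitly passes over it). However, the heart of the matter, the quantitative sublevel estimate
$\left|\{\beta : 0<|\phi(\beta)|<\kappa\}\right|\lesssim N^a\kappa^{1/b}$ with constants uniform over the $\lesssim N^{dr}$ choices of indices, is exactly the step you leave unproved, and appealing to ``Lojasiewicz-type'' bounds does not close it: such bounds require a quantitative lower bound on some derivative of $\phi$, uniform in the index choice, and that is precisely what has to be established. The paper obtains it by a different, concrete route: it reparametrizes $\beta \mapsto \zeta := m/\beta^4$ so that $\omega_j(\beta) = \beta^2\,\Omega_j(\zeta)$ with $\Omega_j(\zeta)=\sqrt{|j|_{\bar g}^4+\zeta}$, then proves a Vandermonde-determinant lower bound for the matrix $\big(\partial_\zeta^{\,n}\Omega_{j_i}\big)_{n,i}$ (Lemma~\ref{determinante}), which in turn rests on the quantitative separation $\big||j_i|^2_{\bar g}-|j_\ell|^2_{\bar g}\big|\gtrsim N^{-2\tau_*}$ from Lemma~\ref{dista}, and then invokes the standard consequence (Lemma 12 of \cite{Bam09}, here Lemma~\ref{v.001}) that such a determinant bound implies the measure estimate with explicit $a,b$. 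That determinant bound is the content your sketch is missing.

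Two further points. First, your heuristic that the Diophantine condition on $\bar g$ makes the values $|j_i|^4_{\bar g}$ ``algebraically independent enough'' is not the right mechanism: what the admissibility of $\bar g$ actually buys is the \emph{quantitative pairwise separation} of the $|j_i|^2_{\bar g}$ (Lemma~\ref{dista}), which goes into the Vandermonde product, and nothing stronger is needed or true. Second, the qualitative non-vanishing of $\phi$ (needed for (NR.2)) does not require a delicate asymptotic argument in $\beta$ with case distinctions about cancellation of leading terms: it is an immediate by-product of the determinant bound, since a nonvanishing Wronskian/Vandermonde determinant shows that the functions $\zeta\mapsto\Omega_{j_i}(\zeta)$ with pairwise-distinct $|j_i|_{\bar g}$ are linearly independent, so any combination with coefficients in $\{\pm 1\}$ is a nontrivial analytic function and hence has only isolated zeros. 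In short: right plan, but the proof as written would not go through without supplying the determinant non-degeneracy that the paper's Lemmas~\ref{dista}--\ref{v.001} provide.
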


We first need a lower bound on the distance between points with
different modulus in $\Z^d$. The following lemma holds

\begin{lemma}
  \label{dista}
Fix any $N>1$ and let $\overline g \in {\cal G}$, $\beta \in {\cal B} = (\beta_1, \beta_2)$, $g = \beta \overline g$. One has that if $j, k \in \Z^d$ such that $|\ell|, |k| \leq N$, $|j|_g \neq |k|_g$, then there exists $\gamma > 0$ and a constant $C (\beta_1)$ such that 
$$
||k|_g^2 - |\ell|_g^2| \geq \frac{C(\beta_1)\gamma}{N^{2\tau_*}} 
$$
where $\tau_*$ is given in the definition \ref{parametrimetrica}. 
\end{lemma}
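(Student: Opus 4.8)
The plan is to reduce the statement (whose intended reading is: for $j,k\in\Z^d$ with $|j|,|k|\le N$ and $|j|_g\neq |k|_g$, one has $||k|_g^2-|j|_g^2|\ge C(\beta_1)\gamma N^{-2\tau_*}$) to the diophantine property built into the definition of the set ${\cal G}$ of admissible metrics. The key observation is that, for any $j\in\Z^d$, the number $g(j,j)=\sum_{i,l}g_{il}j_ij_l$ is a \emph{linear} function of the entries $(g_{il})_{i\le l}$ with \emph{integer} coefficients: setting $m_{il}(j):=j_i^2$ if $i=l$ and $m_{il}(j):=2j_ij_l$ if $i<l$, one has $g(j,j)=\sum_{i\le l}g_{il}\,m_{il}(j)$ with $m_{il}(j)\in\Z$.

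First I would use $g=\beta\overline g$ to write
$$
|k|_g^2-|j|_g^2=\beta\sum_{i\le l}\overline g_{il}\,\ell_{il},\qquad \ell_{il}:=m_{il}(k)-m_{il}(j)\in\Z .
$$
Since $|j|,|k|\le N$ one has $|m_{il}(j)|,|m_{il}(k)|\le 2N^2$, hence $|\ell_{il}|\le 4N^2$ for every pair $i\le l$, and therefore $\sum_{i\le l}|\ell_{il}|\le 2d(d+1)N^2=:c_dN^2$. Moreover, since $|j|_g\neq|k|_g$ and both are nonnegative, $|k|_g^2-|j|_g^2\neq 0$; as $\beta>0$ this forces $\sum_{i\le l}\overline g_{il}\,\ell_{il}\neq 0$, and in particular the integer vector $\ell=(\ell_{il})_{i\le l}\in\R^{d(d+1)/2}$ is nonzero.

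Then I would invoke Definition \ref{parametrimetrica}: since $\overline g\in{\cal G}=\cup_{\gamma>0}{\cal G}_\gamma$, there is $\gamma>0$ with $\overline g\in{\cal G}_\gamma$, so applying the diophantine estimate to the nonzero $\ell$,
$$
\Big|\sum_{i\le l}\overline g_{il}\,\ell_{il}\Big|\ge\frac{\gamma}{\big(\sum_{i\le l}|\ell_{il}|\big)^{\tau_*}}\ge\frac{\gamma}{(c_dN^2)^{\tau_*}}=\frac{\gamma}{c_d^{\tau_*}}\,\frac{1}{N^{2\tau_*}}.
$$
Multiplying by $\beta\ge\beta_1$ yields $||k|_g^2-|j|_g^2|\ge (\beta_1/c_d^{\tau_*})\,\gamma N^{-2\tau_*}$, i.e. the claim with $C(\beta_1):=\beta_1/c_d^{\tau_*}$ (and $c_d$ depending only on $d$).

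The argument is essentially bookkeeping; the only point requiring any care is verifying that $g(j,j)$ is an \emph{integer} linear combination of the $(g_{il})_{i\le l}$, so that the diophantine condition of Definition \ref{parametrimetrica} — stated for $\ell\in\R^{d(d+1)/2}$ but effective for $\ell$ in the integer lattice — applies to $\ell=(m_{il}(k)-m_{il}(j))_{i\le l}$. There is no genuine obstacle: the scaling $g=\beta\overline g$ was introduced precisely so that the diophantine property of $\overline g$ transfers to lower bounds on $|k|_g^2-|j|_g^2$, which, through $\omega_j=\sqrt{|j|_g^4+m}$, is the quantity controlling the non-resonance hypotheses F.2, NR.1, NR.2 for the beam equation.
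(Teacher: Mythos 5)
Your proof is correct and follows essentially the same route as the paper's: rewrite $|k|_g^2-|j|_g^2=\beta\sum_{i\le l}\bar g_{il}\ell_{il}$ with an integer vector $\ell\neq 0$, invoke the diophantine estimate from Definition \ref{parametrimetrica} for $\bar g\in\cG_\gamma$, and bound $\sum_{i\le l}|\ell_{il}|\lesssim N^2$. Your write-up is in fact slightly more careful than the paper's on two bookkeeping points (the passage from $\sum_{i,j}$ to $\sum_{i\le l}$ with the correct off-diagonal factors of $2$, and the observation that the diophantine condition only needs to be applied to integer $\ell$), but the argument is the same.
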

\proof 
By recalling the Definition \ref{parametrimetrica} of the admissible set ${\cal G}$, since $\overline g \in {\cal G}$, one has that there exists $\gamma > 0$ such that 
$$
\Big| \sum_{i \leq j} \overline g_{i j} \ell_{i j} \Big| \geq \dfrac{\gamma}{\Big( \sum_{i \leq j} |\ell_{i j}|\Big)^\tau} \quad \forall \ell = (\ell_{i j})_{i \leq j} \in \R^{\frac{d (d + 1)}{2}} \setminus \{ 0 \}\,. 
$$
By \eqref{j g bar g}, since $g = \beta \overline g$, one has that 
\begin{equation}\label{platini0}
\begin{aligned}
 |\left|k\right|_g^2-\left|  \ell \right|_g^2 | & = \beta  \Big| \sum_{ij}\overline g_{ij}\left(k_i k_j-  \ell_i \ell_j\right) \Big| \\
& \geq \beta_1 \dfrac{\gamma}{\Big( \sum_{i , j} |k_i k_j - \ell_i \ell_j| \Big)^\tau}\,.
\end{aligned}
\end{equation}
Since $|\ell|, |k| \leq N$, one has the following chain of inequalities: 
$$
\begin{aligned}
 \sum_{i , j} |k_i k_j - \ell_i \ell_j|  & \leq \sum_{i j} (|\ell_i| |\ell_j| + |k_i| |k_j| ) \lesssim |\ell|^2 + |k|^2 \stackrel{|\ell|, |k| \leq N}{\lesssim} N^2\,.
\end{aligned}
$$
The latter inequality, together with \eqref{platini0} imply that there exists a constant $C(\beta_1)$ such that 
$$
 |\left|k\right|_g^2-\left|  \ell \right|_g^2| \geq \dfrac{C(\beta_1) \gamma}{N^{2 \tau}}
$$
uniformly on $\beta \in (\beta_1, \beta_2)$. 
The claimed statement has then been proved. \qed

Using the property \eqref{j g bar g}, one can easily verify that  the frequencies $\omega_j \equiv \omega_j(\beta)$ assume the form 
\begin{equation}\label{forma frequenze per non risonanza}
\omega_j(\beta) = \beta^2 \Omega_j, \quad \Omega_j :=  \sqrt{|j|^4_{\overline g} + \frac{m}{\beta^4}}\,.
\end{equation}
Since $\beta_2 \geq \beta \geq \beta_1 > 0$, 
$$
\Big|\sum_{i = 1}^r \sigma_i \omega_{j_i} \Big| =  \beta^2 \Big| \sum_{i = 1}^r \sigma_i \Omega_{j_i}\Big| \geq \beta_1^2  \Big| \sum_{i = 1}^r \sigma_i \Omega_{j_i}\Big|
$$
one can verify non resonance conditions on $\Omega_j$. Since the map 
\begin{equation}\label{mappa beta gamma beam}
(\beta_1, \beta_2) \to (\zeta_1, \zeta_2) := \Big(\frac{m}{\beta_2^4}, \frac{m}{\beta_1^4} \Big)\,, \qquad  \beta \mapsto \zeta := \frac{m}{\beta^4}
\end{equation}
is an analytic diffeomorphism, we can introduce $\zeta = m/ \beta^4$ as parameter in order to tune the resonances. Hence we verify non resonance conditions on the frequencies 
\begin{equation}\label{Omega j beam}
\Omega_j (\zeta) = \sqrt{|j|_{\bar g}^4 + \zeta}, \quad j \in \Z^d\,. 
\end{equation}

\begin{lemma}
  \label{determinante}
Let $\bar g \in {\cal G }$. For any $K\leq N$, consider $K$ indexes $\bji 1,...,\bji K$ with
$\left|\bji 1\right|_g<...\left|\bji K\right|_g\leq N$; and 
consider the determinant
\begin{equation*}
D:=\left|
\begin{matrix}
\Omega_{\bji 1}& \Omega_{\bji 2}&.& .&.&\Omega_{\bji K} 
\\
\partial_\zeta \Omega_{\bji 1} &\partial_\zeta \Omega_{\bji 2} & .& .&.&\partial_\zeta \Omega_{\bji K}
\\
.& .& .& .& .&.
\\
.& .& .& .& .&.
\\
\partial_\zeta^{K - 1} \Omega_{\bji 1}& \partial_\zeta^{K - 1} \Omega_{\bji 2}& .& .&.&\partial_\zeta^{K - 1} \Omega_{\bji K}
\end{matrix}
\right|
\end{equation*}
There exists $C>0$ s.t.
\begin{equation*}
D
\geq
\frac{C}{N^{\eta K^2}}\,,
\end{equation*}
for some constant $\eta \equiv \eta_d > 0$ depending only on the dimension $d$.
\end{lemma}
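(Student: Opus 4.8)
We sketch the proof. The plan is to recognize the matrix defining $D$ as a generalized Vandermonde matrix associated with the function $t\mapsto\sqrt{t}$, to evaluate $D$ in closed form, and then to bound each resulting factor from below; the only non-elementary ingredient is Lemma~\ref{dista}.

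Put $x_i:=|\bji i|^4_{\overline g}$ and $y_i:=x_i+\zeta$, $i=1,\dots,K$. Since $|\bji 1|_{\overline g}<\dots<|\bji K|_{\overline g}$, the $x_i$ and hence the $y_i$ are pairwise distinct with $y_1<\dots<y_K$, and by \eqref{Omega j beam} one has $\Omega_{\bji i}(\zeta)=y_i^{1/2}$, so that
\[
\partial_\zeta^{\,k}\Omega_{\bji i}(\zeta)=c_k\,y_i^{\,\frac12-k}\,,\qquad c_k:=\prod_{\ell=0}^{k-1}\Big(\tfrac12-\ell\Big)\neq0\,,\quad k=0,\dots,K-1\,.
\]
Pulling the factor $c_k$ out of the $k$-th row and the factor $y_i^{\,\frac12-(K-1)}$ out of the $i$-th column reduces the matrix to $\big(y_i^{\,K-1-k}\big)_{0\le k\le K-1,\ 1\le i\le K}$, a Vandermonde matrix in the $y_i$; hence
\[
|D|=\Big|\prod_{k=0}^{K-1}c_k\Big|\cdot\Big(\prod_{i=1}^{K}y_i^{\,\frac12-(K-1)}\Big)\cdot\prod_{1\le i<j\le K}\big(y_j-y_i\big)\,,
\]
and the statement amounts to bounding this quantity from below (the sign of $D$ being explicit and irrelevant for the application).

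There remain three factors to estimate. The prefactor $\big|\prod_{k=0}^{K-1}c_k\big|$ is a positive constant depending only on $K$. For the middle factor we use $\beta\in(\beta_1,\beta_2)$ and $m>0$, giving $y_i\ge\zeta\ge m/\beta_2^4>0$, together with positive definiteness of $\overline g\in\mathcal{G}_0$ and $|\bji i|_g\le N$, giving $|\bji i|\lesssim_{\overline g,\beta_1}N$ and hence $y_i=|\bji i|^4_{\overline g}+\zeta\le C_* N^4$ for $N\ge1$, with $C_*=C_*(\overline g,m,\beta_1)$; since $\tfrac12-(K-1)<0$ this yields $\prod_i y_i^{\,\frac12-(K-1)}\ge c\,N^{-4(K-1)K}$ with $c=c(K,\overline g,m,\beta_1)>0$. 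For the Vandermonde factor we write
\[
y_j-y_i=x_j-x_i=\big(|\bji j|^2_{\overline g}+|\bji i|^2_{\overline g}\big)\big(|\bji j|^2_{\overline g}-|\bji i|^2_{\overline g}\big)\,,
\]
note that $|\cdot|^2_{\overline g}$ is bounded below by a positive constant on $\Z^d\setminus\{0\}$ (so the first factor is $\gtrsim1$, at most one of the two indices being $0$), and invoke Lemma~\ref{dista} together with \eqref{j g bar g}, obtaining $y_j-y_i\ge c(\overline g,\beta_1,\beta_2)\,\gamma\,N^{-2\tau_{*}}$; thus $\prod_{i<j}(y_j-y_i)\ge c'\,N^{-2\tau_{*}\binom{K}{2}}\ge c'\,N^{-\tau_{*}K^2}$. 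Combining the three bounds and recalling $\tau_{*}=\frac{d(d+1)}{2}+1$ from Definition~\ref{parametrimetrica} gives $|D|\ge C\,N^{-\eta K^2}$ with $\eta:=4+\tau_{*}=5+\frac{d(d+1)}{2}$, which depends only on $d$, as required.

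The computation is essentially routine; the genuine content lies entirely in Lemma~\ref{dista}. The one point that requires mild care is the case in which one of the $\bji i$ is the zero vector, where the bound $y_j-y_i\gtrsim\gamma N^{-2\tau_{*}}$ follows instead from $|j|^2_{\overline g}$ being uniformly bounded below on $\Z^d\setminus\{0\}$. All dependence on $K$ (including the prefactor $\prod_k c_k$), on $\overline g$, $m$, $\beta_1$, $\beta_2$ and $\gamma$ is absorbed into the constant $C$, the essential feature being that the exponent $\eta$ of $N$ depends only on the dimension $d$.
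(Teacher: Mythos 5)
Your proof is correct and takes essentially the same route as the paper: compute $\partial_\zeta^k\Omega_{j_i}=c_k(|j_i|_{\bar g}^4+\zeta)^{1/2-k}$, factor the rows and columns to reveal a Vandermonde structure, and invoke Lemma~\ref{dista} for the pairwise differences. The only cosmetic difference is the column normalization: you extract $y_i^{1/2-(K-1)}$ and land on a Vandermonde in $y_i=|j_i|_{\bar g}^4+\zeta$, whereas the paper extracts $y_i^{1/2}$ and works with a Vandermonde in $1/y_i$; both give $\prod_{i<\ell}|y_i-y_\ell|$ up to harmless factors and the same final exponent $\eta=\tau_*+4$.
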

The proof was given in \cite{Bam09}. For sake of completeness we insert it. 
\begin{proof}
For any $i = 1, \ldots, K$, for any $n = 0, \ldots, K - 1$, one computes 
$$
\partial_\zeta^n \Omega_{j_i}(\zeta) = C_n (|j_i|_{\bar g}^4 + \zeta)^{\frac12 - n}
$$
for some constant $C_n \neq 0$. This implies that 
$$
D \geq C \prod_{i = 1}^K \sqrt{|j_i|_{\bar g}^4 + \zeta}  |{\rm det}(A)|
$$
where the matrix $A$ is defined as 
$$
A = \begin{pmatrix}
1& 1 &.& .&.&1
\\
x_1&x_2&.& .&.&x_K 
\\
.& .& .& .& .&.
\\
.& .& .& .& .&.
\\
x_1^{K  - 1}& x_2^{K - 1}&.& .&.&x_K^{K - 1} 
\end{pmatrix}
$$
where 
$$
x_i := \frac{1}{|j_i|_{\bar g}^4 + \zeta}, \quad i = 1, \ldots, K\,. 
$$
This is a Van der Monde determinant. Thus we have 
$$
\begin{aligned}
|{\rm det}(A)|  & = \prod_{1 \leq i < \ell \leq K} |x_i - x_\ell| = \prod_{1 \leq i < \ell \leq K} \Big| \frac{1}{|j_i|_{\bar g}^4 + \zeta} - \frac{1}{|j_\ell|_{\bar g}^4 + \zeta} \Big| \\
& \geq \prod_{1 \leq i < \ell \leq K}  \frac{||j_i|_{\bar g}^4 - |j_\ell|_{\bar g}^4|}{||j_i|_{\bar g}^4 + \zeta| ||j_\ell|_{\bar g}^4 + \zeta|}  \\
& \geq \prod_{1 \leq i < \ell \leq K}  \frac{(|j_i|_{\bar g}^2 + |j_\ell|_{\bar g}^2) ||j_i|_{\bar g}^2 - |j_\ell|_{\bar g}^2|}{||j_i|_{\bar g}^4 + \zeta| ||j_\ell|_{\bar g}^4 + \zeta|}  \\
& \stackrel{Lemma \,\,\ref{dista}}{\geq}  C N^{- K^2(\tau_* + 4)}
\end{aligned}
$$
which implies the thesis.

\end{proof} 
Exploiting this Lemma, and following step by step the proof of Lemma
12 of \cite{Bam09} one gets 

\begin{lemma}
\label{v.001}
Let $\overline g \in {\cal G}$. Then and for any $r$ there
exists $\tau \equiv \tau_r$ with the following property: for any positive $\gamma$
small enough there exists a set $I_\gamma\subset (\zeta_1, \zeta_2)$ such
that $\forall \zeta \in I_\gamma$ one has that for any $N\geq 1$ and any
multi-index  $J_1,...,J_r$ with $|J_l|\leq N$ $\forall l$, one has
\begin{equation*}
\sum_{l=1}^r\sigma_l\Omega_{j_l}\not =0\quad \Longrightarrow
\quad
\left|\sum_{l=1}^r\sigma_l\Omega_{j_l}\right|
\geq\frac{\gamma}{N^{\tau}}\ .
\end{equation*}
Moreover,
\begin{equation*}
\left| (\zeta_1, \zeta_2) \setminus I_\gamma\right|\leq C \gamma^{1/r}\ .
\end{equation*} 
\end{lemma}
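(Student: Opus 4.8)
The plan is to follow the standard measure-theoretic / transversality argument (the one used in \cite{Bam09}), adapting it to the frequencies $\Omega_j(\zeta) = \sqrt{|j|_{\bar g}^4 + \zeta}$. The strategy is: (1) reduce the non-resonance estimate for a single multi-index $(J_1,\dots,J_r)$ to a lower bound that holds for $\zeta$ outside a small ``bad'' set; (2) estimate the measure of each bad set using a non-degeneracy (transversality) property of $\zeta \mapsto \sum_l \sigma_l \Omega_{j_l}(\zeta)$; (3) sum over all the (polynomially many) relevant multi-indexes and take an intersection over a sequence $\gamma_n \to 0$. The non-degeneracy in step (2) is precisely where Lemma \ref{determinante} enters.

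First I would fix $N \geq 1$, $r$, and a multi-index $J_1,\dots,J_p$ with $p \leq r$ and $|J_l| \leq N$, such that the function $\phi(\zeta) := \sum_{l=1}^p \sigma_l \Omega_{j_l}(\zeta)$ is not identically zero (if it is identically zero this multi-index is permanently resonant and contributes nothing, and in fact by analyticity in $\zeta$ the identically-zero case would force a linear relation $\sum_l \sigma_l \Omega_{j_l} \equiv 0$ which is incompatible with the sum being nonzero at one point — so either way we only worry about $\phi \not\equiv 0$). Group the indexes by equal $|\cdot|_{\bar g}$: at most $K \leq \min\{N^d\text{-ish bound}, r\}$ distinct values $|\bji 1|_{\bar g} < \dots < |\bji K|_{\bar g}$ occur, and $\phi(\zeta) = \sum_{i=1}^K c_i \Omega_{\bji i}(\zeta)$ with integer coefficients $c_i$ bounded by $r$, not all zero. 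Since $\Omega_{\bji 1},\dots,\Omega_{\bji K}$ together with their $\zeta$-derivatives have a Wronskian-type determinant $D \geq C N^{-\eta K^2}$ by Lemma \ref{determinante}, the function $\phi$ cannot be small together with its first $K-1$ derivatives; quantitatively, there is an index $n \leq K-1$ with $|\partial_\zeta^n \phi(\zeta)| \gtrsim N^{-\eta K^2}/\|c\|_\infty \gtrsim N^{-\eta' r^2}$ at every point of $(\zeta_1,\zeta_2)$. This is the classical argument that ``a nonzero linear combination of functions with large Wronskian stays large on a large-measure set'': by a standard sublevel-set lemma (e.g. the one in \cite{Bam09} or the Russmann-type estimate), the set $\{\zeta \in (\zeta_1,\zeta_2) : |\phi(\zeta)| < \rho\}$ has measure $\lesssim (\rho \cdot N^{\eta' r^2})^{1/K} \lesssim \rho^{1/r} N^{\eta'' r}$.

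Then I would take $\rho = \gamma / N^\tau$ with $\tau$ to be chosen, define the bad set
\[
B_\gamma := \bigcup_{N \geq 1} \bigcup_{\substack{J_1,\dots,J_p,\ p \leq r \\ |J_l| \leq N,\ \phi \not\equiv 0}} \Big\{ \zeta \in (\zeta_1,\zeta_2) : 0 \neq \Big|\sum_{l=1}^p \sigma_l \Omega_{j_l}(\zeta)\Big| < \frac{\gamma}{N^\tau} \Big\},
\]
and estimate $|B_\gamma|$. The number of multi-indexes with $|J_l| \leq N$ is $\lesssim N^{dr}$ (times a harmless factor $2^r$ for the signs), so the measure of the inner union over multi-indexes is $\lesssim N^{dr} \cdot (\gamma N^{-\tau})^{1/r} N^{\eta'' r} = \gamma^{1/r} N^{dr + \eta'' r - \tau/r}$. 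Choosing $\tau = \tau_r$ large enough that $\tau/r > dr + \eta'' r + 2$ makes the exponent of $N$ at most $-2$, so summing the geometric-like series over $N \geq 1$ gives $|B_\gamma| \lesssim \gamma^{1/r}$. Setting $I_\gamma := (\zeta_1,\zeta_2) \setminus B_\gamma$ then yields both claimed conclusions: the non-resonance estimate holds for every $\zeta \in I_\gamma$ and $|(\zeta_1,\zeta_2) \setminus I_\gamma| \leq C\gamma^{1/r}$.

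The main obstacle — and the only genuinely delicate point — is the quantitative sublevel-set estimate in step (2): converting the lower bound on the Wronskian $D$ (Lemma \ref{determinante}) into a lower bound on $|\phi|$ outside a controlled set, uniformly in $N$ and in the multi-index. This requires the standard lemma that if a $C^{K}$ function on an interval has some derivative of order $\leq K-1$ bounded below by $m$ everywhere, then its $\rho$-sublevel set has measure $\lesssim (\rho/m)^{1/K}$; one must check that the constants there depend only on $r$ (via $K \leq r$) and not on $N$, which is fine since the only $N$-dependence is packaged into $m \simeq N^{-\eta' r^2}$. Everything else (the momentum/index counting, the reduction from $\beta$ to $\zeta$ via the analytic diffeomorphism \eqref{mappa beta gamma beam}, and the passage from (F.2)-type bounds to (NR.1)-(NR.2)) is routine, the latter because (NR.2) is just the statement that $\phi \not\equiv 0$ whenever the equivalence classes are distinct, which holds for $\zeta \in I_\gamma$ a fortiori, and (NR.1) follows from \eqref{j g bar g} and the dyadic property already established for the $|\cdot|$-partition.
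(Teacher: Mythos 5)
Your proposal is correct and follows essentially the same route as the paper, which merely cites Lemma 12 of \cite{Bam09} for this step; you have reconstructed that standard argument faithfully: reduce to a nonzero integer combination $\phi(\zeta)=\sum_i c_i\Omega_{\bji i}(\zeta)$ of distinct-modulus frequencies, use the Vandermonde lower bound of Lemma \ref{determinante} together with the polynomial-in-$N$ bound on the matrix entries to force some derivative of $\phi$ of order $\leq K-1$ to be $\gtrsim N^{-\eta' r^2}$, apply a sublevel-set estimate, and then sum over multi-indexes and over $N$ with $\tau_r$ chosen large enough. The only point to tidy up is the intermediate display $(\rho N^{\eta' r^2})^{1/K}\lesssim\rho^{1/r}N^{\eta'' r}$, which is not true uniformly in $\rho$ but does hold once $\rho=\gamma/N^\tau\leq 1$ is substituted (and similarly the factor $1/\|c\|_\infty$ should appear multiplicatively, not as a division, though this is harmless since $1\leq\|c\|_\infty\leq r$).
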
 
%

\noindent
{\it End of the proof of Lemma \ref{non.beam}} Let $\gamma > 0$. By recalling the diffeomorphism \eqref{mappa beta gamma beam}, one has that the set 
$$
{\cal I}_\gamma := \{ \beta \in [\beta_1,  \beta_2] : m/\beta^4 \in I_\gamma \}\,.
$$
satisfies the estimate 
$$
|(\beta_1, \beta_2) \setminus {\cal I}_\gamma| \lesssim \gamma^{\frac1r}
$$
Now, if we take $\beta \in {\cal I}_\gamma$ and if $\sum_{i=1}^r\sigma_i\omega_{j_i}\not =0$, one has that 
$$
\begin{aligned}
\Big|\sum_{i = 1}^r \sigma_i \omega_{j_i} \Big| & =  \beta^2 \Big| \sum_{i = 1}^r \sigma_i \Omega_{j_i}\Big| \stackrel{\beta_1 \leq \beta \leq \beta_2}\geq \beta_1^2  \Big| \sum_{i = 1}^r \sigma_i \Omega_{j_i}\Big|  \\
& \geq \frac{\beta_1^2 \gamma}{N^\tau}\,. 
\end{aligned}
$$
By the above result,
one has  that, if
$$
\beta \in\bigcup_{\gamma>0}{\cal I}_\gamma\ ,
$$
then (NR.2) holds and furthermore $\bigcup_{\gamma>0}{\cal I}_\gamma$ has
full measure. Hence the claimed statement follows by defining ${\cal B}^{(res)} := {\cal B} \setminus \Big( \bigcup_{\gamma>0}{\cal I}_\gamma \Big)$. 
\qed

\subsection{The Quantum hydrodinamical system}\label{sec:QHD}

We consider the following quantum hydrodynamic system on an irrational torus $\T^d_\Gamma$
\begin{equation}\tag{QHD}\label{EK3}
\left\{\begin{aligned}
& \pa_{t}\rho=-\mathtt{m}\Delta_g\phi-{\rm div}(\rho\nabla_g\phi)\\
&\pa_{t}\phi=-\tfrac{1}{2}|\nabla_g\phi|^{2}- p(\mathtt{m}+\rho)
+\frac{\kappa}{\mathtt{m}+\rho}\Delta_g\rho
-\frac{\kappa}{2(\mathtt{m}+\rho)^{2}}
|\nabla_g\rho|^{2}\,,
\end{aligned}\right.
\end{equation}
where $\mathtt{m}>0$, $\kappa>0$,   the function $p$ belongs to 
$C^{\infty}(\mathbb{R}_{+};\mathbb{R})$ and $p(\mathtt{m})=0$.
The function $\rho(t,x)$ is such that $\rho(t,x)+\mathtt{m}>0$ 
and it has zero average in $x$. The variable $x$ is on the irrational torus $\T^d$ (as in the previous two applications). We assume the conditions
\begin{equation}\label{elliptic}
p'(\mathtt{m})>0\,.  
\end{equation}
We shall use Theorem \ref{main.abs} in order to prove the following almost global existence result. 
In order to give a precise statement of the main result, 
we shall introduce the following notation. 
Given a function $u : \T^d \to \C$, we define
\begin{equation*}
\Pi_0 u := \frac{1}{|\T^d_\Gamma|^{\frac12}} \int_{\T^d} u(x)\, d\,x\,, 
\quad \Pi_0^\bot := {\rm Id} - \Pi_0\,. 
\end{equation*}
Let $\bar g$ be a metric in the set of the admissible metrics ${\cal G}$ given in the definition \ref{parametrimetrica}. Exactly as in the case of the Beam equation, we consider a metric $g$ of the form 
\begin{equation}\label{bla bla metrica constrained QHD}
g = \beta \bar g, \quad \beta \in {\cal B} := (\beta_1, \beta_2), \quad 0 < \beta_1 < \beta_2 < + \infty\,.
\end{equation}
we shall use the parameter $\beta$ in order to tune the resonances and to impose the non-resonance conditions required in order to apply Theorem \ref{main.abs}. The precise statement of the long time existence for the QHD system is the following. 

\begin{theorem}\label{main-idrodinamica} 
Let $\bar g \in {\cal G}$. There exists a set of zero measure ${\cal B}^{(res)}\subset {\cal B}$, s.t. if
$\beta \in {\cal B} \setminus {\cal B}^{(res)}$ and $g = \beta \bar g$, then, $\forall r\geq 2$ there
exist $s_r$ and $\forall s>s_r$ $\exists \epsilon_{rs},c,C$ with the
following property. For any initial datum $(\rho_0, \phi_0) \in H^s(\T^d_\Gamma) \times H^s(\T^d_\Gamma)$ satisfying 
\[
\| \rho_0 \|_s + \| \Pi_0^\bot\phi_0 \|_s \leq \epsilon
\]
there exists a unique solution $t \mapsto (\rho(t), \phi(t))$ of the system \eqref{EK3} satisfying the bound 
\[
\| \rho(t) \|_s + \| \Pi_0^\bot \phi(t) \|_s \leq C \epsilon\,, 
\quad \forall\, |t| \leq c\epsilon^{-r}\,. 
\]
\end{theorem}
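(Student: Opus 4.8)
The plan is to reduce \eqref{EK3} to a \emph{semilinear} nonlinear Schr\"odinger equation near a non-zero constant state by the Madelung transform, and then to apply Theorem \ref{main.abs} following verbatim the strategy used for the beam equation. First I would set
\[
\psi:=\sqrt{\mathtt{m}+\rho}\ \exp\Big(\tfrac{\im}{2\sqrt{\kappa}}\,\phi\Big)\ .
\]
Separating real and imaginary parts and using the identities $\Delta_g\sqrt n/\sqrt n=\tfrac12\Delta_g n/n-\tfrac14|\nabla_g n|^2/n^2$ (with $n=\mathtt{m}+\rho$) and ${\rm div}\big((\mathtt{m}+\rho)\nabla_g\phi\big)=\mathtt{m}\Delta_g\phi+{\rm div}(\rho\nabla_g\phi)$, one checks that $(\rho,\phi)$ solves \eqref{EK3} if and only if
\[
\im\partial_t\psi=-\sqrt{\kappa}\,\Delta_g\psi+\tfrac{1}{2\sqrt{\kappa}}\,p(|\psi|^2)\,\psi\ ,
\]
with $\psi$ in a neighbourhood of the constant $\sqrt{\mathtt{m}}$ (here $\Pi_0\phi$ is a gauge variable, slaved to $(\rho,\Pi_0^\bot\phi)$, and does not enter $\|\rho\|_s+\|\Pi_0^\bot\phi\|_s$). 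Since $\rho$ has zero average and $\mathtt{m}+\rho>0$ is small, this transform is a tame local diffeomorphism near the origin, so it suffices to prove Sobolev stability of $\psi\equiv\sqrt{\mathtt{m}}$ for the above NLS. Writing $\sqrt{\kappa}\,\Delta_g=\Delta_{g/\sqrt{\kappa}}$ ($g/\sqrt{\kappa}$ being again of the form $\beta'\bar g$) and absorbing the constant $\tfrac1{2\sqrt\kappa}$ into the nonlinearity, I reduce to $\im\partial_t\psi=-\Delta_g\psi+\nu\,p(|\psi|^2)\psi$, $\nu>0$.

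Next I would put this in the abstract form \eqref{h.abs}. Since $p(\mathtt{m})=0$, the constant $\sqrt{\mathtt{m}}$ is stationary; writing $\psi=\sqrt{\mathtt{m}}+v$ and passing to Fourier coefficients, the linearization couples $v_j$ with $\bar v_{-j}$, and a standard $2\times2$ Bogoliubov diagonalization --- bounded and invertible on $\ell^2_s$ for every $s$ (its matrix tends to the identity as $|j|\to\infty$ and, by $p'(\mathtt{m})>0$, is non-degenerate for every $j\neq0$) --- brings $H_0$ to the form \eqref{H0} on the phase space indexed by $j\in\Z^d\setminus\{0\}$, with
\[
\omega_j=\sqrt{|j|_g^4+2\nu\,\mathtt{m}\,p'(\mathtt{m})\,|j|_g^2}\ .
\]
The term $\nu\,p(|\psi|^2)\psi$ is a smooth composition operator, so the associated $P$ has bounded coefficients, conserves momentum by translation invariance, and has a smooth vector field on $\ell^2_s$ for $s>d/2$; hence $P\in\cP$ in the sense of Definition \ref{funzP}.

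It then remains to verify Hypotheses \ref{hypo1} and \ref{hypo2}, which I would do exactly as for the beam equation. Since $\omega_j=|j|_g^2+O(1)$, condition (F.1) holds with $\beta=2$, and (F.3) follows from the Berti--Maspero refinement of Bourgain's Lemma \cite{BerMas} applied to the clusters of $|j|_g^2$, exactly as in Lemma \ref{parti.nls}. As $t\mapsto\sqrt{t^2+2\nu\mathtt{m}p'(\mathtt{m})\,t}$ is strictly increasing, the equivalence classes of Definition \ref{equi} are the level sets $[j]=\{i:|i|_{\bar g}=|j|_{\bar g}\}$, which are dyadic since all the metrics are equivalent: this gives (NR.1). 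For (F.2) and (NR.2) I would use the tuning parameter: with $g=\beta\bar g$ one has $\omega_j=\beta^2\,\Omega_j(\zeta)$, $\Omega_j(\zeta)=\sqrt{|j|_{\bar g}^4+\zeta\,|j|_{\bar g}^2}$, $\zeta=1/\beta^2$, and Lemma \ref{dista} (valid because $\bar g\in{\cal G}$) provides the uniform lower bound on $\big||j|_{\bar g}^2-|k|_{\bar g}^2\big|$; the Van der Monde/Wronskian estimate of Lemma \ref{determinante}, now with $x_i=1/(|j_i|_{\bar g}^2+\zeta)$, applies verbatim, and the sub-analyticity argument of \cite{Bam09} then yields, as in Lemma \ref{v.001} and at the end of the proof of Lemma \ref{non.beam}, a full-measure set of $\beta$ (equivalently of $\zeta$) for which (F.2), (NR.1), (NR.2) hold. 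Applying Theorem \ref{main.abs} and transporting the conclusion back through the Bogoliubov diagonalization and the Madelung transform (both tame on Sobolev spaces) gives the stated bound on $\|\rho(t)\|_s+\|\Pi_0^\bot\phi(t)\|_s$. The main obstacle is precisely this last verification of the Melnikov condition (F.2) and of (NR.2) \emph{uniformly in $\beta$ and with a quantitative measure estimate}; everything else --- the Madelung reduction, the property $P\in\cP$, and conditions (F.1), (F.3), (NR.1) --- is a routine adaptation of the Schr\"odinger and beam applications already treated.
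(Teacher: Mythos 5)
Your proposal follows essentially the same route as the paper: Madelung transform to reduce \eqref{EK3} to a semilinear NLS near a nonzero constant, Bogoliubov diagonalization of the quadratic part producing frequencies $\omega_j=c\sqrt{|j|_g^4+c'|j|_g^2}$, verification of Hypotheses \ref{hypo1}, \ref{hypo2} via the $\beta$--tuning $g=\beta\bar g$ together with the Van der Monde/sub-analyticity argument (your version of Lemmas \ref{dista}, \ref{determinante idrodinamico}, \ref{v.001 idrodinamico}), and finally Theorem \ref{main.abs} transported back through the Madelung change of variables. The verifications of (F.1), (F.3), (NR.1), and the choice of tuning parameter $\zeta$, all match what the paper does.

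There is, however, one genuine gap: the step ``writing $\psi=\sqrt{\mathtt{m}}+v$ and passing to Fourier coefficients \dots\ brings $H_0$ to the form \eqref{H0} on the phase space indexed by $j\in\Z^d\setminus\{0\}$'' is not correct as stated. The constant $\sqrt{\mathtt{m}}$ is only a \emph{relative} equilibrium of the NLS: the equation has a $U(1)$ gauge invariance whose orbit through $\sqrt{\mathtt{m}}$ is tangent to the zero Fourier mode, and the Madelung correspondence also imposes the mass constraint $\int_{\T^d}|\psi|^2\,dx=\mathtt{m}|\T^d|_g$ (equivalently $\rho$ of zero mean). The naive substitution $\psi=\sqrt{\mathtt{m}}+v$ with $v$ ranging freely over $\ell^2_s(\Z^d;\C)$ neither respects this constraint nor removes the degenerate gauge direction, so one would find $\omega_0=0$ and the Bogoliubov matrix degenerating at $j=0$, in violation of (F.1). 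The paper deals with this in Section \ref{eliminazero} via a Marsden--Weinstein type reduction: it writes the solution in the form \eqref{faouinv}, i.e.\ $\psi=e^{\ii\theta}(\alpha+z)$ with $z$ zero-mean, uses the conserved $L^2$ norm to express $\alpha=\sqrt{\mathtt{m}-\sum_{j\neq0}|z_j|^2}$, and shows that $z$ obeys the reduced Hamiltonian system \eqref{fresca} with Hamiltonian $\mathcal{K}_{\mathtt{m}}$ in \eqref{ostia1}, after which everything genuinely lives on $j\in\Z^d\setminus\{0\}$ (Lemma \ref{federico2} and Proposition \ref{long time idro shrodinger}). You do remark early on that $\Pi_0\phi$ is ``slaved'' to the other variables, which shows you are aware of the gauge freedom, but you should actually carry out this reduction; without it the Taylor expansion \eqref{espansione} and the claim $P\in\cP$ are not justified. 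The rest of your argument is correct and coincides with the paper's proof.
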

Arguing as in the proof of Corollary \ref{remark misura piena beam}, one can show 
\begin{corollary}\label{remark misura piena QHD}
Let $0 < \beta_1 < \beta_2$. There exists a zero measure set ${\cal G}^{(res)}_{\beta_1, \beta_2} \subseteq {\cal G}_0(\beta_1, \beta_2)$ (where ${\cal G}_0(\beta_1, \beta_2)$ is defined in \eqref{cal G beta 1 beta 2}) such that for any $g \in {\cal G}_0(\beta_1, \beta_2) \setminus {\cal G}^{(res)}_{\beta_1, \beta_2}$ the statements of theorem \ref{main-idrodinamica} hold. 
\end{corollary}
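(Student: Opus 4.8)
The plan is to repeat verbatim the argument used to prove Corollary \ref{remark misura piena beam}, with Theorem \ref{main.beam} replaced by Theorem \ref{main-idrodinamica}. Write $n := \tfrac{d(d+1)}{2}$ and, as in \eqref{invariance bla}, for $E \subset \partial B_1$ set $\beta E := \{ \beta x : x \in E \} \subset \partial B_\beta$, so that $\sigma_\beta(\beta E) = C_n \beta^{n-1} \sigma_1(E)$. Every $g \in {\cal G}_0(\beta_1,\beta_2)$ (see \eqref{cal G beta 1 beta 2}) factors uniquely as $g = \beta \bar g$ with $\beta := \| g \|_2 \in [\beta_1,\beta_2]$ and $\bar g := g/\| g \|_2 \in \partial B_1 \cap {\cal G}_0$; and Theorem \ref{main-idrodinamica} provides, for each diophantine $\bar g \in \partial B_1 \cap {\cal G}$, a full measure set ${\cal B}_{\bar g} := {\cal B} \setminus {\cal B}^{(res)} \subset (\beta_1,\beta_2)$ of parameters $\beta$ for which the QHD system with metric $g = \beta \bar g$ enjoys the almost global existence statement.

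I would then define
$$
{\cal G}^{(nr)}_{\beta_1,\beta_2} := \bigcup_{\bar g \in \partial B_1 \cap {\cal G}} {\cal B}_{\bar g} \subseteq {\cal G}_0(\beta_1,\beta_2),
$$
identifying $\beta \in {\cal B}_{\bar g}$ with the metric $\beta \bar g$. The one point worth isolating is that Claim 1 in the proof of Corollary \ref{remark misura piena beam} is a purely geometric statement about ${\cal G}$ and ${\cal G}_0$, involving no equation, hence it applies here unchanged and gives $\sigma_1({\cal G} \cap \partial B_1) = \sigma_1({\cal G}_0 \cap \partial B_1)$. Disintegrating the Lebesgue measure of ${\cal G}_0(\beta_1,\beta_2)$ over the spheres $\partial B_\beta$ by Fubini and the scaling \eqref{invariance bla}, exactly as in \eqref{fuffa scaling 0}--\eqref{fuffa scaling 1} and in the proof of Claim 2, one gets $|{\cal G}^{(nr)}_{\beta_1,\beta_2}| = |{\cal G}_0(\beta_1,\beta_2)|$; setting ${\cal G}^{(res)}_{\beta_1,\beta_2} := {\cal G}_0(\beta_1,\beta_2) \setminus {\cal G}^{(nr)}_{\beta_1,\beta_2}$ then finishes the proof.

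There is no genuine difficulty here: the structure is identical to the beam case, the sole equation-dependent input being the full measure set ${\cal B}_{\bar g}$ from Theorem \ref{main-idrodinamica}. The only details to handle carefully — and they are handled exactly as for the beam equation — are the measurability in $\bar g$ of the family ${\cal B}_{\bar g}$, needed to invoke Fubini, and the coarea factor $C_n \beta^{n-1}$ relating $\sigma_\beta$ and $\sigma_1$, already recorded in \eqref{invariance bla}, which is what makes the passage from spheres to the full $n$-dimensional set go through.
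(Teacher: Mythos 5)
Your proposal is correct and follows exactly the same route as the paper: the paper's proof of this corollary is simply the remark ``Arguing as in the proof of Corollary~\ref{remark misura piena beam}, one can show\dots'', and you carry out precisely that adaptation, replacing Theorem~\ref{main.beam} by Theorem~\ref{main-idrodinamica} as the source of the full-measure sets ${\cal B}_{\bar g}$ and then rerunning Claim 1 (unchanged, being purely geometric) and the Fubini/coarea computation of Claim 2. Your explicit attention to the coarea factor $C_n\beta^{n-1}$ and to the measurability of $\bar g \mapsto {\cal B}_{\bar g}$ is a slightly more careful rendering than the paper's own terse computation, but it is the same argument.
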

The key tool in order to prove the latter almost global existence result \ref{main-idrodinamica} is to use a change of coordinates (the so called Madelung transformation) which allows to reduce the system \eqref{EK3} to a semilinear Schr\"odinger type equation. We shall implement this in the next sections. 


\subsubsection{Madelung transform}
For $\lambda\in \mathbb{R}_{+}$, 
we define the change of variable (\emph{Madelung transform})
\begin{equation*}\label{mad}\tag{$\mathcal{M}$}
 \psi:= \mathcal{M}_{\psi}(\rho,\phi):=\sqrt{\mathtt{m}+\rho} e^{{\rm i}\lambda \phi}\,,
 \quad \bar{\psi}:=\mathcal{M}_{ \bar\psi}(\rho,\phi):=\sqrt{\mathtt{m}+\rho}e^{-\ii\lambda\phi}\,.
  \end{equation*}
  Notice that the inverse map has the form
    \begin{equation}\label{madelunginv}
    \begin{aligned}
  \mathtt{m}+\rho&=\mathcal{M}_\rho^{-1}(\psi,\bar\psi):=|\psi|^{2}\,,
 \\
  \phi&=\mathcal{M}_\phi^{-1}(\psi,\bar\psi):=\frac{1}{\lambda}
  \arctan\left(\frac{-\ii(\psi-\bar{\psi})}{\psi+\bar{\psi}}\right)\,.
  \end{aligned}
  \end{equation}
  In the following lemma we state a well-posedness result for the Madelung transform.
  \begin{lemma}\label{federico1}
  Define $\kappa=(4\lambda^{2})^{-1}$ and $\hbar:=\lambda^{-1}=2\sqrt{\kappa}$.
  Then the following holds.
  
  \noindent
  $(i)$ Let $s>\frac{d}{2}$ and 
  \[
  \delta:=\frac{1}{\mathtt{m}}\|\rho\|_{s}+\frac{1}{\sqrt{\kappa}}\|\Pi_0^{\bot}\phi\|_{s} \,,
  \qquad \sigma:=\Pi_0\phi\,.
  \]
  There is $C=C(s)>1$ such that, if $C(s)\delta\leq1$, then the function 
  $\psi$ in \eqref{mad} satisfies
  \begin{equation*}
  \|\psi-\sqrt{\mathtt{m}}e^{\ii\lambda\sigma}\|_{s}\leq 2\sqrt{\mathtt{m}}\delta\,.
  \end{equation*}
  
  \noindent
  $(ii)$ Define
  \[
  \delta':=\inf_{\sigma \in \T}\|\psi-\sqrt{\mathtt{m}}e^{\ii \sigma}\|_{s}\,.
  \]
  There is $C'=C'(s)>1$ such that, if $C'(s) \delta'(\sqrt{\mathtt{m}})^{-1}\leq 1$, then the functions $\rho,$
\begin{equation*}
\frac{1}{\mathtt{m}}\|\rho\|_{s}
+\frac{1}{\sqrt{\kappa}}\|\Pi_0^{\bot}\phi\|_{s}\leq 8\frac{1}{\sqrt{\mathtt{m}}}\delta'\,.
\end{equation*}

  \end{lemma}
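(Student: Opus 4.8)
The plan is to read both claims as elementary perturbative estimates about the constant state $\psi\equiv\sqrt{\mathtt m}$, using only two standard facts valid for $s>d/2$: that $H^s(\T^d)$ is a Banach algebra, $\|fg\|_s\lesssim_s\|f\|_s\|g\|_s$, and the Moser (tame) composition estimate, namely that for $F\in C^\infty$ near $0$ with $F(0)=0$ one has $\|F(u)\|_s\le|F'(0)|\,\|u\|_s+C_s\|u\|_s^2$ as soon as $\|u\|_s$ is small. To keep the constants $C(s),C'(s)$ depending on $s$ (and $d$) only, the key preliminary is to pass to dimensionless variables: set $\rho=\mathtt m\,\tilde\rho$ so that $\|\tilde\rho\|_s=\mathtt m^{-1}\|\rho\|_s$, and recall $\lambda=(2\sqrt\kappa)^{-1}$ so that $\|\lambda\,\Pi_0^\bot\phi\|_s=\tfrac12\kappa^{-1/2}\|\Pi_0^\bot\phi\|_s$; in these variables $\delta=\|\tilde\rho\|_s+2\|\lambda\,\Pi_0^\bot\phi\|_s$.

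For part $(i)$ I would write $\phi=\sigma+\tilde\phi$ with $\sigma=\Pi_0\phi\in\R$ a constant and $\tilde\phi=\Pi_0^\bot\phi$, so that $\psi=\sqrt{\mathtt m}\,e^{\ii\lambda\sigma}(1+a)(1+b)$ with $a:=\sqrt{1+\tilde\rho}-1$ and $b:=e^{\ii\lambda\tilde\phi}-1$; since multiplication by the modulus-one constant $e^{\ii\lambda\sigma}$ is an isometry of $H^s$, one gets $\|\psi-\sqrt{\mathtt m}\,e^{\ii\lambda\sigma}\|_s=\sqrt{\mathtt m}\,\|(1+a)(1+b)-1\|_s$. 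Expanding $(1+a)(1+b)-1=a+b+ab$ and applying the composition estimate to $t\mapsto\sqrt{1+t}-1$ (whose derivative at $0$ is $\tfrac12$) and to $t\mapsto e^{\ii t}-1$ (derivative $\ii$ at $0$), one finds $\|a\|_s\le\tfrac12\|\tilde\rho\|_s+C_s\|\tilde\rho\|_s^2$ and $\|b\|_s\le\|\lambda\tilde\phi\|_s+C_s\|\lambda\tilde\phi\|_s^2$, hence $\|a\|_s+\|b\|_s+C_s\|a\|_s\|b\|_s\le\tfrac12\delta+C_s'\delta^2$; multiplying by $\sqrt{\mathtt m}$ and using $C(s)\delta\le1$ to absorb the quadratic term yields $\|\psi-\sqrt{\mathtt m}\,e^{\ii\lambda\sigma}\|_s\le 2\sqrt{\mathtt m}\,\delta$ (with room to spare). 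The membership $\psi\in H^s$ comes out of the same computation.

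For part $(ii)$ I would first use that $\delta'$ is an infimum to pick $\sigma_0\in\T$ with $\|\psi-\sqrt{\mathtt m}\,e^{\ii\sigma_0}\|_s\le\tfrac54\delta'$ and replace $\psi$ by $e^{-\ii\sigma_0}\psi$; this leaves $\rho=|\psi|^2-\mathtt m$ and the gauge-invariant quantity $\Pi_0^\bot\phi$ unchanged and brings $\psi$ to within $\tfrac54\delta'$ of $\sqrt{\mathtt m}$ in $H^s$, so that under the smallness hypothesis $\mathrm{Re}\,\psi\ge\sqrt{\mathtt m}/2>0$ and $|\psi|>0$ and the inverse map \eqref{madelunginv} is well defined. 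Writing $v:=\psi-\sqrt{\mathtt m}=\sqrt{\mathtt m}\,w$ with $\|w\|_s\le\tfrac54\mathtt m^{-1/2}\delta'$, one has $\rho=\mathtt m\,(2\,\mathrm{Re}\,w+|w|^2)$, hence $\mathtt m^{-1}\|\rho\|_s\le 2\|w\|_s+C_s\|w\|_s^2$; and $\phi=2\sqrt\kappa\,\arctan\!\big(\mathrm{Im}\,w/(1+\mathrm{Re}\,w)\big)$, so the composition estimate applied to $w\mapsto\arctan(\mathrm{Im}\,w/(1+\mathrm{Re}\,w))$, whose linearization at $0$ is $\mathrm{Im}\,w$, gives $\kappa^{-1/2}\|\Pi_0^\bot\phi\|_s\le 2\|w\|_s+C_s\|w\|_s^2$. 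Summing, $\mathtt m^{-1}\|\rho\|_s+\kappa^{-1/2}\|\Pi_0^\bot\phi\|_s\le 4\|w\|_s+C_s\|w\|_s^2\le 5\,\mathtt m^{-1/2}\delta'+C_s\mathtt m^{-1}\delta'^2\le 8\,\mathtt m^{-1/2}\delta'$, the last step using $C'(s)\,\mathtt m^{-1/2}\delta'\le1$ to absorb the quadratic term into the gap between $5$ and $8$.

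The only genuinely delicate point is keeping every implied constant independent of $\mathtt m$ and $\kappa$: this is exactly what the rescalings $\rho=\mathtt m\tilde\rho$, $v=\sqrt{\mathtt m}\,w$ and the identity $\lambda=(2\sqrt\kappa)^{-1}$ achieve, since then the scalar maps $t\mapsto\sqrt{1+t}-1$, $t\mapsto e^{\ii t}-1$ and $w\mapsto\arctan(\mathrm{Im}\,w/(1+\mathrm{Re}\,w))$ to which the Moser estimate is applied have their $C^k$ norms on a fixed neighbourhood of $0$ bounded by constants depending only on $k$. Everything else is mechanical use of the algebra property, together with the elementary remark that the gauge rotation $\psi\mapsto e^{-\ii\sigma_0}\psi$ essentially realizes the infimum defining $\delta'$ without altering $\rho$ or $\Pi_0^\bot\phi$.
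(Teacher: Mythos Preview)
Your argument is correct. The paper itself does not give a proof of this lemma: it simply refers to Lemma~2.1 of \cite{FeIaMu}. Your write-up supplies a self-contained proof via the dimensionless rescaling $\rho=\mathtt m\tilde\rho$, $v=\sqrt{\mathtt m}\,w$, the factorisation $\psi=\sqrt{\mathtt m}\,e^{\ii\lambda\sigma}(1+a)(1+b)$, and Moser composition estimates for $\sqrt{1+\cdot}-1$, $e^{\ii\cdot}-1$ and $\arctan\big(\mathrm{Im}\,w/(1+\mathrm{Re}\,w)\big)$; this is precisely the computation one finds when unwinding the cited reference, and your care in making all constants independent of $\mathtt m$ and $\kappa$ is exactly the point. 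Two minor remarks: since $\T$ is compact the infimum defining $\delta'$ is actually attained, so the factor $5/4$ is unnecessary (though harmless); and your form of the Moser estimate with the linear term isolated is justified by writing $F(u)=F'(0)u+u^2H(u)$ and applying the algebra property plus the standard Moser bound to $H$.
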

  \begin{proof}
 see Lemma  2.1 in \cite{FeIaMu}.
  \end{proof}
  We now rewrite equation \eqref{EK3} in the variable $(\psi,\bar{\psi})$.
  \begin{lemma}\label{gatto23}
  Let $(\rho,\phi)\in H^s_0(\T^d)\times H^s(\T^d)$ be a solution of \eqref{EK3} defined over a time interval $[0,T]$, $T>0$,
  such that
  \begin{equation*}
  \sup_{t\in[0,T)}\Big(\frac{1}{\mathtt{m}}\|\rho(t,\cdot)\|_{s} +\frac{1}{\sqrt{\kappa}}
\| \Pi_0^{\bot}\phi(t,\cdot)\|_{s}    \Big)\leq \epsilon
  \end{equation*}
  for some $\epsilon>0$ small enough. Then 
  the function $\psi$ defined in \eqref{mad} solves
  \begin{equation}\label{papsipsi4}
    \begin{cases}
 \pa_{t}\psi=-\ii \big(-\frac{\hbar}{2}\Delta_g\psi+\frac{1}{\hbar}p(|\psi|^{2})\psi\big)\\
 \psi(0)= \sqrt{\mathtt{m}+\rho(0)}e^{\ii \phi(0)}\,.
 \end{cases}
 \end{equation}
  \end{lemma}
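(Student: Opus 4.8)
The statement is a direct computation, so the plan is to differentiate the Madelung transform \eqref{mad} in time, substitute the equations \eqref{EK3}, and verify that the outcome is \eqref{papsipsi4}. First I would fix the functional--analytic setting: the smallness hypothesis yields $\mathtt{m}+\rho(t,x)\geq \mathtt{m}/2>0$ on $[0,T]\times\T^d$, so $R:=\sqrt{\mathtt{m}+\rho}$ is well defined and strictly positive; since $s>d/2$, $H^s(\T^d)$ is an algebra stable under composition with smooth functions, hence $R\in C([0,T];H^s)$ and $\psi=R\,e^{\ii\lambda\phi}\in C([0,T];H^s)\cap C^1([0,T];H^{s-2})$. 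This legitimizes the differentiations and the chain rule used below, and the initial condition in \eqref{papsipsi4} is then just \eqref{mad} evaluated at $t=0$.

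Next I would carry out the computation. From $\psi=R\,e^{\ii\lambda\phi}$ and $R^2=\mathtt{m}+\rho$ one has $\partial_t\psi=\big(\tfrac{\partial_t R}{R}+\ii\lambda\,\partial_t\phi\big)\psi$ with $\tfrac{\partial_t R}{R}=\tfrac{\partial_t\rho}{2(\mathtt{m}+\rho)}$ and $\nabla_g\rho=2R\,\nabla_g R$. Writing the first equation of \eqref{EK3} as $\partial_t\rho=-(\mathtt{m}+\rho)\Delta_g\phi-\nabla_g\rho\cdot\nabla_g\phi$ and inserting the second equation of \eqref{EK3} for $\partial_t\phi$ gives
\begin{equation*}
\frac{\partial_t\psi}{\psi}=-\frac{\Delta_g\phi}{2}-\frac{\nabla_g R\cdot\nabla_g\phi}{R}+\ii\lambda\Big(-\tfrac12|\nabla_g\phi|^2-p(|\psi|^2)+\frac{\kappa\,\Delta_g\rho}{\mathtt{m}+\rho}-\frac{\kappa\,|\nabla_g\rho|^2}{2(\mathtt{m}+\rho)^2}\Big)\,.
\end{equation*}
On the other hand, expanding $\Delta_g\psi$ by the product rule,
\begin{equation*}
\Delta_g\psi=\Big(\Delta_g R-\lambda^2 R\,|\nabla_g\phi|^2+2\ii\lambda\,\nabla_g R\cdot\nabla_g\phi+\ii\lambda R\,\Delta_g\phi\Big)e^{\ii\lambda\phi}\,,
\end{equation*}
hence, using $\hbar=\lambda^{-1}$,
\begin{equation*}
\frac{\ii\hbar}{2}\frac{\Delta_g\psi}{\psi}=\frac{\ii\hbar}{2}\frac{\Delta_g R}{R}-\frac{\ii\lambda}{2}|\nabla_g\phi|^2-\frac{\nabla_g R\cdot\nabla_g\phi}{R}-\frac{\Delta_g\phi}{2}\,,
\end{equation*}
while $-\tfrac{\ii}{\hbar}p(|\psi|^2)=-\ii\lambda\,p(|\psi|^2)$. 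Comparing with the expression for $\partial_t\psi/\psi$, every term matches once one uses the identity linking the quantum pressure to the Bohm potential,
\begin{equation*}
\frac{\Delta_g\rho}{\mathtt{m}+\rho}-\frac{|\nabla_g\rho|^2}{2(\mathtt{m}+\rho)^2}=\frac{2\,\Delta_g R}{R}\,,
\end{equation*}
which follows from $\Delta_g(R^2)=2|\nabla_g R|^2+2R\,\Delta_g R$ with $R^2=\mathtt{m}+\rho$, together with $2\lambda\kappa=\hbar/2$, that is, the choice $\kappa=(4\lambda^2)^{-1}$. Thus $\partial_t\psi=\tfrac{\ii\hbar}{2}\Delta_g\psi-\tfrac{\ii}{\hbar}p(|\psi|^2)\psi=-\ii\big(-\tfrac{\hbar}{2}\Delta_g\psi+\tfrac{1}{\hbar}p(|\psi|^2)\psi\big)$, which is \eqref{papsipsi4}.

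There is no genuine obstacle in this argument: its content is the elementary algebra above and the bookkeeping of the various $\nabla_g$ and $\Delta_g$ terms, the only point deserving a word of justification being that $\psi$ really belongs to $C([0,T];H^s)$ with $H^{s-2}$--valued time derivative so that the manipulations are rigorous, which follows from the algebra and composition properties of $H^s$, $s>d/2$, and the uniform lower bound on $\mathtt{m}+\rho$ guaranteed by Lemma \ref{federico1}. Alternatively, one may simply invoke the corresponding statement in \cite{FeIaMu}, where exactly this reduction of \eqref{EK3} to a semilinear Schr\"odinger equation via the Madelung transform is carried out.
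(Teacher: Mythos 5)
Your computation is correct: differentiating $\psi = R\,e^{\ii\lambda\phi}$ with $R=\sqrt{\mathtt{m}+\rho}$, substituting the two equations of \eqref{EK3}, expanding $\Delta_g\psi$, and invoking the Bohm-potential identity $\frac{\Delta_g\rho}{\mathtt m+\rho}-\frac{|\nabla_g\rho|^2}{2(\mathtt m+\rho)^2}=\frac{2\Delta_g R}{R}$ together with $\kappa=(4\lambda^2)^{-1}$ (so $2\lambda\kappa=\hbar/2$) gives exactly \eqref{papsipsi4}. The paper itself does not reproduce this computation but simply cites Lemma 2.2 of \cite{FeIaMu}, where precisely this Madelung reduction is carried out, so your argument coincides with the intended one.
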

  \begin{proof}
  See Lemma 2.2 in \cite{FeIaMu}. 
  \end{proof}
    Notice that the \eqref{papsipsi4} is an Hamiltonian equation of the form
  \begin{equation}\label{papsipsi5}
 \pa_{t}\psi=-\ii \pa_{\bar \psi}\mathcal{H}(\psi,\bar{\psi})\,,\qquad
  \mathcal{H}(\psi,\bar{\psi})=
 \int_{\mathbb{T}^{d}}
 \big(\frac{\hbar}{2}|\nabla_g\psi|^{2}+\frac{1}{\hbar}P(|\psi|^{2})\big)dx\,,
 \end{equation}
 where $\pa_{\bar{\psi}}=(\pa_{\Re \psi}+\ii \pa_{\Im \psi})/2$.
  The Poisson bracket is defined by 
   \begin{equation}\label{Poisson}
\{\mathcal{H}, \mathcal{G}\}:=-
\ii \int_{\T^d} \pa_{\psi}\mathcal{H}\pa_{\bar{\psi}}\mathcal{G}
-  \pa_{\bar{\psi}}\mathcal{H}\pa_{{\psi}}\mathcal{G} dx\,.
\end{equation}

\subsubsection{Elimination of the zero mode}\label{eliminazero} 
 We introduce the set of variables 
\begin{equation}\label{faou}
\begin{cases}
 \psi_0= \alpha e^{-\ii \theta} & \alpha \in [0,+\infty)\,,\, \theta \in \T \\
 \psi_j=  z_j e^{-\ii \theta} & j\in \Z^d \setminus \{0\}\,,
\end{cases}
\end{equation} 
which are the polar coordinates for $j=0$ and a phase translation for $j\not=0$. 
Rewriting \eqref{papsipsi5} in Fourier coordinates one has 
\begin{equation*}
\ii\pa_t \psi_j = \pa_{\bar{\psi_j}}\mathcal{H}
(\psi,\bar \psi)\,, \quad j\in \Z^d\,,
\end{equation*}
where $\mathcal{H}$ is defined in \eqref{papsipsi5}. We define also the zero mean variable 
\begin{equation}\label{zeta}
z:= \sum_{j\in \Z^d \setminus\{ 0\} } z_j e^{\ii j\cdot x}\,.
\end{equation}
By \eqref{faou} and \eqref{zeta} one has 
\begin{equation}\label{faouinv}
\psi= ( \alpha + z) e^{\ii\theta}\,,
\end{equation}
and it is easy to prove that the quantity
\[
 \mathtt{m}:= \sum_{j\in \Z^d} |  \psi_j|^2= \alpha^2 +  \sum_{j \in\Z^d \setminus \{ 0 \}} | z_j|^2
\]
 is a constant of motion for \eqref{papsipsi4}. Using \eqref{faou}, 
 one can completely recover the variable $\alpha$ 
 in terms of $\{ z_j\}_{j\in \Z^d \setminus \{0\}}$ as 
\begin{equation*}
\alpha= \sqrt{ \mathtt{m}- \sum_{j \in \Z^d \setminus \{ 0 \}} |  z_j|^2}\,.
\end{equation*}
Note also  that the $(\rho,\phi)$ variables in \eqref{madelunginv} 
do not depend on the angular variable $\theta$ 
defined above. This implies that system \eqref{EK3} is 
completely described by the complex variable $z$.
On the other hand, using 
\[ 
\pa_{\bar{\psi_j}}\mathcal{H}(\psi e^{\ii \theta},\bar {\psi e^{\ii \theta}})
= 
\pa_{\bar{\psi_j}}\mathcal{H}(\psi,\bar \psi)e^{\ii \theta}\,,
\] 
one obtains
\begin{equation} \label{equationnew}
\begin{cases}
\ii \pa_t \alpha+\pa_t\theta \alpha = \Pi_0\left( p( |\alpha+z|^2) (\alpha + z) \right)  
\\
\ii \pa_t  z_j + \pa_t\theta  z_j=   \frac{\pa \mathcal{H}}{\pa \bar \psi_j}(\alpha+ z ,\alpha +\bar z)\,.
\end{cases}
\end{equation}
Taking the real part of the first equation in \eqref{equationnew} we obtain 
\begin{equation} \label{deteta}
 \pa_t\theta= \frac{1}{\alpha}  
 \Pi_0\left( \frac{1}{\hbar}p( |\alpha+z|^2)\Re  (\alpha + z) \right) 
 = \frac{1}{2\alpha} 
 \pa_{\bar{\alpha}}\mathcal{H}(\alpha, z, \bar z)\,,
\end{equation}
where  
\begin{equation*} 
\tilde{\mathcal{H}}(\alpha, z, \bar z):= 
\frac{\hbar}{2}\int_{\T^d} (- \Delta_g)z\cdot\bar{z}{\rm d}x
+ \frac{1}{\hbar}\int_{\T^d} G(| \alpha+z|^2)\, {\rm d}x\,.
\end{equation*}
By \eqref{deteta}, \eqref{equationnew} and using that  
\[ 
\pa_{\bar{\psi_j}}\mathcal{H}(\alpha+ z ,\alpha +\bar z)
=
\pa_{\bar{z_{j}}}\tilde{\mathcal{H}}(\alpha, z, \bar z)\,,
\] 
one obtains 
\begin{equation} \label{fresca}
\begin{aligned}
\ii \pa_t  z_j =  & 
\pa_{\bar{z_j}}\tilde{\mathcal{H}}(\alpha, z, \bar z)
- \frac{z_j}{2\alpha}
\pa_{\alpha}\tilde{\mathcal{H}}(\alpha, z, \bar z)=
 \pa_{\bar{z_j}}\mathcal{K}_{\mathtt{m}}( z, \bar z)\,, \quad j\not=0\,, 
\end{aligned}
\end{equation}
where 
\begin{equation}\label{ostia1}
\mathcal{K}_{\mathtt{m}}(z,\bar z):= 
\tilde{\mathcal{H}}(\alpha,z,\bar z)_{|\alpha=\sqrt{\mathtt{m}- \sum_{j\not=0} | z_j|^2}}\,.
\end{equation}
We resume the above discussion in the following lemma.
\begin{lemma}\label{federico2}
The following holds. 

 $(i)$ Let $s>\frac{d}{2}$ and 
  \[
  \delta:=\frac{1}{\mathtt{m}}\|\rho\|_{s}+\frac{1}{\sqrt{\kappa}}\|\Pi_0^{\bot}\phi\|_{s} \,,
  \quad \theta:=\Pi_0\phi\,.
  \]
  There is $C=C(s)>1$ such that, if $C(s)\delta\leq1$, then the function $z$ in \eqref{zeta} satisfies
  \begin{equation*}
  \|z\|_{s}\leq 2\sqrt{\mathtt{m}}\delta\,.
  \end{equation*}
  
  \noindent
  $(ii)$ Define
  \[
  \delta':=\|z\|_{s}\,.
  \]
  There is $C'=C'(s)>1$ such that, if $C'(s) \delta'(\sqrt{\mathtt{m}})^{-1}\leq 1$, then the functions $\rho,$
\begin{equation*}
\frac{1}{\mathtt{m}}\|\rho\|_{s}+\frac{1}{\sqrt{\kappa}}\|\Pi_0^{\bot}\phi\|_{s}
\leq 16\frac{1}{\sqrt{\mathtt{m}}}\delta'\,.
\end{equation*}
 
 \noindent $(iii)$ Let $(\rho,\phi)\in H^s_0(\T^d)\times H^s(\T^d)$ 
 be a solution of \eqref{EK3} defined over a time interval $[0,T]$, $T>0$,
  such that
  \begin{equation*}
  \sup_{t\in[0,T)}\Big(\frac{1}{\mathtt{m}}\|\rho(t,\cdot)\|_{s} +\frac{1}{\sqrt{\kappa}}
\| \Pi_0^{\bot}\phi(t,\cdot)\|_{s}    \Big)\leq \epsilon
  \end{equation*}
  for some $\epsilon>0$ small enough. Then the  function $z\in H^s_0(\T^d)$ 
  defined in \eqref{zeta} solves \eqref{fresca}.

\end{lemma}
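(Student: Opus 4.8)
The plan is to reduce all three items to Lemma \ref{federico1} (well-posedness of the Madelung transform) together with the formal reduction \eqref{faou}--\eqref{fresca} already carried out in the text; the only new ingredient is the observation that the variable $z$ differs from the Madelung variable $\psi$ merely by a unimodular constant factor and the subtraction of the zero mode, both harmless on $H^s$.

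For $(i)$ I would start from Lemma \ref{federico1}$(i)$: under $C(s)\delta\le1$ it gives $\|\psi-\sqrt{\mathtt{m}}\,e^{\ii\lambda\sigma}\|_s\le 2\sqrt{\mathtt{m}}\,\delta$ with $\sigma=\Pi_0\phi$. By \eqref{faouinv}, $z=\Pi_0^\bot(e^{-\ii\theta}\psi)$; since $\Pi_0^\bot$ annihilates every constant function (in particular $e^{-\ii\theta}\sqrt{\mathtt{m}}\,e^{\ii\lambda\sigma}$), has operator norm one on $H^s$, and commutes with multiplication by the unimodular constant $e^{-\ii\theta}$ (itself an $H^s$-isometry), I obtain $\|z\|_s=\|\Pi_0^\bot e^{-\ii\theta}(\psi-\sqrt{\mathtt{m}}\,e^{\ii\lambda\sigma})\|_s\le\|\psi-\sqrt{\mathtt{m}}\,e^{\ii\lambda\sigma}\|_s\le2\sqrt{\mathtt{m}}\,\delta$, which is the claimed bound with the same (possibly enlarged) $C(s)$.

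For $(ii)$ I would run the estimate backwards. Given $\delta'=\|z\|_s$ small, I first bound $\inf_{\sigma\in\T}\|\psi-\sqrt{\mathtt{m}}\,e^{\ii\sigma}\|_s\le 2\delta'$: choosing $\sigma$ equal to the phase $\theta$ of \eqref{faou}, relation \eqref{faouinv} gives $\psi-\alpha e^{\ii\theta}=z\,e^{\ii\theta}$, of $H^s$-norm exactly $\delta'$, while the elementary inequality $0\le\sqrt{\mathtt{m}}-\sqrt{\mathtt{m}-x}\le x/\sqrt{\mathtt{m}}$ with $x=\sum_{j\ne0}|z_j|^2\le\|z\|_0^2\le(\delta')^2$ gives $|\alpha-\sqrt{\mathtt{m}}|\le(\delta')^2/\sqrt{\mathtt{m}}$, whence $\|\psi-\sqrt{\mathtt{m}}\,e^{\ii\theta}\|_s\le\delta'+C(\delta')^2/\sqrt{\mathtt{m}}\le 2\delta'$ once $\delta'/\sqrt{\mathtt{m}}$ is small. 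Then Lemma \ref{federico1}$(ii)$, applied with $2\delta'$ in place of its argument (which only enlarges $C'(s)$), yields $\mathtt{m}^{-1}\|\rho\|_s+\kappa^{-1/2}\|\Pi_0^\bot\phi\|_s\le 8\cdot(2\delta')/\sqrt{\mathtt{m}}=16\,\delta'/\sqrt{\mathtt{m}}$.

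Finally, $(iii)$ follows by combining the above with Lemma \ref{gatto23}: under the a priori bound in the hypothesis, Lemma \ref{gatto23} gives that $\psi$ solves \eqref{papsipsi4}; item $(i)$ then guarantees that $\|z(t)\|_s$ stays small along the flow, so $\alpha(t)=\sqrt{\mathtt{m}-\sum_{j\ne0}|z_j(t)|^2}$ is well defined and bounded away from zero, and the Hamiltonian reduction \eqref{faou}--\eqref{fresca} applies verbatim, showing that $z$ solves \eqref{fresca}. I do not anticipate a serious obstacle: the only thing requiring real attention is keeping the various smallness thresholds (those of Lemmas \ref{federico1} and \ref{gatto23}, and the one ensuring $\alpha>0$) mutually consistent, together with the quadratic-in-$\|z\|_s$ control of $\alpha-\sqrt{\mathtt{m}}$ used above.
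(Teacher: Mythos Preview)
Your proposal is correct. The paper itself does not give an argument here but simply cites Lemma~2.4 of \cite{FeIaMu}; your reduction to Lemmas~\ref{federico1} and~\ref{gatto23} (themselves Lemmas~2.1 and~2.2 of that reference) together with the elementary observation that $z=\Pi_0^{\bot}(e^{-\ii\theta}\psi)$ is precisely the content one would expect that citation to unpack, so there is no divergence in approach.
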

\begin{proof}
See Lemma 2.4 in \cite{FeIaMu}. 
\end{proof}
\begin{remark}
Using \eqref{madelunginv} and \eqref{faouinv} one can study the system \eqref{EK3} 
near the equilibrium point $(\rho,\phi)=(0,0)$ by studying the complex hamiltonian system 
\begin{equation}\label{zetaequation}
\ii \partial_t z = \pa_{\bar{z}}\mathcal{K}_{\mathtt{m}}(z,\bar{z})
\end{equation}
near the equilibrium $z=0$, where $\mathcal{K}_{\mathtt{m}}(z,\bar{z})$
is the Hamiltonian in \eqref{ostia1}. 
Note also that the natural phase-space for \eqref{zetaequation} 
is the complex Sobolev space $H_0^s(\T^d)$, $s\in \R$, 
of complex Sobolev functions with zero mean.
\end{remark}
By Lemma \ref{federico2}, one has that Theorem \ref{main-idrodinamica} will be deduced by the following Proposition
\begin{proposition}\label{long time idro shrodinger}
Let $\bar g \in {\cal G}$. There exists a set of zero measure ${\cal B}^{(res)}\subset {\cal B}$, s.t. if
$\beta \in {\cal B} \setminus {\cal B}^{(res)}$ and $g = \beta \bar g$ then, $\forall r\geq 2$ there
exist $s_r$ and $\forall s>s_r$ $\exists \epsilon_{rs},c,C$ with the
following property. For any initial datum $z_0 \in H^s_0(\T^d)$ satisfying 
$$
\| z_0 \|_s \leq \epsilon
$$
there exists a unique solution $t \mapsto z(t)$ of the equation \eqref{fresca} satisfying the bound 
\[
\| z(t) \|_s \leq C \epsilon\,, 
\qquad \forall \, |t| \leq c\epsilon^{-r}\,. 
\]
\end{proposition}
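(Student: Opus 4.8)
The plan is to deduce Proposition~\ref{long time idro shrodinger} from the abstract Theorem~\ref{main.abs} along the very same lines as the beam equation Theorem~\ref{main.beam}; the only genuinely new ingredients are the computation of the linear frequencies of equation \eqref{fresca} and the verification of the non-resonance conditions for them. First I would expand the Hamiltonian $\mathcal{K}_{\mathtt{m}}$ of \eqref{ostia1} around the equilibrium $z=0$. Inserting $\alpha=\sqrt{\mathtt{m}-\sum_{j\neq 0}|z_j|^2}$ and using that $G'(\mathtt{m})=p(\mathtt{m})=0$, the part of $\frac1\hbar\int G(|\alpha+z|^2)\,dx$ that is homogeneous of degree $2$ in $z$ turns out to be a nonzero multiple of $p'(\mathtt{m})\int(\Re z)^2\,dx$, which in Fourier variables couples $z_j$ with $z_{-j}$; together with the kinetic term $\frac{\hbar}{2}\int|\nabla_g z|^2$ this is a quadratic form with a standard Bogoliubov structure on each pair $\{j,-j\}$. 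A linear symplectic change of variables, block diagonal in Fourier (it mixes only the two modes associated with $(j,+)$ and $(-j,-)$, hence it preserves momentum conservation, reality and boundedness of the coefficients), brings the quadratic part to the diagonal form \eqref{H0} with frequencies
\[
\omega_j=\big(\kappa\,|j|_g^4+\mathtt{c}\,|j|_g^2\big)^{1/2}\,,\qquad \mathtt{c}>0\ \text{a constant proportional to}\ \mathtt{m}\,p'(\mathtt{m})\,,
\]
which is real and positive by the ellipticity condition \eqref{elliptic}. The higher order part $P$ is then a function of $z$ of class $\mathcal{P}$ in the sense of Definition~\ref{funzP} (momentum conservation from translation invariance, reality, bounded coefficients, and smooth vector field on $\ell^2_s$ for $s>d/2$ because the nonlinearity is semilinear), and it vanishes at order at least $3$ at $z=0$. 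The relevant phase space is $\ell^2_s$ over the index set $\Z^d\setminus\{0\}$, which is exactly $H^s_0(\T^d)$.

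Next I would verify Hypotheses~\ref{hypo1} and~\ref{hypo2} for these frequencies. Condition (F.1) holds with $\beta=2$, since $\omega_j=\sqrt{\kappa}\,|j|_g^2+O(1)$ and $|j|_g\simeq|j|$. Condition (F.3) follows exactly as for the Schr\"odinger and beam equations: because $\omega_j=\sqrt{\kappa}\,|j|_g^2+O(1)$, one has $|\omega_i-\omega_j|+|i-j|\gtrsim\big||i|_g^2-|j|_g^2\big|+|i-j|$ up to finitely many low indices, so the partition of \cite{BerMas} associated with the quadratic form $g$ provides simultaneously F.3.1 and F.3.2. The equivalence classes of Definition~\ref{equi} are $[j]=\{i\in\Z^d:\,|i|_{\bar g}=|j|_{\bar g}\}$, and they are dyadic because $\bar g$ is positive definite, which gives (NR.1).

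The core of the argument is the verification of (F.2) and (NR.2), for which I would use the scaling parameter $\beta$ as in Lemmas~\ref{dista}--\ref{v.001}. With $g=\beta\bar g$ one has $|j|_g^2=\beta^2|j|_{\bar g}^2$, hence
\[
\omega_j(\beta)=\big(\kappa\beta^4|j|_{\bar g}^4+\mathtt{c}\beta^2|j|_{\bar g}^2\big)^{1/2}=\beta\,|j|_{\bar g}\,\big(\kappa\beta^2|j|_{\bar g}^2+\mathtt{c}\big)^{1/2}\,,
\]
an analytic function of $\beta$ on $(\beta_1,\beta_2)$. Grouping together the indices of equal $\bar g$-modulus reduces any combination $\sum_l\sigma_l\omega_{j_l}(\beta)$ to a combination $\sum_k c_k\,\omega_{\ell_k}(\beta)$ with the moduli $|\ell_k|_{\bar g}$ pairwise distinct; as each $\omega_\ell(\beta)$ is, up to the nonzero factor $\beta$, the square root of a linear function of $\beta^2$ and these linear functions have pairwise distinct zeros, such a combination vanishes identically only when all $c_k=0$. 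A quantitative Wronskian-type lower bound, polynomial in $N$, for the map $\beta\mapsto(\omega_{\ell_1}(\beta),\dots,\omega_{\ell_K}(\beta))$ on indices with moduli $\le N$ (deduced from Lemma~\ref{dista} exactly as in Lemma~\ref{determinante}), combined with the sublevel-set excision argument of \cite{Bam09} (as in Lemma~\ref{v.001} and the end of the proof of Lemma~\ref{non.beam}), then produces a set $\mathcal{B}^{(res)}\subset\mathcal{B}$ of zero measure such that (F.2) and (NR.2) hold, uniformly in $N$, for every $\beta\in\mathcal{B}\setminus\mathcal{B}^{(res)}$.

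For such $\beta$ all the hypotheses of Theorem~\ref{main.abs} are satisfied by the Hamiltonian of \eqref{fresca}, and applying it (with $\Z^d$ replaced by $\Z^d\setminus\{0\}$) gives, for any $r\ge2$, the exponent $s_r$ and, for $s>s_r$, the constants $\epsilon_{rs},c,C$ with the asserted property $\|z(t)\|_s\le C\epsilon$ for $|t|\le c\epsilon^{-r}$; this is precisely the statement of Proposition~\ref{long time idro shrodinger}, and Theorem~\ref{main-idrodinamica} then follows from Lemma~\ref{federico2}. I expect the main obstacle to be this last verification: the lower bound on $|\sum_l\sigma_l\omega_{j_l}(\beta)|$ must be uniform in the truncation $N$, which requires the quantitative, polynomial-in-$N$ non-degeneracy of the frequency map rather than a bare genericity statement; the reduction to the abstract theorem and the final bootstrap are essentially bookkeeping, although one should check with some care that the Bogoliubov diagonalization preserves membership in $\mathcal{P}$, which it does because it is block diagonal with coefficients that are bounded and converge to the identity as $|j|\to\infty$.
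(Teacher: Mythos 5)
Your proposal is correct and follows essentially the same route as the paper: Bogoliubov diagonalization of the quadratic part of $\mathcal{K}_{\mathtt{m}}$, verification that the nonlinear remainder is of class $\cP$ on $\ell^2_s(\Z^d\setminus\{0\})\cong H^s_0(\T^d)$, identification of the frequencies $\omega_j=\sqrt{A|j|_g^4+B|j|_g^2}$, and verification of Hypotheses~\ref{hypo1}, \ref{hypo2} by a Wronskian/Vandermonde estimate plus the sublevel-set excision of \cite{Bam09}, exactly as the paper does via Lemmas~\ref{dista}, \ref{determinante idrodinamico}, \ref{v.001 idrodinamico}. The only cosmetic difference is the parametrization used for the Vandermonde argument: you factor out $\beta$ and use $\eta=\beta^2$, obtaining entries $x_i=\kappa|j_i|_{\bar g}^2/(\kappa\eta|j_i|_{\bar g}^2+\mathtt{c})$, whereas the paper factors out $\beta^2$ and uses $\zeta=\delta/\beta^2$, obtaining the cleaner $x_i=1/(|j_i|_{\bar g}^2+\zeta)$; both give the same quantitative lower bound via Lemma~\ref{dista}, so this is a matter of taste rather than substance.
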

The rest of this section is dedicated to the proof of the latter Proposition.

\subsubsection{Taylor expansion of the Hamiltonian} 
In this section we shall use the notations introduced in sections \ref{sezione spazio delle fasi}, \ref{functions}. The only difference is that, since we shall restrict to the space of zero average functions, in all the definitions given in sections \ref{sezione spazio delle fasi}, \ref{functions},  one has to replace $\Z^d$ by $\Z^d \setminus \{ 0 \}$ and ${\cal Z}^d$ by ${\cal Z}^d_0 := (\Z^d \setminus \{ 0 \}) \times \{ +, - \}$. In order to study the stability of $z=0$ for \eqref{zetaequation} it is useful to 
expand $\mathcal{K}_{\mathtt{m}}$ at $z=0$. 
We have
\begin{equation}\label{espansione}
\begin{aligned}
\mathcal{K}_{\mathtt{m}}(z,\bar z)&= 
\frac{\hbar}{2} \int_{\T^d} (- \Delta_g)z\cdot\bar{z}\, {\rm d} x 
+ \frac{1}{\hbar} \int_{\T^d} P\Big(\big| \sqrt{\mathtt{m}- \sum_{j\not=0} | z_j|^2}+ z\big|^2\Big)\, {\rm d}x 
\\&
=(2\pi)^d \frac{P(\mathtt{m})}{\hbar}
+\mathcal{K}_{\mathtt{m}}^{(2)}(z,\bar z)
+ \sum_{r= 3}^{N-1} \mathcal{K}_{\mathtt{m}}^{(r)}(z,\bar z)+R^{(N)}(z,\bar z)\,,
\end{aligned}
\end{equation}
where
\begin{equation*}
\mathcal{K}_{\mathtt{m}}^{(2)}(z,\bar z)=
\frac12 \int_{\T^d} \frac{\hbar}{2} (- \Delta_g) z\cdot\bar{z}\, {\rm d} x
+ \frac{p'(\mathtt{m})\mathtt{m}}{\hbar}\int_{\T^d} \frac12( z+\bar z)^2\, {\rm d}x\,,
\end{equation*}
for any $r=3, \cdots, N-1$, 
$\mathcal{K}_{{\rm m}}^{(r)}(z,\bar z)$ 
 is an homogeneous multilinear Hamiltonian function of degree $r$ of the form 
 \begin{equation*}
 \mathcal{K}_{\mathtt{m}}^{(r)}(z,\bar z)=
 \sum_{\substack{\sigma\in\{-1,1\}^r,\ j\in(\Z^d\setminus\{0\})^r
 \\ 
\sum_{i=1}^r\sigma_i j_i=0}}
(\mathcal{K}_{\mathtt{m}}^{(r)})_{\sigma,j}
z_{j_1}^{\sigma_1}\cdots z_{j_r}^{\sigma_r}\,,
\qquad 
|(\mathcal{K}_{\mathtt{m}}^{(r)})_{\sigma,j}|\lesssim_{r}1\,,
\end{equation*}
and
\begin{equation*}
\| X_{R^{(N)}}(z)\|_{s}\lesssim_s \| z\|_{H^s}^{N-1}\,,
\qquad
\forall\, z\in B_{1}(
H_0^{s}(\mathbb{T}^{d}) \,.
\end{equation*}
This implies that ${\cal K}_m^{(r)}$ is in the class ${\cal P}_r$. 
The vector field of the Hamiltonian in \eqref{espansione} has the form
\begin{equation*} 
\begin{aligned}
\partial_t\begin{bmatrix} z \\ \bar z\end{bmatrix}= 
\begin{bmatrix}-\ii \pa_{\bar{z}}  \mathcal{K}_{\mathtt{m}}\\ 
\ii\pa_{ z} \mathcal{K}_{\mathtt{m}}\end{bmatrix}
&= 
-\ii \begin{pmatrix} \frac{ \hbar \Delta_g}{2} 
+ \frac{\mathtt{m} p'(\mathtt{m})}{\hbar}&  
\frac{\mathtt{m}p'(\mathtt{m})}{\hbar}\\ 
-\frac{\mathtt{m} p'(\mathtt{m})}{\hbar}
 &\frac{\hbar \Delta_g}{2} 
 - \frac{\mathtt{m} p'(\mathtt{m})}{\hbar}\end{pmatrix}
 \begin{bmatrix} z \\ \bar z \end{bmatrix}
 \\&+
 \sum_{r=3}^{N-1}\begin{bmatrix} -\ii\pa_{\bar{z}}  \mathcal{K}_{\mathtt{m}}^{(r)}\\ 
\ii \pa_{ z} \mathcal{K}_{\mathtt{m}}^{(r)}\end{bmatrix}+
\begin{bmatrix} -\ii\pa_{\bar{z}}  R^{(N)}\\ 
\ii\pa_{ z} R^{(N)}\end{bmatrix}\,.
\end{aligned}
 \end{equation*}
 Let us now introduce the 
  $2\times2$ matrix of operators
 \[
 \mathcal{C}:=\frac{1}{\sqrt{2\omega(D)
A(D,\mathtt{m})}}
 \left(
 \begin{matrix}
A(D,\mathtt{m}) & 
 -\tfrac{1}{2}\mathtt{m}p'(\mathtt{m})
 \\
 -\tfrac{1}{2}\mathtt{m}p'(\mathtt{m}) & A(D,\mathtt{m}) \end{matrix}
 \right)\,,
 \]
 with 
  \[
A(D,\mathtt{m}):= \omega(D)
 +\tfrac{\hbar}{2} (- \Delta_g) +\tfrac{1}{2}\mathtt{m}p'(\mathtt{m})\,,
 \]
 and where $\omega(D)$ is the
 Fourier multiplier  with symbol 
 \begin{equation}\label{simboOmega}
 \begin{aligned}
 & \sqrt{ \frac{\hbar^2}{4} |j|_{g}^4+ \mathtt{m}p'(\mathtt{m}) |j|_{g}^2}  = \frac{\hbar}{2}\omega_j  \,, \quad j \in \Z^d \setminus \{ 0 \} \\
& \omega_j :=\sqrt{|j_g|^4 + \delta |j_g|^2}, \qquad  \delta := \frac{4\mathtt{m}p'(\mathtt{m})}{\hbar^2}\,. 
 \end{aligned}
 \end{equation}
 Notice that, by using \eqref{elliptic}, the matrix $\mathcal{C}$ is bounded, 
 invertible and symplectic, with estimates
 \begin{equation*}
 \|\mathcal{C}^{\pm1}\|_{\mathcal{L}{(H^s_0\times H^s_0,\,\, H^s_0\times H^s_0)}}\leq1+\sqrt{k}\beta,\quad \beta:=\frac{\mathtt{m}p'(\mathtt{m})}{k}.
 \end{equation*}
 Consider the change of variables 
 \begin{equation*}
\begin{bmatrix} w \\ \bar w \end{bmatrix} := 
\mathcal{C}^{-1} \begin{bmatrix} z \\ \bar z \end{bmatrix}\,.
\end{equation*}
then the Hamiltonian \eqref{espansione} reads
\begin{equation*}
\begin{aligned}
&\widetilde{\mathcal{K}}_{\mathtt{m}}=
\widetilde{\mathcal{K}}^{(2)}_{\mathtt{m}}+ \sum_{k = 3}^{N - 1} \widetilde{\cal K}_m^{(r)} + \tilde{R}_N
\\&
\widetilde{\mathcal{K}}^{(2)}_{\mathtt{m}}(w,\bar{w}):=
\mathcal{K}^{(2)}_{\mathtt{m}}\Big(\mathcal{C}
\begin{bmatrix} w \\ \bar w \end{bmatrix}\Big):=\frac{1}{2}
\int_{\mathbb{T}^{d}}\omega(D)w\cdot\bar{w}{\rm d}x\,,
\\&
\widetilde{\mathcal{K}}^{(i)}_{\mathtt{m}} \in {\cal P}_i \quad i = 3, \ldots, N - 1\,, \\
& \| X_{\widetilde R_N}(w) \|_s \lesssim_s \| w \|_s^{N - 1}\,, \quad \forall \| w \|_s \ll 1\,. 
\end{aligned}
\end{equation*}
From the latter properties, one deduces that the perturbation 
\[
P= \sum_{k = 3}^{N - 1} \widetilde{\cal K}_m^{(r)} + \tilde{R}_N\,, 
\]
is in the class $\cP$ of Def. \ref{funzP}. 

The verification of (F.3) goes exactly as in the case of the
Schr\"odinger equation, since also in this case $\omega_j = |j|_g^2 + O(1)$. 
The asymptotic condition (F.1) is also
trivially fulfilled with $\beta=2$.
The main point is to verify the nonresonance conditions (F.2) and
(NR.1), (NR.2). 
This will be done in the next subsection.
\subsubsection{Non-resonance conditions for (\ref{EK3})}\label{sec:nonresQHD}
According to the section \ref{section beam} on the Beam equation, we fix the metric $\bar g \in {\cal G}$ and we consider $g = \beta\, \bar g$, $\beta_1 \leq \beta \leq \beta_2$. We shall verify the non-resonance conditions on the frequencies $\omega_j$ in \eqref{simboOmega}. By the property \eqref{j g bar g}, 
\begin{equation}\label{omega j alpha QHD}
\begin{aligned}
\omega_j  & = \sqrt{|j|_g^4 + \delta |j|_g^2} = \sqrt{\beta^4 |j|_{\bar g}^4 + \beta^2 \delta |j|_{\bar g}^2} = \beta^2 \Omega_j\,, \\
\Omega_j & :=  |j|_{\bar g} \sqrt{|j|_{\bar g}^2 + \frac{\delta}{\beta^2}}\,, \quad j \in \Z^d \setminus \{ 0 \}\,. 
\end{aligned}
\end{equation}
Since $\beta_2 \geq \beta \geq \beta_1 > 0$, 
$$
\Big|\sum_{i = 1}^r \sigma_i \omega_{j_i} \Big| =  \beta^2 \Big| \sum_{i = 1}^r \sigma_i \Omega_{j_i}\Big| \geq \beta_1^2  \Big| \sum_{i = 1}^r \sigma_i \Omega_{j_i}\Big|
$$
one can verify non resonance conditions on $\Omega_j$. Since the map 
\begin{equation}\label{mappa beta gamma QHD}
(\beta_1, \beta_2) \to (\zeta_1, \zeta_2) := \Big(\frac{\delta}{\beta_2^2}, \frac{\delta}{\beta_1^2} \Big)\,, \qquad  \beta \mapsto \zeta := \frac{\delta}{\beta^2}
\end{equation}
is an analytic diffeomorphism, we can introduce $\zeta = \delta / \beta^2$ as parameter in order to tune the resonances. Hence we verify non resonance conditions on the frequencies 
\begin{equation}\label{Omega j QHD}
\Omega_j \equiv \Omega_j (\zeta) =|j|_{\bar g} \sqrt{|j|_{\bar g}^2 + \zeta}, \quad j \in \Z^d \setminus \{ 0 \}\,. 
\end{equation}

\begin{lemma}
  \label{determinante idrodinamico} 
Assume that the metric $\bar g  \in {\cal G } $. For any $K\leq N$, consider $K$ indexes $\bji 1,...,\bji K$ with
$\left|\bji 1\right|_g< \ldots < \left|\bji K\right|_g\leq N$; and 
consider the determinant
\begin{equation*}
D:=\left|
\begin{matrix}
\Omega_{\bji 1}& \Omega_{\bji 2}&.& .&.&\Omega_{\bji K} 
\\
\partial_\zeta \Omega_{j_1} & \partial_\zeta \Omega_{j_2}& .& .&.& \partial_\zeta \Omega_{j_K}
\\
.& .& .& .& .&.
\\
.& .& .& .& .&.
\\
\partial_\zeta^{K - 1} \Omega_{j_1}& \partial_\zeta^{K - 1} \Omega_{j_2}& .& .&.& \partial_\zeta^{K - 1} \Omega_{j_K}
\end{matrix}
\right|
\end{equation*}
One has
\begin{equation*}
D
\geq
\frac{C}{N^{\eta K^2}}
 \end{equation*}
for some constant $\eta \equiv \eta_d > 0$ large enough, depending only on the dimension $d$. 
\end{lemma}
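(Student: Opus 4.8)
The strategy is identical to that of Lemma \ref{determinante} (the beam case), the only change being the different explicit form of $\Omega_j(\zeta) = |j|_{\bar g}\sqrt{|j|_{\bar g}^2 + \zeta}$. First I would compute the $\zeta$-derivatives of $\Omega_j$. Since $\Omega_j(\zeta) = |j|_{\bar g}(|j|_{\bar g}^2 + \zeta)^{1/2}$, one has $\partial_\zeta^n \Omega_j(\zeta) = C_n |j|_{\bar g}(|j|_{\bar g}^2 + \zeta)^{\frac12 - n}$ for suitable nonzero constants $C_n$ depending only on $n$. Factoring out the row-constants $C_n$ and the column-factors $|j_i|_{\bar g}(|j_i|_{\bar g}^2 + \zeta)^{1/2}$, the determinant $D$ becomes (up to a positive constant and the positive product $\prod_i |j_i|_{\bar g}\sqrt{|j_i|_{\bar g}^2 + \zeta}$) a Vandermonde determinant in the variables $x_i := (|j_i|_{\bar g}^2 + \zeta)^{-1}$, $i=1,\dots,K$.

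Next I would evaluate this Vandermonde determinant: $|\det(A)| = \prod_{1\le i<\ell\le K}|x_i - x_\ell|$. Writing $x_i - x_\ell = \big(|j_\ell|_{\bar g}^2 - |j_i|_{\bar g}^2\big)\big/\big((|j_i|_{\bar g}^2 + \zeta)(|j_\ell|_{\bar g}^2 + \zeta)\big)$, the numerators are bounded below using the separation Lemma \ref{dista}, which gives $\big||j_i|_{\bar g}^2 - |j_\ell|_{\bar g}^2\big| \gtrsim N^{-2\tau_*}$ whenever $|j_i|_{\bar g} \neq |j_\ell|_{\bar g}$ and $|j_i|, |j_\ell| \le N$ (here the $\beta_1$-dependent constant is harmless since we work at fixed $\bar g$ and $\beta \in (\beta_1,\beta_2)$). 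The denominators $(|j_i|_{\bar g}^2 + \zeta)(|j_\ell|_{\bar g}^2 + \zeta)$ are bounded above by a constant times $N^4$ since $|j_i|_{\bar g}^2 \lesssim N^2$ and $\zeta \le \zeta_2$. There are $\binom{K}{2} \le K^2$ such factors, so $|\det(A)| \gtrsim N^{-K^2(2\tau_* + 4)}$, and combining with the positive prefactor $\prod_i |j_i|_{\bar g}\sqrt{|j_i|_{\bar g}^2+\zeta} \ge 1$ (since the $j_i \ne 0$ and $\zeta \ge \zeta_1 > 0$) yields $D \ge C N^{-\eta K^2}$ with $\eta = 2\tau_* + 4$ (or any larger constant depending only on $d$ through $\tau_* = \tfrac{d(d+1)}{2}+1$).

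There is essentially no obstacle here: the only mild point to watch is that $\Omega_j$ in the QHD case has the extra factor $|j|_{\bar g}$ compared to the beam case $\Omega_j = \sqrt{|j|_{\bar g}^4 + \zeta}$, but this factor is strictly positive (as $j \ne 0$, recall that in this section indices range over $\Z^d \setminus \{0\}$) and can simply be pulled out column by column without affecting the lower bound. One should also verify that the constants $C_n$ are indeed nonzero — they are, since $C_n = \tfrac12(\tfrac12 - 1)\cdots(\tfrac12 - n + 1)$ and none of the factors vanish. Thus the proof is a direct transcription of the argument given for Lemma \ref{determinante}, with the Vandermonde node variables $x_i = (|j_i|_{\bar g}^2 + \zeta)^{-1}$ and the separation estimate of Lemma \ref{dista} supplying the key lower bound on the differences.
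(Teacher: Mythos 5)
Your proof is correct and follows essentially the same route as the paper: compute $\partial_\zeta^n\Omega_{j_i}=C_n\,|j_i|_{\bar g}(|j_i|_{\bar g}^2+\zeta)^{1/2-n}$, factor out the column prefactors to reduce to a Vandermonde determinant in $x_i=(|j_i|_{\bar g}^2+\zeta)^{-1}$, and bound each difference $|x_i-x_\ell|$ from below via Lemma \ref{dista}. The paper records the exponent as $K^2(\tau_*+4)$ while you write $K^2(2\tau_*+4)$; this bookkeeping discrepancy is immaterial since the lemma only requires \emph{some} $\eta=\eta_d$, and you correctly flag this.
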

\begin{proof}
The dispersion relation is slightly different w.r. to the one of the Beam equation, hence in this proof we just highlight the small differences w.r. to Lemma \ref{determinante}. For any $i = 1, \ldots, K$, for any $n = 0, \ldots, K - 1$, one computes 
$$
\partial_\zeta^n \Omega_{j_i}(\zeta) = C_n |j_i|_{\bar g} (|j_i|_{\bar g}^2 + \zeta)^{\frac12 - n}
$$
for some constant $C_n \neq 0$. This implies that 
$$
D \geq C \prod_{i = 1}^K \Big(|j_i|_{\bar g} \sqrt{|j_i|_{\bar g}^2 + \zeta} \Big) |{\rm det}(A)|
$$
where the matrix $A$ is defined as 
$$
A = \begin{pmatrix}
1& 1 &.& .&.&1
\\
x_1&x_2&.& .&.&x_K 
\\
.& .& .& .& .&.
\\
.& .& .& .& .&.
\\
x_1^{K  - 1}& x_2^{K - 1}&.& .&.&x_K^{K - 1} 
\end{pmatrix}
$$
where 
$$
x_i := \frac{1}{|j_i|_{\bar g}^2 + \zeta}, \quad i = 1, \ldots, K\,. 
$$
This is a Van der Monde determinant. Thus we have 
$$
\begin{aligned}
|{\rm det}(A)|  & = \prod_{1 \leq i < \ell \leq K} |x_i - x_\ell| = \prod_{1 \leq i < \ell \leq K} \Big| \frac{1}{|j_i|_{\bar g}^2 + \zeta} - \frac{1}{|j_\ell|_{\bar g}^2 + \zeta} \Big| \\
& \geq \prod_{1 \leq i < \ell \leq K}  \frac{||j_i|_{\bar g}^2 - |j_\ell|_{\bar g}^2|}{||j_i|_{\bar g}^2 + \zeta| ||j_\ell|_{\bar g}^2 + \zeta|} 
\stackrel{Lemma \,\,\ref{dista}}{\geq}  C N^{- K^2(\tau_* + 4)}
\end{aligned}
$$
which implies the thesis.
\end{proof}
Exploiting this Lemma, and following step by step the proof of Lemma
12 of \cite{Bam09} one gets

\begin{lemma}
\label{v.001 idrodinamico}
Let $\bar g \in {\cal G}$. Then for any $r$ there
exists $\tau_r$ with the following property: for any positive $\gamma$
small enough there exists a set $I_\gamma\subset (\zeta_1, \zeta_2)$ such
that $\forall \zeta\in I_\gamma$ one has that for any $N\geq 1$ and any
set $J_1,...,J_r$ with $|J_l|\leq N$ $\forall l$, one has
\begin{equation*}
\sum_{l=1}^r\sigma_l\Omega_{j_l}\not =0\qquad \Longrightarrow\qquad
\left|\sum_{l=1}^r\sigma_l\Omega_{j_l}\right|
\geq\frac{\gamma}{N^{\tau}}\ .
\end{equation*}
Moreover one has
\begin{equation*}
\left|[\zeta_1,\zeta_2]\setminus I_\gamma\right|\leq C \gamma^{1/r}\ .
\end{equation*} 
\end{lemma}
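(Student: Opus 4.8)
The plan is to run the classical measure--theoretic scheme of Lemma~12 of \cite{Bam09}, feeding in the quantitative determinant bound of Lemma~\ref{determinante idrodinamico} as the only new ingredient, and then doing a Borel--Cantelli--type summation over the frequency cutoff $N$.

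\smallskip

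\noindent\emph{Step 1: a pointwise lower bound on the derivatives.} Fix $r\in\N$ and a tuple $J_1=(j_1,\sigma_1),\dots,J_r=(j_r,\sigma_r)$ with $|J_l|\le N$, and consider the real--analytic function $\zeta\mapsto f(\zeta):=\sum_{l=1}^r\sigma_l\Omega_{j_l}(\zeta)$ on $(\zeta_1,\zeta_2)$, with $\Omega_j(\zeta)$ as in \eqref{Omega j QHD}. Since $\Omega_j$ depends on $j$ only through $|j|_{\bar g}$, grouping the $j_l$ by the value of $|j|_{\bar g}$ and keeping only the non-zero net coefficients gives
\[
f(\zeta)=\sum_{\nu=1}^{K}c_\nu\,\Omega_{\ell_\nu}(\zeta)\ ,\qquad K\le r\ ,\qquad c_\nu\in\Z\setminus\{0\}\ ,\ |c_\nu|\le r\ ,
\]
for suitable $\ell_1,\dots,\ell_K\in\Z^d\setminus\{0\}$ with $|\ell_1|_{\bar g}<\dots<|\ell_K|_{\bar g}\le C N$; note $f\not\equiv0$ exactly when $K\ge1$. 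If $K=1$ then $|f(\zeta)|=|c_1|\,\Omega_{\ell_1}(\zeta)$ is bounded below by a positive constant depending only on $\bar g$, so there is nothing to estimate. If $K\ge2$, write the column vector $\big(f,\partial_\zeta f,\dots,\partial_\zeta^{K-1}f\big)^{\top}(\zeta)=M(\zeta)\,(c_1,\dots,c_K)^{\top}$ with $M(\zeta)_{n,\nu}:=\partial_\zeta^{n}\Omega_{\ell_\nu}(\zeta)$; applying Lemma~\ref{determinante idrodinamico} to $\ell_1,\dots,\ell_K$ gives $|\det M(\zeta)|\ge C N^{-\eta K^2}$, while each entry of $M(\zeta)$ is $\lesssim_r N^{2}$ on $(\zeta_1,\zeta_2)$ because $\zeta_1>0$. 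Hence, by Cramer's rule, $\|M(\zeta)^{-1}\|\lesssim_r N^{\,2(K-1)+\eta K^2}\le N^{\mu_r}$ with $\mu_r:=2(r-1)+\eta r^2$, and since $(c_1,\dots,c_K)$ has a non-zero integer entry,
\[
\max_{0\le n\le K-1}\big|\partial_\zeta^{n}f(\zeta)\big|\ \ge\ \|M(\zeta)^{-1}\|^{-1}\ \gtrsim_r\ N^{-\mu_r}\ ,\qquad \forall\,\zeta\in(\zeta_1,\zeta_2)\,.
\]

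\smallskip

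\noindent\emph{Step 2: sublevel sets and summation in $N$.} I would then invoke the classical estimate on the measure of the sublevel sets of a smooth function with non-degenerate derivatives (the one used in \cite{Bam09}): combined with Step~1 it gives, for every $\rho>0$ with $\rho N^{\mu_r}<1$,
\[
\big|\{\zeta\in(\zeta_1,\zeta_2):\ |f(\zeta)|<\rho\}\big|\ \le\ C_r\big(\rho\,N^{\mu_r}\big)^{1/r}\ ,
\]
where $K\le r$ is used to replace the natural exponent $1/(K-1)$ by the smaller $1/r$. Now fix $\tau_r$ large, polynomially in $r$ and $d$ (any $\tau_r>\mu_r+d r^{2}+r$ works), and, for $\gamma>0$ small, set
\[
\mathcal{R}_\gamma:=\bigcup_{N\ge1}\ \bigcup_{(J_1,\dots,J_r)\in\mathcal{A}_N}\Big\{\zeta\in(\zeta_1,\zeta_2):\ |f_{(J_1,\dots,J_r)}(\zeta)|<\tfrac{\gamma}{N^{\tau_r}}\Big\}\ ,\qquad I_\gamma:=(\zeta_1,\zeta_2)\setminus\mathcal{R}_\gamma\,,
\]
where $\mathcal{A}_N$ is the set of tuples with $|J_l|\le N$ and $f_{(J_1,\dots,J_r)}\not\equiv0$, whose cardinality is $\lesssim N^{dr}$. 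Since each excluded set has measure $\lesssim_r(\gamma N^{\mu_r-\tau_r})^{1/r}$, the total is bounded by $\gamma^{1/r}\sum_{N\ge1}N^{\,dr+(\mu_r-\tau_r)/r}$, which converges by the choice of $\tau_r$; hence $|(\zeta_1,\zeta_2)\setminus I_\gamma|=|\mathcal{R}_\gamma|\le C\gamma^{1/r}$.

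\smallskip

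\noindent\emph{Step 3: conclusion, and the main obstacle.} If $\zeta\in I_\gamma$ and $(J_1,\dots,J_r)$ has $|J_l|\le N$ with $\sum_{l}\sigma_l\Omega_{j_l}(\zeta)\neq0$, then $f_{(J_1,\dots,J_r)}\not\equiv0$, so this tuple lies in $\mathcal{A}_N$ and $\zeta$ avoids the corresponding excluded set; therefore $|\sum_{l}\sigma_l\Omega_{j_l}(\zeta)|\ge\gamma/N^{\tau_r}$, which is the asserted bound with $\tau=\tau_r$. The one genuinely delicate point is the interplay of the two exponents: the determinant degrades like $N^{-\eta K^2}$, so the derivative lower bound — and hence the per-tuple sublevel estimate — is only polynomial in $N$, and one must verify that a \emph{single} $\tau_r$, chosen large enough in terms of $\eta$, $d$ and $r$, simultaneously makes the double sum over all cutoffs $N\ge1$ and all admissible tuples finite; everything else is a verbatim transcription of the corresponding argument in \cite{Bam09}.
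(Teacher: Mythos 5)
Your proposal is correct and follows exactly the route the paper indicates: the paper declares the result by the phrase ``following step by step the proof of Lemma 12 of \cite{Bam09}'', and your Steps 1--3 are a faithful reconstruction of that argument, with Lemma \ref{determinante idrodinamico} supplying the quantitative lower bound on the Vandermonde-type determinant, the Pyartli/R\"ussmann sublevel-set estimate converting it into a per-tuple measure bound, and the choice of a large $\tau_r$ making the double sum over $N$ and tuples converge. The one point worth flagging explicitly (which you handle correctly but tersely) is the reduction to a tuple of \emph{distinct} values of $|j|_{\bar g}$ with nonvanishing integer coefficients $c_\nu$, which is what makes $K\ge 1$ equivalent to $f\not\equiv 0$ and lets the determinant lemma bite.
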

By recalling the diffeomorphism \eqref{mappa beta gamma QHD}, one has that the set 
$$
{\cal I}_\gamma := \{ \beta \in [\beta_1,  \beta_2] : \delta/\beta^2 \in I_\gamma \}
$$
satisfies the estimate 
$$
|(\beta_1, \beta_2) \setminus {\cal I}_\gamma| \lesssim \gamma^{\frac1r}
$$
Now, if we take $\beta \in {\cal I}_\gamma$ and if $\sum_{i=1}^r\sigma_i\omega_{j_i}\not =0$, one has that (recall \eqref{omega j alpha QHD})
$$
\begin{aligned}
\Big|\sum_{i = 1}^r \sigma_i \omega_{j_i} \Big| & =  \beta^2 \Big| \sum_{i = 1}^r \sigma_i \Omega_{j_i}\Big| \stackrel{\beta_1 \leq \beta \leq \beta_2}\geq \beta_1^2  \Big| \sum_{i = 1}^r \sigma_i \Omega_{j_i}\Big|  \\
& \geq \frac{\beta_1^2 \gamma}{N^\tau}\,. 
\end{aligned}
$$
By the above result,
one has  that, if
$$
\beta \in\bigcup_{\gamma>0}{\cal I}_\gamma\ ,
$$
then (NR.2) holds and furthermore $\bigcup_{\gamma>0}{\cal I}_\gamma$ has
full measure. Hence the claimed statement follows by defining ${\cal B}^{(res)} := {\cal B} \setminus \Big( \bigcup_{\gamma>0}{\cal I}_\gamma \Big)$. 


\subsection{Stability of plane waves in NLS}\label{plane}

Consider the NLS
\begin{equation}
  \label{PNLS}
\im\psi_t=-\Delta_g \psi+f(|\psi|^2)\psi\ ,
  \end{equation}
with $f\in C^\infty(\R,\R)$, $f(0) = 0$ and $g = \beta \bar g$, $\bar g\in\cG$ and $\beta \in (\beta_1, \beta_2) \subset (0, + \infty)$. 
(recall the Definition \ref{parametrimetrica}). 
The equation \eqref{PNLS}  admits solutions of the form
\begin{equation}
  \label{NLSP.1}
\psi_{*,m}(x,t)=ae^{\im (m\cdot x-\nu t)}\ ,\quad m\in\Z^d
  \end{equation}
with $\nu=|m|_g^2+f(a^2)$ and $a > 0$. In order to state the next stability theorem, we need that a suitable condition between $f'(a^2)$ and the metric $g$ is satisfied. For this reason, we slightly modify the definition of ${\cal G}_0$ in \ref{parametrimetrica}. We then re-define ${\cal G}_0$ in the following way: fix $K > 0$, we define
\begin{equation}\label{def cal G0 new}
\mathcal{G}_{0}:=\left\{
\left(g_{ij}\right)_{i\leq j}\in \R^{\frac{d (d + 1)}{2}}\; : \; \inf_{x \neq 0} \frac{g(x, x)}{|x|^2} > K
\right\}
\end{equation}
The definition of the admissible set ${\cal G}$ is then the same in which one replace this new set ${\cal G}_0$ with its hold definition. 
The main theorem of this section is the following. 
\begin{theorem}
  \label{main.plane}
Assume that $0 < \beta_1 < \beta_2$, $\bar g \in {\cal G}$, $2f'(a^2)< \beta_1^2 K^2$, $f'(a^2) \neq 0$ (where $K > 0$ is the constant appearing in \eqref{def cal G0 new}). 
Then  there exists a set of zero measure 
${\cal B}^{(res)} \subset {\cal B} := (\beta_1, \beta_2)$, such that for
$\beta\in {\cal B} \setminus {\cal B}^{(res)} $ for $g = \beta \bar g$, then, for any $ r\geq3$, there
exist $s_r>0$ such that the following holds.
For any $ s>s_r$ and any  $ m\in\Z^d$ there exist constants  $ \epsilon_{rsm},c,C$ 
such that
if the initial datum $\psi_0$ for \eqref{PNLS} fulfills
\begin{equation}
  \label{plane.dat}
\left\|\psi_0\right\|_{L^2}=a\sqrt{|\T^d|_g}\ , 
\quad
\epsilon:=\left\|\psi_0-\psi_{*,m}(.,0)  \right\|_{H^s}<\epsilon_{srm}\ , 
\end{equation}
then the corresponding solution fulfills
\begin{equation}
  \label{stima.sol.plane}
\left\| \psi(t)-\psi_{*,m}(.,t)\right\|_{s}\leq
C\epsilon\,,\quad \forall\;  \left|t\right|\leq c\epsilon^{-r}\,. 
\end{equation}
\end{theorem}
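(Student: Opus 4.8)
The plan is to reduce the stability problem for the plane wave $\psi_{*,m}$ to an application of the abstract Theorem \ref{main.abs}, following the strategy of \cite{FGL13}. First I would perform the gauge and Galilean changes of variables that move the plane wave to an equilibrium: set $\psi(x,t)=e^{\im(m\cdot x-\nu t)}\big(a+v(x,t)\big)$ (more precisely, first conjugate by the plane wave phase, then translate the zero Fourier mode so that the $L^2$ constraint $\|\psi_0\|_{L^2}=a\sqrt{|\T^d|_g}$ is used to eliminate the $j=0$ component, exactly as in the QHD application of Section \ref{sec:QHD}). In the new variable $v$ the equation becomes Hamiltonian with a quadratic part of the form $H_0=\sum_j \omega_j u_{(j,+)}u_{(j,-)}$ after diagonalizing the (constant-coefficient, hence Fourier-diagonal) $2\times2$ symplectic block coming from linearizing $f(|\psi|^2)\psi$ at $\psi=a$; the standard computation gives, for $j\neq 0$,
\begin{equation*}
\omega_j=\sqrt{|j|_g^2\big(|j|_g^2+2a^2 f'(a^2)\big)}\,,
\end{equation*}
which is real and positive precisely because the hypothesis $2f'(a^2)<\beta_1^2 K^2\le |j|_g^2$ (using $|j|_g^2\ge \beta^2 K^2|j|^2\ge \beta_1^2K^2$ from the modified ${\cal G}_0$ in \eqref{def cal G0 new}) forces $|j|_g^2+2a^2f'(a^2)>0$. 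The perturbation $P$ is the remaining nonlinearity; thanks to $f\in C^\infty$ and $f(0)=0$ it is of class $\cP$ of Definition \ref{funzP}, with the phase-space being $H^s_0(\T^d)$ as in the QHD case.

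Next I would verify Hypotheses \ref{hypo1} and \ref{hypo2}. Condition (F.1) holds with $\beta=2$ since $\omega_j=|j|_g^2+O(1)$. Condition (F.3) follows verbatim from the Berti--Maspero generalization of Bourgain's Lemma used in Lemma \ref{parti.nls}, since $\omega_j-|j|_g^2$ is bounded. For the Diophantine conditions (F.2), (NR.1), (NR.2) I would reproduce the argument of Section \ref{section beam}: writing $g=\beta\bar g$ with $\bar g\in\cG$ one has $\omega_j=\beta^2\Omega_j$ with $\Omega_j=|j|_{\bar g}\sqrt{|j|_{\bar g}^2+\zeta}$ and $\zeta=2a^2f'(a^2)/\beta^2$; the map $\beta\mapsto\zeta$ is an analytic diffeomorphism on $(\beta_1,\beta_2)$ (here one uses $f'(a^2)\neq 0$ so that $\zeta$ is a nontrivial parameter), and the dispersion relation $\Omega_j(\zeta)=|j|_{\bar g}\sqrt{|j|_{\bar g}^2+\zeta}$ is \emph{exactly} the one of the QHD system \eqref{Omega j QHD}. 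Hence Lemma \ref{determinante idrodinamico} and Lemma \ref{v.001 idrodinamico} apply directly, giving (F.2) for a set of $\zeta$ of full measure, and pulling back through the diffeomorphism yields a full-measure set of $\beta$; the equivalence classes $[j]=\{i:|i|_{\bar g}=|j|_{\bar g}\}$ are dyadic, so (NR.1) holds, and (NR.2) is the statement $\sum\sigma_l\Omega_{j_l}\neq0$ for multi-indexes with distinct classes, which again follows from Lemma \ref{v.001 idrodinamico} applied with $\gamma\to0$. One then defines ${\cal B}^{(res)}$ as the complement of $\bigcup_{\gamma>0}{\cal I}_\gamma$.

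With all hypotheses verified, Theorem \ref{main.abs} applies to the Cauchy problem for $v$: if $\|v(0)\|_{H^s}<\epsilon_0$ then $\|v(t)\|_{H^s}\le C\epsilon$ for $|t|\le c\epsilon^{-r}$. Undoing the changes of variables (which are bounded and bounded-invertible on $H^s_0$, and the reconstruction of the zero mode is controlled by the $L^2$ constraint exactly as in Lemma \ref{federico2}), the smallness $\epsilon=\|\psi_0-\psi_{*,m}(\cdot,0)\|_{H^s}$ translates into smallness of $\|v(0)\|_{H^s}$, and the conclusion $\|\psi(t)-\psi_{*,m}(\cdot,t)\|_{H^s}\le C\epsilon$ follows. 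The main obstacle I anticipate is purely bookkeeping rather than conceptual: carefully setting up the symplectic diagonalization of the linearized operator and checking that the resulting perturbation genuinely lies in the class $\cP$ (in particular the momentum-conservation property P.1, which holds because the nonlinearity is a function of $|\psi|^2$ evaluated pointwise), together with tracking the zero-mode elimination so that the $L^2$ norm constraint is used consistently — all steps that are by now standard and for which the QHD section of this very paper provides a complete template.
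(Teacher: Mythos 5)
Your proposal follows the paper's route exactly: Galilean boost to $m=0$, Marsden--Weinstein reduction eliminating the zero mode via the $L^2$ constraint (exactly as in Section \ref{sec:QHD}), symplectic diagonalization of the quadratic part, identification of the resulting dispersion relation with the QHD form $\Omega_j(\zeta)=|j|_{\bar g}\sqrt{|j|_{\bar g}^2+\zeta}$ so that Lemmas \ref{determinante idrodinamico} and \ref{v.001 idrodinamico} together with the full-measure argument of Section \ref{sec:nonresQHD} carry over verbatim, and then an application of Theorem \ref{main.abs}. The only minor discrepancy is that the paper records the frequencies as $\omega_j=\sqrt{|j|_g^4-f'(a^2)|j|_g^2}$ in \eqref{PNLS.5} while you find $\sqrt{|j|_g^2\left(|j|_g^2+2a^2f'(a^2)\right)}$ (the latter matching \cite{FGL13}); since the precise sign and coefficient of the tunable parameter $\zeta$ are immaterial to the Vandermonde and measure-estimate argument, this does not affect the validity of the proof.
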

Arguing as in the proof of Corollary \ref{remark misura piena beam}, one can show also in this case the following  
\begin{corollary}\label{remark misura piena plane waves}
Let $0 < \beta_1 < \beta_2$. There exists a zero measure set ${\cal G}^{(res)}_{\beta_1, \beta_2} \subseteq {\cal G}_0(\beta_1, \beta_2)$, where ${\cal G}_0(\beta_1, \beta_2) := \big\{ g \in {\cal G}_0 : \beta_1 \leq \| g \|_2 \leq \beta_2 \big\}$, such that for any $g \in {\cal G}_0(\beta_1, \beta_2) \setminus {\cal G}^{(res)}_{\beta_1, \beta_2}$ the statements of theorem \ref{main.plane} hold. 
\end{corollary}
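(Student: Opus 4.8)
\textbf{Proof plan for Corollary \ref{remark misura piena plane waves}.}

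The plan is to mimic exactly the argument given in the excerpt for Corollary \ref{remark misura piena beam}, replacing the set ${\cal B}_{\overline g}$ coming from the beam equation by the corresponding full-measure set of good scaling parameters provided by Theorem \ref{main.plane}. Concretely, write $n := \frac{d(d+1)}{2}$, and for $\beta_1 \leq \beta \leq \beta_2$ denote by $\sigma_\beta$ the $(n-1)$-dimensional surface measure on the sphere $\partial B_\beta := \{ \| g \|_2 = \beta \}$ inside $\R^n$. The two facts to establish are: (\textbf{Claim 1}) the diophantine metrics of unit norm have full surface measure, i.e. $\sigma_1({\cal G} \cap \partial B_1) = \sigma_1({\cal G}_0 \cap \partial B_1)$; and (\textbf{Claim 2}) for each $\overline g \in {\cal G} \cap \partial B_1$ the set ${\cal B}^{(res)} \subset (\beta_1, \beta_2)$ of Theorem \ref{main.plane} has zero measure, so that ${\cal B}_{\overline g} := (\beta_1,\beta_2) \setminus {\cal B}^{(res)}(\overline g)$ has full measure, and hence the union
\[
{\cal G}^{(nr)}_{\beta_1,\beta_2} := \bigcup_{\overline g \in \partial B_1 \cap {\cal G}} \big\{ \beta\, \overline g : \beta \in {\cal B}_{\overline g} \big\}
\]
has full measure in ${\cal G}_0(\beta_1,\beta_2)$.

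For Claim 1 I would argue exactly as in the proof of Corollary \ref{remark misura piena beam}: by the scaling $\sigma_\beta(\beta E) = C_n \beta^{n-1} \sigma_1(E)$ for $E \subset \partial B_1$, Fubini gives
\[
|{\cal G}_0(\beta_1,\beta_2)| = \frac{C_n(\beta_2^n - \beta_1^n)}{n}\, \sigma_1\big({\cal G}_0 \cap \partial B_1\big), \qquad
|{\cal G}_{\beta_1,\beta_2}| = \frac{C_n(\beta_2^n - \beta_1^n)}{n}\, \sigma_1\big({\cal G} \cap \partial B_1\big),
\]
where ${\cal G}_{\beta_1,\beta_2} := {\cal G}_0(\beta_1,\beta_2) \cap {\cal G}$. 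Since ${\cal G}$ has full Lebesgue measure in ${\cal G}_0$ by Remark \ref{remark admissible set}, the left-hand sides are equal, whence $\sigma_1({\cal G} \cap \partial B_1) = \sigma_1({\cal G}_0 \cap \partial B_1)$. (One small point to note: the redefinition \eqref{def cal G0 new} of ${\cal G}_0$ with the constant $K>0$ does not affect this, since ${\cal G}$ is still of full measure in this smaller open cone; the homogeneity used in the scaling is unchanged.) For Claim 2, the zero-measure property of ${\cal B}^{(res)}(\overline g)$ is precisely the conclusion of Theorem \ref{main.plane}, so ${\cal B}_{\overline g}$ has full measure in $(\beta_1,\beta_2)$; then Fubini in polar-type coordinates $g = \beta\, \overline g$ on the cone ${\cal G}_0(\beta_1,\beta_2)$ gives
\[
|{\cal G}^{(nr)}_{\beta_1,\beta_2}| = \int_{{\cal G} \cap \partial B_1} |{\cal B}_{\overline g}|\, d\sigma_1(\overline g) = (\beta_2 - \beta_1)\, \sigma_1({\cal G} \cap \partial B_1) \overset{\text{Claim 1}}{=} (\beta_2-\beta_1)\, \sigma_1({\cal G}_0 \cap \partial B_1) = |{\cal G}_0(\beta_1,\beta_2)|,
\]
using once more the scaling identity. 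Defining ${\cal G}^{(res)}_{\beta_1,\beta_2} := {\cal G}_0(\beta_1,\beta_2) \setminus {\cal G}^{(nr)}_{\beta_1,\beta_2}$, which is then of zero measure, and observing that for every $g = \beta \overline g \in {\cal G}^{(nr)}_{\beta_1,\beta_2}$ the hypotheses of Theorem \ref{main.plane} are met (the admissibility $\overline g \in {\cal G}$, and $\beta \notin {\cal B}^{(res)}(\overline g)$), the conclusion of Theorem \ref{main.plane} holds for that $g$, which is the desired statement.

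The only genuinely delicate point, and the one I would be most careful about, is the compatibility of the smallness condition $2 f'(a^2) < \beta_1^2 K^2$ from Theorem \ref{main.plane} with the polar decomposition: this inequality depends only on $\beta_1$ (the lower endpoint), not on the direction $\overline g$, so it holds uniformly over all $\overline g \in {\cal G} \cap \partial B_1$ and all $\beta \in (\beta_1,\beta_2)$, and therefore does not shrink the admissible set of directions. One should also check that the exceptional set ${\cal B}^{(res)}(\overline g)$ depends measurably on $\overline g$ so that the Fubini argument is legitimate; this follows because the resonance sets are defined by countably many explicit analytic inequalities in $(\beta, \overline g)$, exactly as in the proof of Lemma \ref{v.001} and the construction preceding Corollary \ref{remark misura piena beam}, so no new idea is needed. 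Apart from this bookkeeping, the corollary is a direct transcription of the proof of Corollary \ref{remark misura piena beam}.
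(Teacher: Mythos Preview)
Your proposal is correct and follows exactly the approach the paper indicates: the paper's ``proof'' of Corollary~\ref{remark misura piena plane waves} is simply the sentence ``Arguing as in the proof of Corollary~\ref{remark misura piena beam}, one can show\ldots'', and you have carried out precisely that transcription, with the appropriate replacements and with due attention to the two points specific to this setting (the modified ${\cal G}_0$ of \eqref{def cal G0 new} and the hypothesis $2f'(a^2)<\beta_1^2K^2$, both of which are indeed harmless for the polar-coordinate Fubini argument). There is nothing to add.
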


The rest of this subsection is devoted to sketch the proof of Theorem \ref{main.plane}, which follows
exactly the proof of the corresponding theorem in \cite{FGL13} except
that in the case of nonresonant tori one has to substitute the
nonresonant condition by \cite{FGL13} with our nonresonance 
and
structure conditions (see Hypotheses \ref{hypo1}, \ref{hypo2}).

We start by reducing the problem to a problem of stability of the origin of
a system of the form \eqref{h.abs}.

First it is easy to see that introducing the new variables $\varphi$
by
\begin{equation*}
\varphi(x,t)=e^{-\im m\cdot x}e^{-\im
  t\left|m\right|^2}\psi(x+2mt,t)\ ,
\end{equation*}
then $\varphi$ still fulfills \eqref{PNLS}, but $\psi_{*,m}(x,t)$ is
changed to $a e^{-\im\nu t}$ with $\nu= f(a^2)$.

The idea of \cite{FGL13} is to exploit that $\varphi(x)=a$  appears as
an elliptic equilibrium of the reduced Hamiltonian system obtained  applying
Marsden Weinstein procedure to \eqref{PNLS} in order to reduce the
Gauge symmetry. We recall that according to Marsden Weinstein
procedure (following \cite{FGL13}), when one has a system invariant under a one parameter 
symmetry group, then there exists an integral of motion (the $L^2$
norm in this case), and the effective dynamics occurs in the quotient of the level
surface of the integral of motion with respect to the group
action. This is the same procedure exploited in section \ref{sec:QHD} for the QHD system. The effective system has a Hamiltonian which is obtained by
restricting the Hamiltonian to the level surface. Such a Hamiltonian
is invariant under the symmetry group associated to the integral of motion.

More precisely, consider the zero mean variable
$$
z(x):=\frac{1}{|\T^d|_g^{1/2}}\sum_{j\in\Z^d\setminus
  \left\{0\right\}} z_j e^{\im j\cdot x}\ ,
$$
and the substitution
\begin{equation}
  \label{varphi}
\varphi(x)=e^{\im \theta} (\sqrt{a^2-\left|\T^d\right|_g\left\|
  z\right\|^2_{L^2}}+z(x) ) 
  \end{equation}
where $\theta \in \T$ is a parameter along the orbit of the Gauge group,
Notice that $\varphi$ belongs to the level surface
$\norma{\varphi}_{L^2}=a\sqrt{\left|\T^d\right|_g} $ and $z(x)$ is the
new free variable. In this case it also turns out that this is a
canonical variable(as it can be verified by the theory of \cite{Bam13}). Thus the Hamiltonian for the reduced system turns
out to be
\begin{equation*}
H_a(z,\bar z)=\int_{\T^d} \left(\bar\varphi(-\Delta \varphi)+F(|\varphi|^2)\right)dx \ ,
  \end{equation*}
with $\varphi$ given by \eqref{varphi}. The explicit form of the
Hamiltonian and its expansion were computed in \cite{FGL13} who showed
that all the terms of the Taylor expansion of $H_a$ have zero momentum
and that all the nonlinear terms are bounded, so, with our language,
the nonlinear part is of class $\cP$. Considering the quadratic part,
\cite{FGL13} showed that there exists a linear transformation
preserving $H^s$ norms and the zero momentum condition, such that the
quadratic part takes the form \eqref{H0} with
\begin{equation}
  \label{PNLS.5}
\omega_j=\sqrt{|j|_g^4-f'(a^2)|j|_g^2}\ .
  \end{equation}
The system is now suitable for the application of Theorem
\ref{main.abs}. We do not give the details, since the verification of the
nonresonance and structural assumptions are done exactly in the same
way as in the previous cases. Indeed one can prove 
the nonresonance conditions on the frequencies \eqref{PNLS.5}
reasoning as done in section \ref{sec:nonresQHD}.

\appendix

\section{A technical lemma}\label{lemma}

{\bf In this section by $\ell^2_s$ we mean $\ell^2_s(\Z^d;\C)$.  }

\begin{lemma}
  \label{prod}
Let 
\[
X:\underbrace{\ell^2_s\times...\times
  \ell^2_s}_{r-\text{times}}\to\ell^2_s\,,
  \]
   be a symmetric $r$-linear
 $X(u^{(1)},...,u^{(r)})=\left(X_j(u^{(1)},...,u^{(r)})\right)_{j\in\Z^d}$ 
 with the property that there exist 
 $\sigma_0,\sigma_1,...,\sigma_r$, with
$\sigma_l\in\left\{-1,1\right\}$
such that
\begin{equation}
  \label{lX}
  X_j(u^{(1)},...,u^{(r)})=\sum_{\begin{subarray}{c}j_1,...,j_r\in\Z^d
      \\  \sigma_0j+\sum_{l=1}^{r}\sigma_lj_l=0 \end{subarray}
  }X_{j,j_1,...,j_r} u_{j_1}^{(1)}....u_{j_r}^{(r)}\ ,
\end{equation}
and $X_{j,j_1,...,j_r}$ 
completely symmetric with respect to any
permutation of the indexes $j,j_1,...,j_r$ fulfilling
\[
\sup_{j,j_1,...,j_r\in\Z^{d}}\left|X_{ j,j_1,...,j_r}\right|<\infty\ .
\]
Then, for any $s>s_0>d/2$
there exists a constant $C_{s,r}>0$ such that 
one has
\begin{equation*}\label{tamemap}
\begin{aligned}
 \left\|X(u^{(1)},...,u^{(r)})\right\|_s 
 &\leq C_{s,r}
  \sup_{j,j_1,...,j_r\in\Z^d}\left|X_{ j,j_1,...,j_r}\right|\times
  \\&
  \times
\sum_{l=1}^{r}
\norma{u^{(1)}}_{s_0}....\norma{u^{(l-1)}}_{s_0} \norma{u^{(l)}}_{s}
\norma{u^{(l+1)}}_{s_0}...\norma{u^{(r)}}_{s_0}\ .
\end{aligned}
\end{equation*}
\end{lemma}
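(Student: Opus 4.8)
The plan is to reduce the statement to a routine iterated Young inequality on $\ell^p(\Z^d)$. First I would pass to absolute values: replacing each $u^{(l)}$ by $(|u^{(l)}_j|)_{j\in\Z^d}$ changes none of the norms $\|u^{(l)}\|_s$, $\|u^{(l)}\|_{s_0}$ and can only increase $|X_j(u^{(1)},\dots,u^{(r)})|$, so I may assume all the $u^{(l)}$ have nonnegative entries; moreover I bound $|X_{j,j_1,\dots,j_r}|\le M:=\sup_{j,j_1,\dots,j_r}|X_{j,j_1,\dots,j_r}|$ in \eqref{lX}. The signs $\sigma_0,\dots,\sigma_r$ are then removed by the changes of variable $j_m\mapsto\sigma_m j_m$ and $j\mapsto-\sigma_0 j$, which are bijections of $\Z^d$ preserving $\langle\cdot\rangle:=1+|\cdot|$ and all the norms $\|\cdot\|_t$; after this the constraint in \eqref{lX} reads $j=j_1+\dots+j_r$. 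Hence it suffices to show that for nonnegative sequences $a^{(1)},\dots,a^{(r)}\in\ell^2_s$ the sequence
\[
b_k:=\langle k\rangle^s\!\!\sum_{\substack{j_1,\dots,j_r\in\Z^d\\ j_1+\dots+j_r=k}}\!\!a^{(1)}_{j_1}\cdots a^{(r)}_{j_r}\ ,\qquad k\in\Z^d\ ,
\]
satisfies $\|b\|_{\ell^2}\sleq_{s,r}\sum_{l=1}^r\|a^{(l)}\|_s\prod_{m\ne l}\|a^{(m)}\|_{s_0}$; indeed, with the $a^{(m)}$ taken to be the reindexed moduli $a^{(m)}_k=|u^{(m)}_{\sigma_m k}|$ of the $u^{(m)}$, one then gets $\|X(u^{(1)},\dots,u^{(r)})\|_s\le M\|b\|_{\ell^2}$.

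The second step is to distribute the weight $\langle k\rangle^s$ onto the factors. On the constraint set one has $|k|\le\sum_{m=1}^r|j_m|\le r\max_m|j_m|$, hence $\langle k\rangle\le r\max_m\langle j_m\rangle$ and therefore $\langle k\rangle^s\le r^s\sum_{l=1}^r\langle j_l\rangle^s$. Inserting this bound into the definition of $b_k$ and interchanging the (nonnegative) summations gives $b_k\le r^s\sum_{l=1}^r\beta^{(l)}_k$, where
\[
\beta^{(l)}_k:=\sum_{\substack{j_1,\dots,j_r\in\Z^d\\ j_1+\dots+j_r=k}}\langle j_l\rangle^s\, a^{(1)}_{j_1}\cdots a^{(r)}_{j_r}
\]
is the $r$-fold convolution on $\Z^d$ of the sequence $c^{(l)}$, defined by $c^{(l)}_j:=\langle j\rangle^s a^{(l)}_j$, with the sequences $a^{(m)}$, $m\ne l$. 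Note $\|c^{(l)}\|_{\ell^2}=\|a^{(l)}\|_s$.

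The third step is the convolution estimate. Iterating $\|f\ast g\|_{\ell^2}\le\|f\|_{\ell^2}\|g\|_{\ell^1}$ (valid for nonnegative sequences) $r-1$ times, one obtains
\[
\|\beta^{(l)}\|_{\ell^2}\le\|c^{(l)}\|_{\ell^2}\prod_{m\ne l}\|a^{(m)}\|_{\ell^1}=\|a^{(l)}\|_s\prod_{m\ne l}\|a^{(m)}\|_{\ell^1}\ ,
\]
and since $s_0>d/2$ the constant $C_d(s_0):=\big(\sum_{j\in\Z^d}\langle j\rangle^{-2s_0}\big)^{1/2}$ is finite, so by Cauchy--Schwarz $\|a^{(m)}\|_{\ell^1}=\sum_j\langle j\rangle^{-s_0}\,\langle j\rangle^{s_0}a^{(m)}_j\le C_d(s_0)\|a^{(m)}\|_{s_0}$. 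Summing over $l$ and using Minkowski in $\ell^2$ yields $\|b\|_{\ell^2}\le r^s C_d(s_0)^{r-1}\sum_{l=1}^r\|a^{(l)}\|_s\prod_{m\ne l}\|a^{(m)}\|_{s_0}$, which after translating back to the $u^{(l)}$ is exactly the asserted inequality with $C_{s,r}:=r^s C_d(s_0)^{r-1}$.

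For full rigour one first carries out the computation for finitely supported sequences, where all sums are finite, and then passes to the limit by monotone convergence, exhausting $\Z^d$ by finite sets; the uniform bound just derived also certifies that $X(u^{(1)},\dots,u^{(r)})\in\ell^2_s$ in the first place. I expect no serious obstacle here: the only delicate points are the bookkeeping of the momentum constraint — absorbing the signs $\sigma_m$ by reindexing so that the sum genuinely becomes a convolution — and the elementary embedding $\ell^2_{s_0}(\Z^d)\hookrightarrow\ell^1(\Z^d)$ for $s_0>d/2$, which is precisely what allows the $r-1$ ``low'' factors measured in $\|\cdot\|_{s_0}$ to be estimated by $\ell^1$ norms and fed into Young's inequality.
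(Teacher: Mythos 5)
Your proof is correct. It amounts to proving from scratch the tame product estimate that the paper instead cites as ``well known'': after bounding the coefficients by $M$, the paper recognises the constrained sum as the $j$-th Fourier coefficient of the function $v(x)=u^{(1)}(x)\cdots u^{(r)}(x)$ on $\T^d$ (with complex conjugates inserted on the factors with $\sigma_l=-1$, which flips the corresponding Fourier indices), and then invokes the standard estimate
$\|v\|_{H^s}\lesssim_{s,r}\sum_{l}\|u^{(l)}\|_{H^s}\prod_{m\neq l}\|u^{(m)}\|_{H^{s_0}}$ for $s\geq s_0>d/2$. You stay entirely at the sequence level and supply the proof of that estimate: the reindexing $j_m\mapsto\sigma_m j_m$, $j\mapsto -\sigma_0 j$ plays exactly the role of the paper's conjugations; the weight splitting $\langle k\rangle^s\leq r^s\sum_l\langle j_l\rangle^s$, the iterated Young inequality $\|f*g\|_{\ell^2}\leq\|f\|_{\ell^2}\|g\|_{\ell^1}$, and the Cauchy--Schwarz embedding $\ell^2_{s_0}(\Z^d)\hookrightarrow\ell^1(\Z^d)$ for $s_0>d/2$ are precisely the ingredients of the usual proof of the tame estimate. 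So the two arguments are the same mathematics; yours is self-contained and makes the constant $C_{s,r}=r^sC_d(s_0)^{r-1}$ explicit, while the paper's is shorter because it outsources the analytic core. Your care about nonnegativity, finite-support approximation, and monotone convergence is sound but not really needed once the a priori bound is in hand; it does no harm.
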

\proof One has
\begin{align*}
  \left\|X(u^{(1)},...,u^{(r)})\right\|_s=
  \sum_{j}\langle j\rangle^{2s}
  \left| 
  \sum_{\begin{subarray}{c}j_1,...,j_r\in\Z^d
      \\  \sigma_0j+\sum_{l=1}^{r}\sigma_lj_l=0 
      \end{subarray}}
      X_{j,j_1,...,j_r} u_{j_1}^{(1)}\cdots u_{j_r}^{(r)}  \right|^{2}
\\
  \leq 
 \sup_{j,j_1,...,j_r}
 \left|X_{ j,j_1,...,j_r}\right|^2 
 \sum_{j}\langle j\rangle^{2s}
\left( \sum_{\begin{subarray}{c}j_1,\ldots,j_r\in\Z^d
      \\  \sigma_0j+\sum_{l=1}^{r}\sigma_lj_l=0 \end{subarray}
  } |u_{j_1}^{(1)}\cdots u_{j_r}^{(r)} | \right)^{2}
\end{align*}
To fix ideas consider first the case $\sigma_l=1$ $\forall l$, then
the bracket is the $j$-th Fourier  coefficient of the function
$v(x)=u^{(1)}(x)\cdots u^{(r)}(x)$, 
with
\[
u^{(l)}(x)=\sum_{j\in\Z^d}\frac{1}{|\T^d|^{1/2}} u^{(l)}_{j}e^{\im
  j\cdot x}\ ,
  \]
for which it is well known that 
\begin{equation}
  \label{tamemap.1}
  \nonumber \left\|v\right\|_s \leq C_{s,r}
\sum_{l=1}^{r}
\norma{u^{(1)}}_{s_0}....\norma{u^{(l-1)}}_{s_0} \norma{u^{(l)}}_{s}
\norma{u^{(l+1)}}_{s_0}...\norma{u^{(r)}}_{s_0}\ .
\end{equation}
then the thesis immediately follows. To deal with the case of
different signs every time one has $\sigma_l=-1$ one simply
substitutes $\overline{u^{(l)}}$ to $u^{(l)} $. This concludes the proof.  
\qed



\def\cprime{$'$}

\end{document}